\documentclass[reqno,11pt,a4paper]{amsart}
\usepackage{amsmath}
\usepackage{amssymb}
\usepackage{graphicx}
\usepackage{mathrsfs}
\usepackage{amssymb}
\usepackage{amsfonts,bm,bbm}
\usepackage{amsthm}
\usepackage[colorlinks,linkcolor=blue,anchorcolor=blue,citecolor=blue,urlcolor=purple]{hyperref}
\usepackage{orcidlink}
\usepackage{geometry}
\usepackage{fullpage}
\usepackage{color}
\usepackage{underscore}
\usepackage{cite}
\usepackage{doi}

\DeclareFontFamily{U}{mathx}{}
\DeclareFontShape{U}{mathx}{m}{n}{<-> mathx10}{}
\DeclareSymbolFont{mathx}{U}{mathx}{m}{n}
\DeclareMathAccent{\widecheck}{0}{mathx}{"71}

\usepackage[open,openlevel=2]{bookmark}
\newtheorem{Thm}{Theorem}[section]

\newtheorem{Lem}[Thm]{Lemma}
\newtheorem{Coro}[Thm]{Corollary}
\newtheorem{Rem}[Thm]{Remark}

\numberwithin{equation}{section}

\newcommand{\1}{\mathbf{1}}
\newcommand{\R}{\mathbb{R}}

\newcommand{\Rd}{{\mathbb{R}^d}}
\newcommand{\Z}{\mathbb{Z}}
\renewcommand{\Re}{\text{Re}}

\newcommand{\F}{\mathcal{F}}
\newcommand{\N}{\mathbb{N}}

\renewcommand{\F}{\mathcal{F}}
\newcommand{\ve}{\varepsilon}

\newcommand{\vp}{\varphi}
\newcommand{\vpi}{\varpi}
\newcommand{\om}{\omega}
\newcommand{\ga}{\gamma}
\newcommand{\Ga}{\Gamma}
\renewcommand{\th}{\theta}
\newcommand{\Th}{\Theta}
\newcommand{\wt}{\widetilde}

\newcommand{\ti}{\tilde}
\newcommand{\ol}{\overline}

\newcommand{\pa}{\partial}
\newcommand{\na}{\nabla}
\newcommand{\de}{\delta}
\newcommand{\De}{\Delta}
\newcommand{\al}{\alpha}
\newcommand{\ka}{\kappa}
\newcommand{\si}{\sigma}

\newcommand{\lam}{\lambda}
\newcommand{\Lam}{\Lambda}
\newcommand{\wh}{\widehat}
\newcommand{\fr}{\frac}
\renewcommand{\S}{\mathbb{S}}

\newcommand{\<}{\langle}
\renewcommand{\>}{\rangle}

\newcommand{\Op}{\operatorname{Op}}

\newcommand{\vertiii}[1]{{\left|\kern-0.25ex\left|\kern-0.25ex\left|#1 \right|\kern-0.25ex\right|\kern-0.25ex\right|}}

\usepackage{tikz,pgfplots}
\usepackage{tikz-cd}
\usepackage{tikz-3dplot}
\usepackage{amsmath,amssymb,amsthm,amsfonts,bm}
\usepackage{color}
\usepackage{tcolorbox}
\usepackage{pdfpages}
\usepackage{textcomp}
\usetikzlibrary{patterns}
\usetikzlibrary{shapes.geometric}
\usetikzlibrary{arrows.meta,arrows}
\usetikzlibrary{decorations.pathreplacing,decorations.pathmorphing}
\usetikzlibrary{hobby}
\usetikzlibrary{math}
\usepgfplotslibrary{fillbetween}
\pgfplotsset{compat=1.18}
\usepackage{graphicx}
\usepackage{multicol}
\usepackage{float}
\usepackage{subfigure}
\usepackage{cancel}

\allowdisplaybreaks

\usepackage{enumitem}

\makeatletter
\def\@tocline#1#2#3#4#5#6#7{\relax
\ifnum #1>\c@tocdepth 
\else
\par \addpenalty\@secpenalty\addvspace{#2}%
\begingroup \hyphenpenalty\@M
\@ifempty{#4}{%
	\@tempdima\csname r@tocindent\number#1\endcsname\relax
}{%
	\@tempdima#4\relax
}%
\parindent\z@ \leftskip#3\relax \advance\leftskip\@tempdima\relax
\rightskip\@pnumwidth plus4em \parfillskip-\@pnumwidth
#5\leavevmode\hskip-\@tempdima
\ifcase #1
\or\or \hskip 1em \or \hskip 2em \else \hskip 3em \fi%
#6\nobreak\relax
\hfill\hbox to\@pnumwidth{\@tocpagenum{#7}}\par
\nobreak
\endgroup
\fi}
\makeatother

\mathcode`l="8000
\begingroup
\makeatletter
\lccode`\~=`\l
\DeclareMathSymbol{\lsb@l}{\mathalpha}{letters}{`l}
\lowercase{\gdef~{\ifnum\the\mathgroup=\m@ne \ell \else \lsb@l \fi}}%
\endgroup

\begin{document}

\title[Uniqueness and Zeroth-Order of Boltzmann]{Uniqueness and Zeroth-Order Analysis of Weak Solutions to the Non-cutoff Boltzmann equation
}
\author{Dingqun Deng
\,\orcidlink{0000-0001-9678-314X}
}
\address{
Graduate School of Engineering Science, Akita University, ORCID: \href{https://orcid.org/0000-0001-9678-314X}{0000-0001-9678-314X}}
\curraddr{}
\email{dingqun.deng@s.akita-u.ac.jp}
\email{dingqun.deng@gmail.com}

\author{Shota Sakamoto
\,\orcidlink{0000-0002-8509-0484}
}
\address{
    Faculty of Mathematics, Kyushu University
ORCID: \href{https://orcid.org/0000-0002-8509-0484}{0000-0002-8509-0484}}
\curraddr{}
\email{sakamoto.shota.588@m.kyushu-u.ac.jp}
\date{\today}

\begin{abstract}
We establish the uniqueness of large solutions to the non-cutoff Boltzmann equation with moderate soft potentials. Specifically, the weak solution $F=\mu+\mu^{\frac{1}{2}}f$ is unique under mild regularity conditions: any two weak solutions $\phi_1,\phi_2$ coincide provided that $\phi_1$ admits a lower bound and possesses logarithmic regularity in $L^\infty_x$, while $\phi_2\in L^\infty_t L^{r}_{x,v}\cap L^\infty_t L^2_{x,v}$ for some sufficiently large $r>2$. As a byproduct, we establish $L^2_{t,x,v}$ stability for initial data $f_0\in L^r_{x,v}\cap L^2_{x,v}$. Our approach employs dilated dyadic decompositions in phase space $(v,\xi,\eta)$ to capture hypoellipticity and to reduce the fractional derivative structure $(-\Delta_v)^{s}$ of the Boltzmann collision operator to zeroth order. The main novelties are the zeroth-order reduction and the negative-order hypoelliptic estimate, which gains integrability in $(t,x)$.
\end{abstract}


\subjclass[2020]{Primary: 35Q20, 35A02; Secondary 76P05, 76N15, 82C40}

\keywords{Boltzmann equation, uniqueness, non-cutoff, hypoellipticity, Littlewood-Paley theory}

\maketitle

{
\fontsize{10}{11.5}\selectfont
\tableofcontents
}

\fontsize{10.42}{12.21}\selectfont

\section{Introduction}

\subsection{Non-cutoff Boltzmann model and difficulties of uniqueness}
Let $d\ge 2$ be the dimension and $T>0$ be a given time.
The \emph{Boltzmann equation} for the particle distribution function $F(t,x,v):[0,T]\times\R^d_x\times\R^d_v\to \R$ at time $t$, position $x$, and velocity $v$ reads 
\begin{align}
	\label{B}
	\pa_tF(t,x,v) + v\cdot\na_xF(t,x,v)=Q(F,F)(t,x,v), \quad \text{ in } (0,T)\times\R^d_x\times\R^d_v,
\end{align}
with initial data $F(0,x,v)=F_0(x,v)$,
where $Q$ is the \emph{Boltzmann collision operator} given by
\begin{align}\label{QFGDef}
	Q(F,G)=\int_{\R^d_{v_*}}\int_{\S^{d-1}_\si}B(v-v_*,\sigma)\big(F(v'_*)G(v')-F(v_*)G(v)\big)\,d\sigma dv_*,
\end{align}
where $(v,v_*)$ and $(v',v_*')$ are pre- and post-collisional velocities, satisfying the conservation laws of momentum and energy: 
\begin{align}\label{vprime}
		v'=\frac{v+v_*}{2}+\frac{|v-v_*|\sigma}{2}, \quad
		v'_*=\frac{v+v_*}{2}-\frac{|v-v_*|\sigma}{2},
\end{align}
with $\sigma\in\S^{d-1}:=\{x\in\R^d\,:\,|x|=1\}$.
The \emph{cross-section} $B(v-v_*,\sigma)$ is a measure of the probability for the event of a collision (or scattering), given by
\begin{align*}
	B(v-v_*,\sigma)=\Th(v-v_*)b(\cos\th)=|v-v_*|^\ga b(\cos\th),
\end{align*}
which depends on the relative velocity $|v-v_*|$ and the deviation angle $\th$ through $\cos\th=\mathbf{k}\cdot\sigma$, where $\mathbf{k}=\frac{v-v_*}{|v-v_*|}$ and $\cdot$ is the usual scalar product in $\R^d$. Without loss of generality, we assume that $B(v-v_*,\sigma)$ is supported on $\mathbf{k}\cdot\sigma\ge 0$, i.e., $0\le\th\le\frac{\pi}{2}$, since one can reduce to this situation by standard symmetrization: $\ol B(v-v_*,\si)=[B(v-v_*,\si)+B(v-v_*,-\si)]\1_{\mathbf{k}\cdot\si\ge 0}$. Moreover, we assume the angular non-cutoff assumption:
\begin{align}
	\label{ths}
	\frac{1}{C_b}\th^{-1-2s}\le\sin^{d-2}\th\, b(\cos\th)\le C_b \th^{-1-2s},
\end{align}
for some $C_b>0$, derived from the inverse power law (for long-range interactions) according to a spherical intermolecular repulsive potential $\phi(r)=r^{-(p-1)}$ $(p>2)$. For $d=3$, it is known that $\ga=\frac{p-5}{p-1}$ and $s=\frac{1}{p-1}$; see for instance \cite{Alexandre2000,Maxwell1867,Villani2002}. The factor $\sin^{d-2}\th$ corresponds to the Jacobian factor for integration in spherical coordinates. 
In this paper, we assume that the indices $(\ga,s)$ satisfy
\begin{align}\label{AssGas}
	-d<\ga<-2s, \quad s\in(0,1),\quad \ga+s>-\frac{d}{2}. 
\end{align}
We refer to 
the case $\ga\ge -2s$ as \emph{hard potentials} and the case $-\frac{d}{2}-s<\ga<-2s$ as \emph{moderate soft potentials}.
It is direct to check that the global Maxwellian
	$\mu(v):=(2\pi)^{-\frac{d}{2}}e^{-\frac{|v|^2}{2}}$
is a steady solution to equation \eqref{B}.
In this work, we apply the exponential decomposition: 
	$\displaystyle F=\mu+\mu^{\frac{1}{2}}f$.
Then the Boltzmann equation can be rewritten as
\begin{align}
	\label{B1}
	\begin{cases}
		\pa_tf+ v\cdot\na_xf = \Gamma(\mu^{\frac{1}{2}}+f,f)+\Gamma(f,\mu^{\frac{1}{2}}), &\text{ in } (0,T)\times\R^d_x\times\R^d_v, \\
		f(0,x,v)=f_0, &\text{ in }\R^d_x\times\R^d_v,
	\end{cases}
\end{align}
where, by \eqref{QFGDef}, the standard Boltzmann collision operator is given by
\begin{align}\label{GaBolt}
	\Gamma(f,g)=\mu^{-\frac{1}{2}}Q(\mu^{\frac{1}{2}}f,\mu^{\frac{1}{2}}g)=\int_{\R^d_{v_*}\times\S^{d-1}_\si}B(v-v_*,\sigma)\mu^{\frac{1}{2}}(v_*)\big(f'_*g'-f_*g\big)\,d\sigma dv_*.
\end{align}
The weak solution of the Boltzmann equation \eqref{B1} is a function $f$ that satisfies  
\begin{align}\label{weakf}
&(f(t),\Phi(t))_{L^2_{x,v}
}\Big|_{t=0}^{t=T}
-(f,(\pa_t+v\cdot\na_x)\Phi)_{L^2_t([0,T])L^2_{x,v}}
=(\Ga(\mu^{\frac{1}{2}}+f,f)+\Ga(f,\mu^{\frac{1}{2}}),\Phi)_{L^2_t([0,T])L^2_{x,v}}, 
\end{align}
for any test function $\Phi\in C^2_c(\R^{1+2d})$ and for a.e. $T\ge 0$.
Moreover, the actual weak form of the collision operator $\Ga(f,g)$ can be obtained by pre-post velocity change of variables as in \eqref{CollBoltzDef}. 
If $\phi_1$ and $\phi_2$ are two solutions to the Boltzmann equation \eqref{B1} with initial data $\phi_{1,0}$ and $\phi_{2,0}$, respectively, then $f:=\phi_1-\phi_2$ solves 
\begin{align}\label{eqh}\begin{cases}
\pa_tf+v\cdot\na_xf=\Ga(\mu^{\frac{1}{2}}+\phi_1,f)+\Ga(f,\mu^{\frac{1}{2}}+\phi_2), & \text{ in }(0,T)\times\R^d_x\times\R^d_v,\\
f(0,x,v)=\phi_{1,0}-\phi_{2,0},&\text{ in }\R^d_x\times\R^d_v.
\end{cases}
\end{align}
Moreover, we need to capture the dissipation effect produced by the solution $\phi_1$. For the sake of simplicity, inspired by the self-generating lower bounds in \cite{Henderson2020b}, we assume that the solution $\mu+\mu^{\fr12}\phi_1$ has a uniform exponential lower bound satisfying \eqref{lowerphi} below.

\subsubsection{Difficulties of uniqueness for nonlinear kinetic equations}
	The uniqueness of weak or low-regularity solutions to nonlinear kinetic equations remains a fundamental open problem, particularly for nonlocal collision operator. 
The main difficulties lie in the following aspects. 
\begin{itemize}[leftmargin=1em]
	\item \textbf{Nonlinearity.} The non-cutoff Boltzmann collision operator is bilinear and nonlocal. Roughly speaking, the collision operator behaves like the fractional operator $(-\De_v)^{s}$ and satisfies
	\begin{align*}
		(\Ga(f,g),h)_{L^2_v}\le \|f\|_{L^2_v}\|g\|_{H^s_v}\|h\|_{H^s_v}.
	\end{align*}
	However, when considering the $L^2$ energy estimate, the nonlinear terms in \eqref{eqh} are, roughly,
	\begin{align}\label{observation1}
		\big(\Ga(\phi_1,f)+\Ga(f,\phi_2),f\big)_{L^2_{t,x,v}}
		&\le \|\phi_1\|_{L^\infty_{t,x}L^2_v}\|f\|_{L^2_{t,x}H^s_v}^2
		+\|f\|_{L^2_{t,x,v}}\|\phi_2\|_{L^\infty_{t,x}H^s_v}\|f\|_{L^2_{t,x}H^s_v},
	\end{align}
where the $L^\infty_{t,x}$ norm of $\|\phi_2\|_{H_v^s}$ is not a natural structure for low-regularity solutions to the non-cutoff Boltzmann equation, such as $L^2$ solutions.

	\smallskip 
	\item \textbf{Lack of regularity.} As observed in \eqref{observation1}, due to the $L^\infty_{t,x}$ norm, it is difficult to obtain the uniqueness unless the regular norm $\|\phi_2\|_{L^\infty_{t,x}H^s_v}$ is small.
	Even for short times, the lack of $L^\infty_{t,x}$ boundedness prevents us from establishing uniqueness. 

	\smallskip \item \textbf{Commutator against advection $v\cdot\na_x$.} 
	For spatially inhomogeneous kinetic equations, applying the pseudo-differential operator $P_k$ (or the negative Bessel potential $\<D_v\>^{-2s}$) in the $v$ variable leads to the commutator $[P_k, v\cdot\na_x]$. This commutator generates the undesirable derivative $|\na_x|$, which does not correspond to the natural regularity for kinetic equations, whereas hypoellipticity yields only regularity of order $|\na_x|^{\frac{s}{1+2s}}$.

	\end{itemize}

\subsection{Main results and notations}
We present the main results, followed by some observations and notations.
Let $\{\De_j^{\mathrm{std}}\}_{j\ge 0}$ denote the standard Littlewood--Paley decomposition in $x$ associated with a dyadic partition of unity in the Fourier variable $\xi$; see, for example, \eqref{DejQkDef} in the case $\vpi=1$. 
\begin{Thm}[Uniqueness and $L^2_{t,x,v}$ stability]
	\label{MainThm1}
	Let $d\ge 2$, and fix any $M_0,T_*>0$. Consider the moderate soft potential
case, in which the parameters $(\gamma,s)$ satisfy \eqref{AssGas}, and let
$C_{\mathrm{low}},L_0>0$ denote the fixed constants used throughout the paper.
Then there exist a sufficiently large exponent $r=r(d,\gamma,s)\in(2,\infty)$
and a constant $C_{\gamma,s,d}>0$, both depending only on $d,\gamma,s$, for which
the following holds. Let $\phi_1,\phi_2$ be any two solutions to the non-cutoff
Boltzmann equation \eqref{B1} on $[0,T_*]$, with respective initial data
$\phi_{1,0},\phi_{2,0}$, and suppose that
\begin{align}
	\label{lowerphi}
	&\Phi_1(t,x,v):=\mu+\mu^{\fr12}\phi_1\ge C_{\mathrm{low}}^{-1}\mu^{L_0},\quad\text{ a.e. $(t,x,v)\in[0,T]\times\R^{2d}_{x,v}$},
\end{align}
and 
\begin{align}
	\label{assumption}
	&\begin{aligned}
&\|\<v\>^{C_{\ga,s,d}}\phi_2\|_{L^\infty_t([0,T_*])L^{r}_{x,v}}
+\|\<v\>^{C_{\ga,s,d}}\phi_2\|_{L^\infty_t([0,T_*])L^{2}_{x,v}}
\le M_0,
\\
&\sum_{j\ge 0}(j+1)^2\|\De^{\mathrm{std}}_j\<v\>^{|\ga|+d+1}\phi_{1}\|^2_{L^\infty_t([0,T])L^\infty_{x}L^r_v}\le M_0.
	\end{aligned}
\end{align}
Then the pair satisfies the $L^2_{t,x,v}$ stability estimate (see
\eqref{EsContDepend} for the precise formulation):
\begin{align}\label{ThmContiDepend}
	\|\phi_1-\phi_2\|_{L^2_t([0,T_*])L^2_{x,v}}
	\le C_{T_*,M_0,\ga,s,d,C_{\mathrm{low}},L_0}\|\phi_{1,0}-\phi_{2,0}\|_{L^2_{x,v}}.
\end{align}
Consequently, if $\phi_{1,0}=\phi_{2,0}$, we obtain the following uniqueness:
	\begin{align}\label{ThmUnique}
		\phi_1(t,x,v)=\phi_2(t,x,v)\quad\text{ for a.e. }\  (t,x,v)\in[0,T]\times\R^d_x\times\R^d_v.
	\end{align}
\end{Thm}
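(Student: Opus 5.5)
We study the difference $f:=\phi_1-\phi_2$, which solves \eqref{eqh}, and aim for an $L^2_{t,x,v}$ energy estimate that closes without any $L^\infty_{t,x}H^s_v$ control of $\phi_2$.

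\textbf{Splitting the collision terms.} We write
\[
\Gamma(\mu^{\frac12}+\phi_1,f)+\Gamma(f,\mu^{\frac12}+\phi_2)
=\mu^{-\frac12}Q(\Phi_1,\mu^{\frac12}f)+\Gamma(f,\mu^{\frac12}+\phi_2).
\]
The first term carries the dissipation. By the lower bound \eqref{lowerphi} and the standard coercivity argument for $Q(\Phi_1,\cdot)$ with $\Phi_1$ bounded below, testing against $f$ produces a dissipation controlled from below by the anisotropic norm $\|\langle v\rangle^{\gamma/2}f\|_{L^2_{t,x}H^s_v}^2$. The price is an error of order $\|f\|_{L^2}^2$, weighted by $\|\langle v\rangle^{|\gamma|+d+1}\phi_1\|_{L^\infty_{t,x}L^r_v}$. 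That $L^\infty_{t,x}$ bound follows from the logarithmic Besov assumption in \eqref{assumption}, since $\sum_j \|\Delta^{\mathrm{std}}_j\cdot\|_{L^\infty_x}$ is summable by Cauchy--Schwarz using the weights $(j+1)^2$.

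\textbf{The zeroth-order reduction.} The second term, $\Gamma(f,\phi_2)$, is where the difficulty lies. Here $\phi_2$ is only in $L^\infty_tL^r_{x,v}$, so we cannot put $s$ derivatives on it. The plan is to decompose $f$, $\phi_2$ and the test function with dilated dyadic blocks in $(v,\xi,\eta)$, where $\eta$ is dual to $v$ and the dilation is adapted to the weight $\langle v\rangle$. On each block the fractional structure $(-\Delta_v)^s$ is traded between factors: whenever the derivative would fall on $\phi_2$, it is moved onto $f$ or the test function. The resulting trilinear bound should take the form
\[
|(\Gamma(f,\phi_2),f)|\lesssim \|\phi_2\|_{L^r_{x,v}}\,\|f\|_{L^{p}_{x}\dot{\mathcal H}}\,\|f\|_{L^{q}_x\mathcal H},\qquad \tfrac1r+\tfrac1p+\tfrac1q=1,
\]
with $p,q>2$ slightly, and with velocity weights absorbed by $\langle v\rangle^{C_{\gamma,s,d}}$. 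The exponent $r$ is chosen large so that $p,q$ are close to $2$. We expect this reduction, which places $\phi_2$ at zeroth order in all of $(x,v)$ derivatives while keeping the velocity weights compatible with $\gamma<-2s$ and $\gamma+s>-d/2$, to be the main obstacle. The plan is to use the condition $\gamma+s>-d/2$ to make the kinetic factor $|v-v_*|^\gamma$ locally integrable against $L^{r'}$ functions via Hardy--Littlewood--Sobolev.

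\textbf{Closing with negative-order hypoellipticity.} To pay for the $L^p_x$, $p>2$, integrability of $f$, we use a negative-order hypoelliptic estimate for the equation for $f$: we control $\|\langle D_x\rangle^{\frac{s}{1+2s}-\epsilon}f\|$ in a negative $v$-Sobolev space by the dissipation norm and by the right-hand side measured in $H^{-s}_v$. The dilated decomposition handles the commutator $[P_k,v\cdot\nabla_x]$ because frequency blocks in $\eta$ are coupled to blocks in $\xi$ along the transport scaling. Sobolev embedding in $(t,x)$ then converts this gain into $L^p_{t,x}$ with $p=2+\delta$. Interpolating between this bound and the $L^2$ dissipation, and applying Young's inequality, absorbs the $\phi_2$ term into the dissipation up to $C(M_0)\|f\|_{L^2}^2$.

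\textbf{Conclusion.} Gr\"onwall's inequality on $[0,T_*]$ then yields $\|f(t)\|_{L^2_{x,v}}^2\le C\|f(0)\|_{L^2_{x,v}}^2$. Integrating in time gives \eqref{ThmContiDepend}, and \eqref{ThmUnique} follows when $f(0)=0$. The weak formulation \eqref{weakf} justifies the energy identity after mollification in $(t,x)$, using the $L^2$ bounds in \eqref{assumption}.
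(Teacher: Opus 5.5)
Your argument fails at the step you flag as the main obstacle, and the failure is structural. You run an $L^2_{x,v}$ energy estimate, testing against $f$. The only dissipation available is then $\|f\|_{L^2_D}\sim\|\langle v\rangle^{\gamma/2}f\|_{H^s_v}$. You therefore need a trilinear bound with $\phi_2$ at zeroth order and both copies of $f$ at most at the $H^s_v$ level.

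No such bound exists. The order-$2s$ singularity of $\Gamma(f,g)$ is carried jointly by the second argument and the test function. The first argument can only absorb \emph{negative} regularity, since in the grazing regime it enters only at frequency $\eta^-\approx 0$. So regularity can never be moved from $\phi_2$ onto the first slot.

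Concretely, fix $0\le\psi\in C^\infty_c(\mathbb{R}^d_x)$ and a bump $\chi\in C^\infty_c(\mathbb{R}^d_v)$, let $g_N(v)=\chi(v)\cos(Nv_1)$, and take $\phi_2=\psi g_N$ and $f=\psi\bigl(\mu^{1/2}+N^{-s}g_N\bigr)$. The cross term $N^{-s}\int\psi^3\,dx\,(\Gamma(\mu^{1/2},g_N),g_N)_{L^2_v}$ has size $\gtrsim N^{-s}\|g_N\|_{L^2_D}^2\gtrsim N^{s}$ by \eqref{esAf}. The remaining terms are $O(1)$, while $\|\phi_2\|_{L^r_{x,v}}$ and $\|f\|_{L^p_xH^s_v}$ stay bounded.

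With $\phi_2$ at zeroth order you must put $2s$ derivatives on the test function (compare \eqref{64eq} and Lemma \ref{LemUpper}). That means you need $\|f\|_{L^2_{t,x}H^{2s}_v}$ or $\|\phi_2\|_{L^\infty_{t,x}H^s_v}$, which is exactly the obstruction \eqref{observation1}. Paraproducts and interpolation do not help. The negative-order hypoelliptic estimate only yields integrability of $\langle D_v\rangle^{-m}f$, which is useful precisely in the first slot. It is telling that your scheme uses the logarithmic regularity of $\phi_1$ only to obtain an $L^\infty_{t,x}$ bound.

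The missing idea is to lower the whole energy estimate by $s$ derivatives, rather than redistribute derivatives inside the $L^2$ estimate. The paper applies $\Delta_jP_kR_r$ with the coupled dilation $\omega=\omega_02^{j/(1+2s)+rl_1}$ and weights each block by $(\omega2^k)^{-2s}2^{2rl}$, morally testing against $\langle D_v\rangle^{-2s}f$. This has three effects:
\begin{itemize}
\item The energy becomes of $H^{-s}_v$ type.
\item The dissipation now controls $\|f\|^2_{L^2_{t,x,v}}$: via \eqref{dissipationLower} for $k\ge1$, Theorem \ref{Thmlow} for $k=0$, $j\ge1$, and Gr\"onwall for $j=k=0$.
\item The $\phi_2$ term costs only $\|\langle D_v\rangle^{-s}f\|_{L^{r_0}_{t,x}L^2_v}\|\phi_2\|_{L^{r_0^*}_{t,x}L^2_v}$ times the $L^2$-level dissipation, and the first factor is exactly what Corollary \ref{CoroNegativeHypo} controls.
\end{itemize}

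This shift creates several issues your outline does not address:
\begin{itemize}
\item The transport commutator $[P_k,v\cdot\nabla_x]$ forces the choice $\alpha=1/(1+2s)$ in \eqref{choice3}.
\item The commutators of $\Delta_jP^j_kR_r$ with $\Gamma(\phi_1,\cdot)$ and $\Gamma(\cdot,\phi_2)$ require Bony's decomposition. This is where the $(j+1)^2$ weights on $\phi_1$ pay for the logarithmic loss \eqref{ges1a} in high--high interactions.
\item The $\phi_2$ term now sits at the dissipation level with a coefficient of size $M_0C(M_0)$, so it can be neither absorbed nor handled by Gr\"onwall. The paper therefore splits $\phi_i=\phi_{i,\delta}+\tilde\phi_{i,\delta}$ as in \eqref{splitphi}. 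It uses the smallness of $\tilde\phi_{2,\delta}$ in $L^{r_0^*}_{t,x}$, and places $2s$ derivatives and $L^\infty_x$ on the mollified part as in \eqref{esGa6}.
\end{itemize}
Gr\"onwall is then applied to this negative-order energy, not to $\|f(t)\|^2_{L^2_{x,v}}$.
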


\begin{Rem}
	\begin{enumerate}[leftmargin=2em]
		\item Considering the decomposition $F-\mu$ could be mathematically (and physically) natural for kinetic equations on $\R^d_x$, as reflected in the trend to global equilibrium \cite{Desvillettes2000}. 
		\item This approach bypasses the $L^\infty_x$ norm issue of $\phi_2$.
		Moreover, for $\phi_1$, we require logarithmic regularity in $x$, measured by $\log(\langle D_x\rangle)$ in $L^\infty$.
		\item The assumption $\ga+s>-\frac{d}{2}$ arises from the collisional estimates in Subsections \ref{SecSmallQc} and \ref{SecCOmmSmall} for small relative velocity; in other words, if one considers the non-singular cross-section $\Th=\<v-v_*\>^{\ga}$, then the assumption $\ga+s>-\fr{d}{2}$ is not required. Moreover, if one considers the Landau collision operator, such a restriction is unnecessary, and the \textbf{Landau-Coulomb} case $\ga=-d$ should also be covered.
		\item The constant $C_{\gamma,s,d}>0$ is constructive throughout the proof in Section~\ref{SecLPkinetic} and is uniform with respect to $\ga,s$; that is, it is bounded independently of $\ga,s$, as ensured by the choice we will make in \eqref{choice10} later. 
		\item The largeness of $r>0$ reflects the small gain of integrability in $x$; see Corollary \ref{CoroNegativeHypo}. 
		\item The constants $C_{\mathrm{low}},L_0>0$ in assumption \eqref{lowerphi} can be arbitrary and our method applies whenever $\phi_1$ has an exponential lower bound. One may also consider decompositions with tails other than exponential, or with a rougher lower bound; we may leave this to future work. 
		\item The constants in this work may depend on the parameters $\ga, s, d$ without further mention. 
		\item One may also use $(j+1)^{1^+}$ instead of $(j+1)^2$ in \eqref{assumption}, but in this case the constants in the estimates depend on $1^+-1$, as reflected in \eqref{ges1a}.
	\end{enumerate}
\end{Rem}

\subsubsection{Miscellaneous Notations}
Throughout this paper, $C$ and $N$ denote positive constants (generally large), and $c_0$ denotes a constant (small) that may vary from line to line. 
$(a_1,a_2,\dots,a_n)$ denotes any $n$-dimensional vector. $[P,Q]=PQ-QP$ denotes the Lie bracket of two operators $P,Q$. $\N = {1, 2, \dots}$ denotes the set of positive natural numbers, and $\R$ denotes the set of real numbers.
Let $\1_A$ be the indicator function on the set $A$.  
The notation $a\approx b$ (resp. $a\gtrsim b$, $a\lesssim b$) for positive real functions $a,b$ means $C^{-1}a\le b\le Ca$ (resp. $a\ge C^{-1}b$, $a\le Cb$) on their domains where $C>0$ is a constant not depending on possible free parameters. 
A constant $C=C_{a_1,a_2,\dots}=C(a_1,a_2,\dots)$ denotes a positive constant depending on $a_1,a_2,\dots$ which may vary from line to line.
The notation $a\ll b$ means that $a\le C^{-1}b$ for some sufficiently large constant $C>0$ depending only on fixed parameters. 
The set $C^\infty_c$ consists of smooth functions with compact support. 
We may also use $P\sim Q$ to illustrate the formal (and rough) equivalence of $P$ and $Q$. For operators, we simply list them in parallel, e.g., $\De_jP_kR_rf\triangleq\De_j\big(P_k(R_rf)\big)$. 
Moreover, $m^-<m$ and $m^+>m$ denote constants chosen arbitrarily close to $m$.
For brevity, we denote by $\chi_{a\le|\eta|\le b},\chi_{|\eta|\le a},\chi_{|\eta|>b}$ smooth cutoff functions (which may vary from place to place) satisfying 
\begin{align}\label{smoothcutoff}\notag
	&\qquad\qquad\quad\chi_{a\le|\eta|\le b}=
	\begin{cases}
		1,&\text{ when }a\le|\eta|\le b,\\
		0,&\text{ when }|\eta|<\frac{3}{4}a\text{ or }|\eta|>\frac{4}{3}b, 
	\end{cases}\\
	&\chi_{|\eta|\le a}=
	\begin{cases}
		1,&\text{ when }|\eta|\le a,\\
		0,&\text{ when }|\eta|>\frac{4}{3}a, 
	\end{cases}
	\qquad
	\chi_{|\eta|>b}=
	\begin{cases}
		1,&\text{ when }|\eta|>b,\\
		0,&\text{ when }|\eta|\le \frac{3}{4}b. 
	\end{cases}
\end{align}

\subsubsection{$L^p$ spaces}
We denote $\|\cdot\|_{L^p}$ the Lebesgue $L^p$ norm and write $L^p_tL^q_xL^r_v=L^p_t(L^q_x(L^r_v))$ in short. We take $\mathbb{R}^d_x \times \mathbb{R}^d_v$ as the underlying domain for $(x,v)$, while the time interval will be specified in the relevant sections. For instance,
\begin{align*}
	\|f\|_{L^p_tL^q_xL^r_v([0,T]\times\R^d_x\times\R^d_v)}:=\Big\{\int_0^T\Big[\int_{\R^d_x}\Big(\int_{\R^d_v}|f|^r\,dv\Big)^{\frac{q}{r}}\,dx\Big]^{\frac{p}{q}}\,dt\Big\}^{\frac{1}{p}}.
\end{align*}
Throughout this paper, we adopt shorthand notation for integrals; for example, we write $\int_x$ for $\int_{\R^d_x}$ whenever no confusion can arise. 
The Sobolev space $W^{k,p}(\R^d)$ denotes the set of tempered distributions whose $W^{k,p}$ norm $\|f\|_{W^{k,p}(\R^d)}:=\sum_{j=0}^k\|\na^jf\|_{L^p(\R^d)}$ is finite. 

\subsubsection{Bessel potential}
Let $(I-\De_x)^{-\frac{\ka}{2}}$ be the Bessel potential operator of order $\ka$ ($0<\Re\ka<\infty$) in $\R^d_x$. It can also be represented by the Bessel potential: for any suitable function $f$, 
\begin{align*}
	(I-\De_x)^{-\frac{\ka}{2}}f=G_\ka*f,\quad
	\text{where} \ \ G_\ka(x) = \big((1+4\pi^2|\xi|^2)^{-\frac{\ka}{2}}\big)^\vee(x).
\end{align*}
Here and below, $\wh{f}$ and $f^\vee$ are the Fourier transform and inverse Fourier transform in the sense of tempered distribution, respectively.
By \cite[Prop. 1.2.5, p. 13]{Grafakos2014a}, we know that if $\ka>0$ is real, then $G_\ka$ is strictly positive.
Furthermore, 
\begin{align}
	&\begin{cases}
		\|G_\ka\|_{L^1_x(\R^d)}=1,\\
		G_\ka(x)\le C_{\ka,d} e^{-\frac{|x|}{2}}\ (\text{if }|x|\ge 2),\\
		G_{\ka,d}(x)\approx_{\ka,d}H_s(x)
		\ 
		(\text{if }|x|\le 2),
	\end{cases}
\label{GsDecay}
	\text{where }
	H_\ka(x)=\begin{cases}
		|x|^{\ka-d}+1+O(|x|^{\ka-d+2}),&\text{if }0<\ka<d,\\
		\ln\frac{2}{|x|}+1+O(|x|^{2}),&\text{if }\ka=d,\\
		1+O(|x|^{\ka-d}),&\text{if }\ka>d.
	\end{cases}
\end{align}
By Young's convolution inequality, one has $\|(I-\De_x)^{-\frac{\ka}{2}}f\|_{L^p_x(\R^d)}\le\|G_\ka\|_{L^1_x}\|f\|_{L^p_x}=\|f\|_{L^p_x(\R^d)}$ ($1\le p\le \infty$).
Moreover, we have the equivalence relation: for
$m,n\in\R$ and $1\le p\le \infty$,
\begin{align}
	\label{equiv}\|\<v\>^m\<D_v\>^nf\|_{L^p(\R^d)}\approx \|\<D_v\>^n\<v\>^mf\|_{L^p(\R^d)}.
\end{align}
This follows from \cite[Lemma 2.1 and Section 8]{Alonso2022} for the case $1\le p\le\infty$ and \cite[Prop. 5.5 and its Corollary, pp. 251--252]{Stein1993} for the case $1<p<\infty$.
%
Moreover, for $k,l\in\R$, we define the Sobolev space $H^{k,p}_l(\R^d)$ as follows, with the conventions $H^{k}=H^{k,2}_0$, $H^{k}_l=H^{k,2}_l$ and $H^{\infty,p}=\cap_{k=1}^\infty H^{k,p}$:
\begin{align*}
	H^{k,p}_l(\R^d)&=\big\{f \text{ is tempered distribution}\,:\, \|f\|_{H^k_l(\R^d)}:=\|\<v\>^l\<D_v\>^kf\|_{L^p_v(\R^d)}<\infty\big\}. 
\end{align*}


\subsubsection{Dissipation norm}
Some standard estimates of non-cutoff Boltzmann collision operators can be found in \cite{Gressman2011,Alexandre2012,Global2019}. 
To describe its behavior, \cite{Alexandre2012} introduces the triple norm $\vertiii{f}$:
\begin{align}
	\label{vertiii}
	\vertiii{f}^2 & :=\int_{v,v_*,\si} B(v-v_*,\sigma)\Big(\mu_*(f'-f)^2+f^2_*((\mu')^{1/2}-\mu^{1/2})^2\Big)\,d\sigma dv_*dv,
\end{align}
while \cite{Gressman2011} introduces the anisotropic norm $N^{s,\gamma}$:
\begin{align*}
	\|f\|^2_{N^{s,\gamma}}: & =\|\<v\>^{\gamma/2+s}f\|^2_{L^2_v}+\int_{\R^{2d}}(\<v\>\<v'\>)^{\frac{\gamma+2s+1}{2}}\frac{(f'-f)^2}{d(v,v')^{d+2s}}\1_{d(v,v')\le 1}\,dvdv',
\end{align*}
where $d(v,v'):=\sqrt{|v-v'|^2+\frac{1}{4}(|v|^2-|v'|^2)^2}$.
Moreover, \cite{Global2019} use the pseudo-differential norm
\begin{align}\label{tiaa}
	\|(\tilde{a}^{1/2})^wf\|_{L^2_v}, \quad \ti a(v,\eta)=\<v\>^{\ga}\big(1+|v|^2+|v\wedge\eta|^2+|\eta|^2\big)^s+K_0\<v\>^{\gamma+2s},
\end{align}
where $(\cdot)^w$ is the Weyl quantization and $K_0>0$ is a sufficiently large constant; see also \cite{Lerner2010,Deng2020a}. 
The symbolic calculus for the non-cutoff Boltzmann equation, as presented in \cite{Global2019}, can be extended to a general $d$-dimensional space by defining the wedge product $|v\wedge\eta|$ as $|v||\eta|\sin\vartheta$, with $\cos\vartheta=\frac{v\cdot\eta}{|v||\eta|}$. 
(In the proof of \cite[Prop. 3.1]{Global2019}, only $|v||\eta|\sin\vartheta$ is used essentially.)
Then one has from \cite[(2.13) abd (2.15)]{Gressman2011}, \cite[Prop. 2.1]{Alexandre2012} and \cite[Theorem 1.2]{Global2019} that these dissipation norms are all equivalent. We denote this equivalent norm by $L^2_D$:
\begin{align}\label{tia}
	\|f\|_{L^2_D}:=\|(\ti a^{\frac{1}{2}})^wf\|_{L^2_v}\approx \|f\|^2_{N^{s,\gamma}}\approx\vertiii{f}^2.
\end{align}
One also has from \cite[Lemma 2.4]{Deng2020a} that $\|\<v\>^l(\ti a^{\frac{1}{2}})^wf\|_{L^2_v}\approx\|(\ti a^{\frac{1}{2}})^w(\<v\>^lf)\|_{L^2_v}$ for any $l\in\R$.
From \cite[Prop. 2.2]{Alexandre2012} or \cite[(2.15)]{Gressman2011} (or simply \eqref{tiaa}), we have
\begin{align}
	\label{esD}
	\|\<v\>^{\frac{\ga+2s}{2}}f\|_{L^2_v}+\|\<v\>^{\frac{\ga}{2}}\<D_v\>^sf\|_{L^2_v}\lesssim \|f\|_{L^2_D}\lesssim \|\<v\>^{\frac{\ga+2s}{2}}\<D_v\>^sf\|_{L^2_v}.
\end{align}

\subsubsection{The weak solution}
In this work, we apply the Littlewood-Paley decomposition to the Boltzmann equation to ensure sufficient smoothness, so that $\De_jP_kR_r\phi_i\in L^2_t([0,T];H^\infty_{x,v}(\R^{2d}))$. Then, by the weak formulation \eqref{weakf} and assumption \eqref{assumption}, $\pa_t\De_jP_kR_r\phi_i\in L^2_t([0,T];H^\infty_{x,v}(\R^{2d}))$. Hence, standard approximation arguments yield that $\De_jP_kR_r\phi_i\in C([0,T];H^\infty_{x,v}(\R^{2d}))$ and $t\mapsto \|\De_jP_kR_r\phi_i\|_{L^2_{x,v}}^2$ is absolutely continuous, which suffices for our energy estimates. 

Moreover, if $\phi_i$ has finite energy in $L^\infty_t([0,T_*];L^2_{x,v}(\R^{2d}_{x,v}))$ as assumed in \eqref{assumption}, and satisfies the weak form \eqref{weakf}, which implies $\phi_i\in L^2((0,T_*);H^{-N}_{x,v}(\R^{2d}_{x,v}))$ for sufficiently large $N=N(d)>0$, then the Aubin-Lions lemma yields $\phi_i\in C([0,T_*];H^{-\de}_{x,v}(\Omega))$ for any $\de>0$ and any open bounded Lipschitz domain $\Omega\subset\R^{2d}_{x,v}$, so that the term $(f(t),\Phi(t))_{L^2_{x,v}}|_{t=0}^{t=T}$ on the left-hand side of \eqref{weakf} is well-defined for all $T\in [0,T_*]$, rather than a.e. $T\in [0,T_*]$.

\subsection{Main novelties and ideas for the uniqueness of large solutions}

\begin{itemize}[leftmargin=1.5em]
\item \textbf{Zeroth-order frequency.}
First, observe the term $\|f\|_{L^2_{t,x,v}}$ in \eqref{observation1} requires less regularity than $L^2_{t,x}H^s_v$. To exploit this, we reduce the derivative level in the kinetic equation from $H^s_v$ to $L^2_v$, so that $\|f\|_{L^2_{t,x,v}}$ aligns with the zeroth-order frequency energy estimate. This can be done, roughly speaking, by applying $\<D_v\>^{-2s}$ to the kinetic equation, reducing the fractional derivative $\<D_v\>^{2s}$ implicit in the collision $Q(f,g)$ to zeroth order.

\smallskip 
\item \textbf{Dyadic decomposition and Littlewood-Paley theory.} 
As implied above, our goal is to extract all the derivative structures of the kinetic equation and reduce them to zeroth order. To do so, a natural tool is the dyadic decomposition of the frequency space and Littlewood-Paley theory. By applying suitable pseudo-differential operators to the equation (defined later in \eqref{DejQkDef}), the advection $v\cdot\na_x$ and fractional derivative $\<D_v\>^{2s}$ implicit in the collision $Q(f,g)$ behave as constants in frequency space. For instance, under dyadic decomposition
\begin{align*}
	\De_jf(x)&=\int_{\R^{d}_{\xi}\times\R^{d}_{y}}e^{2\pi i \xi\cdot(x-y)}\wh{\Psi_j}(\varpi^{-1}\xi)f(y)\,dyd\xi,\\
	P_kf(v)&=\int_{\R^d_\eta\times\R^d_u}e^{2\pi i\eta\cdot(v-u)}\wh{\Psi_{k}}(\om^{-1}\eta)f(u)\,dud\eta,
\end{align*}
with dilations $\varpi,\om>0$, 
$\na_x$ behaves like $\varpi2^j$ while $\<D_v\>^{2s}$ behaves like $\<\om2^k\>^{2s}$. Furthermore, since the kinetic equation \eqref{B} involves $v,\pa_t,\na_x,\<D_v\>^{2s}$, it's very natural to utilize the dyadic decomposition on $(v,\tau,\xi,\eta)$, corresponding to velocity, time frequency, spatial frequency, and velocity frequency, respectively. 
The details will be given in Section \ref{SecLP}. 

\smallskip 
\item \textbf{Gain of integrability and negative-order hypoellipticity.} 
We will further utilize the structure of $\<D_v\>^{-s}$ within the first argument $f$ of collision $Q(f,g)$. To this end, as in \cite{Jabin2022,Deng2022}, one can introduce a weight function in the frequency space $(\xi,\eta)$:
	$\displaystyle W(\xi,\eta)=\exp\big\{\frac{\xi}{|\xi|}\cdot\frac{\eta}{\om}\big\}$. 
on the support of $P_0$. 
Substituting such a weight into the kinetic equation and noting that 
$\xi\cdot \nabla_\eta W(\xi,\eta)=W(\xi,\eta)\frac{|\xi|}{\om}$, one obtains a gain of regularity in $x$.
Using this idea, we establish the negative-order hypoellipticity in Theorem \ref{ThmNegativeHypo}. This enables us to recover regularity, and hence integrability, in $(t,x)$ at the $L^2$ level (or more generally in $L^p$ for $p$ close to $2$). As a consequence, the energy estimate can be closed when $\phi_2\in L^\infty_tL^r_{x,v}$ rather than $\phi_2\in L^\infty_{t,x}L^r_v$.

\smallskip 
\item
\textbf{Approximation of solutions.} Utilizing the above gain of integrability from negative-order hypoellipticity, we could treat $L^p$ norms ($p<\infty$) instead of $L^\infty$ norms, which allows us to use approximate identities in $L^p$. In fact, for any $p\in[1,\infty)$ and $\phi_1,\phi_2\in L^p_t([0,T])L^p_{x,v}$, we decompose
\begin{align*}
\phi_i=\phi_{i,\de_j}+\wt\phi_{i,\de_j}\quad(i=1,2),
\end{align*}
where $\phi_{i,\de_j}$ is the regular part and $\wt\phi_{i,\de_j}$ is the irregular but small part, i.e., $\|\<v\>^C\wt\phi_{i,\de_j}\|_{L^p_t([0,T])L^p_{x,v}}=o_{\de_j}(1)\to 0$ as $\de_j\to 0$ (with possibly different convergence rates $\de_j$ in different places). Moreover, for $\phi_1$ we use a mollifier in $x$ so that $\phi_{1,\de_j}$ also satisfies the lower bound \eqref{lowerphi} while for $\phi_2$ we use a mollifier in both $x$ and $v$. A similar approximation holds for $\phi_1$ in the $\log(\<D_x\>)$ space; see \eqref{Propphide} -- \eqref{es718} for details. 
\end{itemize}

\smallskip Applying the above ideas, one can consider the nonlinear kinetic diffusion-type equation 
\begin{align}\label{eqIntro0}
	\pa_tf+v\cdot\na_xf=Lf+\Ga(\phi_1,f)+\Ga(f,\phi_2)\quad\text{ in }\R_t\times\R^d_x\times\R^d_v,
\end{align}
where the linearized operator $L$ behaves as a (possibly fractional) Laplacian in velocity, like
$$L \sim -\<D_v\>^{2s}+\text{lower-derivative order terms}$$ for some $s \in (0,1)$. Applying the dyadic decomposition to \eqref{eqIntro0} and performing the energy estimate with suitable parameters $\varpi,\om>0$ (to be determined later), one has the key estimate: 
\begin{align*}\notag
	&\|P_k\De_jf\|_{L^\infty_tL^2_{x,v}}^2+c_0\|P_k\De_jf\|_{L^2_{t,x}H^s_v}^2\\
	&\quad\le C\varpi2^j(\om2^k)^{-1}\|P_k\De_jf\|_{L^2_{t,x,v}}^2
	+\big(P_k\De_j(\Ga(\phi_1,f)+\Ga(f,\phi_2)),P_k\De_jf\big)_{L^2_{t,x,v}}
\end{align*}
for any $j,k\ge 0$, 
where the underlying time interval is $[0,T]$ and the first-right term is obtained from the commutator $[P_k,v\cdot\na_x]$. Moreover, for the nonlinear collision terms, we intend to deduce a negative fractional derivative $\<D_v\>^{-s}$ to the first argument of $\Ga$, i.e., 
\begin{align*}
	\big(\Ga(f,g),h\big)_{L^2_v}\le C\|\<D_v\>^{-s}f\|_{L^2_v}\|\<D_v\>^{s}g\|_{L^2_v}\|\<D_v\>^{s}h\|_{L^2_v}. 
\end{align*}
Therefore, by summing in $j,k$ via suitable commutator estimates $[P_k\De_j,\Ga]$, and extracting the fractional derivative via dyadic decomposition, we aim to obtain the estimate
\begin{align}\notag \label{eq124}
	\|f\|_{L^2_{t,x,v}}^2&\le C\sum_{j,k\ge 0}\varpi2^j(\om2^k)^{-1-2s}\|P_k\De_jf\|_{L^2_{t,x,v}}^2
	+C\|\<D_v\>^{-s}\phi_1\|_{L^\infty_{t,x}L^2_v}\|f\|_{L^2_{t,x,v}}^2\\
	&\quad
	+C\|\<D_v\>^{-s}f\|_{L^{r}_{t,x}L^2_v}\|\phi_2\|_{L^{r^*}_{t,x}L^2_v}\|f\|_{L^2_{t,x,v}}+\text{lower-order terms},
\end{align}
for some $r,r^*\in(2,\infty)$ with $\frac{1}{r}+\frac{1}{r^*}=\frac{1}{2}$. The precise energy estimate will be given in \eqref{esSumEverything} and \eqref{es21c}.
To eliminate the derivative $\na_x\sim \varpi2^j$, we set $\om=\om_02^{\frac{j}{1+2s}}$ in the Littlewood-Paley operator $P_k$ with a large $\om_0>1$. This corresponds one derivative in $v$ matching to a $\frac{1}{1+2s}$-derivative in $x$. For the term $\|\<D_v\>^{-s}f\|_{L^r_{t,x}L^2_v}$, we apply the hypoelliptic estimate in Theorem \ref{ThmNegativeHypo} to gain integrability in $(t,x)$. This ensures that, on a short time interval $[0,T]$, the right-hand side of \eqref{eq124} can be absorbed by the left-hand side, implying $f=0$ and proving the uniqueness.

\smallskip 
For the non-cutoff Boltzmann equation, the nonlocal collision structure in \eqref{tiaa} couples velocity and velocity frequency, so we further apply a dyadic decomposition $R_r(v)$ in the velocity variable $v$. The same approach can be directly extended to local collision kinetic equations, such as the Landau and Fokker-Planck equations. We emphasize to the reader that, in this work, the \textbf{leading-order} term requires particular care, in contrast to the \textbf{lower-order} term.

\smallskip\noindent {\bf Some Literature.} 
Even in fluid dynamics, the (non-)uniqueness of weak solutions to the incompressible Navier-Stokes equations remains a fundamental and challenging problem; see e.g., \cite{Buckmaster2019,Cheskidov2022,Coiculescu2025}. 
For the spatially homogeneous kinetic equations, the uniqueness of non-cutoff-type equations has been widely studied; see e.g., \cite{Tanaka1978,Fournier2008,Desvillettes2009,Fournier2010}.

\smallskip 
For the spatially inhomogeneous non-cutoff case, a renormalized solution is obtained~\cite{Alexandre2001}, but its uniqueness is not known.
However, in regular Sobolev spaces $H^m_{x}$ with $m>\frac{d}{2}$, thanks to the Sobolev embedding $\|f\|_{L^\infty_x}\le C\|f\|_{H^m_x}$, the uniqueness of solutions to the non-cutoff Boltzmann equation (and other kinetic equations) can be obtained; see \cite{Alexandre2012,Gressman2011}. For the case of regular Besov spaces, we refer to \cite{Morimoto2016}. On the other hand, if $f\in L^\infty_{t,x}H^{2s}_v$, the uniqueness follows as in \cite[Theorem 1.2]{Alexandre2011}; but $L^\infty_{t,x}H^{2s}_v$ is rather strong in both $(t,x)$ and $v$ variables. 

\smallskip 
For the existence theory of large-data solution, \cite{Henderson2020,Henderson2025} establishes the local-in-time existence of large-data solutions (in $L^\infty$ or $C^2_{\text{kin}}$) to the non-cutoff Boltzmann equation. However, the $L^\infty_{x,v}$ norm appears insufficient to provide a regularity framework capable of establishing uniqueness. 
On the other hand, \cite{Chen2024} showed that the cutoff Boltzmann equation is ill-posed in $H^s_x$ for small $s\ge 0$ in the sense that uniform continuous dependence on the initial data fails. In contrast, we establish the $L^2_{t,x,v}$ stability (weaker than uniform continuous dependence) in the non-cutoff setting. 
Beyond classical solutions, \cite{Duan2020} studies the well-posedness of mild solutions to the non-cutoff Boltzmann and Landau equations in the $L^1_{\xi}$ framework, which forms a Banach algebra under the convolution product, by using the Fourier transform in the spatial variable $x$. 

\smallskip
Recently, Alonso-Gualdani-Sun \cite{Alonso2025} studied uniqueness for Landau-type equations using a multiplier operator $\mathcal{M}$, under the assumption of either solution continuity or the addition of a regularizing term $\nu \Delta_v f$ with small $\|\cdot\|_{L^\infty_{t,x}}$ norm. Moreover, Chen-Nguyen-Yang \cite{Chen2025} established well-posedness for non-cutoff kinetic equations in critical (scaling-invariant) spaces, assuming the initial data is small in a suitable norm and exploiting the regularization effect. 
Moreover, the De Giorgi-Nash-Moser theory offers a powerful tool to obtain $L^\infty$ bounds and regularity for kinetic equations; see \cite{Imbert2019,Imbert2021,Alonso2022,FernandezReal2025}. 
In \cite{Henderson2025}, uniqueness of the non-cutoff Boltzmann equation is established under the assumptions $\phi_1\in C^\al$ and $\phi_2\in L^1_tL^\infty_{x,v}$.

\smallskip\noindent
{\bf Outline of the paper.}
The remainder of the paper is organized as follows. In Appendix \ref{SecToolbox}, we will summarize basic tools for pseudo-differential calculus and standard properties of the Boltzmann collision operator. Section \ref{SecLP} introduces the pseudo-differential operators used in this work and presents the Littlewood-Paley theorem along with key commutator estimates. In Section \ref{SecHypoelliptic}, we prove the hypoelliptic estimate with negative derivative $\langle D_v\rangle^{-s}$ in Theorem \ref{ThmNegativeHypo}. Sections \ref{SecUpper} and \ref{SecComm} establish upper bound and commutator estimates for the Boltzmann collision operator with the velocity-related pseudo-differential operators $P_k$ and $R_r$ defined in \eqref{DejQkDef}. Section \ref{SecLPkinetic} proves the main uniqueness result, Theorem \ref{MainThm1}, for the non-cutoff Boltzmann equation by applying the Littlewood-Paley decomposition and combining the previous estimates. In particular, Subsection \ref{SecSpatialNonlinear} applies $\Delta_j$ to the collision operator $\Gamma(f,g)$ and establishes nonlinear estimates in spatially dyadic blocks. 

\section{The Littlewood-Paley framework}\label{SecLP}
In this section, we introduce the Littlewood-Paley decomposition with mixed $v$, $(t,x)$-frequency variable $(\tau,\xi)$ (used only for hypoellipticity analysis), $x$-frequency variable $\xi$, and $v$-frequency $\eta$. 


\subsection{The pseudo-differential operator with mixed $(v,\tau,\xi,\eta)$}
We state several types of pseudo-differential operators that are used for the Littlewood-Paley decomposition in this work. 

\smallskip  \noindent
{\bf The general case with kernel $\wh{\Xi}(a^{-1}\eta)$.} 
Let $\widehat{\Xi}(\eta) \in C_c^\infty$ be a function with compact support in ${|\eta| \le 1}$ (which will be taken as either $\widehat{\Psi}$ or $\widehat{\Psi_0}$ defined in \eqref{Psi0def} below). Let $a \ge 1$ be a constant that may depend on other parameters. We consider the symbol $\widehat{\Xi}(a^{-1}\eta)$ satisfying
\begin{align}\label{XiDef111}
	\begin{aligned}
	&\wh{\Xi}(a^{-1}\eta)\ \text{ 
that is supported in the region $\{|\eta|\le 2a\}$},\\
&\text{$\na^k\wh{\Xi}(a^{-1}\eta)$ $(k\ge 1)$ is supported in the region $\{\frac{6}{7}a\le |\eta|\le 2a\}$.}
	\end{aligned}
\end{align}
Then we can define the corresponding pseudo-differential operator $P_a$ given by $\wh{P_af}(\eta)=\wh{\Xi}(a^{-1}\eta)\wh{f}(\eta)$. 
Then Young's convolution inequality yields the uniformly-in-$a$ $L^p$ bound for $P_a$ ($p \in [1,\infty]$): 
\begin{align}
	\label{boundDea}
	\|P_{a}f\|_{L^p_v}&\le\|\Xi(v)\|_{L^1_v}\|f(v)\|_{L^p_v}.  
\end{align}

\noindent
{\bf The $v$, $x$-frequency, and $v$-frequency cases: $R_r\sim \wh{\Psi_r}(\<v\>)$,  $\De_j\sim\wh{\Psi_j}(2^{-rl_0}\xi)$ and $P_k\sim\wh{\Psi_k}(2^{-rl_1}2^{-\frac{j}{1+2s}}\eta)$.}
We fix any Schwartz function $\Psi$ on $\R^d$ whose Fourier transform is non-negative, supported in the annulus $\frac{6}{7}\le |\xi|\le 2$, equal to $1$ on the smaller annulus $1\le|\xi|\le2-\frac{2}{7}$. 
Note that we assume only compact support and the Schwartz property; the partition of unity property is imposed only when needed.
To consider the dyadic decomposition, we define a sequence of radial Schwartz functions $\Psi_k$ $(k\ge 1)$ and one radial Schwartz function $\Psi_0$ that takes values in $[0,1]$, by 
\begin{align}\label{Psi0def}
	\wh{\Psi_k}(\eta)=\wh{\Psi}(2^{-k}\eta),\quad k\ge 1,\quad\text{ and }\quad
	\wh{\Psi_0}(\eta)=
	\begin{cases}
		1,&|\eta|<\frac{6}{7}\\
		0,&|\eta|\ge 2. 
	\end{cases}
\end{align}
Henceforth, the radial function notation $\wh{\Psi_k}$ may denote functions in different dimensions. 
We now introduce the pseudo-differential operators associated with the dyadic decomposition in velocity $v$, velocity-frequency $\eta$, and time-space frequency $(\tau,\xi)$, respectively: 
\begin{align}\label{DejQkDef}
	\begin{cases}
	\displaystyle
	R_r(v)=\wh{\Psi_r}(\<v\>),\\
	\displaystyle
	\F_x\De_jf(\xi)=\wh{\Psi_j}(\varpi^{-1}\xi)\F_xf(\xi),\\
	\F_{t,x}\De^{t,x}_jf(\tau,\xi)=\wh{\Psi_j}(\varpi^{-1}\xi)\F_{t,x}f(\tau,\xi),\\
	\displaystyle
	\F_vP_kf(\eta)=\wh{\Psi_{k}}(\om^{-1}\eta)\F_vf(\eta),\\
	\displaystyle
	\F_vQ_kf(\eta)=\wh{\Phi_{k}}(\om^{-1}\eta)\F_vf(\eta),~\text{ where }\wh{\Phi_k}=|\wh{\Psi_k}|^{\frac12}.
	\end{cases}
\end{align}
For brevity, we use the same notation $\wh{\Psi}$ for the indices $r$, $j$, and $k$, as well as for both the $(d+1)$- and the $d$-dimensional dyadic decompositions. Note that $\De_j$ commutes with $P_k$ and $R_r$. Here, the dilation parameters $\varpi=\varpi(r)>0$ and $\om=\om(j,r)\ge 1$ are given by 
\begin{align}\label{Choice}
	\om=\om_02^{\al j+rl_1},\ \
	\varpi=\varpi_02^{rl_0},\ \ 
	\om_0,\varpi_0\ge 1, \quad l_0,l_1\ge 0. 
\end{align}
This indicates that one derivative in $v$ corresponds to an $\al$-derivative in $x$ at the cost of the weight $\<v\>^{l_1}$.
We will also denote, $\wt{R_r}, \wt{\Delta_j}, \wt{\De}_j^{t,x}, \wt{P_k}$, etc., as the operators obtained by replacing $\widehat{\Psi}$ with another Schwartz function $\wh{\wt{\Psi}}$ that equals to $1$ on the support of $\wh{\Psi}$ and whose support is slightly larger than that of $\wh{\Psi}$, so that 
\begin{align}\label{SlightlyLarger}
	R_r=\wt{R_r}R_r,\quad \Delta_j=\wt{\Delta}_j \Delta_j,\quad \De_j^{t,x}=\wt{\De}_j^{t,x}\De_j^{t,x}, \quad P_k=\wt{P_k} P_k,\ \  etc.
\end{align}
The parameters $\varpi$ and $\omega$ will be specified separately in Subsection~\ref{SubsecLPkinetic} and in the negative-order hypoellipticity Theorem~\ref{ThmNegativeHypo} and Corollary~\ref{CoroNegativeHypo}. 

\medskip \noindent
{\bf Partition of unity.} For coercive estimates, we employ a partition of unity associated with $\Psi$, choosing $\wh{\Psi}(\xi)$ such that $\wh{\Psi}(\xi)+\wh{\Psi}(\xi/2)=1$ on the annulus $1\le|\xi|\le 4-\frac{4}{7}$. Then 
\begin{align}\label{sum1}
	&\sum_{j\in\Z}\wh{\Psi}(2^{-j}\xi)=1\ \text{ for all }\xi\neq 0, 
	\ 
	\text{ and set }\wh{\Psi_0}(\xi)=\begin{cases}
		\Big(\sum_{j\le 0}\wh{\Psi}(2^{-j}\xi)\Big)^{\frac{1}{2}}
		&\text{ when }\xi\ne 0,\\
		1&\text{ when }\xi=0. 
	\end{cases}
\end{align}
Then we have dyadic decomposition
Therefore, when \eqref{sum1} is assumed, using the notations in \eqref{DejQkDef}, we have Littlewood-Paley decomposition as 
\begin{align}\label{fequivDejPk}
	f=\sum_{r=0}^\infty\sum^{\infty}_{j=0}\sum_{k=0}^\infty \De_jP_kR_rf,
\end{align}
where the summation is taken in the order $k,j,r$, by expanding the pseudo-differential operators as in \eqref{DejQkDef}, even when $\De_j$ depends on $r$ and $P_k$ depends on $j,r$.
The assumption \eqref{sum1} is used only when explicitly stated (for instance, in Littlewood-Paley Theorem \ref{LPThm} and in the estimate of $\De_j\Gamma(f,g)$ in Subsection \ref{SecSpatialNonlinear}); most other results require only that $\Psi$ be a Schwartz function.

\medskip \noindent
{\bf 
	Parameter table}
Here, we provide a table to illustrate the meaning of the parameters in the Littlewood-Paley (LP) decomposition. 
\begin{table}[ht]
\centering
\begin{tabular}{c|l}
\hline
Parameter & Description \\
\hline
$r$ & Velocity weight index in LP decomposition \\
$j$ & Spatial (or space--time) frequency index in LP decomposition \\
$k$ & Velocity frequency index in LP decomposition \\
$\om$ & Velocity-frequency dilation: $\om=\om_0 2^{\al j + r l_1}$ \\
$\vpi$ & Spatial-frequency dilation: $\vpi=\vpi_0 2^{r l_0}$ \\
$\om_0$ & Large constant (velocity-frequency base) \\
$\vpi_0$ & Large constant (spatial-frequency base) \\
$\al$ & Transfer of regularity: velocity frequency $\rightarrow$ spatial frequency \\
$l_1$ & Transfer of weight: velocity frequency $\rightarrow$ weight $\<v\>$ \\
$l_0$ & Transfer of weight: space frequency $\rightarrow$ weight $\<v\>$ \\
\hline
\end{tabular}
\caption{Description of the parameters.}
\end{table}

\subsection{The Littlewood-Paley theorem}
For the dilated pseudo-differential operators defined in \eqref{DejQkDef}, we still have the corresponding Littlewood-Paley theorem for $P_k$, $Q_k$, and $\Delta_j$ (the same estimates also hold for $R_r(v)$ by Plancherel's theorem).
\begin{Thm}\label{LPThm}
	Denote the pseudo-differential operators $\De_j$, $P_k$, and $Q_k$ as in \eqref{DejQkDef}, with dilation parameters $\varpi=\varpi(r)>0$ and $\om=\om(j,r)\ge 1$. 
	Then, for any $n\in\R$ and suitable function $f$,  
\begin{align}\label{LPThmes}
	\sum_{k=0}^\infty\|\<v\>^n(P_kf,Q_kf)\|_{L^2_v}^2
	&\le C\|\<v\>^nf\|_{L^2_v}^2,\qquad
	\sum_{j=0}^\infty\|\De_jf\|_{L^2_x}^2\le C\|f\|_{L^2_x}^2,
\end{align}
with some constant $C>0$ that is independent of $f,r$ (for both) and $j$ (for the first one).

\smallskip Conversely, by assuming 
	$\sum_{k\ge 0}\wh{\Psi_k}(\eta)=1$ (for all $\eta\neq 0$), 
for any $n\in\R$ and suitable function $f$, 
\begin{align*}
	\|\<v\>^nf\|_{L^2_v}^2&\le C\sum_{k=0}^\infty\|\<v\>^nP_kf\|_{L^2_v}^2,\qquad
	\|f\|_{L^2_x}^2
	\le C\sum_{j=0}^\infty\|\De_jf\|_{L^2_x}^2, 
\end{align*}
\end{Thm}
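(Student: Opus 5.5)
The plan is to reduce everything to Plancherel's theorem together with a commutator estimate for the weight $\<v\>^n$. The spatial estimates for $\De_j$ carry no weight and follow directly. By Plancherel in $x$,
\begin{align*}
\sum_{j\ge0}\|\De_jf\|_{L^2_x}^2=\int_{\R^d_\xi}\sum_{j\ge 0}|\wh{\Psi_j}(\varpi^{-1}\xi)|^2|\F_xf(\xi)|^2\,d\xi .
\end{align*}
Each point $\varpi^{-1}\xi$ lies in the support $\{\frac67 2^{j}\le|\cdot|\le 2^{j+1}\}$ of at most three indices $j$, and $0\le\wh{\Psi}\le 1$ may be assumed after normalizing. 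Hence the sum of symbols is bounded by an absolute constant $C$, uniformly in $\varpi$, hence in $r$.

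For the converse with $\sum_k\wh{\Psi_k}=1$, I use the same finite-overlap property together with Cauchy--Schwarz:
\begin{align*}
1=\Big(\sum_j\wh{\Psi_j}\Big)^2\le 3\sum_j\wh{\Psi_j}^2 .
\end{align*}
This gives $\|f\|^2_{L^2_x}\le 3\sum_j\|\De_jf\|^2_{L^2_x}$. When $n=0$, the $v$-statements for $P_k$ and $Q_k$ are identical, since $|\wh{\Phi_k}|^2=\wh{\Psi_k}$ is again finitely overlapping and bounded.

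The main step is the weight $\<v\>^n$, which does not commute with $P_k$. I would write $\<v\>^nP_kf=P_k(\<v\>^nf)+[\<v\>^n,P_k]f$. The first piece is handled by the $n=0$ case applied to $\<v\>^nf$. For the commutator, I use that $P_k$ is convolution with the kernel $\om^d2^{kd}\Psi(\om2^k\cdot)$ and apply Peetre's inequality $\<v\>^n\le 2^{|n|}\<v-u\>^{|n|}\<u\>^n$. A cleaner route is to note that $\<v\>^nP_k\<v\>^{-n}$ is a pseudo-differential operator whose symbol, by the symbolic calculus recalled in Appendix~\ref{SecToolbox} (or the equivalence \eqref{equiv}), equals $\wh{\Psi_k}(\om^{-1}\eta)$ plus a remainder. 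That remainder is a finite sum of terms $\pa_v^\al\<v\>^n\<v\>^{-n}\,\pa_\eta^\al\wh{\Psi_k}(\om^{-1}\eta)$ of size $(\om2^k)^{-|\al|}\<v\>^{-|\al|}$, still supported on the $k$-th dilated annulus, plus a smoothing tail of arbitrarily high negative order in $\om2^k$.

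Terms supported on the $k$-th annulus give almost-orthogonal pieces, and summing their squares in $k$ is controlled by $C\|\<v\>^nf\|^2_{L^2_v}$ exactly as before. The tails satisfy a bound $C_N(\om2^k)^{-N}\|\<v\>^nf\|_{L^2_v}$ with $\om\ge1$, so they are summable in $k$ uniformly in $\om$. Since every derivative of the dilated symbol gains a factor $(\om2^k)^{-1}\le 1$, all constants are uniform in $\om$, i.e.\ in $j$ and $r$. This uniformity is the point I expect to need the most care: one has to check that the symbolic expansion remainders depend only on seminorms of $\wh{\Psi}$ and not on $\om$.

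The converse weighted inequality then follows by the same decomposition run backwards. Starting from $\<v\>^nf=\sum_k\<v\>^nP_kf$, I apply the unweighted converse to $\<v\>^nf$, which gives
\begin{align*}
\|\<v\>^nf\|^2_{L^2_v}\le C\sum_k\|P_k\<v\>^nf\|^2_{L^2_v}.
\end{align*}
I then swap the weight inside $P_k$ using the same commutator bounds. The commutator contributions come with the small factor $(\om2^k)^{-1}$, which at $k=0$ is not small, so I would handle them via the $(\om2^k)^{-1}\<v\>^{-1}$ gain. The resulting lower-order weighted norm is then absorbed, if necessary by reiterating the expansion to order $N$, or alternatively by a duality argument.
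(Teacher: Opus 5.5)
Your unweighted arguments are correct: Plancherel with finite overlap for the direct bound, and $1\le 3\sum_j\wh{\Psi_j}^2$ for the converse. Your weighted direct estimate is also correct, though heavier than necessary. The single bound $\|[P_k,\<v\>^n]f\|_{L^2_v}\le C(\om2^k)^{-1}\|\<v\>^{n-1}f\|_{L^2_v}$ already suffices, since $\sum_{k\ge0}(\om2^k)^{-2}\le\frac43$ uniformly in $\om\ge1$; this is how the paper argues.

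The gap is in the weighted converse. Your main route commutes the weight past $P_k$ while $P_k$ acts on $f$, which leaves
\[
\|\<v\>^nf\|_{L^2_v}^2\le C\sum_{k\ge0}\|\<v\>^nP_kf\|_{L^2_v}^2+C'\|\<v\>^{n-1}f\|_{L^2_v}^2 .
\]
Here $C'$ is of order one when $\om\approx1$, so it cannot simply be absorbed. The error is a norm of $f$ itself, not of the pieces $P_kf$. For $n\ge0$ you can still close by iterating down to the unweighted converse, using $\|P_kf\|_{L^2_v}\le\|\<v\>^nP_kf\|_{L^2_v}$. For $n<0$, however, each iteration produces $\|\<v\>^{n-m}f\|_{L^2_v}$ with an ever more negative weight and an $O(1)$ constant. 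Nothing bounds this by $\sum_k\|\<v\>^nP_kf\|^2_{L^2_v}$, and expanding the symbol to order $N$ does not help, because the remainder still acts on $f$. Since the statement is asserted for every $n\in\R$, the case $n<0$ cannot be skipped. Your fallback ``alternatively by a duality argument'' does not fix this as written, because it omits the key ingredient.

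The missing idea is to make every commutator fall on $P_kf$ rather than on $f$, by factoring $P_k=\wt{P_k}P_k$. Under the partition-of-unity hypothesis one may take $\wt{P_k}=P_{k-1}+P_k+P_{k+1}$. The paper then argues by duality: for $\|g\|_{L^2_v}=1$,
\[
(\<v\>^nf,g)_{L^2_v}=\sum_{k\ge0}\big([\<v\>^n,\wt{P_k}]P_kf,g\big)_{L^2_v}+\sum_{k\ge0}\big(\<v\>^nP_kf,\wt{P_k}g\big)_{L^2_v}.
\]
The first sum is at most $C\sum_k(\om2^k)^{-1}\|\<v\>^{n-1}P_kf\|_{L^2_v}\le C\big(\sum_k\|\<v\>^nP_kf\|_{L^2_v}^2\big)^{1/2}$. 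The second sum is bounded by Cauchy--Schwarz together with $\sum_k\|\wt{P_k}g\|_{L^2_v}^2\le C\|g\|_{L^2_v}^2$. No absorption is needed, and the constant is uniform in $\om\ge1$ for every $n\in\R$. Equivalently, you can write $(f,g)_{L^2_v}=\sum_k(P_kf,\wt{P_k}g)_{L^2_v}$ with $\|\<v\>^{-n}g\|_{L^2_v}\le1$, and apply your own direct estimate with weight $-n$ to the operators $\wt{P_k}$.
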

\begin{proof}
For the classical Littlewood-Paley theorem on $\R^d_x$, we refer to \cite[Theorem 6.1.2]{Grafakos2014}.
Although we have inserted a dilation constant in the kernels, the classic proof is still valid via a simple dilation in the function $f$ when $n=0$.
For instance, by letting $\ol f(x)=f(\varpi^{-1}x)$, we have 
\begin{align*}
	\De_jf(x)&=\int_{\R^{d}_{\xi}\times\R^{d}_{y}}e^{2\pi i\xi\cdot(x-y)}\wh{\Psi_j}(\vpi^{-1}\xi)f(y)\,dyd\xi
	\\
	&=\int_{\R^{d}_{\xi}\times\R^{d}_{y}}e^{2\pi i\xi\cdot(\varpi x-y)}\wh{\Psi_j}(\xi)f(\varpi^{-1}y)\,dyd\xi
	=\De^{\mathrm{std}}_j\,\ol{f}(\varpi x),
\end{align*}
where $\De^{\mathrm{std}}_j$ is the classical Littlewood-Paley operator without dilation. Therefore, by the classical Littlewood-Paley theorem, we have 
\begin{align}\label{rhoolf}
	\varpi^{\frac{d}{2}}\|f\|_{L^2_{x}}=\|\ol{f}\|_{L^2_{x}}\approx \Big(\sum_{j=0}^\infty\|\De^{\mathrm{std}}_j\,\ol{f}\|_{L^2_{x}}^2\Big)^{\frac{1}{2}}=\varpi^{\frac{d}{2}}\Big(\sum_{j=0}^\infty\|\De_jf(x)\|_{L^2_{x}}^2\Big)^{\frac{1}{2}}, 
\end{align}
whenever $f\in L^2_{x}$. 
The same proof applies to $P_k$ and $Q_k$, which implies \eqref{LPThmes} for the case $n=0$.

\smallskip For case $n\neq 0$, we borrow the commutator estimate $[\<v\>^n,P_k]$ in \eqref{CommawPa} below. That is, 
\begin{align*}
	\sum_{k=0}^\infty\|\<v\>^nP_kf\|_{L^2_v}^2
	&\le \sum_{k=0}^\infty\big(\|[\<v\>^{n},P_k]f\|_{L^2_v}+\|P_k\<v\>^nf\|_{L^2_v}\big)^2\\
	&\le C_n\sum_{k=0}^\infty\big(\|\<v\>^{n-1}(\om2^k)^{-1}f\|_{L^2_v}^2+\|P_k\<v\>^nf\|_{L^2_v}^2\big)
	\le C_n\|\<v\>^nf\|_{L^2_v}^2, 
\end{align*}
since $\om\ge 1$ (this is the only place where we use this assumption). 
The same estimate holds for $Q_k$. Conversely, following the standard Littlewood-Paley theorem, if $\<v\>^nf\in L^2_v$, using $\sum_kP_kf=f$ and $P_k=P_k\wt{P_k}$, where $\wt{P_k}$ is given by \eqref{SlightlyLarger}, one can deduce 
\begin{align*}
	\|\<v\>^nf\|_{L^2_v}&=\Big\|\sum_{k\ge 0}\<v\>^nP_kf\Big\|_{L^2_v}=\sup_{\|g\|_{L^2_v}=1}\Big(\sum_{k\ge 0}[\<v\>^n,\wt{P_k}]P_kf+\wt{P_k}\<v\>^nP_kf,g\Big)_{L^2_v}\\
	&\le \sup_{\|g\|_{L^2_v}=1}\sum_{k\ge 0}\Big(\|\<v\>^{n-1}(\om2^k)^{-1}P_kf\|_{L^2_v}\|g\|_{L^2_v}+|(\<v\>^nP_kf,\wt{P_k}g)_{L^2_v}|\Big)\\
	&\le \sup_{\|g\|_{L^2_v}=1}\Big(\sum_{k\ge 0}\|\<v\>^nP_kf\|_{L^2_v}^2\Big)^{\frac{1}{2}}\Big(\|g\|_{L^2_v}^2+\sum_{k\ge 0}\|\wt{P_k}g\|_{L^2_v}^2\Big)^{\frac{1}{2}}
	\\
	&
	\le C\Big(\sum_{k\ge 0}\|\<v\>^nP_kf\|_{L^2_v}^2\Big)^{\frac{1}{2}}.  
\end{align*}
where we used the commutator estimate \eqref{CommawPa}. 
The case of $Q_k$ follows in the same manner.
This completes the proof of Theorem \ref{LPThm} for the case $n\neq 0$. 
\end{proof}

\subsection{Some Commutator estimates}\label{SecLPcommutator}
In this Subsection, we derive some useful commutator estimates related to the pseudo-differential operators defined in \eqref{DejQkDef}. 
Within this Subsection, $\wh{\cdot}$ denotes the Fourier transform with respect to $v$. 


\subsubsection{``Compact support'' of $P_k,\De_j$}
Using the support of the kernels of $P_k$ and $\De_j$, we have 
\begin{align}
	\label{supportPk}
	\|\<v\>^l\<D_v\>^{n}P_kf\|_{L^2_v}&\le C\<\om2^k\>^{n}\|\<v\>^lP_kf\|_{L^2_v},\quad \text{ when }k\ge 0,\,n\ge0,\\
	\notag\|\<v\>^l\<D_v\>^{n}P_kf\|_{L^2_v}&\le C\<\om2^k\>^{n}\|\<v\>^lP_kf\|_{L^2_v},\quad \text{ when }k\ge 1,\,n\in\R,\\
	\||D_x|^n\De_jf\|_{L^2_x}&\le C(\varpi2^j)^n\|\De_jf\|_{L^2_x},\quad\text{ when }j\ge 0,\,n\ge 0,\notag
\end{align}
for any $l\in\R$, where we used \eqref{SlightlyLarger} and the fact that $\<v\>^l\<D_v\>^{n}\wt{P_k}\in \operatorname{Op}(\<v\>^l\<\om2^k\>^{n})$. 

\subsubsection{Commutators \texorpdfstring{$[P_k,\<v\>^l]$}{[Pk,<v>l]} and \texorpdfstring{$[P_k,\ti{a}^w]$}{[Pk,aw]}}
Recall the pseudo-differential structure of $L^2_D$: 
$\|f\|_{L^2_D}=\|\ti{a}^wf\|_{L^2_v}$, where $\ti{a}$ is given by \eqref{tiaa}. It follows from \cite[Prop. 1.4]{Global2019} that $\ti{a}^w$ is an invertible pseudo-differential operator whose symbol belongs to $S(\ti{a}^{-1})$. 
Moreover, by \cite[Prop. 3.7]{Global2019} and \cite[Lemma 2.1]{Deng2020a}, for any multi-indices $\al,\beta$, 
\begin{align*}
	\pa^{\al}_v\pa^{\beta}_\eta\ti{a}&\in S(\ti{a}), \ \ \text{ and }\ \ 
	(\ti{a}^w)^{-1}=G_1(\ti{a}^{-1})^w=(\ti{a}^{-1})^wG_2,
\end{align*}
for some bounded operator $G_1,G_2$ on $H(M)$ with any $\Ga$-admissible weight $M$ (the space $H(M)$ is defined in \eqref{HmDef}). In particular, $G_1,G_2$ are bounded linear operators on $H^k_l$. 
On the other hand, the symbols $\wh{\Psi_k}(\om^{-1}\eta)$, $R_r(v)=\wh{\Psi_r}(\<v\>)$ and the trivial symbols $\<v\>^n$, $\<\eta\>^n$ satisfy 
\begin{align}
		\label{symbolic}
	\begin{aligned}
	\pa_\eta\wh{\Psi_k}(\om^{-1}\eta)\in S(\<\eta\>^{-s'}(\om2^{k})^{s'-1}),\quad
	\pa_v R_r(v)&\in S(2^{-r}),\\ 
	\pa_v\<v\>^n\in S(\<v\>^{n-1}),\quad\pa_\eta\<\eta\>^n\in S(\<\eta\>^{n-1}), 
	\end{aligned}
\end{align}
respectively, for any $s'\in\R$. 
Thus, by using the commutator estimate \eqref{Liebracket} and Lemma \ref{basicCommuLem}, for any $k\ge 0$ and $n,m,l\in\R$, (noting that $[P_k,\<v\>^{l}]\in\text{Op}(\<v\>^{l-1}(\om2^k)^{n-1})$)
\begin{align}
	\label{CommawPa}
	\begin{cases}
	\|
		[P_k,\<v\>^{l}]
		f\|_{L^2_v}
    \le C(\om2^k)^{-1}\|\<v\>^{l-1}f\|_{L^2_v},\\
	\|[\ti{a}^w,\<D_v\>^{l}]f\|_{L^2_v}
	\le C\|\<D_v\>^{l-1}\ti{a}^wf\|_{L^2_v},\\ 
	\|\<v\>^n\<D_v\>^s[P_k,R_r]f\|_{L^2_v}\le C2^{-r}(\om2^k)^{-1}\|\<v\>^n\<D_v\>^sf\|_{L^2_v}.
	\end{cases}
\end{align}

\subsubsection{Upper and lower bounds of some dissipation}
Using the Littlewood-Paley theorem and the commutator estimates \eqref{CommawPa}, we derive the following bounds. 
Due to the mixed presence of $P_k$, $R_r$, and $\tilde a^w$, the commutator estimates only yield lower-order terms, but at the cost of losing the pseudo-differential operators $P_k$ or $R_r$. Therefore, the strategy is to estimate each quantity by a {\bf leading-order} term plus a {\bf lower-order} term.

\smallskip Using the support of the kernel of $P_k$, we write $P_k=P_k\chi_{\frac{6}{7}2^k\le|D_v|\le2^{k+1}}$ and $\wt\chi:=\chi_{\frac{6}{7}2^k\le|D_v|\le2^{k+1}}=\chi_{\frac{6}{7}2^k\le|D_v|\le2^{k+1}}(P_{k-1}+P_k+P_{k+1})$ for $k\ge 0$ (where $P_{-1}:=0$ and $\wt\chi$ is as in \eqref{smoothcutoff}). 
Applying \eqref{esD} and the lower bound $|\eta| \ge \frac{6}{7}\omega 2^k$ for $k \ge 1$, we obtain (note that $P_0$ cannot be included in the dissipation estimate, since $|\eta|$ has no positive lower bound on the support of its kernel)
\begin{align*}
	&\sum_{k\ge 1}(\om2^k)^{-2s}\|P_kf\|^2_{L^2_D}\\
	&\notag\ge \frac{1}{C}\sum_{k\ge 1}(\om2^k)^{-2s}\|\<v\>^{\frac{\ga}{2}}\<D_v\>^{s}P_k\chi_{\frac{6}{7}\om2^k\le|D_v|\le\om2^{k+1}}f\|^2_{L^2_v}\\
	&\notag\ge \frac{1}{C}\sum_{k\ge 1}(\om2^k)^{-2s}\|\<D_v\>^{s}P_k\<v\>^{\frac{\ga}{2}}\wt\chi f\|^2_{L^2_v}-C\sum_{k\ge 1}(\om2^k)^{-2s}\|\underbrace{[\<v\>^{\frac{\ga}{2}},\<D_v\>^{s}P_k]}_{\Op(\<v\>^{\frac{\ga}{2}-1}(\om2^k)^{s-1})}\wt\chi f\|^2_{L^2_v}\\
	&\notag\ge \frac{1}{C}\sum_{k\ge 1}\|\<v\>^{\frac{\ga}{2}}P_kf\|^2_{L^2_v}
	-\frac{1}{C}\sum_{k\ge 1}\|\underbrace{[P_k,\<v\>^{\frac{\ga}{2}}]}_{\Op(\om2^k)^{-1}\<v\>^{\frac{\ga}{2}-1}}\wt\chi f\|^2_{L^2_v}
	-C\sum_{k\ge 1}(\om2^{k})^{-2}\|\<v\>^{\frac{\ga}{2}-1}\wt\chi f\|^2_{L^2_v}\\
	&\ge \frac{1}{C}\sum_{k\ge 1}\|\<v\>^{\frac{\ga}{2}}P_kf\|^2_{L^2_v}
	-C\sum_{k\ge 0}(\om2^{k})^{-2}\|\<v\>^{\frac{\ga}{2}-1}P_kf\|^2_{L^2_v}.
\end{align*}
Let $\wt{P_k}$ be the pseudo-differential operator defined by \eqref{SlightlyLarger}. 
Then $P_k=\wt{P_k}P_k$ and hence, for any $s\in\R$ and $s'\ge 0$, we can extract the Bessel potential $\<D_v\>$ as  
\begin{align}
	\label{dissUpperPk}\notag
    &\|\<D_v\>^s\<v\>^{n}P_kR_rf\|_{L^2_v}^2=\|\<D_v\>^s\<v\>^{n}\wt{P_k}\<D_v\>^{s'-s}\wt{P_k}\<D_v\>^{s-s'}P_kR_rf\|_{L^2_v}^2\\
    &\notag \le C\|\<D_v\>^s\wt{P_k}\<v\>^{n}P_kR_rf\|_{L^2_v}^2
		\\
    &
		\notag\ \ +C\|\<D_v\>^s\underbrace{[\<v\>^{n},\wt{P_k}]}_{\text{Op}((\om2^k)^{-1}\<v\>^{n-1})}\underbrace{\<D_v\>^{s'-s}\wt{P_k}}_{\text{Op}((\om2^k)^{s'}\<\eta\>^{-s})}\<D_v\>^{s-s'}P_kR_rf\|_{L^2_v}^2\\
    & \le C(\om2^k)^{2s'}\|\<D_v\>^{s-s'}\<v\>^{n}P_kR_rf\|_{L^2_v}^2. 
\end{align}
Thus, by \eqref{dissUpperPk} and Littlewood-Paley theorem \ref{LPThm}, since $\om\ge 1$, if $s'\ge 0$, then 
\begin{align*}
	\notag
    \sum_{k\ge0}(\om2^k)^{-2s'}\|\<D_v\>^s\<v\>^{n}P_kR_rf\|_{L^2_v}^2
		&\le C\|\<D_v\>^{s-s'}\<v\>^{n}R_rf\|_{L^2_v}^2. 
\end{align*}
In this fashion, let $\wt{R_r}$ be defined as in \eqref{SlightlyLarger}, satisfying $R_r\le \wt{R_r}\le R_{r-1}+R_r+R_{r+1}$ (where we set $R_{-1}=0$ for brevity).
To extract the weight $\<v\>$, for any $s',s\in\R$ satisfying $s\ge s'$, we have 
\begin{align*}\notag
    &2^{2rl}\|\<v\>^{n}\<D_v\>^sP_kR_rf\|_{L^2_v}^2\\
    &\notag\le C2^{2rl}\|\<v\>^{n}\wt{R_r}\<D_v\>^sP_kR_rf\|_{L^2_v}^2+C2^{2rl}\|\<v\>^n\underbrace{[\<D_v\>^sP_k,\wt{R_r}]}_{\text{Op}((\om2^k)^{s-s'-1}2^{-rl}\<\eta\>^{s'}\<v\>^{l-1})}R_rf\|_{L^2_v}^2\\
		&\le C\|\<v\>^{l+n}\<D_v\>^sP_kR_rf\|_{L^2_v}^2+C(\om2^k)^{2s-2s'-2}\|\<v\>^{l+n-1}\<D_v\>^{s'}R_rf\|_{L^2_v}^2. 
\end{align*}
Moreover, without the operator $P_k$, for any $l,s\in\R$, we simply have 
\begin{align}\notag
	\label{dissUpper1temp}
    &2^{2rl}\|\<D_v\>^s\<v\>^{n}R_rf\|_{L^2_v}^2\\
		\notag
    &\le C2^{2rl}\|\<v\>^{n}\wt{R_r}\<D_v\>^sR_rf\|_{L^2_v}^2+C2^{2rl}\|\underbrace{[\<D_v\>^s,\<v\>^{n}\wt{R_r}]}_{\text{Op}(\<\eta\>^{s-1}2^{-rl-1}\<v\>^{l+n})}R_rf\|_{L^2_v}^2\\
    &\notag\le C\|\wt{R_r}\<v\>^{l+n}\<D_v\>^sR_rf\|_{L^2_v}^2+C2^{-2r}\|\<v\>^{l+n}\<D_v\>^{s-1}R_rf\|_{L^2_v}^2\\
		&\le C\|\<v\>^{l+n}\<D_v\>^sR_rf\|_{L^2_v}^2. 
\end{align}
In the following, we consider a pseudo-differential operator $P^{(1)}_k$ (and similarly $Q^{(1)}_k$) whose kernel has compact support in the annulus $\{\frac{6}{7}\om2^k\le|\eta|\le\om2^{k+1}\}$. The rigorous definition will be given in \eqref{Pkderi1} and \eqref{wtPk} later. 
Let $\wt{R_r}=\wt{R_r}(\<v\>)$ be defined as in \eqref{SlightlyLarger} such that $R_r=\wt{R_r}R_r$.
 Then, for any $n,l,m,s'\in\R$, we can consider  
\begin{align*}
	2^{rl}\<v\>^{n}\<D_v\>^{m}P_k^{(1)}R_rf
	&=2^{rl}\underbrace{[\<v\>^{n},\<D_v\>^{m}P_k^{(1)}]}_{\text{Op}((\om2^k)^{m+s'-1}\<\eta\>^{-s'}\<v\>^{n-1})}R_rf
	+2^{rl}\<D_v\>^{m}P_k^{(1)}\<v\>^{n}R_rf,
\end{align*}
and, by using $P_k^{(1)}=P_k^{(1)}\chi_{\frac{6}{7}\om2^k\le|D_v|\le\om2^{k+1}}$ (when $k\ge 0$), 
\begin{align*}
	2^{rl}\<D_v\>^{m}P_k^{(1)}\<v\>^{n}\wt{R_r}R_rf
	&=2^{rl}\underbrace{[\<D_v\>^{m}P_k^{(1)},\<v\>^{n}\wt{R_r}]}_{\text{Op}((\om2^k)^{m+s'-1}\<\eta\>^{-s'}\<v\>^{n-1})}R_rf
	+\underbrace{2^{rl}\<v\>^{n}\wt{R_r}\<D_v\>^{m}P_k^{(1)}}_{\text{Op}(\<v\>^{n+l}(\om2^k)^{m-s}\<\eta\>^s)}R_rf.
\end{align*}
As in the symbolic calculus in \eqref{symbolic}, we use the fact that $\pa_\eta(\<\eta\>^{m}\wh{\Psi_k^{(1)}}(\eta))\in S((\om2^k)^{m-1})$ and that $\pa_v\wt{R_r}\in S(2^{-r})$. 
These commutator estimates, together with \eqref{dissUpper1temp}, yield
\begin{align*}\notag
	&2^{rl}\|\<D_v\>^{m}\<v\>^{n}P_k^{(1)}R_rf\|_{L^2_v}\\
	&\notag\le C(\om2^k)^{m+s'-1}\|2^{rl}\<v\>^{n-1}\<D_v\>^{-s'}R_rf\|_{L^2_v}
	+2^{rl}\|\<D_v\>^{m}P_k^{(1)}\<v\>^{n}R_rf\|_{L^2_v}\\
	&\notag\le C(\om2^k)^{m+s'-1}\|\<v\>^{l+n-1}\<D_v\>^{-s'}R_rf\|_{L^2_v}
	\\
	&\quad+C(\om2^k)^{m-s}\|\<v\>^{l+n}\<D_v\>^{s}(P_{k-1}+P_k+P_{k+1})R_rf\|_{L^2_v},
\end{align*}
where, for brevity of notations, we let $P_{-1}=0$. 

\smallskip As a summary, we have obtained estimates: for any $n,l,m,s,s',s''\in\R$ satisfying $s'\le s$, 
\begin{align}
	\label{dissipationLower}
	&\sum_{k\ge 1}\|\<v\>^{\frac{\ga}{2}}P_kf\|^2_{L^2_v}
	\le C\sum_{k\ge 1}(\om2^k)^{-2s}\|P_kf\|^2_{L^2_D}
	+C\sum_{k\ge 0}(\om2^{k})^{-2}\|\<v\>^{\frac{\ga}{2}-1}P_kf\|^2_{L^2_v},\\
		\label{dissUpper}
    &\sum_{k\ge0}(\om2^k)^{-2s'}\|\<D_v\>^s\<v\>^{n}P_kR_rf\|_{L^2_v}^2
		\le C\|\<D_v\>^{s-s'}\<v\>^{n}R_rf\|_{L^2_v}^2,\\
		\label{dissUpper1b}\notag
		&2^{2rl}\|\<D_v\>^s\<v\>^{n}P_kR_rf\|_{L^2_v}^2
		\le C\|\<v\>^{l+n}\<D_v\>^sP_kR_rf\|_{L^2_v}^2\\
		&\qquad\qquad\qquad\qquad\qquad\qquad+C(\om2^k)^{2s-2s'-2}\|\<v\>^{l+n-1}\<D_v\>^{s'}R_rf\|_{L^2_v}^2,\\
		\label{dissUpper1}
    &2^{2rl}\|\<D_v\>^s\<v\>^{n}R_rf\|_{L^2_v}^2
    \le C\|\<v\>^{l+n}\<D_v\>^sR_rf\|_{L^2_v}^2,\\
\label{LowerSupportComm}\notag
	&2^{2rl}\|\<D_v\>^{m}\<v\>^{n}P_k^{(1)}R_rf\|_{L^2_v}^2\\
	&\ \ \notag\le C(\om2^k)^{2m-2s}\|\<v\>^{l+n}\<D_v\>^{s}(P_{k-1}+P_k+P_{k+1})R_rf\|_{L^2_v}^2
	\\
	&\quad
	+C(\om2^k)^{2m+2s''-2}\|\<v\>^{l+n-1}\<D_v\>^{-s''}R_rf\|_{L^2_v}^2 \ \ \text{(where $P_{-1}=0$)}. 
\end{align}
The estimate \eqref{LowerSupportComm} also holds with $P_k$ replacing $P_k^{(1)}$ for $k\ge 1$. Similar estimates hold for $R_rP_k$ replacing $P_kR_r$ via the commutator estimate \eqref{CommawPa}.

\section{The hypoellipticity method}\label{SecHypoelliptic}
In this Section, we take 
$\displaystyle\wh{f}(\tau,\xi)=\int_{\R_t\times\R^d_x}f(t,x)e^{-i(t,x)\cdot(\tau,\xi)}\,dxdt$,
as the non-unitary Fourier transform in $(t,x)$, 
and we aim to derive the hypoelliptic estimate, for the case of low-velocity frequency $k=0$ and large spatial-frequency $j\ge 1$, of the kinetic transport equation in the whole space:
\begin{align}\label{eqtrans}
	(\pa_t+v\cdot\na_x)f=g,\quad\text{ in }\ \R_t\times\R^d_x\times\R^d_v,
\end{align}
in the sense of distribution. 
We begin by estimating the following basic pattern of an integral.
\begin{Lem}\label{basicpattern}
	Let $d\ge 1$ and $\phi(v)\in W^{1,1}(\R^d)$. For any constants $a,b>0$ and $l\ge 1$, we have   
	\begin{align}\label{eqPhivp0}
		&\int_{\R^d_{v_*}}\phi(v-v_*)
		\Big|\frac{1}{b+i(\tau+v_*\cdot\xi)}\Big|^{2l}
		\,dv_*\le Cb^{-2l+1}|\xi|^{-1}\|\na_v\phi\|_{L^1_v},\\
		&
		\label{eqPhivp0a}
		\int_{\R^d_{v_*}}\phi(v-v_*)
		\Big|\frac{1}{b+i(\tau+v_*\cdot\xi)\<v_*\>}\Big|^{2l}
		\,dv_*\le Cb^{-2l+1}|(\tau,\xi)|^{-1}\|\na_v\phi\|_{L^1_v},
	\end{align}
	where $|(\tau,\xi)|=\sqrt{|\tau|^2+|\xi|^2}$, 
	with some generic constant $C>0$.
\end{Lem}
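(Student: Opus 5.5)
The plan is to reduce both bounds to a one-dimensional integral along the direction of $\xi$. The constraint is that the right-hand side contains $\|\na_v\phi\|_{L^1_v}$ and not $\|\phi\|_{L^1_v}$, so the slicing has to turn $\phi$ into a one-variable function whose sup is controlled by its derivative.

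For \eqref{eqPhivp0}, set $e=\xi/|\xi|$ and write $v_*=ye+w$ with $y\in\R$ and $w\perp e$. The kernel $K(y):=(b^2+(\tau+|\xi|y)^2)^{-l}$ depends only on $y$, so the integral equals $\int_\R K(y)\psi(y)\,dy$, where $\psi(y):=\int_{e^\perp}\phi(v-ye-w)\,dw$. Fubini gives $\psi\in W^{1,1}(\R)$ with $\|\psi'\|_{L^1(\R)}\le\|\na_v\phi\|_{L^1_v}$. A $W^{1,1}(\R)$ function vanishes at infinity, so $\sup|\psi|\le\|\psi'\|_{L^1}\le\|\na_v\phi\|_{L^1_v}$. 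The substitution $z=\tau+|\xi|y$ followed by $z=bz'$ gives $\int_\R K\,dy=|\xi|^{-1}b^{-2l+1}\int_\R(1+z'^2)^{-l}dz'$. The last integral is finite since $l\ge1$, and this proves \eqref{eqPhivp0}.

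For \eqref{eqPhivp0a}, use $|b+iX\<v_*\>|\ge|b+iX|$ with $X=\tau+v_*\cdot\xi$. In the case $|\tau|\le 2|\xi|$ we have $|\xi|^{-1}\lesssim|(\tau,\xi)|^{-1}$, and \eqref{eqPhivp0} already gives the claim. In the case $|\tau|>2|\xi|$, keep the same slicing. For fixed $w$ the kernel becomes $(b^2+h(y)^2)^{-l}$ with $h(y)=(\tau+|\xi|y)\<(y,w)\>$. The slice function $\psi_w(y):=\phi(v-ye-w)$ still satisfies $\int_{e^\perp}\sup_y|\psi_w|\,dw\le\|\na_v\phi\|_{L^1_v}$. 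It therefore suffices to prove, uniformly in $w$,
\begin{align*}
\int_\R (b^2+h(y)^2)^{-l}\,dy\lesssim b^{-2l+1}|\tau|^{-1}.
\end{align*}
The zero of $h$ is $y_0=-\tau/|\xi|$, where $|y_0|>2$. We have $h'(y)=|\xi|\<v_*\>+(\tau+|\xi|y)y\<v_*\>^{-1}$. On the window $|y-y_0|\le|y_0|/2$ the second term is bounded by $|\xi||y-y_0|\,|y|/\<v_*\>\le|\xi||y_0|/2$, while the first is at least $|\xi||y|\ge|\xi||y_0|/2$. So the first term dominates: $|h'|\gtrsim|\xi||y_0|=|\tau|$ and $h$ is monotone there. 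The change of variable $z=h(y)$ then gives the bound $|\tau|^{-1}\int_\R(b^2+z^2)^{-l}dz\lesssim b^{-2l+1}|\tau|^{-1}$.

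The main obstacle is the complement of the window, $|y-y_0|>|y_0|/2$. There $|h(y)|\ge|\xi||y-y_0|\<y\>\gtrsim|\tau|\,\max(1,|y|/|y_0|)\,\<y\>$, so $h$ is large but not monotone. I would first try the pointwise bound $(b^2+h^2)^{-l}\le b^{-2l+1}|h|^{-1}$, which holds since $|h|\ge b$ or $|h|<b$ both work for $l\ge1$. This leaves $\int|h|^{-1}dy$ over the complement, and it has to be $\lesssim|\tau|^{-1}$. On $|y|\le|y_0|/2$ we have $|h|\gtrsim|\tau|\<y\>$, and $\int_{|y|\le|y_0|}\<y\>^{-1}dy$ produces a logarithm $\log|y_0|$. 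If that logarithm cannot be avoided, I would fall back on splitting $\{|y|\le|y_0|/2\}$ dyadically in $\<y\>$ and using $l\ge1$ to get $(b^2+h^2)^{-l}\le h^{-2}b^{-2l+2}$. This recovers $\int\<y\>^{-2}dy<\infty$ at the price $b^{-2l+2}|\tau|^{-2}$. That price is acceptable whenever $b\lesssim|\tau|$. When $b\gtrsim|\tau|$, the trivial bound $b^{-2l}$ times the length of the relevant region suffices.
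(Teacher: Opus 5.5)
For \eqref{eqPhivp0} your argument is the paper's. You slice along $\xi/|\xi|$, bound the transverse integral of $\phi$ uniformly in the longitudinal variable by $\|u\|_{L^\infty(\R)}\le\|u'\|_{L^1(\R)}$, and integrate the Lorentzian kernel.

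For \eqref{eqPhivp0a} your route is genuinely different. The paper uses $\langle v_*\rangle\ge\langle v_{*1}\rangle$ ($v_{*1}$ the component along $\xi$) to replace the kernel by one depending only on $v_{*1}$. It then quotes $\int_\R(b^2+(\tau+|\xi|y)^2\langle y\rangle^2)^{-l}\,dy\le Cb^{-2l+1}|(\tau,\xi)|^{-1}$ from Bouchut's Lemma 2.4. You prove that one-dimensional bound by hand instead: the case $|\tau|\le2|\xi|$ via $\langle\cdot\rangle\ge1$ and \eqref{eqPhivp0}, and the case $|\tau|>2|\xi|$ via a window around the zero $y_0$ plus its complement. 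This makes the lemma self-contained. Carrying $w$ inside $h$ is unnecessary, though: $\langle(y,w)\rangle\ge\langle y\rangle$ reduces everything to $w=0$ and lets you reuse the integrated slice $\psi$ from the first part.

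One step fails as written. From ``first term $\ge|\xi||y_0|/2$'' and ``second term $\le|\xi||y_0|/2$'' you only get $h'\ge0$, not $|h'|\gtrsim|\tau|$. The quantitative bound is genuinely false at the edge of your window. Writing $\tau+|\xi|y=|\xi|(y-y_0)$,
\[
h'(y)=\frac{|\xi|}{\langle (y,w)\rangle}\Bigl(1+|w|^2+y^2+y(y-y_0)\Bigr),
\]
so for $w=0$ one gets $h'(y_0/2)=|\xi|\,\langle y_0/2\rangle^{-1}\approx 2|\xi|^2/|\tau|$, which is much smaller than $|\tau|$ when $|y_0|$ is large. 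The substitution $z=h(y)$ then produces a factor $|\tau|/|\xi|^2$ instead of $1/|\tau|$.

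The fix is to shrink the window to $|y-y_0|\le|y_0|/4$. There $|y|\ge\tfrac34|y_0|$ and the second term is at most $|\xi||y-y_0|\le\tfrac14|\xi||y_0|$, so $h'\ge\tfrac12|\tau|$. On the complement you have $|h|\ge|\xi||y-y_0|\langle y\rangle\ge\tfrac14|\tau|\langle y\rangle$, and scaling gives directly
\[
\int\bigl(b^2+h^2\bigr)^{-l}dy\le\int_\R\Bigl(b^2+\tfrac{1}{16}\tau^2y^2\Bigr)^{-l}dy=\frac{4}{|\tau|}\,b^{-2l+1}\int_\R(1+z^2)^{-l}\,dz .
\]
The logarithm you worried about is an artifact of using $b^{-2l+1}|h|^{-1}$, and no split into $b\lesssim|\tau|$ versus $b\gtrsim|\tau|$ is needed.

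As phrased, your last case is also too loose. The bound $b^{-2l}$ times the length $\sim|y_0|$ of $\{|y|\le|y_0|/2\}$ exceeds $b^{-2l+1}|\tau|^{-1}$ when $|y_0|\gg b/|\tau|$; you would have to cut at $|y|\sim b/|\tau|$. With these repairs your proof is complete.
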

\begin{proof}
	First, we consider a rotating change of variable: $v\mapsto \ti v=R^{-1}v$ with an orthogonal matrix $R_\xi$ satisfying $R^{-1}_\xi=R^T_\xi$ and $R^T_\xi\frac{\xi}{|\xi|}=e_1\equiv (1,0,\dots,0)$. For example, we choose $R_\xi=\bigl(\frac{\xi}{|\xi|},R_2,\dots,R_d\bigr)$ with some unit vectors $R_i\in\R^d$ orthogonal to $\frac{\xi}{|\xi|}$. Thus, $|\pa_{v_1}(R_\xi v)|\le 1$ and by the Sobolev inequality for one-dimensional $v_1$, 
	\begin{align*}
		\|\psi(R_\xi v)\|_{L^\infty_{v_1}L^1_{v'}}\le C\|\pa_{v_1}(\psi(R_\xi v))\|_{L^1_{v}}\le C\|\na_v\psi(R_\xi v)\|_{L^1_{v}}\le C\|\na_v\psi(v)\|_{L^1_{v}}, 
	\end{align*}
	where $v'=(v_2,\dots,v_d)$. Here, we used the $1$-D Sobolev inequality:
		$\|u\|_{L^\infty(\R)}\le C\|u'\|_{L^1(\R)}$. 
Therefore, fixing any $(\tau,\xi)\in\R^{1+d}$, by rotation $v_*\mapsto R_\xi v_*$, translation, and rotation $v_*\mapsto R^{-1}_\xi v_*$ again, we have 
	\begin{align*}\notag
		\int_{\R^d_{v_*}}\phi(v-v_*)\Big|\frac{1}{b+i(\tau+v_*\cdot\xi)}\Big|^{2l}\,dv_*
		&\notag\le 
		C\|\phi(v-R_\xi v_*)\|_{L^\infty_{v_{*1}}L^1_{v_*'}}
		\Big\|\frac{1}{|b+i(\tau+|\xi|v_{*1})|^{2l}}\Big\|_{L^1_{v_{*1}}}\\
		&\le C\|\na_{v_*}\phi(R_\xi v_*)\|_{L^1_{v_*}}\|(1+|v_{*1}|^2)^{-l}\|_{L^1_{v_{*1}}}|\xi|^{-1}b^{-2l+1}\\
		&\le Cb^{-2l+1}|\xi|^{-1}\|\na_v\phi\|_{L^1_{v_*}},
	\end{align*}
	where $v'=(v_2,\dots,v_d)$. To calculate \eqref{eqPhivp0a}, we apply \cite[Lemma 2.4]{Bouchut2002} to find that 
	\begin{align}
		\label{es45}
		\int_{\R}\frac{dv_1}{(b^2+(\tau+|\xi|v_1)^2\<v_1\>^{2})^{l}}\le \frac{Cb^{-2l+1}}{|(\tau,\xi)|}.
	\end{align} 
	Therefore, 
	\begin{align*}
		\int_{\R^d_{v_*}}\phi(v-v_*)\Big|\frac{1}{b+i(\tau+v_*\cdot\xi)\<v_*\>}\Big|^{2l}\,dv_*
		&\notag\le 
		C\|\phi(v-R_\xi v_{*})\|_{L^\infty_{v_{*1}}L^1_{v_*'}}
		\|(b^2+(\tau+|\xi|v_{1})^2\<v_{1}\>^{2})^{l}\|_{L^1_{v_{1}}}\\
		&\le Cb^{-2l+1}|(\tau,\xi)|^{-1}\|\na_v\phi\|_{L^1_{v}}. 
	\end{align*}
	This completes the proof of Lemma \ref{basicpattern}. 
\end{proof}

\subsection{The hypoelliptic \texorpdfstring{$L^2$}{L2} estimate for dilated pseudo-differential operator}
In this Subsection, we intend to deduce the hypoelliptic estimate in $L^2$. 
Applying $Q_0$ to \eqref{eqtrans}, we have 
\begin{align}\label{eq49}
	\pa_tQ_0f+v\cdot\na_xQ_0f&=[v\cdot\na_x,Q_0]f+Q_0g
	=(\om2^k)^{-1}\na_x\cdot Q_0^{(1)}f+Q_0g,
\end{align}
where we have calculated the commutator as 
\begin{align*}
	[v\cdot\na_x,Q_0]f
	&=\int_{\R^d_{v_*}}\om^d\Phi_0(\om(v-v_*))(v-v_*)\cdot\na_xf(v_*)\,dv_*
	=(\om2^k)^{-1}\na_x\cdot Q^{(1)}_0f. 
\end{align*}
Here, $Q^{(1)}_0$ is a derivative variant of $Q_0$ with no gain in coefficients, given by 
\begin{align}\label{Pkderi1}
	Q_0^{(1)}f(v)&=\int_{\R^d_{v_*}}\om^d\Phi_0^{(1)}(\om(v-v_*))f(v_*)\,dv_*,\  
	~\text{ where }~\Phi_0^{(1)}(v)=v\Phi_0(v),
\end{align}
and thus, $\wh{\Phi_0^{(1)}}(\eta)=\frac{-1}{2\pi i}\na\wh{\Phi_0}(\eta)$ is a derivative variant of $\Phi_0$ .
Taking Fourier transform in \eqref{eq49} with respect to $(t,x)$ and adding a constant $\lam=\lam(\tau,\xi,r)$, we have 
\begin{align}\label{P0FtxWf}
	Q_0\F_{t,x}f=\frac{\lam Q_0\F_{t,x}f+(\om2^k)^{-1}\xi\cdot Q_0^{(1)}\F_{t,x}f+Q_0\F_{t,x}g}{\lam+i(\tau+v\cdot\xi)}.
\end{align}
\begin{Thm}\label{Thmlow}
	Assume that $f,g\in L^2(\R_t\times\R^d_x\times\R^d_v)$ satisfies the pure transport equation 
	\begin{align*}
		(\pa_t+v\cdot\na_x)f=g,
		\quad\text{ in }\ \R_t\times\R^d_x\times\R^d_v,
	\end{align*}
	in the sense of distributions. 
	Let $\Psi_j$ be given as in \eqref{Psi0def} and denote the spatial and velocity pseudo-differential operators $\De_j,P_k,Q_k$ as in \eqref{DejQkDef}. 
Then, for any fixed $j\ge 1$ and any $C_0>0$, the following estimate holds: 
\begin{align}\label{hypoes}\notag
	\|P_0\F_{t,x}{\De_jR_rf}\|_{L^2_v}^2
	&\le C_0^{-1}\|Q_0\F_{t,x}\De_jR_rf\|_{L^2_{v}}^2
	+CC_0\om^{-2}\|\<v\>^{-1}\F_{t,x}\De_jR_rf\|_{L^2_v}^2\notag\\
	&\quad\notag
	+CC_0\om^{-2s}2^{-r\ga}\|\<v\>^{\frac{\ga}{2}}\<D_v\>^{s}(P_0+P_1)\F_{t,x}\De_jR_rf\|_{L^2_v}^2\\
	&\quad
	+CC_0\om^2(\varpi2^j)^{-2}\|Q_0\F_{t,x}\De_jR_rg\|_{L^2_{v}}^2.
\end{align}
\end{Thm}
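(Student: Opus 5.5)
Starting from the resolvent identity \eqref{P0FtxWf}, applied to $F:=\F_{t,x}\De_jR_rf$ and $G:=\F_{t,x}\De_jR_rg$ (with $k=0$, so $(\om2^k)^{-1}=\om^{-1}$), I will show
\[
Q_0F=\frac{\lam Q_0F+\om^{-1}\xi\cdot Q_0^{(1)}F+Q_0G}{\lam+i(\tau+v\cdot\xi)}.
\]
Since $P_0=\wt Q_0 Q_0$ up to a harmless bounded factor (both are Fourier multipliers in $\eta$ supported in $|\eta|\le2\om$, and $\wh{\Psi_0}=\wh{\Phi_0}^2$), it is enough to bound $\|P_0F\|_{L^2_v}$ by pairing the identity with $P_0F$, or more directly with a smooth low-frequency projection. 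Writing $P_0F=\int \om^d\Phi_0(\om(v-v_*))\,Q_0F(v_*)\,dv_*$, I will apply Cauchy--Schwarz in $v_*$ with weight $|\lam+i(\tau+v_*\cdot\xi)|^{-2}$. Lemma~\ref{basicpattern} with $l=1$ and $\phi=\om^d|\Phi_0(\om\cdot)|$ gives $\|\na_v\phi\|_{L^1}\lesssim\om$. This produces
\[
\|P_0F\|_{L^2_v}^2\lesssim \frac{\om}{\lam|\xi|}\,\big\|\lam Q_0F+\om^{-1}\xi\cdot Q_0^{(1)}F+Q_0G\big\|_{L^2_v}^2 .
\]
On the support of $\De_j$ we have $|\xi|\approx\varpi2^j$. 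I will choose $\lam$ so that $\om\lam/|\xi|=C_0^{-1}$, i.e.\ $\lam=C_0^{-1}|\xi|/\om$.

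With this choice each term maps onto one line of \eqref{hypoes}. The $\lam Q_0F$ term gives $C_0^{-1}\|Q_0F\|^2$, which is the first term. The source term gives $\frac{\om}{\lam|\xi|}\|Q_0G\|^2=C_0\om^2|\xi|^{-2}\|Q_0G\|^2$, which is exactly the last term. The commutator term gives $\frac{\om}{\lam|\xi|}\om^{-2}|\xi|^2\|Q_0^{(1)}F\|^2=C_0\|Q_0^{(1)}F\|^2$, and it remains to bound $\|Q_0^{(1)}F\|_{L^2_v}$. The symbol of $Q_0^{(1)}$ is $\na_\eta\wh{\Phi_0}(\om^{-1}\eta)$, supported in the annulus $\frac67\om\le|\eta|\le2\om$. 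So $Q_0^{(1)}$ is a derivative-type variant localized where $P_0+P_1$ is comparable to the identity, and I can write $Q_0^{(1)}=Q_0^{(1)}\chi(P_0+P_1)$.

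To estimate $\|Q_0^{(1)}F\|$ I will use \eqref{LowerSupportComm} with $m=0$, $n=0$, $l=-\ga/2$ (the weight is supplied by $R_r$, since $\<v\>\approx2^r$ there), and $s''=0$. This gives
\[
\|Q_0^{(1)}F\|^2\lesssim \om^{-2s}2^{-r\ga}\|\<v\>^{\frac{\ga}{2}}\<D_v\>^s(P_0+P_1)F\|^2+\om^{-2}\|\<v\>^{-1}F\|^2 ,
\]
which supplies the remaining two terms. I could equally derive this from the commutator $[\<v\>^{\ga/2},\cdot]$ and the lower bound $|\eta|\gtrsim\om$ on the annulus. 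The weight exponent in the lower-order term may come out as $\<v\>^{-1}$ only after using $R_r$ to trade $2^{-r\ga/2}\<v\>^{\ga/2-1}$ for $\<v\>^{-1}$; I will do this with \eqref{dissUpper1}.

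The main obstacle is the first step: applying Lemma~\ref{basicpattern} cleanly when $Q_0F$ is only $L^2$ in $v_*$. The Cauchy--Schwarz has to be arranged as $|P_0F(v)|^2\le \big(\int\phi(v-v_*)|\lam+i(\cdots)|^{-2}dv_*\big)\big(\int\phi(v-v_*)|N(v_*)|^2dv_*\big)$, with $N$ the numerator. Integrating in $v$ then uses $\|\phi\|_{L^1}\lesssim1$. One must also check that the $\<v\>$-weight commutators arising from $R_r$ do not spoil this, which should only contribute the $\om^{-2}$ lower-order term.
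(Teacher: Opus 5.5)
Your proposal is correct and follows the paper's own proof essentially step by step. Like the paper, you write $P_0=Q_0Q_0$, substitute the resolvent identity \eqref{P0FtxWf}, and apply Cauchy--Schwarz against the kernel of $Q_0$ together with \eqref{eqPhivp0}, which produces the factor $\om/(\lam|\xi|)$. You then choose $\lam\sim C_0^{-1}\om^{-1}|\xi|\approx C_0^{-1}\om^{-1}\varpi2^j$ and control $Q_0^{(1)}$ through \eqref{LowerSupportComm}.

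Two cosmetic fixes are needed. First, to get exactly $C_0^{-1}$ in front of the first term, $\lam$ must also absorb the constant from Lemma~\ref{basicpattern} and from splitting $\|N\|^2$; the paper takes $\lam=C_0^{-1}C^{-1}\om^{-1}\varpi2^j$. Second, the right parameter in \eqref{LowerSupportComm} is $l=\ga/2$, not $-\ga/2$, and that is the choice that actually gives the bound you display.
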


\begin{proof}
Fix any $j\ge 1$ and set $k=0$. 
Note that $\De_j,R_r$ commutes with $\pa_t$ and $v\cdot\na_x$. We can then apply $\De_jQ_0R_r$ to identity \eqref{P0FtxWf} to obtain 
\begin{align}\label{eq411eq}
	Q_0^2\F_{t,x}{\De_jR_rf}
	&=
	Q_0\Big\{\frac{\lam Q_0\F_{t,x}\De_jR_rf+\om^{-1}\xi\cdot Q_0^{(1)}\F_{t,x}\De_jR_rf+Q_0\F_{t,x}\De_jR_rg}{\lam+i(\tau+v\cdot\xi)}\Big\}.
\end{align}
For the first $f$ term and the last $g$ term, using H\"older's inequality and estimate \eqref{eqPhivp0}, 
\begin{align}\label{eq410a}\notag
	\Big\|Q_0\Big\{\frac{\lam Q_0\F_{t,x}\De_jR_rf}{\lam+i(\tau+v\cdot\xi)}\Big\}\Big\|_{L^2_{v}}^2
	\notag
	&\le \Big\|\Big(\int_{\R^d_{v_*}}\om^{d}|\Psi_0(\om(v-v_*))|\Big|\frac{1}{\lam+i(\tau+v_*\cdot\xi)}\Big|^2\,dv_*\Big)^{\frac{1}{2}}\\
	&\notag\ \ \times\Big(\int_{\R^d_{v_*}}\om^{d}|\Psi_0(\om(v-v_*))||\lam Q_0\F_{t,x}\De_jR_rf(v_*)|^2\,dv_*\Big)^{\frac{1}{2}}\Big\|_{L^2_{v}}^2\\
	&\le C\om\lam|\xi|^{-1}\|Q_0\F_{t,x}\De_jR_rf\|_{L^2_{v}}^2, 
\end{align}
and similarly, 
\begin{align}\label{eq410}
	\Big\|Q_0\Big\{\frac{Q_0\F_{t,x}\De_jR_rg}{\lam+i(\tau+v\cdot\xi)}\Big\}\Big\|_{L^2_{v}}^2
	&\le C\om\lam^{-1}|\xi|^{-1}\|Q_0\F_{t,x}\De_jR_rg\|_{L^2_{v}}^2. 
\end{align}
For the commutator term $Q_0^{(1)}f$, using H\"older's inequality and estimate \eqref{eqPhivp0} again, together with the lower bound of the support of the kernel of $Q_0^{(1)}$ and the commutator estimate as in \eqref{LowerSupportComm} (with $m=n=0$, $l=\frac{\ga}{2}$, and $s'=0$ therein), 
\begin{align}\label{eq410c}\notag
	&\Big\|Q_0\Big\{\frac{\om^{-1}\xi\cdot Q_0^{(1)}\F_{t,x}\De_jR_rf}{\lam+i(\tau+v\cdot\xi)}\Big\}\Big\|_{L^2_{v}}^2
	\le C\om^{-1}\lam^{-1}|\xi|\|Q_0^{(1)}\F_{t,x}\De_jR_rf\|_{L^2_{v}}^2\\
	&\ \le C\om^{-1}\lam^{-1}|\xi|\Big(\om^{-1}\|\<v\>^{-1}\F_{t,x}\De_jR_rf\|_{L^2_v}
	+\om^{-s}2^{-r\frac{\ga}{2}}\|\<v\>^{\frac{\ga}{2}}\<D_v\>^{s}(P_0+P_1)\F_{t,x}\De_jR_rf\|_{L^2_v}\Big)^2.
\end{align}
Here, the first term corresponds to the commutator contribution, while the second term is the leading-order term. Combining estimates \eqref{eq410a}, \eqref{eq410}, and \eqref{eq410c} with \eqref{eq411eq}, using the support property $
\big\{\frac{6}{7}\varpi 2^j\le |\xi|\le \varpi 2^{j+1}\big\}$
of the kernel of $\De_j$ for $j\ge 1$, noting that $P_0=Q_0Q_0$, and choosing 
\begin{align*}
	\lam=C_0^{-1}C^{-1}\om^{-1}(\varpi2^j), 
\end{align*}
we deduce \eqref{hypoes}.
This completes the proof of Theorem \ref{Thmlow}.
\end{proof}

\subsection{The negative-order hypoelliptic \texorpdfstring{$L^p$}{Lp} estimate}
In this subsection, we aim to establish the time-spatial regularity of $\<D_v\>^{-N}f$ for any $N>0$, where $f$ solves a kinetic-type equation. Classical hypoelliptic estimates, such as \cite[Theorem 2.1]{Bouchut2002}, typically focus on the regularity of $f$ itself rather than on negative-order $\<D_v\>^{-N}f$. The averaging lemma involving negative-order $\<D_v\>^{-N}$ was also considered, for instance, in \cite{Jabin2022,Zhu2025}. We also refer to the counterexample in \cite[Theorem 1.3]{DeVore2000}; see also \cite{Arsenio2019}. 
Note that, in this subsection, we employ the \textbf{time-spatial} dyadic decomposition, unlike the purely spatial decomposition used in other parts of this work.  
Thus, we recall the pseudo-differential operators $\De^{t,x}_j,P_k$ defined in \eqref{DejQkDef}: 
\begin{align}\label{Dejtimespatial}
	\begin{cases}
	\De^{t,x}_jf(x)=\int_{\R^{2+2d}_{s,\tau,y,\xi}}e^{2\pi i(\tau,\xi)\cdot(t-s,x-y)}\wh{\Psi_j}(\tau,\xi)f(s,y)\,dydsd\xi d\tau,\\
	P_kf(v)=\int_{\R^d_\eta\times\R^d_u}e^{2\pi i\eta\cdot(v-u)}\wh{\Psi_{k}}(\om^{-1}\eta)f(u)\,dud\eta,\\
	Q_kf(v)=\int_{\R^d_\eta\times\R^d_u}e^{2\pi i\eta\cdot(v-u)}\wh{\Phi_{k}}(\om^{-1}\eta)f(u)\,dud\eta,
	\end{cases}
\end{align}
where $\wh{\Psi_j}(\tau,\xi)$ denotes the $(d+1)$-dimensional dyadic decomposition. Moreover, throughout this subsection we fix dilation $\om=2^{\al j}$ with $\al$ given by Theorem~\ref{ThmNegativeHypo}, which differs from the choice $\om=\om_0 2^{\al j+rl_1}$ and the value $\al=\fr{1}{1+2s}$ used elsewhere in this work.

\smallskip We are now ready to state the negative-order hypoellipticity. 
\begin{Thm}
\label{ThmNegativeHypo}
Let $\om=\om(j)=2^{\al j}$.
	Let $2\le p<\infty$, $m\in(0,\fr{d}{p})$ and $M\ge 0$. Assume that $f,g\in L^p(\R_t\times\R^d_x\times\R^d_v)$ satisfies the pure transport equation 
	\begin{align}
		\label{eqtrans2}
		(\pa_t+v\cdot\na_x)f=g,
		\quad\text{ in }\ \R_t\times\R^d_x\times\R^d_v,
	\end{align}
	in the sense of distributions. 
	Denote the pseudo-differential operators $\De_j,P_k$ as in \eqref{Dejtimespatial}.
	Denote the constants measuring transfer-regularity and time-spatial regularity, respectively, by
	\begin{align*}
		\al=\fr{1}{\th_{m,p}+1+M}, \quad \beta=\fr{\th_{m,p}}{\th_{m,p}+1+M}, 
	\end{align*}
	where $\th_{m,p}=
\begin{cases}
	\min\{\frac{m^-}{2},\frac{1^-}{p}\},&\text{if }p>2,\\
	\min\{\frac{m}{2},\frac{1}{2}\},&\text{if }p=2, 
\end{cases}$ and $m^-$ denotes an arbitrary number in $(0,m)$. 
	For any fixed $j\ge 1$, the following estimates hold:
\begin{align}
	\label{esPkWjDejf}
	\begin{aligned}
	&2^{\beta j}\|\<D_v\>^{-m}P_0\De_jf\|_{L^p_{t,x,v}}
	\le C_{p}
	\Big(\|\De_jf\|_{L^p_{t,x,v}}
	+\|\<D_{v}\>^{-M}\<v\>\De_jg(v)\|_{L^p_{t,x,v}}\Big),\\
	&2^{\beta j}\|\<D_v\>^{-m}(I-P_0)\De_jf\|_{L^p_{t,x,v}}
	\le C_{p}\|\De_jf\|_{L^p_{t,x,v}}. 
	\end{aligned}
\end{align}
For $j=0$ we have
\begin{align}\label{eq442}
	\|\<D_v\>^{-m}\De_0f\|_{L^p_{t,x,v}}\le C\|\De_0f\|_{L^p_{t,x,v}}.
\end{align}
\end{Thm}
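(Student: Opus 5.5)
The plan has three parts: the trivial bounds first, then the high-$v$-frequency piece, and finally the low-$v$-frequency piece, which is the real content.

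The trivial bounds are \eqref{eq442} and the $L^p$ boundedness of $\<D_v\>^{-m}$. Since $m>0$, the Bessel potential $\<D_v\>^{-m}=(I-\De_v)^{-\frac m2}$ is convolution in $v$ with $G_m$, and $\|G_m\|_{L^1}=1$ by \eqref{GsDecay}. Young's inequality in $v$ followed by integration in $(t,x)$ then gives \eqref{eq442} directly.

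\textbf{Second estimate in \eqref{esPkWjDejf}.} On the support of the symbol of $I-P_0$ we have $|\eta|\ge \frac67\om=\frac67 2^{\al j}$. I will write $I-P_0=\sum_{k\ge1}P_k$ and use $P_k=\wt{P_k}P_k$. The operator $\<D_v\>^{-m}\wt{P_k}$ is a dilated multiplier whose symbol is supported where $|\eta|\approx\om2^k$ and has size $(\om2^k)^{-m}$. After rescaling, its kernel has $L^1$ norm $\lesssim(\om2^k)^{-m}$, by the same dilation argument as in \eqref{boundDea}. Summing over $k\ge1$ with the uniform bound $\|P_kf\|_{L^p}\le C\|f\|_{L^p}$ gives a bound of order $\om^{-m}\|\De_jf\|_{L^p}=2^{-\al m j}\|\De_jf\|_{L^p}$. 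It then suffices to check $\beta\le \al m$, i.e. $\th_{m,p}\le m$, which holds by the definition of $\th_{m,p}$.

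\textbf{First estimate in \eqref{esPkWjDejf}.} This is the main step, and I would prove it by interpolation between $p=2$ and an $L^\infty$/$L^\infty$-type endpoint. The $L^2$ input would be an analogue of Theorem \ref{Thmlow} for the time-space localisation $\De_j^{t,x}$, where $|(\tau,\xi)|\approx 2^j$. Taking the Fourier transform in $(t,x)$, I write, as in \eqref{P0FtxWf},
\[
Q_0\F f=\frac{\lam Q_0\F f+\om^{-1}\xi\cdot Q_0^{(1)}\F f+Q_0\F g}{\lam+i(\tau+v\cdot\xi)} ,
\]
and then bound each term separately.
\begin{itemize}
\item To handle the $\tau$-dominant region, I use \eqref{eqPhivp0a} with the weight $\<v\>$; this is why $\<v\>$ multiplies $g$ in the statement.
\item To produce the factor $\<D_v\>^{-M}$ on $g$, I let $\<D_v\>^{-m}P_0$ absorb $\<D_v\>^{M}$ on the low-frequency block $|\eta|\lesssim\om$, at cost $\om^{M}$.
\item The commutator term $\om^{-1}\xi\cdot Q_0^{(1)}$ is not iterated. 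Instead I bound it crudely by $\om^{-1}2^j$ times $\|\De_jf\|$, without any dissipation.
\end{itemize}
Choosing $\lam\approx 2^{j}\om^{-1-\dots}$ and balancing $\lam 2^{-j}\om$ (the $f$ term), $\om^{2M}\lam^{-1}2^{-j}\om$ (the $g$ term) and the commutator then yields the power $\al=1/(\th+1+M)$ and the gain $2^{\beta j}$ with $\beta=\th\al$.

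The negative order $m$ is what replaces the dissipation needed in Theorem \ref{Thmlow}. At frequency $|\eta|\le\om$ we have $\<D_v\>^{-m}$ bounded, and its kernel $G_m$ lies in $W^{1,1}$-type spaces only up to the singularity $|v|^{m-d}$. This is why Lemma \ref{basicpattern} is applied with $\phi$ equal to the kernel of $\<D_v\>^{-m}P_0$, whose gradient has $L^1$ norm controlled after dilation. For $p>2$ I interpolate (Riesz--Thorin on the linear map $(f,g)\mapsto$ the localized piece) between the $L^2$ bound and the trivial $L^\infty$-type bound with zero gain. The Hölder exponents then force $\th\le m^-/2$ and $\th<1/p$, which is where $m<d/p$ enters: it makes $G_m\in L^{p'}$ locally.

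The expected obstacle is making the $L^p$ extension rigorous. The multiplier $1/(\lam+i(\tau+v\cdot\xi))$ is not an $L^p$ multiplier in $(t,x)$ uniformly. If interpolation fails there, my fallback is to work in physical space via the Duhamel formula $f=\int_0^\infty e^{-\lam s}(\lam f+g)(t-s,x-sv,v)\,ds$. I would then average in $v$ against the kernel of $\<D_v\>^{-m}P_0$ and apply the dispersion/averaging estimate in $L^p$, using the $1/p$ loss from the time-averaging exponent.
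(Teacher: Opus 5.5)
Your arguments for \eqref{eq442} and for the $(I-P_0)$ bound are fine. The first estimate in \eqref{esPkWjDejf}, however, is not reached for $p>2$. Riesz--Thorin applied to a \emph{fixed} operator containing $\langle D_v\rangle^{-m}$ interpolates between two bounds: an $L^2$ bound at order $m$ with gain $\omega^{-\min\{m,1\}/2}$ (the most your $L^2$ scheme can deliver, see below), and an order-zero $L^\infty$ bound with no gain. This yields only the gain $\omega^{-\min\{m,1\}/p}$. For $m<1$ and $p>2$ that is strictly weaker than the required $\omega^{-\theta_{m,p}}$ with $\theta_{m,p}=\min\{m^-/2,1^-/p\}$; for example, $p=4$, $m=2/5$ gives $1/10$ versus $(1/5)^-$. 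Since the statement fixes $\omega=2^{\alpha j}$ with $\alpha=1/(\theta_{m,p}+1+M)$, retuning $\lambda$ cannot compensate.

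The missing idea is to interpolate the order of the Bessel potential as well. Prove the $L^2$ bound for $\langle D_v\rangle^{-m_0}$ with $m_0=m/\iota$ (gain $\omega^{-\min\{m_0^-,1\}/2}$), pair it with the order-zero $L^q$ bound, and use complex interpolation $(H^{-m_0,2},H^{0,q})_{[\iota]}$ with $\iota=\frac{1/p-1/q}{1/2-1/q}\to\frac2p$ as $q\to\infty$. All the negative order then sits at the $L^2$ endpoint, and the gain becomes $\omega^{-\iota\min\{m_0^-,1\}/2}\to\omega^{-\min\{m^-/2,1^-/p\}}$; this is how the paper obtains $\theta_{m,p}$.

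Two side remarks. Your account of the role of $m<d/p$ is reversed: $G_m\in L^{p'}_{\rm loc}$ if and only if $m>d/p$. And the obstacle you anticipate is not one. For any damping $\mu(v)>0$, the multiplier $\mu(\mu+i(\tau+v\cdot\xi))^{-1}$ is the Duhamel average $\mu\int_0^\infty e^{-\mu s}f(t-s,x-sv,v)\,ds$, which is a contraction on every $L^p_{t,x}$ uniformly in $v$.

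The $L^2$ step is not closed as written either. Your list carries the factor $\omega=\|\nabla(\omega^d\Phi_0(\omega\,\cdot))\|_{L^1}$. With it, the $f$-term $\lambda 2^{-j}\omega$ and the crudely bounded commutator term $\omega^{-1}\lambda^{-1}2^{j}$ (cf.\ \eqref{eq410c}) have product $1$ for every $\lambda$. So there is no gain at all, and the asserted outcome $\alpha=1/(\theta+1+M)$ does not follow.

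A gain appears only after proving a bound on the kernel $K=G_m*\omega^d\Phi_0(\omega\,\cdot)$ of $\langle D_v\rangle^{-m}Q_0$: namely $\sup_{v_1}\int|K(v_1,v')|\,dv'\lesssim\omega^{1-m}$ in every rotated frame, for $m<1$. This turns the product into $\omega^{-2m}$, and it has to be carried through the balancing.

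There is a second problem in the $\tau$-dominant part of the support of $\Delta_j^{t,x}$. There you must divide by $\lambda+i(\tau+v\cdot\xi)\langle v\rangle$, which multiplies the commutator numerator by $\langle v\rangle$. The result is a term $\omega^{-1}2^j\|\langle v\rangle\Delta_jf\|$, which the statement does not allow on the right-hand side.

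The paper avoids both problems by never applying $Q_0$ before inverting. It discards $P_0$ by $L^2$-boundedness and applies $\langle D_v\rangle^{-m}$ directly to
$\mathcal{F}\Delta_jf=(\lambda2^j\langle v\rangle^{-1}\mathcal{F}\Delta_jf+\mathcal{F}\Delta_jg)/(\lambda2^j\langle v\rangle^{-1}+i(\tau+v\cdot\xi))$.
So there is no commutator, and the weight lands only on $g$. The gain comes from Cauchy--Schwarz against $G_m$, the integrability of $G_m$ along lines, and \eqref{es45}. $P_0$ is used only in the $g$-part, where integrating by parts in its convolution kernel produces $\langle D_v\rangle^{-M}$.
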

\begin{proof}
The estimate \eqref{eq442} in the case $j=0$ is trivial by the boundedness of $\<D_v\>^{-m}$. In the following, we assume $j\ge1$. 
Since $\F_{t,x},\De_j$ commute with $\pa_t,v\cdot\na_x$, applying $\F_{t,x}\De_j$ to \eqref{eqtrans2} implies
\begin{align*}
	\F_{t,x}\De_jf
	=\frac{\lam2^j\<v\>^{-1}\F_{t,x}\De_jf+\F_{t,x}\De_jg}{\lam2^j\<v\>^{-1}+i(\tau+v\cdot\xi)}.
\end{align*}
Applying $P_0\F_{t,x}^{-1}$ implies 
\begin{align}\label{eqFour1}
	P_0\De_jf
	&=
	P_0\F_{t,x}^{-1}\Big\{\frac{\lam2^j\<v\>^{-1}\F_{t,x}\De_jf+\F_{t,x}\De_jg}{\lam2^j\<v\>^{-1}+i(\tau+v\cdot\xi)}\Big\}
	=I_f+I_g.
\end{align}

\smallskip\noindent
{\bf Step 1. $L^2$ estimate of $I_f$.}
For the part $I_f$, applying Plancherel's theorem, $L^2$ boundedness of $P_0$, Bessel potential \eqref{GsDecay}, and Cauchy-Schwarz inequality, we can calculate 
\begin{align}\label{es214}\notag
	\|\<D_v\>^{-m}I_f\|_{L^2_{t,x,v}}
	&\le \big\|G_m*_v\big\{\frac{\lam2^j\<v\>^{-1}\F_{t,x}\De_jf}{\lam2^j\<v\>^{-1}+i(\tau+v\cdot\xi)}\big\}\big\|_{L^2_{\tau,\xi,v}}\\
	&\le \lam2^j\Big\|\Big\{G_m*_v\big(\lam2^j+i(\tau+v\cdot\xi)\<v\>\big)^{-2}\Big\}^{\fr12}\Big\{G_m*_v|\F_{t,x}\De_jf|^2\Big\}^{\fr12}\Big\|_{L^2_{\tau,\xi,v}}. 
\end{align}
For the first factor, we consider the orthogonal matrix $R_\xi$ satisfying $R^{-1}_\xi=R^T_\xi$ and $R^T_\xi\frac{\xi^T}{|\xi|}=(1,0,\dots,0)^T$ as in Lemma \ref{basicpattern}. 
Thus, for any fixed $(\tau,\xi)\in\R^{1+d}$, by rotation $u\mapsto R_\xi u$, we have 
\begin{align}\label{esGmCon}\notag
	G_m*_v\big(\lam2^j+i(\tau+v\cdot\xi)\<v\>\big)^{-2}
	&=\int_{\R^d_{u}}G_m(v-R_\xi^Tu)\big(\lam2^j+i(\tau+|\xi|u_1)\<u\>\big)^{-2}\,du\\
	&\le\|G_m(v-R_\xi^Tu)\|_{L^p_{u_1}L^1_{u'}}\big\|\big(\lam2^j+i(\tau+|\xi|u_1)\<u_1\>\big)^{-2}\big\|_{L^{p'}_{u_1}},
\end{align}
for any $p\ge 1$, where $\fr{1}{p}+\fr{1}{p'}=1$. 
Since the Bessel potential term $G_m$ is a radial function, using the decay in \eqref{GsDecay},
\begin{align*}
	\|G_m(v-R_\xi^Tu)\|_{L^p_{u_1}L^1_{u'}}
	&=\|G_m(R_\xi v-u)\|_{L^p_{u_1}L^1_{u'}}=\|G_m\|_{L^p_{u_1}L^1_{u'}}\\
	&\le \|\1_{|u|\le 2}H_m(u)\|_{L^p_{u_1}L^1_{u'}}+C\|\1_{|u|>2}e^{-\fr{|u|}{2}}\|_{L^p_{u_1}L^1_{u'}},
\end{align*}
where $H_m$ is given by \eqref{GsDecay}. 
For the part near $u=\infty$, the Bessel potential has a good decay. For the part near $u=0$, the asymptotic behavior \eqref{GsDecay} shows that $H_m$ is well behaved for $m\ge d$. On the other hand, when $m\in(0,d)$, 
\begin{align*}
	\|\1_{|u|\le 2}H_m(u)\|_{L^p_{u_1}L^1_{u'}}
	&\le C\|\1_{|u|\le 2}|u|^{m-d}\|_{L^p_{u_1}L^1_{u'}}
	\le C\|\1_{|u_1|\le 2}|u_1|^{m-1}\|_{L^p_{u_1}}\\
	&\le C_m<\infty, 
\end{align*}
provided that $p(m-1)>-1$, i.e., $m>1-\fr1p=\fr1{p'}$ when $p'\in(1,\infty]$ and $m\ge 1$ when $p'=1$. 
For the second factor in \eqref{esGmCon}, we simply apply \eqref{es45} to deduce that 
\begin{align*}
	\big\|\big(\lam2^j+i(\tau+|\xi|u_1)\<u_1\>\big)^{-2}\big\|_{L^{p'}_{u_1}}
	&\le C_p(\lam2^j)^{-2+\fr1{p'}}|(\tau,\xi)|^{-\fr{1}{p'}}. 
\end{align*}
Substituting the above into \eqref{esGmCon}, we have 
\begin{align}\label{esGmCon1}
	G_m*_v\big(\lam2^j+i(\tau+v\cdot\xi)\<v\>\big)^{-2}
	&\le C_{m,p}(\lam2^j)^{-2+\fr1{p'}}|(\tau,\xi)|^{-\fr{1}{p'}},
\end{align}
Plugging \eqref{esGmCon1} into \eqref{es214}, we deduce 
\begin{align}\label{es318}\notag
	\|\<D_v\>^{-m}I_f\|_{L^2_{t,x,v}}
	&\le C_{m,p}\lam2^j\times (\lam2^j)^{-1+\fr1{2p'}}
	\times \Big\||(\tau,\xi)|^{-\fr{1}{2p'}}\Big\{G_m*_v|\F_{t,x}\De_jf|^2\Big\}^{\fr12}\Big\|_{L^2_{\tau,\xi,v}}\\
	&\notag\le C_{m,p}(\lam2^j)^{\fr1{2p'}}\||(\tau,\xi)|^{-\fr{1}{2p'}}\F_{t,x}\De_jf\|_{L^2_{\tau,\xi,v}}\\
	&\le C_{m,p}\lam^{\fr1{2p'}}\|\De_jf\|_{L^2_{t,x,v}}. 
\end{align}
for any $m>0$ and $p\in[1,\infty]$ such that $m>\fr{1}{p'}$ when $p'\in(1,\infty]$, and $m\ge 1$ when $p'=1$, i.e., the optimal regularity is given by $\lam^{\min\{\frac{m^-}{2},\frac{1}{2}\}}$, where $m^-$ denotes an arbitrary number in $(0,m)$.

\smallskip\noindent
{\bf Step 2. $L^p$ estimate of $I_f$.}
We need to analyze the multipliers
\begin{align}\label{m0Def}
	m_f(\tau,\xi)&:=\frac{\lam2^j\<v\>^{-1}}{\lam2^j\<v\>^{-1}+i(\tau+v\cdot\xi)}. 
\end{align}
Moreover, since the translation, dilation, and rotation won't change the multiplier norm; see, for instance, \cite[Prop. 2.5.14]{Grafakos2014}, we can apply dilation $(\tau,\xi)\mapsto 2^{-j}(\tau,\xi)$ and the following $(d+1)$-dimensional rotation. 
Let rotation $B$ be defined by
\begin{align*}
	B\begin{pmatrix}
         1  \\
         v_1\\
         \vdots\\
         v_d
    \end{pmatrix}
    =\begin{pmatrix}
         \<v\>  \\
         0\\
         \vdots\\
         0
    \end{pmatrix}.
\end{align*}
Then, using notation $\<v\>_i=\sqrt{1+v_1^2+\cdots + v_i^2}$ temporarily, $B$ is given by
\begin{align*}
    B=\begin{pmatrix}
     \frac{1}{\<v\>} &  \frac{v_1}{\<v\>}  & \frac{v_2}{\<v\>} & \cdots & \frac{v_d}{\<v\>} \\
      -\frac{v_1}{\<v\>_1}   & \frac{1}{\<v\>_1} & 0 & \cdots &0\\
      -\frac{v_2}{\<v\>_1\<v\>_2} & -\frac{v_1v_2}{\<v\>_1\<v\>_2}  & \frac{\<v\>_1^2}{\<v\>_1\<v\>_2} & \cdots & 0\\
      \vdots & \vdots & \vdots &\ddots & 0\\
      -\frac{v_d}{\<v\>_{d-1}\<v\>} & -\frac{v_1v_d}{\<v\>_{d-1}\<v\>} & -\frac{v_2 v_d}{\<v\>_{d-1}\<v\>} & \cdots & \frac{\<v\>^2_{d-1}}{\<v\>_{d-1}\<v\>}
    \end{pmatrix},
\end{align*}
where the $i$-th row of the matrix ($3\le i\le d+1)$ is
\begin{align*}
    \frac{1}{\<v\>_{i-2}\<v\>_{i-1}}
    \begin{pmatrix}
     -v_{i-1} & -v_1v_{i-1}   & \cdots & -v_{i-2}v_{i-1} & \<v\>_{i-2}^2  &0 &\cdots &0
    \end{pmatrix}.
\end{align*}
Then we consider the change of variables 
\begin{align}\label{changeRotation}
	\begin{pmatrix}
         \wt\tau \\
         \wt\xi
    \end{pmatrix}
    =B^T\begin{pmatrix}
         \tau \\
         \xi
    \end{pmatrix},
\end{align}
which yields that $$\tau+v\cdot\xi=(\tau,\xi)\cdot(1,v)=B^T\begin{pmatrix}
         \wt\tau \\
         \wt\xi
    \end{pmatrix}\cdot B\begin{pmatrix}
         1 \\
         v
    \end{pmatrix}=(\wt\tau,\wt\xi)\cdot (\<v\>,0,\dots,0)=\wt\tau\<v\>,$$
and satisfies that, for any nonzero multi-index $\nu\in\N^{1+d}$, 
\begin{align}\label{boundtauxi}
	|(\tau,\xi)|\le C|(\wt\tau,\wt\xi)|,\quad |\pa_{\wt\tau,\wt\xi}^{\nu}(\tau,\xi)|\le C. 
\end{align}
Therefore, applying the rotation \eqref{changeRotation} to multiplier \eqref{m0Def}, it suffices to consider Fourier multiplier 
\begin{align}\label{m1a}
	m_f'(\tau)&:=\frac{\lam\<v\>^{-1}}{\lam\<v\>^{-1}+i\tau\<v\>}=\frac{\lam}{\lam+i\tau\<v\>^2}.
\end{align}
We then calculate the derivatives of $m'_f,m'_g$ which depends only on $\tau$:
\begin{align*}
	|\pa_{\tau}m_f'|&=\big|\frac{\lam\<v\>^2}{(\lam+i\tau\<v\>^2)^2}\big|
	\le \frac{1}{|\tau|}.
\end{align*}
Therefore, by the Marcinkiewicz multiplier theorem (which utilizes only the first-order derivative in every direction; see, for instance \cite[Corollary 6.2.5]{Grafakos2014}), we deduce that, for any $p\in(1,\infty)$, the $L^p_{t,x}$ multiplier norms of $m_f,m_g$ and $m'_f,m'_g$ (given by \eqref{m0Def} and \eqref{m1a}) satisfy 
\begin{align}\label{m1MultiNorm}
	\|m_f\|_{L^p_{t,x}\to L^p_{t,x}}&=\|m'_f\|_{L^p_{t,x}\to L^p_{t,x}}\le C_{p}.
\end{align}

\smallskip\noindent{\bf Step 3. Interpolation and $L^p$ estimate for $I_f$.}
For the term $I_f$ given by \eqref{eqFour1}, it follows from \eqref{es318} and \eqref{m1MultiNorm} that for any $q\in(1,\infty)$ and $m_0>0$, 
\begin{align*}
	\|\<D_v\>^{-m_0}I_f\|_{L^2_{t,x,v}}
	&\le C\lam^{\min\{\frac{m_0^-}{2},\frac{1}{2}\}}\|\De_jf\|_{L^2_{t,x,v}},\\
	\|I_f\|_{L^q_{t,x,v}}
	&\le C_q\|\De_jf\|_{L^q_{t,x,v}}. 
\end{align*}
where $m_0^-$ denotes an arbitrary number in $(0,m_0)$. 
Utilizing the complex interpolation $(H^{0,p_0}_{l_0},H^{0,p_1}_{l_1})_{[\th]}=H^{0,p}_l$ and $(H^{s_0,p_0}_{},H^{s_1,p_1}_{})_{[\th]}=H^{s^*,p^*}_{}$ for any $p_0,p_1\in(1,\infty)$ and $s_0,s_1\in\R$, where $\frac{l}{p}=\fr{l_0(1-\th)}{p_0}+\fr{l_1\th}{p_1}$, $\frac{1}{p}=\fr{1-\th}{p_0}+\fr\th{p_1}$, and $s^*=(1-\th)s_0+\th s_1$; see e.g., \cite[Theorem 6.4.5 and Theorem 5.5.3]{Bergh1976}, for any $m_0>0$, we have 
\begin{align*}
	\|\<D_v\>^{-m_0\iota}I_f\|_{L^p_{t,x,v}}
	&\le C_{q}\lam^{\iota\min\{\frac{m_0^-}{2},\frac{1}{2}\}}\|\De_jf\|_{L^p_{t,x,v}},
\end{align*}
where $\iota=\frac{1/p-1/q}{1/2-1/q}\in(0,\fr{2}{p})$. 
Since $\iota\to \frac{2}{p}$ as $q\to\infty$, by choosing a sufficiently large $q>2$, we deduce that for any $m\in(0,\fr{d}{p})$ and $p\ge 2$, 
\begin{align}\label{esIf}
	\|\<D_v\>^{-m}I_f\|_{L^p_{t,x,v}}
	&\le C_{p}\lam^{\th_{m,p}}\|\De_jf\|_{L^p_{t,x,v}},
\end{align}
where $\th_{m,p}=
\begin{cases}
	\min\{\frac{m^-}{2},\frac{1^-}{p}\},&\text{if }p>2,\\
	\min\{\frac{m}{2},\frac{1}{2}\},&\text{if }p=2. 
\end{cases}$

\smallskip 
\noindent{\bf Step 4. $L^p$ estimate for $I_g$.}
For the term $I_g$, we rewrite $g=(I-\De_v)^{M/2}\<D_v\>^{-M}g$ with any even integer $M\ge 0$. 
Using $\De_j=\chi_{\frac67 2^j\le|D_{t,x}|\le2^{j+1}}\De_j$, the convolution form of $P_0$, and integrating by parts, we obtain 
\begin{align}\label{Igpri1}\notag
	I_g&=\F_{t,x}^{-1}\int_{\R^d_{v_*}}(I-\De_{v_*})^{M/2}\Big\{\frac{\om^d\Psi_0(\om(v-v_*))\chi_{\frac672^j\le|(\tau,\xi)|\le2^{j+1}}}{\lam2^j\<v_*\>^{-1}+i(\tau+v_*\cdot\xi)}\Big\}\F_{t,x}(\<D_{v_*}\>^{-M}\De_jg(v_*))\,dv_*\\
	&\notag=\sum_{|\ga_1|+|\ga_2|\le M}C_{\ga_1,\ga_2}\F_{t,x}^{-1}\int_{\R^d_{v_*}}\om^{d+|\ga_1|}\pa^{\ga_1}_{v_*}\Psi_0(\om(v-v_*))
	\pa^{\ga_2}_{v_*}\Big\{\frac{1}{\lam2^j\<v_*\>^{-1}+i(\tau+v_*\cdot\xi)}\Big\}\\
	&\qquad\qquad\qquad\qquad\qquad\qquad\ \times\chi_{\frac672^j\le|(\tau,\xi)|\le2^{j+1}}\F_{t,x}(\<D_{v_*}\>^{-M}\De_jg(v_*))\,dv_*. 
\end{align}
To calculate the derivative $\pa^{\ga_2}_{v_*}$, we apply Faà di Bruno's formula or the iterated chain rule, i.e., for any multi-index $\nu\in\N^d$ with $|\nu|\ge 1$, there exist constants $C_{\nu_1,\dots,\nu_k}$ such that
\begin{align*}
	\pa^\nu_v f(g(v))=\sum_{k=1}^{|\nu|}f^{(k)}(v)
	\sum_{\substack{\nu_1+\cdots+\nu_k=\nu\\
	0\ne\nu_n\in\N^d~(\forall n)}}C_{\nu_1,\dots,\nu_k}\prod^k_{n=1}\pa^{\nu_n}g,  
\end{align*}
to deduce that, for any $|\ga_2|\ge 1$, 
\begin{align*}
	\pa^{\ga_2}_{v_*}\Big\{\frac{1}{\lam2^j\<v_*\>^{-1}+i(\tau+v_*\cdot\xi)}\Big\}
	&=
	\sum_{k=1}^{|\ga_2|}\frac{\sum_{\substack{\nu_1+\cdots+\nu_k=\nu\\
	0\ne\nu_n\in\N^d~(\forall n)}}C_{\nu_1,\dots,\nu_k}\prod^k_{n=1}\pa^{\nu_n}_{v_*}(\lam2^j\<v_*\>^{-1}+i(\tau+v_*\cdot\xi))}{(\lam2^j\<v_*\>^{-1}+i(\tau+v_*\cdot\xi))^{k+1}}. 
\end{align*}
Therefore, together with the zero-th derivative, it suffices to establish the multiplier norm of 
\begin{align*}
	m_g(\tau,\xi)&:=
	\frac{\om^{|\ga_1|}\prod^k_{n=1}\pa^{\nu_n}_{v_*}(\lam2^j\<v_*\>^{-1}+i(\tau+v_*\cdot\xi))}{(\lam2^j\<v_*\>^{-1}+i(\tau+v_*\cdot\xi))^{k+1}}\chi_{\frac672^j\le|(\tau,\xi)|\le2^{j+1}},
\end{align*}
with any $\nu_1+\cdots+\nu_k=\nu$, $\nu_n\ne 0$, and $0\le k\le |\ga_2|$. For brevity, we denote $\prod^0_{k=1}(\cdots)=1$. 
By using dilation $(\tau,\xi)\mapsto 2^{-j}(\tau,\xi)$ and rotations $(\wt\tau,\wt\xi)=R_vR(\tau,\xi)$ as in \eqref{changeRotation}, it suffices to find the multiplier norm of 
\begin{align}\label{mgpri}
	m_g'(\wt\tau,\wt\xi)&:=
	\frac{\om^{|\ga_1|}}{2^j}\fr{\prod^k_{n=1}(\lam\pa^{\nu_n}_{v_*}\<v_*\>^{-1}+\1_{|\nu_n|=1}\xi^{\nu_n})}{(\lam\<v_*\>^{-1}+i\wt\tau\<v_*\>)^{k+1}}\chi_{\frac67\le|(\wt\tau,\wt\xi)|\le2},
\end{align}
where $\xi=R^{-1}R_v^{-1}(\wt\tau,\wt\xi)$. 
Using estimate \eqref{boundtauxi}, the derivatives of multiplier $m'_g$ in \eqref{mgpri} satisfies 
\begin{align*}
	|\pa^{\nu}_{\wt\tau,\wt\xi}m_g'(\wt\tau,\wt\xi)|
	\le \fr{\om^{|\ga_1|}\<v_*\>}{\lam^{1+k}2^j|(\wt\tau,\wt\xi)^{\nu}|}
	\le \fr{\<v_*\>}{\lam^{1+|\ga_1|+k}2^j|(\wt\tau,\wt\xi)^{\nu}|}, 
\end{align*}
provided that $\lam^{-1}=\om\ge 1$. 
Therefore, by the Marcinkiewicz multiplier theorem, we deduce the $L^p_{t,x}$ multiplier norms: 
\begin{align}\label{mgMultiNorm}
	\|m_g\|_{L^p_{t,x}\to L^p_{t,x}}=\|m_g'\|_{L^p_{t,x}\to L^p_{t,x}}\le C_{p}\fr{\<v_*\>}{\lam^{1+|\ga_1|+k}2^j},
\end{align}
for any $p\in(1,\infty)$. 
Since $0\le k\le |\ga_2|$ and $|\ga_1|+|\ga_2|\le M$, substituting \eqref{mgMultiNorm} into \eqref{Igpri1} and using $|\pa^{\ga_1}_{v_*}\Psi_0(v_*)|\le C_N\<v_*\>^{-N}$ for any $N\ge 0$, we have 
\begin{align}\label{es441g}\notag
	\|I_g\|_{L^p_{t,x,v}}&\le \fr{C_p}{\lam^{1+M}2^j}
	\Big\|\int_{\R^d_{v_*}}\om^{d}\<\om(v-v_*)\>^{-N}
	\|\<v_*\>\<D_{v_*}\>^{-M}\De_jg(v_*)\|_{L^p_{t,x}}\,dv_*\Big\|_{L^p_v}\\
	&\le \fr{C_p}{\lam^{1+M}2^j}\|\<D_{v}\>^{-M}\<v\>\De_jg(v)\|_{L^p_{t,x,v}}, 
\end{align}
for any even number $M\ge 0$.
By interpolation, \eqref{es441g} also holds for any real $M\ge 0$. 

\smallskip 
Together with the estimate of $I_f$ in \eqref{esIf}, to obtain the optimal regularity, we let $\lam^{1+M}2^j=\lam^{-\th_{m,p}}$, i.e., $\lam=2^{-\fr{1}{\th_{m,p}+1+M}j}$ and $\lam^{\th_{m,p}}=2^{-\fr{\th_{m,p}}{\th_{m,p}+1+M}j}$ with any $m\in(0,\fr{d}{p})$, to deduce 
\begin{align*}
	\|\<D_v\>^{-m}I_f\|_{L^p_{t,x,v}}
	+\|I_g\|_{L^p_{t,x,v}}
	&\le C_{p}2^{-\fr{\th_{m,p}}{\th_{m,p}+1+M}j}
	\Big(\|\De_jf\|_{L^p_{t,x,v}}
	+\|\<D_{v}\>^{-M}\<v\>\De_jg(v)\|_{L^p_{t,x,v}}\Big).
\end{align*}
Substituting this into \eqref{eqFour1}, we obtain 
\begin{align*}
	\|\<D_v\>^{-m}P_0\De_jf\|_{L^p_{t,x,v}}
	&\le C_{p}2^{-\fr{\th_{m,p}}{\th_{m,p}+1+M}j}
	\Big(\|\De_jf\|_{L^p_{t,x,v}}
	+\|\<D_{v}\>^{-M}\<v\>\De_jg(v)\|_{L^p_{t,x,v}}\Big), 
\end{align*}
for any $p\in(1,\infty)$, which implies the first part of \eqref{esPkWjDejf}.

\smallskip For the term $(I-P_0)$, note that $1-\wh{\Psi_0}(\om^{-1}\eta)$, the kernel of $I-P_0$, is supported on $\{|\eta|\ge \om\}$, it's direct to check that 
\begin{align*}
	\Big|\pa^{\ga}_\eta\Big({(1-\wh{\Psi_0}(\om_0^{-1}2^{-\al j}\eta))\<\om\>^{m}}{\<\eta\>^{-m}}\Big)\Big|\le C_\ga|\eta|^{-|\ga|}.
\end{align*}
Thus, by the $L^p$ multiplier theorem and $\om=\lam^{-1}=2^{\fr{p}{\th_{m,p}+1+M}j}$, for any $p\in(1,\infty)$, 
\begin{align*}
	\|\<D_v\>^{-m}(I-P_0)\De_jf\|_{L^p_{t,x,v}}
	&\le C_{p}\<\om\>^{-m}\|\De_jf\|_{L^p_{t,x,v}}
	\le C_{p}2^{\fr{-p\th_{m,p}}{\th_{m,p}+1+M}j}\|\De_jf\|_{L^p_{t,x,v}}.
\end{align*}
This implies the second part of \eqref{esPkWjDejf} and concludes Theorem \ref{ThmNegativeHypo}. 
\end{proof}

\subsection{Corollary of the negative-order hypoellipticity}
As a corollary, let $A_0>0$ and let $f$ satisfy (this extension will be done in Section \ref{SecLPkinetic} and \eqref{eqkinetic12})
\begin{align}\label{es451}
	(\pa_t+v\cdot\na_x)f=
	\begin{cases}
		-A_0f+\Ga(\mu^{\frac{1}{2}},f),&\text{ in }t\in(T,\infty),\\
		\Ga(\mu^{\frac{1}{2}}+\phi_1,f)+\Ga(f,\mu^{\frac{1}{2}}+\phi_2),&\text{ in }t\in(0,T),\\
		A_0f-\Ga(\mu^{\frac{1}{2}},f),&\text{ in }t\in(-\infty,0).
	\end{cases}
\end{align}
Then we can obtain the regularity in $L^p_{t,x}$ for $\<D_v\>^{-m}f$. Note that we will use part of the regularity to derive the embedding on the left-hand side and the remaining regularity for the nonlinear terms on the right-hand side.

\begin{Coro}\label{CoroNegativeHypo}
	Assume the same conditions of Theorem \ref{ThmNegativeHypo}. 
Let $n\in\R$, $p\in[2,\infty)$, $m>0$, and 
\begin{align*}
	M=\frac{d}{2}-\frac{d}{p}+2s,\ \ \beta=\fr{\th_{m,p}}{\th_{m,p}+1+M},\ \  \beta'=\fr{\beta}{2}, \ \ \th_{m,p}=
\begin{cases}
	\min\{\frac{m^-}{2},\frac{1^-}{p}\},&\text{if }p>2,\\
	\min\{\frac{m}{2},\frac{1}{2}\},&\text{if }p=2, 
\end{cases}
\end{align*}
for any $m^-\in(0,m)$. 
Then, if $f$ solves \eqref{es451}, there exists a constant $r^*\in[1,\infty)$ such that 
\begin{align}\label{esNegativeforf}\notag
	&\|\<D_{t,x}\>^{\beta'}\<D_v\>^{-m}\<v\>^{-n}f\|_{L^p_{t}(\R)L^p_{x,v}}
	\\
	&\notag\quad\le C\big(1+\|\phi_1\|_{L^{\infty}_{t}([0,T])L^{\infty}_{x}L^2_v}+\|\<v\>^{1+\max\{\ga+2s,0\}-n}\phi_2\|_{L^{r^*}_{t}([0,T])L^{r^*}_{x}L^2_v}\big)\\
	&\notag\qquad\quad\times\|\<v\>^{1+\max\{\ga+2s,0\}-n}f\|_{L^p_{t}([0,T])L^p_{x}L^2_v}\\
	&\qquad
	+C(1+A_0)\|\<v\>^{1-n}f\|_{L^p_{t}((-\infty,0)\cup(T,\infty))L^p_{x}L^2_v}, 
\end{align}
where $r^*=\fr{d+1}{\beta'}$. 
(For exmaple, when $p=2$, we have $r^*=\frac{2(d+1)(m^-+2(1+M))}{m^-}$.)
\end{Coro}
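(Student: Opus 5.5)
We apply Theorem \ref{ThmNegativeHypo} to $F:=\<v\>^{-n}f$ on the whole line $\R_t$. Commuting $\<v\>^{-n}$ through the transport operator gives $(\pa_t+v\cdot\na_x)F=G:=\<v\>^{-n}g$, where $g$ is the right-hand side of \eqref{es451} on each of the three time intervals. Summing \eqref{esPkWjDejf} and \eqref{eq442} over the time-spatial blocks $\De^{t,x}_j$, with $P_0$ and $I-P_0$ together, yields
\[
2^{\beta j}\|\<D_v\>^{-m}\De_jF\|_{L^p_{t,x,v}}\lesssim \|\De_jF\|_{L^p_{t,x,v}}+\|\<D_v\>^{-M}\<v\>\De_jG\|_{L^p_{t,x,v}}.
\]
Since $\beta'=\beta/2<\beta$, we give up half of the gain: $\sum_j 2^{(\beta'-\beta)j}$ converges, so the Littlewood--Paley characterization of $\<D_{t,x}\>^{\beta'}$ in $L^p$ (Besov $B^{\beta}_{p,\infty}\subset H^{\beta',p}$) controls $\|\<D_{t,x}\>^{\beta'}\<D_v\>^{-m}F\|_{L^p}$ by $\sup_j$ of the right-hand side. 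The main point is then to bound $\|\<D_v\>^{-M}\<v\>^{1-n}g\|_{L^p_{t,x,v}}$ by the right-hand side of \eqref{esNegativeforf}. The role of $M=\frac d2-\frac dp+2s$ is that $2s$ derivatives absorb the order of $\Ga$, and the Sobolev embedding $H^{\frac d2-\frac dp}_v\hookrightarrow L^p_v$ (in dual form, $\<D_v\>^{-(\frac d2-\frac dp)}:L^2_v\to L^p_v$) converts the $L^p_v$ norm into the $L^2_v$ norms that appear on the right.

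On $(-\infty,0)\cup(T,\infty)$ the source is $\pm A_0f\mp\Ga(\mu^{\fr12},f)$. We bound $\|\<D_v\>^{-2s}\<v\>^{1-n}\Ga(\mu^{\fr12},f)\|_{L^2_v}\lesssim\|\<v\>^{1-n+(\ga+2s)^+}f\|_{L^2_v}$ by duality, using the standard upper bound $|(\Ga(\mu^{\fr12},f),h)|\lesssim\|\<v\>^{\ga/2+s}\<D_v\>^sf\|\,\|\<v\>^{\ga/2+s}\<D_v\>^sh\|$, moving one $\<D_v\>^s$ onto the test function (via the commutator/symbol bounds \eqref{CommawPa}), and then applying Sobolev embedding in $v$. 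The extra weight $\max\{\ga+2s,0\}$ is harmless in the soft case and is recorded in the statement. This gives the $C(1+A_0)$ term, up to this weight.

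On $(0,T)$ the nonlinear terms are treated the same way by duality in $v$ against $h\in\<D_v\>^{-M}L^{p'}_v$. For $\Ga(\phi_1,f)$ we use the trilinear bound with $\phi_1$ in $L^2_v$ and place $\phi_1$ in $L^\infty_{t,x}$, which gives $\|\phi_1\|_{L^\infty_tL^\infty_xL^2_v}\|\<v\>^{\cdots}f\|_{L^p_{t,x}L^2_v}$. For $\Ga(f,\phi_2)$ we need the derivatives to fall on the test function rather than on $\phi_2$. We use the symmetric form from the pre--post change of variables, or the bound $(\Ga(f,g),h)\lesssim\|f\|\,\|g\|\,\|\<D_v\>^{2s}h\|$ with weights, valid after the $\<D_v\>^{-M}$ smoothing. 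Hölder in $(t,x)$ then puts $\phi_2$ in $L^{r^*}_{t,x}$ and $f$ in $L^{\tilde p}_{t,x}$ with $\frac1p=\frac1{r^*}+\frac1{\tilde p}$.

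The main obstacle is the integrability mismatch in the $\phi_2$ term: Hölder forces $f\in L^{\tilde p}_{t,x}$ with $\tilde p>p$. We close it with the gained regularity. Sobolev embedding in $(t,x)\in\R^{d+1}$ gives $H^{\beta',p}\hookrightarrow L^{\tilde p}$ for $\frac1{\tilde p}=\frac1p-\frac{\beta'}{d+1}$, i.e.\ $r^*=\frac{d+1}{\beta'}$. But this embedding applies to $\<D_v\>^{-m}f$, not to $f$, and it introduces the left-hand side itself. We would try one of two routes. The first is to use the remaining half of the gain, $2^{(\beta-\beta')j}$, block by block: Bernstein's inequality on $\De^{t,x}_j$ costs $2^{j(d+1)(\frac1p-\frac1{\tilde p})}=2^{\beta' j}$, which is compensated exactly by the unused factor, so the output is $L^p$ of $f$. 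The second is to put the negative $v$-derivative on $f$ in the first slot of $\Ga$, as in the introduction. A single-sweep argument of this type presumably explains why the statement carries $f$ only in $L^p_{t,x}L^2_v$ on the right. Justifying that this Bernstein trade is compatible with the bilinear structure, where $\De_j$ hits a product in $(t,x)$, is the step I expect to require the most care; a paraproduct decomposition in $(t,x)$ would be the first thing to try.
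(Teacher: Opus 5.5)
Your skeleton matches the paper: apply Theorem~\ref{ThmNegativeHypo} to $\<v\>^{-n}f$, pass from $\<D_v\>^{-M}$ in $L^p_v$ to $\<D_v\>^{-2s}$ in $L^2_v$ by Sobolev embedding in $v$, and bound each piece of the source by \eqref{64eq} with all $2s$ derivatives on the test function. The gap is the one nontrivial step, the $\phi_2$ term. You leave it open, and the way you set it up cannot close.

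Reducing to $\sup_j$ and then to $\|\<D_v\>^{-M}\<v\>^{1-n}g\|_{L^p_{t,x,v}}$ asks for $L^p_{t,x}$ integrability of $\Ga(f,\phi_2)$. That is not available when $f\in L^p_{t,x}$ and $\phi_2\in L^{r^*}_{t,x}$, and it is what pushes you toward upgrading $f$ to $L^{\tilde p}$ and toward paraproducts.

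Your first route has the right exponent count, but it must be applied to the source \emph{as a whole} and in the dual direction:
\begin{itemize}
\item After spending half of the gain, the source only needs to be controlled in $\big(\sum_j 2^{-2\beta' j}\|\<D_v\>^{-M}\<v\>^{1-n}\De^{t,x}_jG\|^2_{L^p_{t,x,v}}\big)^{1/2}$. Keep the $\ell^2$ sum, as in \eqref{es429}.
\item The embedding $L^{r_0}_{t,x}\hookrightarrow B^{-\beta'}_2[L^p]$ with $\frac{\beta'}{d+1}=\frac1{r_0}-\frac1p$ then \emph{lowers} the integrability required of $G$ to $L^{r_0}$, $r_0<p$; this is \eqref{es457}. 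Concretely, it is Bernstein $\|\De^{t,x}_jG\|_{L^p}\lesssim 2^{\beta' j}\|\De^{t,x}_jG\|_{L^{r_0}}$ followed by Littlewood--Paley.
\item H\"older with $\frac1{r_0}=\frac1p+\frac1{r^*}$ puts $\phi_2$ in $L^{r^*}_{t,x}$ and leaves $f$ in $L^p_{t,x}$, giving exactly $r^*=\frac{d+1}{\beta'}$. The $\phi_1$ and linear pieces are taken with $r_0=p$.
\end{itemize}
The bilinear structure enters only through the pointwise-in-$(t,x)$ bound \eqref{64eq}, so $\De^{t,x}_j$ never has to be split across the product. No paraproduct is needed, and nothing about $f$ beyond $L^p_{t,x}L^2_v$ is used.

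Your $\ell^\infty$ bookkeeping would also miss the stated endpoint. If the Bernstein cost $2^{\beta' j}$ exactly cancels the unused factor, you get a bound uniform in $j$, i.e.\ only $B^{\beta'}_{p,\infty}$. That does not embed into $H^{\beta',p}$, so you would have to give up some $\varepsilon$ and obtain only $r^*>\frac{d+1}{\beta'}$. With the $\ell^2$ sum the endpoint works, because $\sum_j\|\De^{t,x}_jG\|^2_{L^{r_0}}\lesssim\|G\|^2_{L^{r_0}}$ for $r_0\le 2$, by Minkowski plus the Littlewood--Paley inequality. This is the condition $p_0\le q_1$ in Theorem~\ref{BesovembeddThm}, which holds for $p=2$, the case used in \eqref{es77}.

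Finally, for $\Ga(\mu^{\fr12},f)$ the symmetric $H^s$--$H^s$ upper bound cannot be turned into an $L^2$--$H^{2s}$ bound by the symbol estimates \eqref{CommawPa}, which do not act on the collision form. Use \eqref{64eq} with $s_2=0$, $s_3=2s$, as for the other terms.
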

\begin{proof}
Within this proof, let $\R_t$ be the underlying time interval, and let $\wt{\De}^{t,x}_j$ be the operator defined in \eqref{SlightlyLarger} so that $\De^{t,x}_j=\De^{t,x}_j\wt{\De}^{t,x}_j$. 
First, for any $n\in\R$, using the Embedding Theorem \ref{BesovembeddThm}, we convert the Triebel-Lizorkin norm into the Besov norm with time-spatial pseudo-differential operator $\De^{t,x}_j$ as 
\begin{align}\label{es777z}
	\|\<D_{t,x}\>^{\beta'}\<D_v\>^{-m}\<v\>^{-n}\phi\|_{L^p_t(\R)L^p_{x,v}}
	& \le C\Big(\sum_{j\ge 0}2^{2\beta'j}\|\De^{t,x}_j\<D_v\>^{-m}\<v\>^{-n}\phi\|_{L^{p}_{t}(\R)L^{p}_{x,v}}^2\Big)^{\frac{1}{2}},
\end{align}
Thus, choosing $
2\beta'\le \beta$, applying hypoellipticity Theorem \ref{ThmNegativeHypo} to equation $(\pa_t+v\cdot\na_x)f=G$, where $G$ denotes the right-hand side of \eqref{es451}, we have 
\begin{align}\label{es429}\notag
	&\Big(\sum_{j\ge 0}2^{2\beta'j}\|\<D_v\>^{-m}\De^{t,x}_j\<v\>^{-n}f\|_{L^p_{t,x,v}}^2\Big)^{\frac{1}{2}}\\
	&\notag\le C_p\Big(\sum_{j\ge 0}2^{(2\beta'-2\beta)j}\Big\{\|\De^{t,x}_j\<v\>^{-n}f\|_{L^p_{t,x,v}}^2
  +\|\De^{t,x}_j\<D_v\>^{-M}\<v\>^{1-n}G\|_{L^p_{t,x,v}}^2\Big\}\Big)^{\fr12}\\
	&\quad\notag
	+ C_p\Big(\sum_{j\ge 0}2^{(2\beta'-2\beta)j}\|\De^{t,x}_j\<v\>^{-n}f\|_{L^p_{t,x,v}}^2\Big)^{\fr12}\\
	&\le C_{p}\|\<v\>^{-n}f\|_{L^p_{t,x,v}}+C_{p}\Big(\sum_{j\ge0}2^{(2\beta'-2\beta)j}\|\<D_v\>^{-M}\<v\>^{1-n}\De^{t,x}_jG\|_{L^p_{t,x,v}}^2\Big)^{\frac{1}{2}},
\end{align}
for any $n\in\R$, where, since $\beta'<\beta$, we have used the trivial embedding $L^p\hookrightarrow B^{\beta'-\beta}_{2}[L^{p}]$ for the $f$ term by the embedding Theorem \ref{BesovembeddThm}. 
For the $G$ term, we utilize the regularity factor $2^{(2\beta'-2\beta)j}\le 2^{-\beta'j}$ (since $\beta'=\fr{\beta}{2}$) as follows. Notice $M-2s\ge\frac{d}{2}-\frac{d}{p}\ge 0$ (for the case $p=2$, no embedding is required).
Then, by embedding Theorem \ref{BesovembeddThm}, i.e., $H^{-2s}_v=B^{-2s}_{2}[L^{2}]\hookrightarrow F^{-M}_{2}[L^{p}]=H^{-M,p}$ and $F^{0}_{2}[L^{r_0}]=L^{r_0}_{t,x}\hookrightarrow B^{-\beta'}_{2}[L^{p}]$ provided that $\beta'=(d+1)(\frac{1}{r_0}-\frac{1}{p})>0$ or $r_0=p$, we obtain, for any suitable $g$, that 
\begin{align}\label{es457}\notag
	\Big(\sum_{j\ge 0}2^{-2\beta'j}\|\<D_v\>^{-M}\De^{t,x}_jg\|_{L^p_{t,x,v}}^2\Big)^{\frac{1}{2}}
	&\le C\Big(\sum_{j\ge 0}2^{-2\beta'j}\|\<D_v\>^{-2s}\De^{t,x}_jg\|_{L^p_{t,x}L^2_v}^2\Big)^{\frac{1}{2}}\\
	&\le C\|\<D_v\>^{-2s}g\|_{L^{r_0}_{t,x}L^2_v}. 
\end{align}
Splitting $G$ into time intervals $(-\infty,0)\cup[0,T]\cup(T,\infty)$, and applying \eqref{es457} to the collision $G$ with $(d+1)(\frac{1}{r_0}-\frac{1}{p})=\beta'>0$ for $\phi_2$ term and and $r_0=p$ for other terms, we obtain 
\begin{align}\label{es458}\notag
	&\Big(\sum_{j\ge 0}2^{-2\beta'j}\|\<D_v\>^{-M}\<v\>^{1-n}\De^{t,x}_jG\|_{L^p_{t,x,v}}^2\Big)^{\frac{1}{2}}\\
	&\notag\le C\big\|\<D_v\>^{-2s}\<v\>^{1-n}\Ga(f,\phi_2)\1_{t\in[0,T]}\big)\big\|_{L^{r_0}_{t,x}L^2_v}\\
	&\notag\
	+C\big\|\<D_v\>^{-2s}\<v\>^{1-n}
	\big((-A_0f+\Ga(\mu^{\frac{1}{2}},f))\1_{t\in(T,\infty)}
	\\&\qquad \  +(\Ga(\mu^{\frac{1}{2}}+\phi_1,f)+\Ga(f,\mu^{\frac{1}{2}}))\1_{t\in[0,T]}
	+(A_0f-\Ga(\mu^{\frac{1}{2}},f))\1_{t\in(-\infty,0)}\big)\big\|_{L^p_{t,x}L^2_v}.
\end{align}
provided that $(d+1)(\frac{1}{r_0}-\frac{1}{p})=\beta'>0$. 
Finally, utilizing the negative Bessel potential $\<D_v\>^{-2s}$ and the collision estimate \eqref{64eq}, we continue \eqref{es458} as 
\begin{align}\label{esg457}\notag
	&\Big(\sum_{j\ge 0}2^{-2\beta'j}\|\<D_v\>^{-M}\<v\>^{1-n}\De^{t,x}_jG\|_{L^p_{t,x,v}}^2\Big)^{\frac{1}{2}}\\
	&\notag\le 
	C\Big\|\|\<v\>^{1+\ga+2s-n}f\|_{L^2_v}\Big\|_{L^{p}_{t}([0,T])L^{p}_{x}}+C\Big\|\|f\|_{L^2_v}\|\<v\>^{1+\ga+2s-n}\phi_2\|_{L^2_v}\Big\|_{L^{r_0}_{t}([0,T])L^{r_0}_{x}}\\
	&\notag\quad
	+C\|\<v\>^{1+\max\{\ga+2s,0\}-n}f\|_{L^p_{t}(\R)L^p_{x}L^2_v}
	+C(1+A_0)\|\<v\>^{1-n}f\|_{L^p_{t}((-\infty,0)\cup(T,\infty))L^p_{x}L^2_v}\\
	&\notag\le C\big(1+\|\phi_1\|_{L^{\infty}_{t}([0,T])L^{\infty}_{x}L^2_v}+\|\<v\>^{1+\ga+2s-n}\phi_2\|_{L^{r^*}_{t}([0,T])L^{r^*}_{x}L^2_v}\big)\|\<v\>^{1+\max\{\ga+2s,0\}-n}f\|_{L^p_{t}([0,T])L^p_{x}L^2_v}\\
	&\quad
	+C(1+A_0)\|\<v\>^{1-n}f\|_{L^p_{t}((-\infty,0)\cup(T,\infty))L^p_{x}L^2_v},
\end{align}
where $r^*\in(r_0,\infty)$ is given by $\frac{d+1}{r^*}=(d+1)(\frac{1}{r_0}-\frac{1}{p})=\beta'>0$. 
Combining the estimates \eqref{es777z}, \eqref{es429} and \eqref{esg457} with $\beta'=\fr{\beta}{2}$, we obtain \eqref{esNegativeforf} and conclude Corollary \ref{CoroNegativeHypo}. 
\end{proof}


\section{Upper bound of the collision operator}\label{SecUpper}
In this Section, we address the upper bound of the collision operator, by truncating the angular cross-section $b(\cos\th)$ and kinetic relative velocity kernel $|v-v_*|^\ga$ as in \eqref{Bbarc} and \eqref{QcDef}.

\subsection{Main upper-bound estimate}
We begin by stating the main result of this section.
\begin{Thm}[Upper bound]
	\label{ThmUpper}
	Let $0\le s_1\le s$ and assume $\ga+2s-s_1>-\frac{d}{2}$. Then 
\begin{align}\label{QfghUpper}
	|(Q(f,g),h)_{L^2_v}|
	&\le C\|\<D_v\>^{-s_1}\<v\>^{4|\ga|+4+\frac{d+1}{2}}f\|_{L^2_v}\|g\|_{L^2_D}\|h\|_{L^2_D}.
\end{align}
Consequently, together with commutator estimate \eqref{CommGaAndQ}, we have, 
for any $n,N\in\R$, 
\begin{align}\label{UpperGaNoSum}\notag
	|(\Ga(f,g),h)_{L^2_v}|
	&\le 
	C\|\<D_v\>^{-s_1}\<v\>^{-N}f\|_{L^2_v}\|g\|_{L^2_D}\|h\|_{L^2_D}\\
	&\quad+C\|f\|_{L^2_v}\|\<v\>^{\frac{\ga+2s}{2}+n}g\|_{L^2_v}\|\<v\>^{-n}h\|_{L^2_D},
\end{align}
and, denoting operators $P_k,R_r$ as in \eqref{DejQkDef}, 
\begin{align}\label{UpperGaes}\notag
	&|(\Ga(f,P_kR_rg),P_kR_rh)_{L^2_v}|\\
	&\ \ \notag\le C\|\<D_v\>^{-s_1}\<v\>^{-N}f\|_{L^2_v}\|P_kR_rg\|_{L^2_D}\|P_kR_rh\|_{L^2_D}\ \ \emph{(leading order)}\\
	&\quad
	+C\|f\|_{L^2_v}\|\<v\>^{\frac{\ga+2s}{2}}P_kR_rg\|_{L^2_v}\|P_kR_rh\|_{L^2_D} \ \ \emph{(lower-order commutator)}.
\end{align}
\end{Thm}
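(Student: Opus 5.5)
The plan is to exploit the fact that $(Q(f,g),h)_{L^2_v}$ is linear in $f$ and depends on $f$ only through $f_*=f(v_*)$. In the weak form
\begin{align*}
(Q(f,g),h)_{L^2_v}=\int_{\R^d_{v_*}} f(v_*)\,\mathcal{K}_{g,h}(v_*)\,dv_*,\qquad
\mathcal{K}_{g,h}(v_*):=\int_{\R^d_v\times\S^{d-1}_\si}B(v-v_*,\si)\,g(v)\big(h(v')-h(v)\big)\,d\si dv,
\end{align*}
duality in $v_*$ gives
\begin{align*}
|(Q(f,g),h)|\le\|\<D_v\>^{-s_1}\<v\>^{l}f\|_{L^2_v}\,\|\<D_{v_*}\>^{s_1}\<v_*\>^{-l}\mathcal{K}_{g,h}\|_{L^2_{v_*}},
\end{align*}
with $l=4|\ga|+4+\frac{d+1}{2}$. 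So \eqref{QfghUpper} reduces to showing that $\mathcal{K}_{g,h}$ has $s_1$ derivatives in $v_*$ in weighted $L^2$, controlled by $\|g\|_{L^2_D}\|h\|_{L^2_D}$. The $s_1=0$ case is the classical trilinear bound, in the form with $\|f\|_{L^2}$ in \cite{Alexandre2012,Gressman2011}; I use its proof as a template. The weight $\<v\>^{-l}$, with $l$ large, absorbs every polynomial growth produced along the way. I also use \eqref{equiv} and \eqref{CommawPa} to commute $\<v_*\>^{-l}$ past $\<D_{v_*}\>^{s_1}$ at the cost of lower-order terms.

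To bound the $H^{s_1}$ norm I will use the Gagliardo seminorm and study the differences $\mathcal{K}_{g,h}(v_*+z)-\mathcal{K}_{g,h}(v_*)$. The key observation is translation covariance: $B$ depends only on $v-v_*$, and $v'$ in \eqref{vprime} commutes with common translations. Changing variables $v\mapsto v+z$ therefore gives
\begin{align*}
\mathcal{K}_{g,h}(v_*+z)=\mathcal{K}_{\tau_zg,\tau_zh}(v_*),\qquad \tau_zg(v):=g(v+z).
\end{align*}
Hence the $v_*$-difference splits into $\mathcal{K}_{\tau_zg-g,\tau_zh}$ and $\mathcal{K}_{g,\tau_zh-h}$. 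Each piece is again a trilinear collision-type quantity, now with one argument replaced by a difference quotient of size $|z|^{s_1}$ measured in the $s$-regularity of $g$ or $h$. Integrating against $|z|^{-d-2s_1}$, together with $s_1\le s$, moves at most $s_1$ derivatives onto $g$ and $h$. These can be paid for by the anisotropic dissipation \eqref{tia} and \eqref{esD}, up to polynomial weights that the large $l$ absorbs.

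To make this rigorous, I follow the decomposition announced at the start of this section. I split $b$ into a grazing part $|\th|\lesssim$ small and a regular part, as in \eqref{Bbarc}, and $|v-v_*|^\ga$ into $|v-v_*|\le1$ and $|v-v_*|>1$, as in \eqref{QcDef}. The regular-angle, large-relative-velocity part is benign: cancellation is not needed there, so Cauchy--Schwarz suffices. For the grazing part, I use the standard Taylor/cancellation lemma for $h(v')-h(v)$ to extract $\th^{2s}$ against the symbol $\ti a$. The step I expect to be the main obstacle is the small relative velocity region $|v-v_*|\le1$ with soft $\ga$. There one has to bound the $L^2_{v_*}$ norm of $\int|v-v_*|^{\ga+2s-s_1}\1_{|v-v_*|\le1}(\cdots)\,dv$ by Hardy--Littlewood--Sobolev or Young's inequality. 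Square integrability of the kernel requires $2(\ga+2s-s_1)>-d$, which is exactly the hypothesis. I would first try Young's inequality with $|\cdot|^{\ga+2s-s_1}\1_{|\cdot|\le1}\in L^2$, and fall back on the pre-post change of variables if the singular part cannot be symmetrized directly.

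The consequences then follow by standard reductions. For \eqref{UpperGaNoSum}, write $\Ga(f,g)=\mu^{-1/2}Q(\mu^{1/2}f,\mu^{1/2}g)$ and use the commutator estimate \eqref{CommGaAndQ} to replace it by $Q(\mu^{1/2}f,g)$ plus a remainder. The main part is handled by \eqref{QfghUpper}, where $\mu^{1/2}$ turns $\<v\>^{l}$ into any $\<v\>^{-N}$ (after commuting with $\<D_v\>^{-s_1}$ via \eqref{equiv}). The remainder, which involves no singular cancellation, gives the lower-order term $\|f\|_{L^2}\|\<v\>^{\frac{\ga+2s}{2}+n}g\|\|\<v\>^{-n}h\|_{L^2_D}$. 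Finally, \eqref{UpperGaes} is \eqref{UpperGaNoSum} applied with $g,h$ replaced by $P_kR_rg,P_kR_rh$ and $n=0$.
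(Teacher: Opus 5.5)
Your duality reduction to an $H^{s_1}_{v_*}$ bound on $\mathcal{K}_{g,h}$ is correct, and so is the covariance identity $\mathcal{K}_{g,h}(v_*+z)=\mathcal{K}_{\tau_zg,\tau_zh}(v_*)$. The argument breaks when you estimate $\mathcal{K}_{\tau_zg-g,\tau_zh}$ and $\mathcal{K}_{g,\tau_zh-h}$ separately.

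A bound $\|\mathcal{K}_{G,H}\|_{L^2_{v_*}}\lesssim\|G\|_{H^a}\|H\|_{H^b}$, with any polynomial weights, is by duality the same as $|(Q(f,G),H)_{L^2_v}|\lesssim\|f\|_{L^2_v}\|G\|_{H^a}\|H\|_{H^b}$. This forces $a+b\ge 2s$. To see it, take $f=\mu$ and $G=H$ compactly supported and highly oscillating; then $-(Q(\mu,G),G)_{L^2_v}$ controls $\|G\|_{H^s}^2$ up to lower order.

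Integrating against $|z|^{-d-2s_1}$ then adds $s_1$ derivatives to $g$ or to $h$. So each piece needs $2s+s_1$ derivatives in total. But $\|g\|_{L^2_D}\|h\|_{L^2_D}$ provides only $\<v\>^{\ga/2}\<D_v\>^{s}$ on each factor; the anisotropic part of $\ti a$ adds weight in angular directions, not derivatives. Your claim that the extra $s_1$ derivatives ``can be paid for by the anisotropic dissipation'' therefore fails for every $s_1>0$.

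The loss is visible on the Fourier side. The $z$-difference acts on $\mathcal{K}$ through a multiplier of modulus $|e^{2\pi i z\cdot\eta_*}-1|\lesssim\min\{1,|z||\eta_*|\}$, which is governed by the frequency $\eta_*$ of $f$. Your splitting
\[
e^{2\pi i z\cdot\eta_*}-1=e^{2\pi i z\cdot\eta}\big(e^{-2\pi i z\cdot(\eta-\eta_*)}-1\big)+\big(e^{2\pi i z\cdot\eta}-1\big)
\]
replaces it by two terms governed by the frequencies of $g$ and $h$. In the dominant region $|\eta|\gg\<\eta_*\>$ these two terms cancel to leading order, and estimating them separately costs a factor $(|\eta|/\<\eta_*\>)^{s_1}$.

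The missing idea is that the $s_1$ derivatives must be absorbed by the $v_*$-regularity of the kinetic factor, not by $g$ and $h$. For $Q_c$ this is the decay $|\nabla^k\wh{\Th_c}(\eta_*)|\lesssim\<\eta_*\>^{-d-\ga-k}$ from \eqref{nakThc}. In the Bobylev representation, $v_*$ enters only through $\wh G(\eta_*,\eta-\eta_*)$, with $G=\chi_{|v-v_*|\le1}f(v_*)g(v)$, and through $\wh{\Th_c}(\eta_*-\eta^-)-\wh{\Th_c}(\eta_*)$. After Taylor expansion and angular integration, the symbol has size $|\eta|^{2s}\<\eta_*\>^{-d-\ga-2s}$ for $|\eta|\gtrsim\<\eta_*\>$. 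The condition $\<\eta_*\>^{s_1-d-\ga-2s}\in L^2_{\eta_*}$ is exactly the hypothesis $\ga+2s-s_1>-\frac d2$.

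Your heuristic $|v-v_*|^{\ga+2s-s_1}\1_{|v-v_*|\le1}\in L^2$ is this same mechanism in physical variables. But translation covariance never produces that kernel, because it moves all $v_*$-regularity onto $g$ and $h$.

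For the grazing part of $Q_{\bar c}$ the gain is not free either. The paper integrates by parts with $(I-\De_{v_*})$, which also hits the phase $e^{-2\pi i\eta^-\cdot v_*}$ and produces factors $(\eta^-)^{\al}$ with $|\al|\le2$. These are controlled only because $|\eta^-|=|\eta|\sin\frac\th2$ pairs with the angular singularity.

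Your reductions from \eqref{QfghUpper} to \eqref{UpperGaNoSum} and \eqref{UpperGaes} are fine and match the paper.
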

Here, we retain part of the dissipation norm because, in the main energy estimate \eqref{es21}, we cannot afford to lose any order of the weight $\<v\>$.
In order to prove Theorem \ref{ThmUpper}, by splitting $Q=Q_c+Q_{\bar{c}}$ as in \eqref{Bbarc}, in the following Subsections (see \eqref{QbarcTemp} and \eqref{QcTemp}), we will obtain 
\begin{Lem}\label{LemUpper}
Let $\ga,s,s_1,s_2,s_3$ be such that $\ga>-d$ and 
\begin{align*}
	\begin{cases}
s_2\ge 0, \ \ 
0\le s_1\le s_3, \ \ 
s_2+s_3=2s, \ \ \ga+2s-s_1>-\frac{d}{2},\\
\ga-s_1+s_2+1>-\frac{d}{2},\quad
\ga-s_1+s_2+s_3>-\frac{d}{2}.
	\end{cases}
\end{align*}
Then 
\begin{align}\label{Qcesfinal}
	|(Q_c(f,g),h)_{L^2_v}|
	\le C\|\<D_v\>^{-s_1}\<v\>^{-\ga}f\|_{L^2_v}\|\<D_v\>^{s_2}\<v\>^{\ga}g\|_{L^2_v}\|\<D_v\>^{s_3}h\|_{L^2_v}, 
\end{align}
and
\begin{align}\label{Qbarfhg}\notag
	\big|\big(Q_{\bar{c}}(f,g),h\big)_{L^2_v}\big|
	&\le C\|\<D_v\>^{-s}\<v\>^{\frac{d+3}{2}}f\|_{L^2_v}
	\|\<D_v\>^{s_2}\<v\>^{\ga}g(v)\|_{L^2_v}
	\|\<D_v\>^{s_3}h\|_{L^2_v}\\
	&\ \notag+C\|\<D_v\>^{-2}\<v\>^{4|\ga|+4+\frac{d+1}{2}}f\|_{L^2_v}\min\Big\{\|g\|_{L^2_D}\|h\|_{L^2_D},\\
	&\qquad\qquad \|\<v\>^{\frac{\ga}{2}}g\|_{L^2_D}\|\<v\>^{-\frac{\ga}{2}}h\|_{L^2_D}+\|\<v\>^{\frac{\ga}{2}}g\|_{L^2_D}\|\<D_v\>^sh\|_{L^2_v}
	\Big\}.
\end{align}
\end{Lem}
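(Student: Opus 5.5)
We have not yet seen the definitions \eqref{Bbarc} and \eqref{QcDef}. The natural reading is that $Q_c$ is the part of the collision operator with the kinetic factor localized to small relative velocity, $|v-v_*|^\ga\chi(|v-v_*|)$ with $\chi$ compactly supported, and $Q_{\bar c}$ is the complementary part where $|v-v_*|\gtrsim 1$ (or a smooth truncation thereof). The plan is to treat the two pieces separately, each time moving $s_1$ derivatives (resp.\ $s$ or $2$ derivatives) off the first argument $f$. This is done through duality in $v_*$: write $f=\<D_v\>^{s_1}\<D_v\>^{-s_1}f$ and let $\<D_{v_*}\>^{s_1}$ act on the $v_*$-dependent part of the weak form.

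\textbf{Step 1: the term $Q_c$, giving \eqref{Qcesfinal}.} I would use the pre-post change of variables to write the weak form
\begin{align*}
(Q_c(f,g),h)=\int B_c\, f_*g\,(h'-h)\,d\sigma dv_*dv.
\end{align*}
For fixed $v$, set $\mathcal{T}(v_*)=\int_\sigma b\,|v-v_*|^\ga\chi\,(h'-h)\,d\sigma$. Then the $v_*$-integral is $\int f_*\,\mathcal{T}\,dv_*$, which is bounded by $\|\<D\>^{-s_1}f\|_{L^2}\,\|\<D_{v_*}\>^{s_1}(\ldots)\|_{L^2}$. The difference $h'-h$ supplies order $s_2+s_3=2s$ in total after the usual cancellation/Taylor argument (the standard $\theta^{2s}$ gain). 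I would split it so that $s_3$ derivatives fall on $h$ and $s_2$ on $g$, through a Bobylev-type Fourier representation or a commutator-free symmetrization. The extra $s_1\le s_3$ derivatives in $v_*$ are absorbed because $v'$ depends smoothly on $v_*$ for fixed $\sigma$.

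The singularity $|v-v_*|^{\ga-s_1+\cdots}$ produced by differentiating the kernel is then controlled by Hardy--Littlewood--Sobolev or Pitt-type inequalities. Local integrability of $|v-v_*|^{2(\ga-s_1+s_2+s_3)}$ near $0$ requires exactly
\begin{align*}
\ga-s_1+s_2+s_3>-\tfrac{d}{2}\quad\text{and}\quad \ga-s_1+s_2+1>-\tfrac{d}{2},
\end{align*}
which are the stated conditions. Since $|v-v_*|\le 1$ on this piece, the weights $\<v\>^{-\ga}$ and $\<v\>^{\ga}$ can be transferred freely. \emph{I expect this small-relative-velocity estimate to be the main obstacle.} The difficulty is distributing fractional derivatives among $f,g,h$ in the presence of the angular singularity. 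I would first try the Fourier (Bobylev) formula for the Maxwellian-like kernel $|v-v_*|^\ga\chi$, viewed as a convolution in $v-v_*$, and estimate the resulting trilinear Fourier integral with Cauchy--Schwarz, weighting by $\<\eta_*\>^{-s_1}$.

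\textbf{Step 2: the term $Q_{\bar c}$, giving \eqref{Qbarfhg}.} Here the kernel is smooth in $v-v_*$, so $\<D_{v_*}\>^{2}$ (or $\<D_{v_*}\>^{s}$) can be placed on the smooth $v_*$-dependence at the cost of polynomial weights $\<v\>^{4|\ga|+4}$. I would split according to whether the derivative hits the kinetic factor or the post-collisional variables. The main part reproduces the known upper bound $\|\<v\>^{\ldots}f\|\,\|g\|_{L^2_D}\|h\|_{L^2_D}$ from \cite{Gressman2011,Alexandre2012}, with $f$ replaced by $\<D\>^{-2}f$. This holds because that bound requires only an $L^2$ norm with weight $\<v\>^{(d+1)/2}$ on the $f$-factor, via $L^1\subset L^2_{(d+1)/2}$. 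The version with $\<v\>^{\pm\ga/2}$ follows by the same argument with a weight shift. The remainder, in which derivatives land on $h'-h$, gives the first term with $\<D\>^{-s}f$, estimated as in Step 1 but with no singularity.

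\textbf{Step 3: conclusion of Theorem \ref{ThmUpper}.} Choose $s_2=s_3=s$, with $s_1\le s$ and $\ga+2s-s_1>-\frac{d}{2}$. The conditions of Lemma \ref{LemUpper} then hold, since $1\ge s$ makes $\ga-s_1+s+1>-\frac{d}{2}$ automatic. Use \eqref{esD} and the weight commutation $\|\<v\>^l(\ti a^{1/2})^w f\|_{L^2_v}\approx\|(\ti a^{1/2})^w(\<v\>^lf)\|_{L^2_v}$ to bound $\|\<D\>^s\<v\>^{\ga}g\|$ and $\|\<D\>^sh\|$ by $\|g\|_{L^2_D}$ and $\|h\|_{L^2_D}$, using $\ga<0$ and localization of $v$ near $v_*$ to trade weights onto $f$. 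This yields \eqref{QfghUpper}.

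Estimates \eqref{UpperGaNoSum} and \eqref{UpperGaes} then follow by writing $\Ga(f,g)=Q(\mu^{1/2}f,g)$ plus the commutator $[\mu^{-1/2},\cdot]$ handled by \eqref{CommGaAndQ}. For \eqref{UpperGaes}, apply this with $g,h$ replaced by $P_kR_rg$ and $P_kR_rh$. The factor $\mu^{1/2}$ absorbs any weight $\<v\>^{-N}$ on $f$, because $\<D\>^{-s_1}$ commutes with $\mu^{1/2}$ modulo lower-order terms.
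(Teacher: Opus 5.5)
\textbf{Verdict: genuine gap.} Your plan for $Q_{\bar c}$ is broadly the paper's. You integrate by parts with $I-\De_{v_*}$ so that $\<D_v\>^{-2}$ falls on $f$. The pieces where derivatives hit the kernel become a collision operator with smooth kernel $\pa^{\al_1}\Th_{\bar c}$ and first argument $\<D_v\>^{-2}f$, handled by known bounds. The gap is the estimate \eqref{Qcesfinal} for $Q_c$, which is the heart of the lemma: you leave it at ``I would first try'', and the one concrete mechanism you give fails.

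On the support of $Q_c$ one has $|v-v_*|\le 1$. At fixed $\si$, both $b(\mathbf{k}\cdot\si)$ and $v'$ depend on $v_*$ through $\mathbf{k}=\frac{v-v_*}{|v-v_*|}$ and $|v-v_*|$, which are singular on the diagonal.
\begin{itemize}
\item Hypothesis \eqref{ths} gives no control on derivatives of $b$. Even for power-law $b$, $\pa_{v_*}b(\mathbf{k}\cdot\si)$ is one power of $\th$ more singular than $b$ and carries a factor $|v-v_*|^{-1}$.
\item $\pa_{v_*}[h(v')]=(\pa_{v_*}v')^{T}\na h(v')$ with $\pa_{v_*}v'=\frac12(I-\si\otimes\mathbf{k})$ of size one. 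This costs a full derivative of $h$ with no $\sin\frac\th2$ gain.
\end{itemize}
So letting $\<D_{v_*}\>^{s_1}$ act on $\mathcal T(v_*)$ at fixed $\si$ does not ``absorb'' $s_1\le s_3$ derivatives. Your accounting of the hypotheses is also off. Local integrability of $|v-v_*|^{2(\ga-s_1+s_2+s_3)}$ explains only $\ga-s_1+s_2+s_3>-\frac d2$. The conditions $\ga-s_1+s_2+1>-\frac d2$ and $\ga+2s-s_1>-\frac d2$ (your Step 1 never uses the latter) come from separate frequency regimes.

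What is missing is the frequency-side argument you only name.
\begin{itemize}
\item Encode the localization in $G(v_*,v)=\chi_{|v-v_*|\le1}f(v_*)g(v)$ and prove \eqref{esGS1}. This holds because $\<D_{v_*}\>^{-s_1}\<D_v\>^{s_2}\chi_{|v-v_*|\le 1}\<v\>^{-\ga}\<v_*\>^{\ga}\<D_{v_*}\>^{s_1}\<D_v\>^{-s_2}$ has symbol in $S(1)$, since $\<v\>\approx\<v_*\>$ on the support. This is the rigorous form of your weight transfer.
\item The Bobylev identity \eqref{Qc123} writes the form as $\int b(\frac{\eta}{|\eta|}\cdot\si)\wh G(\eta_*,\eta-\eta_*)\ol{\wh h(\eta)}\big(\wh{\Th_c}(\eta_*-\eta^-)-\wh{\Th_c}(\eta_*)\big)$. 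Here the angle no longer depends on $v_*$, and the kernel enters only through $|D^k\wh{\Th_c}(\eta)|\lesssim\<\eta\>^{-d-\ga-k}$ from \eqref{nakThc}.
\item On $\{|\eta^-|\le\frac12\<\eta_*\>\}$, Taylor expand $\wh{\Th_c}$. The $\om$-odd part of $\eta^-$ in \eqref{decompositionofetaminus} integrates to zero, leaving $\eta\sin^2\frac\th2$. Then \eqref{angulareta} gives the multiplier $\min\{1,|\eta|^{2s-2}\<\eta_*\>^{2-2s}\}\big(|\eta|\<\eta_*\>^{-d-\ga-1}+|\eta|^2\<\eta_*\>^{-d-\ga-2}\big)$.
\item Its $L^\infty_\eta L^2_{\eta_*}$ bound against the weights $\<\eta_*\>^{-s_1}\<\eta-\eta_*\>^{s_2}\<\eta\>^{s_3}$ is where the conditions enter. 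On $\{|\eta|\le2\<\eta_*\>\}$ one needs $\ga-s_1+s_2+1>-\frac d2$. On $\{|\eta|>2\<\eta_*\>\}$ one needs $\ga+2s-s_1>-\frac d2$ and $s_2+s_3\ge 2s$.
\item The region $\{|\eta^-|\ge\frac12\<\eta_*\>\}$ needs a further split by $|\eta^-|^2\lessgtr 2|\eta_*\cdot\eta^-|$ and $|\eta|\lessgtr2\<\eta_*\>$. This is where $s_1\le s_3$, $s_2\ge0$ and $\ga>-d$ are used.
\end{itemize}

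The same caveat applies to your Step 2. It must be run in the Bobylev form \eqref{Qbarc101}, where $v_*$-derivatives of $e^{-2\pi i\eta^-\cdot v_*}$ produce factors of $\eta^-$. The mixed term with a single factor $\eta^-$ (the paper's $Q_{\bar c,2}$) is not integrable against $b$ when $s\ge\frac12$. It needs an extra Taylor step in the first argument of $\wh{G^{\al_1}}$ plus the same $\om$-symmetry cancellation, which ``as in Step 1 but with no singularity'' does not supply.
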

\begin{proof}[Proof of Theorem \ref{ThmUpper}]
To apply Lemma \ref{LemUpper}, we let $s_2=s_3=s$ and choose any $s_1>0$ satisfying $0\le s_1\le s$. By splitting 
	\begin{align*}
		(Q(f,g),h)_{L^2_v}
		&
		=(Q(f,\<v\>^{-\frac{\ga}{2}}g),\<v\>^{\frac{\ga}{2}}h)_{L^2_v}
		+\big((Q(f,g),h)_{L^2_v}-(Q(f,\<v\>^{-\frac{\ga}{2}}g),\<v\>^{\frac{\ga}{2}}h)_{L^2_v}\big).
	\end{align*}
	and applying \eqref{Qcesfinal} and \eqref{Qbarfhg} to the first term, and commutator estimate \eqref{CommvlTemp} to the second term, 
	we deduce \eqref{QfghUpper}.
For the estimate of $\Ga$, by \eqref{QfghUpper}, \eqref{equiv}, commutator estimate \eqref{CommGaAndQ}, and decomposition
	\begin{align*}
		\Ga(f,g)=Q(\mu^{\frac{1}{2}}f,g)+\big(\Ga(f,g)-Q(\mu^{\frac{1}{2}}f,g)\big),
	\end{align*} 
	we can obtain \eqref{UpperGaNoSum} and \eqref{UpperGaes}, 
for any $N\ge 0$.
 This concludes Theorem \ref{ThmUpper}. 
\end{proof}

\subsection{Large relative velocity: \texorpdfstring{$Q_{\bar{c}}$}{Q barc}}
For the part of the large relative velocity $Q_{\bar{c}}$ in \eqref{QcDef}, we need to calculate in the frequency space. Integrating over $v$ and utilizing the identities from \eqref{deltafunction}, 
\begin{align}\notag\label{Qbarc101}
	&\big(Q_{\bar{c}}(f,g),h\big)_{L^2_v}=\int_{v,v_*,\si}b(\cos\th)\Th_{\bar{c}}(v-v_*)f(v_*)g(v)\big(\ol{h(v')}-\ol{h(v)}\big)\\
	&\notag=\int_{v,v_*,\si,\zeta,u,\eta}
	b(\cos\th)\Th_{\bar{c}}(u-v_*)f(v_*)g(u)\ol{\wh{h}(\eta)}e^{2\pi i\zeta\cdot(v-u)}\Big(e^{2\pi i\eta\cdot v'}-e^{2\pi i\eta\cdot v}\Big)
	\\
	&=\int_{v_*,\si,u,\eta}
	b\big(\frac{\eta}{|\eta|}\cdot\si\big)\Th_{\bar{c}}(u-v_*)f(v_*)g(u)\ol{\wh{h}(\eta)}\Big(e^{-2\pi i\eta^+\cdot u}e^{-2\pi i\eta^-\cdot v_*}-e^{-2\pi i\eta\cdot u}\Big).  
\end{align}
Using $\<D_{v_*}\>^{2}=(I-\De_{v_*})^{}$, we have 
\begin{align*}
	\big(Q_{\bar{c}}(f,g),h\big)_{L^2_v}
	&=\int_{v_*,\si,u,\eta}
	b\big(\frac{\eta}{|\eta|}\cdot\si\big)\<v_*\>^{M}\<D_{v_*}\>^{-2}f(v_*)\<v_*\>^{-M}g(u)\ol{\wh{h}(\eta)}\\
	&\qquad\times(I-\De_{v_*})^{}\Big\{\Th_{\bar{c}}(u-v_*)\Big(e^{-2\pi i\eta^+\cdot u}e^{-2\pi i\eta^-\cdot v_*}-e^{-2\pi i\eta\cdot u}\Big)\Big\},  
\end{align*}
for any $M\in\R$, where we can calculate 
\begin{align}\notag
	&(I-\De_{v_*})^{}\Big\{\Th_{\bar{c}}(u-v_*)\big(e^{-2\pi i\eta^+\cdot u}e^{-2\pi i\eta^-\cdot v_*}-e^{-2\pi i\eta\cdot u}\big)\Big\}\\
	&\notag\ \ =
	\sum_{|\al_2|=2}C_{\al_2}\Th_{\bar{c}}(u-v_*)
	(\eta^-)^{\al_2}\big(e^{-2\pi i\eta^+\cdot u}e^{-2\pi i\eta^-\cdot v_*}\big)\\
	&\notag\quad+
	\sum_{|\al_1|=|\al_2|=1}C_{\al_1,\al_2}\pa^{\al_1}_{v_*}\Th_{\bar{c}}(u-v_*)
	(\eta^-)^{\al_2}\big(e^{-2\pi i\eta^+\cdot u}e^{-2\pi i\eta^-\cdot v_*}\big)\\
	&\quad+\sum_{|\al_1|\le 2}C_{\al_1}\pa^{\al_1}_{v_*}\Th_{\bar{c}}(u-v_*)\big(e^{-2\pi i\eta^+\cdot u}e^{-2\pi i\eta^-\cdot v_*}-e^{-2\pi i\eta\cdot u}\big),\notag
\end{align}
for some constants $C_{\al_2},C_{\al_1,\al_2},C_{\al_1}>0$ with multi-indices $\al_1,\al_2$.
Thus, integrating over $v_*$ yields 
\begin{align*}\notag
	\big(Q_{\bar{c}}(f,g),h\big)_{L^2_v}
	&=\int_{v_*,\si,u,\eta}
	b\big(\frac{\eta}{|\eta|}\cdot\si\big)\<v_*\>^{M}\<D_{v_*}\>^{-2}f(v_*)\<v_*\>^{-M}g(u)\ol{\wh{h}(\eta)}\\
	&\notag\qquad\times(I-\De_{v_*})^{}\Big\{\Th_{\bar{c}}(u-v_*)\Big(e^{-2\pi i\eta^+\cdot u}e^{-2\pi i\eta^-\cdot v_*}-e^{-2\pi i\eta\cdot u}\Big)\Big\}\\
	&=:Q_{\bar{c},1}+Q_{\bar{c},2}+Q_{\bar{c},3},
\end{align*}
where, by translation $v_*\mapsto v_*-u$ and integrating over $u$, we split (note that $\Th_{\bar{c}}(v_*)=\Th_{\bar{c}}(-v_*)$)
\begin{align}\label{Qbarc123Def}\notag
	Q_{\bar{c},1}&:=\displaystyle\sum_{|\al_2|=2}C_{\al_2}\int_{v_*,\si,\eta}
	b
	\wh{f_2}(\eta_*)\wh{G^{0}}(\eta^--\eta_*,\eta-\eta_*)
	\ol{\wh{h}(\eta)}(\eta^-)^{\al_2},\\
	Q_{\bar{c},2}&:=\displaystyle\sum_{|\al_1|=|\al_2|=1}C_{\al_1,\al_2}\int_{v_*,\si,\eta}
	b
	\wh{f_2}(\eta_*)\wh{G^{\al_1}}(\eta^--\eta_*,\eta-\eta_*)
	\ol{\wh{h}(\eta)}(\eta^-)^{\al_2},\\
	Q_{\bar{c},3}&:=\displaystyle\sum_{|\al_1|\le 2}C_{\al_1}\int_{v_*,\si,\eta}
	b
	\wh{f_2}(\eta_*)\Big(\wh{G^{\al_1}}(\eta^--\eta_*,\eta-\eta_*)-\wh{G^{\al_1}}(-\eta_*,\eta)\Big)
	\ol{\wh{h}(\eta)}.  \notag
\end{align}
and, for any $M>0$, we denote 
\begin{align}
	\label{whGDef}\notag
	f_2(v_*)&:=\<v_*\>^{M}\<D_{v_*}\>^{-2}f(v_*),\\
	G^{\al_1}(v_*,u)&:=\<v_*+u\>^{-M}\pa^{\al_1}_{v_*}\Th_{\bar{c}}(v_*)g(u),\quad 
	\wh{G^{\al_1}}(\eta_*,\eta):=\F_{v_*,u}G^{\al_1}(\eta_*,\eta),
\end{align}
where $\wh{G^{\al_1}}(\eta_*,\eta)$ has a good decay with respect to the first argument $\eta_*$. In fact, since $\Th_{\bar{c}}(v_*)$ has no singularity at $v_*=0$, we have $\<v_*+u\>^{-M}|\pa^{\al_1}\Th_{\bar{c}}(v_*)|\le C\<v_*+u\>^{-M+|\ga|}\<u\>^{\ga}$ and hence, 
\begin{align}\label{eswhG0}\notag
	\|\<\eta_*\>^{N}\<\eta\>^{s'}\wh{G^{\al_1}}(\eta_*,\eta)\|_{L^2_{\eta_*,\eta}}
	&\le C_N\|\<u\>^{\ga}\<D_u\>^{s'}g(u)\|_{L^2_u}\|\<v_*\>^{-M+|\ga|}\|_{L^\infty_uL^2_{v_*}}\\
	&\le C_N\|\<v\>^{\ga}\<D_v\>^{s'}g\|_{L^2_v}, 
\end{align}
for any $|\al_1|\ge 0$, $N\ge 0$ and $s'\in\R$, 
provided that $M>\frac{d+1}{2}+|\ga|$.

\subsubsection{The term $Q_{\bar{c},1}$}
For $Q_{\bar{c},1}$, we have sufficient angular regular factor $|\eta^-|^2$. Using H\"older's inequality, applying translation $\eta_*\mapsto \eta^--\eta_*$
 to the $G^0$ term, we have, for any $s'\in\R$,  
\begin{align}\label{Qbarc1a}\notag
	&Q_{\bar{c},1}
	\le C\Big(\int_{\si,\eta,\eta_*}
	b\big(\frac{\eta}{|\eta|}\cdot\si\big)
	|\wh{f_2}(\eta_*)|^2|\ol{\wh{h}(\eta)}|^2
	\<\eta^--\eta_*\>^{-N}|\eta^-|^{2}\<\eta_*\>^{2}\<\eta\>^{2s'}\Big)^{\frac{1}{2}}
	\notag\\
	&\quad\times
	\Big(\int_{\si,\eta,\eta_*}
	b\big(\frac{\eta}{|\eta|}\cdot\si\big)
	|\<\eta_*\>^{N}\wh{G^0}(\eta_*,\eta-\eta_*)|^2
	\<\eta_*\>^{-N}|\eta^-|^2\<\eta^--\eta_*\>^{-2}\<\eta\>^{-2s'}\Big)^{\frac{1}{2}}, 
\end{align}
By $|\eta^-|^2\le C\<\eta_*\>^2\<\eta^--\eta_*\>^{2}$, $|\eta^-|=|\eta|\sin\frac{\th}{2}$, angular integral \eqref{angulareta}, we compute  
\begin{align*}\notag
	\Big|\int_{\si}b\big(\frac{\eta}{|\eta|}\cdot\si\big)\<\eta^--\eta_*\>^{-N}|\eta^-|^2\<\eta_*\>^{2}\Big|
	&\notag
	\le\int_{\si}\1_{|\eta^-|\le\<\eta_*\>}b\<\eta_*\>^{2}|\eta^-|^2+\int_{\si}\1_{|\eta^-|>\<\eta_*\>}b\<\eta_*\>^{4}\\
	&\le C|\eta|^{2s}\<\eta_*\>^{4-2s}, 
\end{align*}
and, by $|\eta^-|^2\le C\<\eta_*\>^2\<\eta^--\eta_*\>^{2}$, 
\begin{align*}\notag
	\Big|\int_{\si}
	b\big(\frac{\eta}{|\eta|}\cdot\si\big)
	\<\eta_*\>^{-N}|\eta^-|^2\<\eta^--\eta_*\>^{-2}\Big|
	&\notag\le\int_{\si}\1_{|\eta^-|\le\<\eta_*\>}b\<\eta_*\>^{-N}|\eta^-|^2+\int_{\si}\1_{|\eta^-|>\<\eta_*\>}b\<\eta_*\>^{-N+2}\\
	&\le C|\eta|^{2s}\<\eta_*\>^{-N+2-2s}.
\end{align*}
Substituting these into the corresponding $(f_2,h)$ and $G^0$ terms in \eqref{Qbarc1a}, recalling that $f_2(v_*)=\<v_*\>^{M}\<D_{v_*}\>^{-2}f(v_*)$, and using \eqref{eswhG0}, for any $s'\in\R$, we deduce 
\begin{align}\label{Qbarc1es}\notag
	|Q_{\bar{c},1}|
	&\le C\Big(\int_{\eta,\eta_*}\<\eta\>^{2s+2s'}
	|\wh{f_2}(\eta_*)|^2|\ol{\wh{h}(\eta)}|^2\<\eta_*\>^{4-2s}\Big)^{\frac{1}{2}}
	\Big(\int_{\eta,\eta_*}\<\eta\>^{2s-2s'}|\<\eta_*\>^{N}\wh{G^0}(\eta_*,\eta)|^2\Big)^{\frac{1}{2}}\\
	&\le C\|\<D_v\>^{-s}\<v\>^{M}f\|_{L^2_v}\|\<v\>^{\ga}\<D_v\>^{s-s'}g\|_{L^2_v}\|\<D_v\>^{s+s'}h\|_{L^2_v}. 
\end{align}

\subsubsection{The term $Q_{\bar{c},2}$}
For $Q_{\bar{c},2}$ in \eqref{Qbarc123Def}, we need to deduce one more regular factor $|\eta^-|$ from $\wh{G^{\al_1}}$ term. That is, by \eqref{whGDef}, we further decompose  
\begin{align}\label{Qbarc2a}\notag
	Q_{\bar{c},2}
	&\notag=\sum_{|\al_1|=|\al_2|=1}C_{\al_1,\al_2}\int_{v_*,\si,u,\eta}
	b\big(\frac{\eta}{|\eta|}\cdot\si\big)
	\wh{f_2}(\eta_*)\wh{G^{\al_1}}(\eta^--\eta_*,\eta-\eta_*)
	\ol{\wh{h}(\eta)}(\eta^-)^{\al_2}\\
	&=:\sum_{|\al_1|=|\al_2|=1}C_{\al_1,\al_2}\big\{Q_{\bar{c},2}^{s,1}+Q_{\bar{c},2}^{s,2}+Q_{\bar{c},2}^{l}\big\},
\end{align}
where we split 
\begin{align*}
	Q_{\bar{c},2}^{s,1}
	&:=\int_{\si,\eta,\eta_*}
	\1_{|\eta^-|\le\frac{1}{2}\<\eta_*\>^{}}
	b\big(\frac{\eta}{|\eta|}\cdot\si\big)
	\wh{f_2}(\eta_*)\ol{\wh{h}(\eta)}
	\big(\wh{G^{\al_1}}(\eta^--\eta_*,\eta-\eta_*)-\wh{G^{\al_1}}(-\eta_*,\eta-\eta_*)\big)
	(\eta^-)^{\al_2},\\
	Q_{\bar{c},2}^{s,2}
	&:=\int_{\si,\eta,\eta_*}
	\1_{|\eta^-|\le\frac{1}{2}\<\eta_*\>^{}}
	b\big(\frac{\eta}{|\eta|}\cdot\si\big)
	\wh{f_2}(\eta_*)\ol{\wh{h}(\eta)}
	\wh{G^{\al_1}}(-\eta_*,\eta-\eta_*)(\eta^-)^{\al_2},\\
	Q_{\bar{c},2}^{l}
	&:=\int_{\si,\eta,\eta_*}
	\1_{|\eta^-|>\frac{1}{2}\<\eta_*\>^{}}
	b\big(\frac{\eta}{|\eta|}\cdot\si\big)
	\wh{f_2}(\eta_*)\ol{\wh{h}(\eta)}
	\wh{G^{\al_1}}(\eta^--\eta_*,\eta-\eta_*)(\eta^-)^{\al_2}.
\end{align*} 
For $Q_{\bar{c},2}^{s,1}$, by Taylor expansion, we have 
\begin{align*}
	\wh{G^{\al_1}}(\eta^--\eta_*,\eta-\eta_*)-\wh{G^{\al_1}}(-\eta_*,\eta-\eta_*)
	=
	\eta^-\cdot\int^1_0\na_{\eta_*}\wh{G^{\al_1}}(\de\eta^--\eta_*,\eta-\eta_*)\,d\de, 
\end{align*}
where $\na_{\eta_*}$ is taken with respect to the first argument of $\wh{G^{\al_1}}$. Thus, 
\begin{align*}
	Q_{\bar{c},2}^{s,1}&=\int_{\si,\eta,\eta_*}
	\1_{|\eta^-|\le\frac{1}{2}\<\eta_*\>^{}}
	b\big(\frac{\eta}{|\eta|}\cdot\si\big)
	\wh{f_2}(\eta_*)\ol{\wh{h}(\eta)}
	\eta^-\cdot\int^1_0\na_{\eta_*}\wh{G^{\al_1}}(\de\eta^--\eta_*,\eta-\eta_*)\,d\de~
	(\eta^-)^{\al_2}.
\end{align*}
Note that $|\eta^-|\le\frac{1}{2}\<\eta_*\>$ implies $\<\de\eta^--\eta_*\>^2
		\ge 1+\frac{|\eta_*|^2}{2}-|\eta^-|^2\ge \frac{1}{4}\<\eta_*\>^2$. 
Therefore, by using H\"older's inequality and $|\al_1|=|\al_2|=1$, we have 
\begin{align*}
	&|Q_{\bar{c},2}^{s,1}|
	\le C\Big(\int_{\si,\eta,\eta_*,\de}\1_{|\eta^-|\le\frac{1}{2}\<\eta_*\>^{}}
	b
	|\wh{f_2}(\eta_*)\ol{\wh{h}(\eta)}|^2
	\<\eta_*\>^{-N}|\eta^-|^2\<\eta\>^{2s'}\Big)^{\frac{1}{2}}\\
	&\  \times\Big(\int_{\si,\eta,\eta_*,\de}\1_{|\eta^-|\le\frac{1}{2}\<\eta_*\>^{}}
	b
	|\na_{\eta_*}\wh{G^{\al_1}}(\de\eta^--\eta_*,\eta-\eta_*)|^2\<\de\eta^--\eta_*\>^{N}
	|\eta^-|^2\<\eta\>^{-2s'}\Big)^{\frac{1}{2}},
\end{align*}
for any $s'\in\R$. For the $\na_{\eta_*}\wh{G^{\al_1}}$ term, we take the change of variables $\eta_*\mapsto \de\eta^--\eta_*$ as in \eqref{xichangetoplustau}, where $|\eta^-|\le\frac{1}{2}\<\eta_*-\de\eta^-\>$ implies $4|\eta^-|^2
		\le 1+2|\eta_*|^2+2|\eta^-|^2$, and hence, $|\eta^-|^2\le \<\eta_*\>^2$. 
Thus, by angular integral \eqref{angulareta}, 
\begin{align}\label{Qbarc1s1}\notag
	|Q_{\bar{c},2}^{s,1}|
	&\le C\Big(\int_{\eta,\eta_*}\<\eta\>^{2s+2s'}\<\eta_*\>^{-N+2-2s}|\wh{f_2}(\eta_*)\ol{\wh{h}(\eta)}|^2\Big)^{\frac{1}{2}}
	\Big(\int_{\eta,\eta_*}\<\eta\>^{2s-2s'}\<\eta_*\>^{N+2-2s}|\na_{\eta_*}\wh{G^{\al_1}}(\eta_*,\eta)|^2\Big)^{\frac{1}{2}}\\
	&\le C\|\<D_v\>^{-N-s}\<v\>^{M}f\|_{L^2_v}\|\<v\>^{\ga}\<D_v\>^{s-s'}g\|_{L^2_v}\|\<D_v\>^{s+s'}h\|_{L^2_v}, 
\end{align}
where, as in \eqref{eswhG0}, by \eqref{equiv}, for $M>\frac{d+2}{2}$, we have calculated that for any $N\ge 0$ and $|\al_1|=1$, 
\begin{align*}
	\|\<\eta_*\>^{N}\<\eta\>^{s-s'}\na_{\eta_*}G^{\al_1}(\eta_*,\eta)\|_{L^2_{\eta_*,\eta}}
	\notag
	&\le \Big\|\<D_{u}\>^{s-s'}\<D_{v_*}\>^{N}\Big(v_*\<v_*\>^{-M}\pa^{\al_1}_{v_*}\Th_{\bar{c}}(u-v_*)g(u)\Big)\Big\|_{L^2_{v_*,u}}
	\\
	&\le C_N\|\<D_{u}\>^{s-s'}\<u\>^{\ga}g(u)\|_{L^2_u}.
\end{align*}

For $Q_{\bar{c},2}^{s,2}$, since $|\al_2|=1$, we can further decompose $\eta^-$
\begin{align}\label{decompositionofetaminus}
	\eta^-=\frac{\eta-|\eta|\si}{2}=\frac{|\eta|}{2}\Big(\big(\frac{\eta}{|\eta|}\cdot\si\big)\frac{\eta}{|\eta|}-\si\Big)+\Big(1-\big(\frac{\eta}{|\eta|}\cdot\si\big)\Big)\frac{\eta}{2}. 
\end{align}
Using decomposition $\si=\cos\th\,\mathbf{j}+\sin\th\,\omega$ in \eqref{decomsitoj} (where $\mathbf{j}=\frac{\eta}{|\eta|}$ and $\om\cdot\mathbf{j}=0$) and the symmetry of $\omega$ in \eqref{symmetaomega}, since the other integrands in $Q_{\bar{c},2}^2$ are independent of $\omega$, the integral corresponding the first term $\displaystyle\frac{|\eta|}{2}\big((\frac{\eta}{|\eta|}\cdot\si)\frac{\eta}{|\eta|}-\si\big)$ vanishes. Therefore, by $1-\frac{\eta}{|\eta|}\cdot\si=1-\cos\th=2\sin^2\frac{\th}{2}$ and \eqref{angulareta} (note that $\min\{1,|\eta|^{-2+2s}\<\eta_*\>^{2-2s}\}\le |\eta|^{-1+s}\<\eta_*\>^{1-s}$), we have 
\begin{align*}
	|Q_{\bar{c},2}^{s,2}|
	&\le C\int_{\si,\eta,\eta_*}
	\1_{|\eta^-|\le\frac{1}{2}\<\eta_*\>^{}}
	b\sin^2\frac{\th}{2}|
	\wh{f_2}(\eta_*)\ol{\wh{h}(\eta)}\wh{G^{\al_1}}(-\eta_*,\eta-\eta_*)||\eta|\\
	&\le C\int_{\eta,\eta_*}|
	\wh{f_2}(\eta_*)\ol{\wh{h}(\eta)}\wh{G^{\al_1}}(-\eta_*,\eta-\eta_*)|\<\eta\>^{s}\<\eta_*\>^{1-s}. 
\end{align*}
As above, using H\"older's inequality
and estimate \eqref{eswhG0}, for any $s'\in\R$, we have
\begin{align}\label{Qbarc1s2}
	|Q_{\bar{c},2}^{s,2}|\le C\|\<D_v\>^{-N-2}\<v\>^{M}f\|_{L^2_v}\|\<v\>^{\ga}\<D_v\>^{s-s'}g\|_{L^2_v}\|\<D_v\>^{s+s'}h\|_{L^2_v}. 
\end{align}

For $Q^{l}_{\bar{c},2}$ in \eqref{Qbarc2a}, 
applying $|\eta^-|^{2}\le \<\eta_*\>^{2}\<\eta^--\eta_*\>^{2}$ and H\"older's inequality, we have 
\begin{align*}
	|Q^{l}_{\bar{c},2}|
	&\le \Big(\int_{\si,\eta,\eta_*}
	\1_{|\eta^-|>\frac{1}{2}\<\eta_*\>^{}}
	b\big(\frac{\eta}{|\eta|}\cdot\si\big)|\wh{f_2}(\eta_*)\wh{h}(\eta)|^2\<\eta_*\>^{4}\<\eta\>^{2s'}\Big)^{\frac{1}{2}}\\
	&\times
	\Big(\int_{\si,\eta,\eta_*}
	\1_{|\eta^-|>\frac{1}{2}\<\eta_*\>^{}}
	b\big(\frac{\eta}{|\eta|}\cdot\si\big)|\wh{G^{\al_1}}(\eta^--\eta_*,\eta-\eta_*)|^2\<\eta^--\eta_*\>^{4}\<\eta\>^{-2s'}\Big)^{\frac{1}{2}},
\end{align*}
for any $s'\in\R$. 
For the $\wh{G^{\al_1}}$ term, note that $|\eta^-|>\frac{1}{2}\<\eta_*\>$ implies $4|\eta^-|^2>1+|\eta_*|^2\ge 1+\frac{1}{2}|\eta^--\eta_*|^2-|\eta^-|^2$, and hence, $10|\eta^-|^2\ge \<\eta^--\eta_*\>^2$. Applying $\eta_*\mapsto \eta^--\eta_*$ to the $\wh{G^{\al_1}}$ term, using angular integral \eqref{angulareta} and estimate \eqref{eswhG0}, we have 
\begin{align}\label{Qbarc2l}
	|Q^{l}_{\bar{c},2}|\le C\|\<D_{v}\>^{-s}\<v\>^{M}f\|_{L^2_{v}}\|\<D_{v}\>^{s-s'}\<v\>^{\ga-1}g\|_{L^2_v}\|\<D_v\>^{s+s'}h\|_{L^2_v}.
\end{align}

\subsubsection{The term $Q_{\bar{c},3}$}
For $Q_{\bar{c},3}$ in \eqref{Qbarc123Def}, we apply \eqref{Qbarc101} conversely and deduce 
\begin{align*}
	Q_{\bar{c},3}
	&=\sum_{|\al_1|\le 2}C_{\al_1}\int_{v_*,\si,u,\eta}
	b\big(\frac{\eta}{|\eta|}\cdot\si\big)f_1(v_*)g(u)\ol{\wh{h}(\eta)}\pa^{\al_1}_{v_*}\Th_{\bar{c}}(u-v_*)\\
	&\qquad\qquad\qquad\times\big(e^{-2\pi i\eta^+\cdot u}e^{-2\pi i\eta^-\cdot v_*}-e^{-2\pi i\eta\cdot u}\big)\\
	&=\sum_{|\al_1|\le 2}C_{\al_1}\int_{v,v_*,\si}b(\cos\th)\pa^{\al_1}_{v_*}\Th_{\bar{c}}(v-v_*)f_1(v_*)g(v)\big(\ol{h(v')}-\ol{h(v)}\big)\\
	&=:(Q_{\pa^{\al_1}\Th_{\bar{c}}}(f_1,g),h)_{L^2_v}. 
\end{align*}
Since $\pa^{\al_1}_{v_*}\Th_{\bar{c}}$ is smooth, $Q_{\pa^{\al_1}\Th_{\bar{c}}}(f_1,g)$ has similar properties to $Q(f_1,g)$ or $Q_{\bar{c}}(f_1,g)$ regarding upper bounds. 
Applying the upper bound estimate \cite[Lemma 3.2]{Alexandre2013}, we have 
\begin{align}\label{Qbarc3}
	|Q_{\bar{c},3}|
	&\le |(Q_{\pa^{\al_1}\Th_{\bar{c}}}(f_1,g),h)_{L^2_v}|
	\le C\|\<D_v\>^{-2}\<v\>^{3|\ga|+2+\frac{d+1}{2}}f\|_{L^2_v}\|g\|_{L^2_D}\|h\|_{L^2_D},
\end{align}
and by the commutator estimate \cite[Lemma 2.4]{Alexandre2010} (they considered the non-singular truncated cross-section $\Phi(v-v_*)=\<v-v_*\>^{\ga}$, but the same arguments can be applied to $\pa^{\al_1}\Th_{\bar{c}}$; see also \cite[(2.2)]{Alexandre2011}), (note that  $f_1=\<D_v\>^{-2}f$)
\begin{align}\label{Qbarc3a}\notag
	|Q_{\bar{c},3}|
	&\le |(Q_{\pa^{\al_1}\Th_{\bar{c}}}(f_1,\<v\>^{\frac{\ga}{2}}g),\<v\>^{-\frac{\ga}{2}}h)_{L^2_v}|
	+|(\<v\>^{-\frac{\ga}{2}}Q_{\pa^{\al_1}\Th_{\bar{c}}}(f_1,\<v\>^{\frac{\ga}{2}}g)-Q_{\pa^{\al_1}\Th_{\bar{c}}}(f_1,g),h)_{L^2_v}|\\
	&
	\le C\|\<D_v\>^{-2}\<v\>^{4|\ga|+4+\frac{d+1}{2}}f\|_{L^2_v}
	\big(\|\<v\>^{\frac{\ga}{2}}g\|_{L^2_D}\|\<v\>^{-\frac{\ga}{2}}h\|_{L^2_D}+\|\<v\>^{\frac{\ga}{2}}g\|_{L^2_D}\|\<D_v\>^sh\|_{L^2_v}\big).
\end{align}
One can also apply the commutator estimate \eqref{CommGaAndQ} and work directly with the exponential-weighted collision operator $\Gamma(f,g)$, since our final result concerns exponential decompositions.

\smallskip 
Finally, collecting the estimates \eqref{Qbarc1es}, \eqref{Qbarc1s1}, \eqref{Qbarc1s2}, \eqref{Qbarc2l}, \eqref{Qbarc3}, and \eqref{Qbarc3a}
into \eqref{Qbarc123Def}, we can deduce that, for any $l\in\R$, $M>\frac{d+2}{2}$ and $s'\in\R$, 
\begin{align}\notag\label{QbarcTemp}
	\big|\big(Q_{\bar{c}}(f,g),h\big)_{L^2_v}\big|
	&\le C\|\<v\>^{M}\<D_v\>^{-s}f\|_{L^2_v}
	\|\<D_v\>^{s-s'}\<v\>^{\ga}g(v)\|_{L^2_v}
	\|\<D_v\>^{s+s'}h\|_{L^2_v}\\
	&\ \notag+C\|\<D_v\>^{-2}\<v\>^{4|\ga|+4+\frac{d+1}{2}}f\|_{L^2_v}\min\Big\{\|g\|_{L^2_D}\|h\|_{L^2_D},\\
	&\qquad \|\<v\>^{\frac{\ga}{2}}g\|_{L^2_D}\|\<v\>^{-\frac{\ga}{2}}h\|_{L^2_D}+\|\<v\>^{\frac{\ga}{2}}g\|_{L^2_D}\|\<D_v\>^sh\|_{L^2_v}
	\Big\}. 
\end{align}
This concludes the estimate of $Q_{\bar{c}}$ in \eqref{Qbarfhg}.

\subsection{Small relative velocity: $Q_c$}\label{SecSmallQc}
In this part, we will deal with $Q_c(f,g)$ with cross section $\Th_{c}(v-v_*)=|v-v_*|^\ga\phi(v-v_*)$ (given in \eqref{Bbarc}) that has singularity near $0$.
From \eqref{Bbarc}, we have $\phi(v-v_*)=\phi(v-v_*)\chi_{|v-v_*|\le 1}$, where $\chi\in C^\infty_c$ is defined as in \eqref{smoothcutoff}.
 Then we rewrite 
\begin{align}
	\label{Gc}
	\begin{aligned}
	\phi(v-v_*)f(v_*)g(v)&=\phi(v-v_*)\chi_{|v-v_*|\le 1}f(v_*)g(v),\\
	G(v_*,v)&:=\chi_{|v-v_*|\le 1}f(v_*)g(v).
	\end{aligned}
\end{align}
For the fractional derivatives of $G$, by Lemma \ref{basicCommuLem}, for any $M,s_1,s_2\in\R$, we can calculate 
\begin{align}\label{esGS1}\notag
	\|\<\eta_*\>^{-s_1}\<\eta\>^{s_2}\wh{G}(\eta_*,\eta)\|_{L^2_{\eta_*}L^2_{\eta}}
	\notag
	&=\|\underbrace{\<D_{v_*}\>^{-s_1}\<D_v\>^{s_2}\chi_{|v-v_*|\le 1}\<v\>^{-M}\<v_*\>^{M}\<D_{v_*}\>^{s_1}\<D_{v}\>^{-s_2}}_{\in \Op(1)}
	\\
	&\notag\qquad\qquad\quad
	\circ\<D_{v_*}\>^{-s_1}\<v_*\>^{-M}f(v_*)\<D_{v}\>^{s_2}\<v\>^{M}g(v)\|_{L^2_{v_*,v}}\\
	&\le C\|\<D_{v_*}\>^{-s_1}\<v_*\>^{-M}f(v_*)\|_{L^2_{v_*}}\|\<D_{v}\>^{s_2}\<v\>^{M}g(v)\|_{L^2_{v}},
\end{align}
where $\<D_{v_*}\>^{-s_1}\<D_v\>^{s_2}\chi_{|v-v_*|\le 1}\<v\>^{-M}\<v_*\>^{M}\<D_{v_*}\>^{s_1}\<D_{v}\>^{-s_2}$ can be regarded as a pseudo-differential operator whose symbol belongs to $S(1)$, since $\<v_*\>\approx\<v\>$ whenever $|v-v_*|\le 1$, and hence, is bounded on $L^2$. 
By Fourier inversion, integrating in $v$, and the identities in \eqref{deltafunction}, 
\begin{align}\notag\label{QcBobylev}
	&(Q_c(f,g),h)_{L^2_v}
	\notag=\int_{v,v_*,\si}b(\cos\th)\Th_{c}(v-v_*)G(v_*,v)\big(\ol{h(v')}-\ol{h(v)}\big)\\
	&\notag=\int_{v,v_*,\si,\eta,u,\zeta}
	b(\cos\th)\Th_{c}(u-v_*)G(v_*,u)\ol{\wh{h}(\eta)}e^{2\pi i\zeta\cdot(v-u)}\Big(e^{-2\pi i\eta\cdot v'}-e^{-2\pi i\eta\cdot v}\Big)\\
	&\notag=\int_{v_*,\si,\eta,u,\zeta}
	b\big(\frac{\eta}{|\eta|}\cdot\si\big)\Th_{c}(-v_*)G(v_*+u,u)\ol{\wh{h}(\eta)}e^{-2\pi i\eta\cdot u}\Big(
	e^{-2\pi i\eta^-\cdot v_*}-1\Big)\\
	&=\int_{\si,\eta,\zeta}
	b\big(\frac{\eta}{|\eta|}\cdot\si\big)
	\F_{v_*,u}\big(G(v_*+u,u)\big)(\eta_*,\eta)\ol{\wh{h}(\eta)}\Big(
	\wh{\Th_{c}}(\eta_*-\eta^-)-
	\wh{\Th_{c}}(\eta_*)\Big). 
\end{align}
where we have used $\wh{fg}=\wh{f}*\wh{g}$. For the Fourier transform of $G$, one has 
\begin{align}\label{FourGvstar}
	\F_{v_*,u}\big(G(v_*+u,u)\big)(\eta_*,\eta)
	&=\int_{v_*,u}G(v_*,u)e^{-2\pi i\eta_*\cdot v_*-2\pi i(\eta-\eta_*)\cdot u}
	=\wh{G}(\eta_*,\eta-\eta_*).
\end{align}
Therefore, we have 
\begin{align}\label{Qc123}\notag
	(Q_c(f,g),h)_{L^2_v}
    &\notag=\int_{\R^{2d}_{\eta_*,\eta}\times\S^{d-1}_\si}
	b\big(\frac{\eta}{|\eta|}\cdot\si\big)\wh{G}(\eta_*,\eta-\eta_*)\ol{\wh{h}(\eta)}\Big(\wh{\Th_c}(\eta_*-\eta^-)
	-\wh{\Th_c}(\eta_*)\Big)\\
	&=:Q_{c1}+Q_{c2}+Q_{c3},
\end{align}
where, by Taylor expansion $\wh{\Th_c}(\eta_*-\eta^-)-\wh{\Th_c}(\eta_*)
	=-\eta^-\cdot\na_\eta\wh{\Th_c}(\eta_*)+\int^1_0(1-\de)\na^2_\eta\wh{\Th_c}(\eta_*-\de\eta^-):\eta^-\otimes\eta^-\,d\de$, we decompose 
\begin{align*}
	Q_{c1}&=\int_{\eta,\eta_*,\si}\1_{|\eta^-|\le\frac{1}{2}\<\eta_*\>} b\wh{G}(\eta_*,\eta-\eta_*)\ol{\wh{h}(\eta)}(-\eta^-)\cdot\na_\eta\wh{\Th_c}(\eta_*),\\
	Q_{c2}\notag&=\int_{\eta,\eta_*,\si}\1_{|\eta^-|\le\frac{1}{2}\<\eta_*\>}b\wh{G}(\eta_*,\eta-\eta_*)\ol{\wh{h}(\eta)}\int^1_0(1-\de)\na^2_\eta\wh{\Th_c}(\eta_*-\de\eta^-):\eta^-\otimes\eta^-\,d\de,\\
	Q_{c3}&\notag=\int_{\eta,\eta_*,\si}\1_{|\eta^-|\ge\frac{1}{2}\<\eta_*\>}b\wh{G}(\eta_*,\eta-\eta_*)\ol{\wh{h}(\eta)}\big(\wh{\Th_c}(\eta_*-\eta^-)-\wh{\Th_c}(\eta_*)\big),
\end{align*}
For $Q_{c1}$, we also divide $\eta^-$ as in \eqref{decompositionofetaminus}, i.e., 
\begin{align*}
	\eta^-=\frac{\eta-|\eta|\si}{2}=\frac{|\eta|}{2}\Big(\big(\frac{\eta}{|\eta|}\cdot\si\big)\frac{\eta}{|\eta|}-\si\Big)+\Big(1-\big(\frac{\eta}{|\eta|}\cdot\si\big)\Big)\frac{\eta}{2}. 
\end{align*}
Using the decomposition $\si=\cos\th\,\mathbf{j}+\sin\th\,\omega$ in \eqref{decomsitoj} and the symmetry of $\omega$ in \eqref{symmetaomega}, the integral corresponding the first term $\frac{|\eta|}{2}\big((\frac{\eta}{|\eta|}\cdot\si)\frac{\eta}{|\eta|}-\si\big)$ vanishes (note that, by \eqref{bobylevangle}, $|\eta^-|=|\eta|\sin\frac{\th}{2}$ is independent of $\om$). For the second term, we have 
\begin{align*}
	\Big(1-\big(\frac{\eta}{|\eta|}\cdot\si\big)\Big)\frac{\eta}{2}
	&=(1-\cos\th)\frac{\eta}{2}
	=\eta\sin^2\frac{\th}{2}. 
\end{align*}
Thus, by \eqref{angulareta}, we have 
\begin{align*}
    Q_{c1}&=-\int_{\R^{2d}_{\eta,\eta_*}\times\S^{d-1}_\si}\1_{|\eta^-|\le\frac{1}{2}\<\eta_*\>} b\sin^2\frac{\theta}{2}\eta\cdot\na_\eta\wh{\Th_c}(\eta_*)\wh{G}(\eta_*,\eta-\eta_*)\ol{\wh{h}(\eta)},
\end{align*}
By \eqref{nakThc}, we have $|D_\eta^k\wh{\Th_c}(\eta)|\le C_{N,d,\ga}\<\eta\>^{-d-\ga-k}$. 
Then, together with \eqref{angulareta}, we have 
\begin{align*}
	|Q_{c1}|&=\Big|\int_{\R^{2d}_{\eta,\eta_*}\times\S^{d-1}_\si}
	\1_{|\eta^-|\le\frac{1}{2}\<\eta_*\>}b\sin^2\frac{\th}{2}\eta\cdot\na_\eta\wh{\Th_c}(\eta_*)\wh{G}(\eta_*,\eta-\eta_*)\ol{\wh{h}(\eta)}\Big|\\
	&\le C\int_{\R^{2d}_{\eta,\eta_*}}
	\min\{1,|\eta|^{-2+2s}\<\eta_*\>^{2-2s}\}
	\frac{|\eta|}{\<\eta_*\>^{d+\ga+1}}\Big|\wh{G}(\eta_*,\eta-\eta_*)
	\ol{\wh{h}(\eta)}\Big|.
\end{align*}
Similarly, for $Q_{c2}$, on the support of $\1_{|\eta^-|\le\frac{1}{2}\<\eta_*\>}$, when $\ga\ge-d-2$, we have 
\begin{align*}
	|\na^2_\eta\wh{\Th_c}(\eta_*-\de\eta^-)|
	\le C\<\eta_*-\de\eta^-\>^{-d-\ga-2}
	\le C\<|\eta_*|-\de|\eta^-|\>^{-d-\ga-2}
	\le C\<\eta_*\>^{-d-\ga-2}, 
\end{align*} 
and hence, 
\begin{align*}
	|Q_{c2}|&\le C\int_{\R^{2d}_{\eta,\eta_*}\times\S^{d-1}_\si}
	\1_{|\eta^-|\le\frac{1}{2}\<\eta_*\>}b\sin^2\frac{\th}{2}
	\frac{|\eta|^2}{\<\eta_*\>^{d+\ga+2}}
	\Big|\wh{G}(\eta_*,\eta-\eta_*)\ol{\wh{h}(\eta)}\Big|\,d\si d\eta_*d\eta\\
	&\le C\int_{\R^{2d}_{\eta,\eta_*}}\min\{1,|\eta|^{-2+2s}\<\eta_*\>^{2-2s}\}\frac{|\eta|^2}{\<\eta_*\>^{d+\ga+2}}\Big|\wh{G}(\eta_*,\eta-\eta_*)\ol{\wh{h}(\eta)}\Big|\,d\eta_*d\eta.
\end{align*}
The estimates of $Q_{c1}$ and $Q_{c2}$ have similar structure. 
That is, 
\begin{align}\label{Qc12}\notag
	|Q_{c1}|+|Q_{c2}|
	&\notag\le C\int_{{\eta,\eta_*}}\min\{1,|\eta|^{-2+2s}\<\eta_*\>^{2-2s}\}\Big(\frac{|\eta|}{\<\eta_*\>^{d+\ga+1}}+\frac{|\eta|^2}{\<\eta_*\>^{d+\ga+2}}\Big)\Big|\wh{G}(\eta_*,\eta-\eta_*)\ol{\wh{h}(\eta)}\Big|\\
	&\notag\le C\int_{{\eta,\eta_*}}\Big(\frac{|\eta|}{\<\eta_*\>^{d+\ga+1}}+\frac{|\eta|^2}{\<\eta_*\>^{d+\ga+2}}\Big)\Big|\wh{G}(\eta_*,\eta-\eta_*)\ol{\wh{h}(\eta)}\Big|\1_{|\eta|\le 2\<\eta_*\>}\\
	&\notag\qquad+C\int_{{\eta,\eta_*}}\frac{\<\eta_*\>^{2-2s}}{|\eta|^{2-2s}}\Big(\frac{|\eta|}{\<\eta_*\>^{d+\ga+1}}+\frac{|\eta|^2}{\<\eta_*\>^{d+\ga+2}}\Big)\Big|\wh{G}(\eta_*,\eta-\eta_*)\ol{\wh{h}(\eta)}\Big|\1_{|\eta|> 2\<\eta_*\>}\\
	&=:Q_{c1}'+Q_{c2}'.
\end{align}
For $Q_{c1}'$, since $|\eta|\le2\<\eta_*\>$, we have 
\begin{align*}
	Q_{c1}'
	&\le 
	C\int_{{\eta,\eta_*}}\frac{\1_{|\eta|\le 2\<\eta_*\>}|\<\eta_*\>^{-s_1}\<\eta-\eta_*\>^{s_2}\wh{G}(\eta_*,\eta-\eta_*)\<\eta\>^{s_3}\ol{\wh{h}(\eta)}|}{\<\eta_*\>^{d+\ga-s_1}\<\eta-\eta_*\>^{s_2}\<\eta\>^{s_3}}\frac{|\eta|}{\<\eta_*\>}\\
	&\le 
	C\Big\|\frac{\1_{|\eta|\le 2\<\eta_*\>}}{\<\eta_*\>^{d+\ga-s_1+1}\<\eta-\eta_*\>^{s_2}\<\eta\>^{s_3-1}}\Big\|_{L^\infty_{\eta}L^2_{\eta_*}}
	\|\<\eta\>^{s_3}\wh{h}\|_{L^2_{\eta}}
	\|\<\eta_*\>^{-s_1}\<\eta\>^{s_2}\wh{G}(\eta_*,\eta)\|_{L^2_{\eta_*,\eta}}.
\end{align*}
Here, by \eqref{equivetastar}, we calculate the integral as 
\begin{align*}
	&\Big\|\frac{\1_{|\eta|\le 2\<\eta_*\>}}{\<\eta_*\>^{d+\ga-s_1+1}\<\eta-\eta_*\>^{s_2}\<\eta\>^{s_3-1}}\Big\|_{L^\infty_{\eta}L^2_{\eta_*}}^2\\
	&\le 
	\int_{\R^d_{\eta_*}}\frac{\1_{|\eta|\le\frac{1}{2}\<\eta_*\>}\1_{\<\eta-\eta_*\>\approx \<\eta_*\>\gtrsim\<\eta\>}}{\<\eta_*\>^{2d+2\ga-2s_1+2}\<\eta-\eta_*\>^{2s_2}\<\eta\>^{2s_3-2}}\,d\eta_*
	+
	\int_{\R^d_{\eta_*}}\frac{\1_{\frac{1}{2}\<\eta_*\>\le |\eta|\le 2\<\eta_*\>}\1_{\<\eta-\eta_*\>\lesssim\<\eta_*\>\approx\<\eta\>}}{\<\eta_*\>^{2d+2\ga-2s_1+2}\<\eta-\eta_*\>^{2s_2}\<\eta\>^{2s_3-2}}\,d\eta_*\\
	& \le 
	\frac{1}{\<\eta\>^{d+2\ga-2s_1+2s_2+2s_3}}
	+
	\int_{\R^d_{\eta_*}}\frac{\1_{\<\eta-\eta_*\>\lesssim\<\eta\>}}{\<\eta-\eta_*\>^{2d+2\ga-2s_1+2s_2+2s_3}}\,d\eta_*
	\le C,
\end{align*}
provided that $2d+2\ga-2s_1+2+2s_2>d$, $2d+2\ga-2s_1+2s_3\ge 0$, $2d+2\ga-2s_1+2s_2+2s_3>d$, 
where, in the first term, we used the estimate 
\begin{align}\label{integrallargeR}
	\int_{\R^d_{\eta_*}}\frac{\1_{\<\eta_*\>\ge A}}{\<\eta_*\>^k}\,d\eta_*
	\le C\int^\infty_{A}(1+R)^{-k+d-1}\,dR\le CA^{-k+d}\ \text{ for any $k>d$, $A>0$}. 
\end{align}
Thus, by \eqref{esGS1}, 
\begin{align}\label{Qc2es1}
	Q_{c1}'
	&\le C\|\<D_v\>^{-s_1}\<v\>^{-\ga}f\|_{L^2_v}\|\<D_v\>^{s_2}\<v\>^{\ga}g\|_{L^2_v}\|\<D_v\>^{s_3}h\|_{L^2_v}. 
\end{align}
For $Q_{c2}'$, similarly, we have  
\begin{align*}
	&Q_{c2}'\le C\int_{\R^{2d}_{\eta,\eta_*}}\1_{|\eta|> 2\<\eta_*\>}\frac{\<\eta_*\>^{2-2s}}{|\eta|^{2-2s}}\frac{|\eta|^2}{\<\eta_*\>^{d+\ga+2}}\Big|\wh{G}(\eta_*,\eta-\eta_*)\ol{\wh{h}(\eta)}\Big|
    \,d\eta_*d\eta\\
	&\le C\int_{\R^{2d}_{\eta,\eta_*}}\frac{|\eta|^{2s}\1_{|\eta|> 2\<\eta_*\>}}{\<\eta_*\>^{d+\ga+2s-s_1}\<\eta-\eta_*\>^{s_2}\<\eta\>^{s_3}}\Big|\<\eta_*\>^{-s_1}\<\eta-\eta_*\>^{s_2}\wh{G}(\eta_*,\eta-\eta_*)\<\eta\>^{s_3}\ol{\wh{h}(\eta)}\Big|
    \,d\eta_*d\eta\\
	&\le C\Big\|\frac{|\eta|^{2s}\1_{|\eta|> 2\<\eta_*\>}}{\<\eta_*\>^{d+\ga+2s-s_1}\<\eta-\eta_*\>^{s_2}\<\eta\>^{s_3}}\Big\|_{L^\infty_\eta L^2_{\eta_*}}\|\<\eta\>^{s_3}\ol{\wh{h}(\eta)}\|_{L^2_{\eta}}\|\<\eta_*\>^{-s_1}\<\eta\>^{s_2}\wh{G}(\eta_*,\eta)\|_{L^2_{\eta}L^2_{\eta_*}},
\end{align*}
where, by \eqref{equivetastar} ($|\eta|>2\<\eta_*\>$ implies $\<\eta-\eta_*\>\approx\<\eta\>\gtrsim\<\eta_*\>$), we calculate the integral of $\eta_*$ as 
\begin{align*}
	\int_{\R^{d}_{\eta_*}}\frac{|\eta|^{4s}\1_{|\eta|> 2\<\eta_*\>}}{\<\eta_*\>^{2d+2\ga+4s-2s_1}\<\eta-\eta_*\>^{2s_2}\<\eta\>^{2s_3}}
	\,d\eta_*
	&\le\int_{\R^{d}_{\eta_*}}\frac{\<\eta\>^{4s-2s_2-2s_3}\1_{
	\<\eta\>\approx\<\eta-\eta_*\>\gtrsim\<\eta_*\>}}{\<\eta_*\>^{2d+2\ga+4s-2s_1}}\,d\eta_*\\
	&\le C\int_{\R^{d}_{\eta_*}}\frac{1}{\<\eta_*\>^{2d+2\ga+4s-2s_1}}\,d\eta_*\le C, 
\end{align*}
provided that $s_2+s_3\ge 2s$ and $2d+2\ga+4s-2s_1>d$. Together with \eqref{esGS1}, we have 
\begin{align}\label{Qc2es2}
	Q_{c2}'&\le C\|\<D_v\>^{-s_1}\<v\>^{-\ga}f\|_{L^2_v}\|\<D_v\>^{s_2}\<v\>^{\ga}g\|_{L^2_v}\|\<D_v\>^{s_3}h\|_{L^2_v}. 
\end{align}

For $Q_{c3}$ in \eqref{Qc123}, by rewriting 
 $\big(\cdots\big)=b\big(\frac{\eta}{|\eta|}\cdot\si\big)\wh{G}(\eta_*,\eta-\eta_*)\ol{\wh{h}(\eta)}$,
we split it as
\begin{align*}
	Q_{c3}&=\int_{\eta,\eta_*,\si}\1_{|\eta^-|\ge\frac{1}{2}\<\eta_*\>}\big(\wh{\Th_c}(\eta_*-\eta^-)-\wh{\Th_c}(\eta_*)\big)b\big(\frac{\eta}{|\eta|}\cdot\si\big)\wh{G}(\eta_*,\eta-\eta_*)\ol{\wh{h}(\eta)}\\
	&=\int_{\eta,\eta_*,\si}\1_{|\eta^-|\ge\frac{1}{2}\<\eta_*\>}\wh{\Th_c}(\eta_*-\eta^-)\1_{|\eta^-|^2\le2|\eta_*\cdot\eta^-|}
	\1_{|\eta|\le2\<\eta_*\>}\1_{\<\eta-\eta_*\>\le\<\eta_*-\eta^-\>}\Big(\cdots\Big)\\
	&\ +\int_{\eta,\eta_*,\si}\1_{|\eta^-|\ge\frac{1}{2}\<\eta_*\>}\wh{\Th_c}(\eta_*-\eta^-)\1_{|\eta^-|^2\le2|\eta_*\cdot\eta^-|}
	\1_{|\eta|\le2\<\eta_*\>}\1_{\<\eta-\eta_*\>>\<\eta_*-\eta^-\>}\Big(\cdots\Big)\\
	&\ +\int_{\eta,\eta_*,\si}\1_{|\eta^-|\ge\frac{1}{2}\<\eta_*\>}\wh{\Th_c}(\eta_*-\eta^-)\1_{|\eta^-|^2\le2|\eta_*\cdot\eta^-|}
	\1_{|\eta|>2\<\eta_*\>}\Big(\cdots\Big)\\
	&\ +\int_{\eta,\eta_*,\si}\1_{|\eta^-|\ge\frac{1}{2}\<\eta_*\>}\Big(\wh{\Th_c}(\eta_*-\eta^-)\1_{|\eta^-|^2>2|\eta_*\cdot\eta^-|}-\wh{\Th_c}(\eta_*)\Big)\Big(\cdots\Big)\\
	&=:Q_{c3}^{1}+Q_{c3}^{2}+Q_{c3}^{3}+Q_{c3}^{4}.
\end{align*}
Similar to \eqref{supportJ3}, we utilize the following decomposition and their estimates. First, when $|\eta^-|^2\le2|\eta_*\cdot\eta^-|$, we have $|\eta^-|\le2|\eta_*|$ and hence, $|\eta_*-\eta^-|^2=|\eta_*|^2-2\eta_*\cdot\eta^-+|\eta^-|^2\le9|\eta_*|^2$. Similarly, when $|\eta^-|^2>2|\eta_*\cdot\eta^-|$, we have $|\eta_*-\eta^-|^2\ge |\eta_*|^2$. Lastly, when $|\eta|>2\<\eta_*\>$, we have $\<\eta_*-\eta^-\>\le\<\eta_*\>+\<\eta^-\>\le2\<\eta\>$ and $|\eta_*-\eta|\ge |\eta|-|\eta_*|\ge\frac{|\eta|}{2}\approx\<\eta\>$ (note $|\eta|\ge 1$). In summary, together with \eqref{ths}, we have 
\begin{align}\label{supportQc3}
	\begin{cases}
		\text{when}~|\eta^-|\ge\frac{1}{2}\<\eta_*\>,&|\eta|\ge\frac{1}{2}\<\eta_*\>,\\
		\text{when}~|\eta^-|^2\le2|\eta_*\cdot\eta^-|, &\<\eta_*-\eta^-\>\le C\<\eta_*\>\ \text{and} \ |\eta^-|\le2|\eta_*|,\\
		\text{when}~|\eta^-|^2>2|\eta_*\cdot\eta^-|,&\<\eta_*-\eta^-\>\ge \<\eta_*\>,\\
		\text{when}~|\eta|>2{\<\eta_*\>},&\<\eta_*-\eta^-\>\le2\<\eta\>~\text{and}~\<\eta_*-\eta\>\approx\<\eta\>,\\
		\text{in any cases},&b(\cos\th)\le C\th^{-d+1-2s}
		\le \frac{C|\eta|^{d-1+2s}}{|\eta^-|^{d-1+2s}}.
	\end{cases}
\end{align}
Moreover, by \eqref{nakThc}, we know that 
\begin{align*}
|\wh{\Th_c}(\eta_*-\eta^-)|\le C\<\eta_*-\eta^-\>^{-d-\ga}.
\end{align*}
On the support of $Q_{c3}^{1}$, one has ${\<\eta-\eta_*\>\le\<\eta_*-\eta^-\>\lesssim \<\eta_*\>\approx\<\eta\>\approx\<\eta^-\>}$ and hence, $b(\cos\th)\le C$ (there's no singular angular term). Thus, by H\"older's inequality, we have 
\begin{align*}
	Q_{c3}^{1}
	&\le C\Big(\int_{\eta,\eta_*,\si}
	|\<\eta_*\>^{-s_1}\<\eta-\eta_*\>^{s_2}\wh{G}(\eta_*,\eta-\eta_*)|^2
	\,d\si d\eta_*d\eta\Big)^{\frac{1}{2}}\\&\quad\times 
	\Big(\int_{\eta,\eta_*,\si}
	\frac{\1_{|\eta^-|\ge\frac{1}{2}\<\eta_*\>}\1_{|\eta^-|^2\le2|\eta_*\cdot\eta^-|}
	\1_{|\eta|\le2\<\eta_*\>}\1_{\<\eta-\eta_*\>\le\<\eta_*-\eta^-\>}}{\<\eta_*-\eta^-\>^{2d+2\ga}\<\eta_*\>^{-2s_1}\<\eta-\eta_*\>^{2s_2}\<\eta\>^{2s_3}}|\<\eta\>^{s_3}\ol{\wh{h}(\eta)}|^2\Big)^{\frac{1}{2}}\\
	&\le C\|\<\eta_*\>^{-s_1}\<\eta\>^{s_2}\wh{G}(\eta_*,\eta)\|_{L^2_{\eta,\eta_*}}\|\<\eta\>^{s_3}\wh{h}\|_{L^2_\eta}
	\Big\|\frac{\1_{\<\eta-\eta_*\>\le\<\eta_*-\eta^-\>\lesssim \<\eta_*\>\approx\<\eta\>}}{\<\eta_*-\eta^-\>^{d+\ga}\<\eta-\eta_*\>^{s_2}\<\eta_*\>^{-s_1+s_3}}
	\Big\|_{L^1_{\si}L^\infty_\eta L^2_{\eta_*}},
\end{align*}
where
\begin{align*}\notag
	\Big\|\frac{\1_{\<\eta-\eta_*\>\le\<\eta_*-\eta^-\>\lesssim \<\eta_*\>\approx\<\eta\>}}{\<\eta_*-\eta^-\>^{d+\ga}\<\eta-\eta_*\>^{s_2}\<\eta_*\>^{-s_1+s_3}}
	\Big\|_{L^2_{\eta_*}}
	&\notag\le \Big\|
	\frac{1}{\<\eta-\eta_*\>^{d+\ga-s_1+s_2+s_3}}\Big\|_{L^2_{\eta_*}}
	\le C,
\end{align*}
provided that $d+\ga>0$, $-s_1+s_3\ge 0$ and $2d+2\ga-2s_1+2s_2+2s_3>d$. Thus, together with \eqref{esGS1}, we have 
\begin{align*}
	Q_{c3}^1\le C\|\<D_v\>^{-s_1}\<v\>^{-\ga}f\|_{L^2_v}\|\<D_v\>^{s_2}\<v\>^{\ga}g\|_{L^2_v}\|\<D_v\>^{s_3}h\|_{L^2_v}. 
\end{align*}
The estimate of $Q_{c3}^{2}$ is similar. On its support, we have $|\eta^-|\approx\<\eta_*\>\approx|\eta|$, $\<\eta_*-\eta^-\>\le C\<\eta_*\>$, $|\eta|\le2\<\eta_*\>$, $\<\eta-\eta_*\>>\<\eta_*-\eta^-\>$, and $b(\cos\th)\le C$. Using Cauchy-Schwarz inequality yields
\begin{align*}
	&Q_{c3}^{2}
	\le 
	C\Big(\int_{\eta,\eta_*,\si}
	|\<\eta_*\>^{-s_1}\<\eta-\eta_*\>^{s_2}\wh{G}(\eta_*,\eta-\eta_*)|^2
	\Big)^{\frac{1}{2}}\\
	&\quad\times 
	\Big(\int_{\eta,\eta_*,\si}
	\frac{\1_{|\eta^-|\approx|\eta|\approx\<\eta_*\>}
	\1_{\<\eta_*-\eta^-\>\lesssim\min\{\<\eta-\eta_*\>,\<\eta_*\>\}}
	\1_{|\eta|\le2\<\eta_*\>}
	}{\<\eta_*-\eta^-\>^{2d+2\ga}\<\eta_*\>^{-2s_1}\<\eta-\eta_*\>^{2s_2}\<\eta\>^{2s_3}}|\<\eta\>^{s_3}\ol{\wh{h}(\eta)}|^2\Big)^{\frac{1}{2}}\\
	&\le 
	C\|\<D_v\>^{-s_1}\<v\>^{-\ga}f\|_{L^2_v}\|\<D_v\>^{s_2}\<v\>^{\ga}g\|_{L^2_v}\|\<\eta\>^{s_3}\wh{h}\|_{L^2_\eta}
	\Big\|\frac{
	1}{\<\eta_*-\eta^-\>^{d+\ga-s_1+s_2+s_3}}\Big\|_{L^1_{\si}L^\infty_{\eta}L^2_{\eta_*}},
\end{align*}
provided that $s_2\ge 0$ and $-s_1+s_3\ge 0$, where the last term is bounded by a constant whenever $2d+2\ga-2s_1+2s_2+2s_3>d$. 

\smallskip
On the support of $Q_{c3}^{3}$, by \eqref{supportQc3}, we have $|\eta|\ge|\eta^-|\ge\frac{1}{2}\<\eta_*\>$, $\<\eta_*-\eta^-\>\le C\<\eta_*\>$, $|\eta^-|\le2|\eta_*|$, $\<\eta_*-\eta^-\>\le2\<\eta\>$, and $\<\eta_*-\eta\>\gtrsim\<\eta\>$. Thus, by H\"older's inequality and angular integral \eqref{angulareta}, 
\begin{align*}
	&Q_{c3}^{3}
	\le C\Big(\int_{\eta,\eta_*,\si}
	b(\cos\th)\frac{\<\eta_*\>^{2s}}{|\eta|^{2s}}\1_{|\eta^-|\ge\frac{1}{2}\<\eta_*\>}
	|\<\eta_*\>^{-s_1}\<\eta-\eta_*\>^{s_2}\wh{G}(\eta_*,\eta-\eta_*)|^2
	\Big)^{\frac{1}{2}}\\
	&\ \ \times 
	\Big(\int_{\eta,\eta_*,\si}
	b\frac{|\eta|^{2s}}{\<\eta_*\>^{2s}}
	\frac{\1_{2|\eta_*|\ge|\eta^-|\ge\frac{1}{2}\<\eta_*\>}
	\1_{\<\eta_*-\eta^-\>\le C\min\{\<\eta\>,\<\eta_*\>\}}
	\1_{\<\eta_*-\eta\>\gtrsim\<\eta\>}
	}{\<\eta_*-\eta^-\>^{2d+2\ga}\<\eta_*\>^{-2s_1}\<\eta-\eta_*\>^{2s_2}\<\eta\>^{2s_3}}|\<\eta\>^{s_3}\ol{\wh{h}(\eta)}|^2\Big)^{\frac{1}{2}}\\
	&\le C\|\<\eta_*\>^{-s_1}\<\eta\>^{s_2}\wh{G}(\eta_*,\eta)\|_{L^2_{\eta,\eta_*}}
	\Big(\int_{\eta,\eta_*,\si}
	b
	\frac{|\eta|^{2s}\1_{|\eta^-|\ge\frac{1}{C}\<\eta_*-\eta^-\>}
	|\<\eta\>^{s_3}\ol{\wh{h}(\eta)}|^2}{\<\eta_*-\eta^-\>^{2d+2\ga}\<\eta_*\>^{2s-2s_1}\<\eta\>^{2s_2+2s_3}}\Big)^{\frac{1}{2}},
\end{align*}
provided that $s_2\ge 0$. The last factor, if $s_2+s_3\ge 2s$ and $2s-2s_1\ge 0$, by translation $\eta_*\mapsto\eta_*-\eta^-$ and angular integral \eqref{angulareta}, can be estimated as  
\begin{align*}
	&\le \Big(\int_{\eta,\eta_*,\si}
	b(\cos\th)
	\frac{\1_{|\eta^-|\ge\frac{1}{2}\<\eta_*-\eta^-\>}
	}{\<\eta_*-\eta^-\>^{2d+2\ga+2s-2s_1}\<\eta\>^{2s}}|\<\eta\>^{s_3}\ol{\wh{h}(\eta)}|^2\Big)^{\frac{1}{2}}
	\\
	&\le \Big(\int_{\R^{2d}_{\eta,\eta_*}}
	\frac{1}{\<\eta_*\>^{2d+2\ga+4s-2s_1}}|\<\eta\>^{s_3}\ol{\wh{h}(\eta)}|^2\,d\eta_*d\eta\Big)^{\frac{1}{2}}\le \|\<\eta\>^{s_3}\ol{\wh{h}(\eta)}\|_{L^2_\eta},
\end{align*}
provided that $2d+2\ga+4s-2s_1>d$. 

\smallskip 
For the term $Q_{c3}^{4}$, by \eqref{supportQc3}, on the support of $\1_{|\eta^-|^2>2|\eta_*\cdot\eta^-|}$, we have $\<\eta_*-\eta^-\>\ge\<\eta_*\>$ and hence, by using \eqref{nakThc}, we have
\begin{align*}
	|\wh{\Th_c}(\eta_*-\eta^-)|\1_{|\eta^-|^2>2|\eta_*\cdot\eta^-|}+|\wh{\Th_c}(\eta_*)|\le C\<\eta_*\>^{-d-\ga},
\end{align*}
whenever $-d-\ga<0$. Therefore, by angular integral \eqref{angulareta} and $|\eta^-|\ge\frac{1}{2}\<\eta_*\>$ on its support, we can estimate $Q_{c3}^{4}$ as 
\begin{align*}
	|Q_{c3}^{4}|
	&\le 
	C\int_{\eta,\eta_*,\si}b(\cos\th)\frac{\1_{|\eta^-|\ge\frac{1}{2}\<\eta_*\>}}{\<\eta_*\>^{d+\ga}}\frac{|\<\eta_*\>^{-s_1}\<\eta-\eta_*\>^{s_2}\wh{G}(\eta_*,\eta-\eta_*)\<\eta\>^{s_3}\ol{\wh{h}(\eta)}|}{\<\eta_*\>^{-s_1}\<\eta-\eta_*\>^{s_2}\<\eta\>^{s_3}}\\
	&\le 
	C\int_{{\eta,\eta_*}}\frac{|\eta|^{2s}\1_{|\eta|\ge\frac{1}{2}\<\eta_*\>}}{\<\eta_*\>^{d+\ga+2s}}\frac{|\<\eta_*\>^{-s_1}\<\eta-\eta_*\>^{s_2}\wh{G}(\eta_*,\eta-\eta_*)\<\eta\>^{s_3}\ol{\wh{h}(\eta)}|}{\<\eta_*\>^{-s_1}\<\eta-\eta_*\>^{s_2}\<\eta\>^{s_3}}\\
	&\le C\Big(\int_{{\eta,\eta_*}}
	|\<\eta_*\>^{-s_1}\<\eta-\eta_*\>^{s_2}\wh{G}(\eta_*,\eta-\eta_*)|^2
	\Big)^{\frac{1}{2}}\\
	&\quad\times \Big(\int_{{\eta,\eta_*}}
	\frac{\1_{|\eta|\ge\frac{1}{2}\<\eta_*\>}|\eta|^{4s-2s_3}}{\<\eta_*\>^{2d+2\ga+4s-2s_1}\<\eta-\eta_*\>^{2s_2}}
	|\<\eta\>^{s_3}\ol{\wh{h}(\eta)}|^2\Big)^{\frac{1}{2}}.
\end{align*}
Here, by \eqref{equivetastar}, we have $\<\eta\>\approx\<\eta_*\>\gtrsim\<\eta-\eta_*\>$ when $\frac{1}{2}|\eta|\le\<\eta_*\>\le2|\eta|$, and $\<\eta\>\approx\<\eta-\eta_*\>\gtrsim\<\eta_*\>$ when $\<\eta_*\>\le\frac{1}{2}|\eta|.$ Thus, 
\begin{align*}
	&\int_{\R^d_{\eta_*}}\frac{\1_{|\eta|\ge\frac{1}{2}\<\eta_*\>}\<\eta\>^{4s-2s_3}}{\<\eta_*\>^{2d+2\ga+4s-2s_1}\<\eta-\eta_*\>^{2s_2}}\,d\eta_*\\
	&=
	\int_{\R^d_{\eta_*}}\frac{
		\1_{\<\eta\>\approx\<\eta_*\>\gtrsim\<\eta-\eta_*\>}}{\<\eta_*\>^{2d+2\ga-2s_1+2s_3}\<\eta-\eta_*\>^{2s_2}}\,d\eta_*
	+\int_{\R^d_{\eta_*}}\frac{
		\1_{\<\eta\>\approx\<\eta-\eta_*\>\gtrsim\<\eta_*\>}\<\eta\>^{4s-2s_3}}{\<\eta_*\>^{2d+2\ga+4s-2s_1}\<\eta-\eta_*\>^{2s_2}}\,d\eta_*\\
	&\le 
	\int_{\R^d_{\eta_*}}\frac{C}{\<\eta-\eta_*\>^{2d+2\ga-2s_1+2s_2+2s_3}}\,d\eta_*
	+\int_{\R^d_{\eta_*}}\frac{C}{\<\eta_*\>^{2d+2\ga-2s_1+2s_2+2s_3}}\,d\eta_*
	\le C,
\end{align*}
provided that $2d+2\ga-2s_1+2s_3\ge 0$, $s_2+s_3\ge 2s$, and $2d+2\ga-2s_1+2s_2+2s_3>d$. Collecting the above estimates, we have found that 
\begin{align*}
	|Q_{c3}|\le C\|\<D_v\>^{-s_1}\<v\>^{-\ga}f\|_{L^2_v}\|\<D_v\>^{s_2}\<v\>^{\ga}g\|_{L^2_v}\|\<D_v\>^{s_3}h\|_{L^2_v}, 
\end{align*}
provided the above conditions with respect to $s_1,s_2,s_3$ are valid.  
Together with the estimates of $Q_{c1},Q_{c2}$ in \eqref{Qc12}, \eqref{Qc2es1}, and \eqref{Qc2es2}, substituting into \eqref{Qc123} yields
\begin{align}\label{QcTemp}
	|(Q_c(f,g),h)_{L^2_v}|
	\le C\|\<D_v\>^{-s_1}\<v\>^{-\ga}f\|_{L^2_v}\|\<D_v\>^{s_2}\<v\>^{\ga}g\|_{L^2_v}\|\<D_v\>^{s_3}h\|_{L^2_v}, 
\end{align}
whenever 
$\ga>-d$, 
$s_2\ge 0$, 
$s_1\le s_3$, 
$s_2+s_3\ge 2s$, 
$\ga+2s-s_1>-\frac{d}{2}$,
$\ga-s_1+s_2+1>-\frac{d}{2}$,
and 
$\ga-s_1+s_2+s_3>-\frac{d}{2}$. This proves \eqref{Qcesfinal}. 

\section{Commutator estimate of the collision operator}\label{SecComm}
In this section, we present commutator estimates for the non-cutoff Boltzmann equation, which will be used in the Littlewood-Paley analysis carried out in Section \ref{SecLPkinetic}. We focus on collision operators acting only on velocity variables. Throughout this section, the Fourier transform with respect to $v$ is denoted by $\wh{f}=\F_vf=\int_{\R^d_v}f(v)e^{-2\pi iv\cdot\eta}\,dv$.

\subsection{Main commutator estimate}
We consider the self-adjoint pseudo-differential operator $P_a$ corresponding to symbol $\wh{\Xi}(a^{-1}\zeta)$ $(a\ge 1)$ given by \eqref{XiDef111}, 
\begin{align}\label{DeADef}
	P_{a}f(v)
	&=\int_{\R^d_\eta\times\R^d_{u}}e^{2\pi i\eta\cdot(v-u)}\wh{\Xi}(a^{-1}\eta)f(u)\,dud\eta
	=\int_{\R^d_{u}}a^d\Xi(a(v-u))f(u)\,du.
\end{align}

\smallskip \noindent
\textbf{Main commutator.}
By using the generalized Bobylev formula (c.f., \cite{Alexandre2012,Alexandre2011,Alexandre2010,Morimoto2016}), we intend to rewrite the collision operator in the frequency variable. 
For simplicity, since we mollify $Q(f,g)$ and $h$ with respect to $v$, we assume that $f,g,h$ are Schwartz functions and extend to $L^2_v$ (or $L^2_D$) later by standard density arguments. 
Our main target is to calculate the commutators 
\begin{align*}
	\big(P_{a}Q(f,g)-Q(f,P_{a}g),h\big)_{L^2_v},\quad\text{ and }\quad
	\big(R_rQ(f,g)-Q(f,R_rg),h\big)_{L^2_v}.
\end{align*}
By applying \eqref{CollBoltzDef}, expanding the pseudo-differential operator $P_{a}$ as in \eqref{DeADef}, writing $g=(\wh{g})^\vee,h=(\wh{h})^\vee$ whenever necessary, and splitting the cross-section $B(v-v_*,\si)=B_c(v-v_*,\si)+B_{\bar c}(v-v_*,\si)$ as in \eqref{Bbarc}, we have 
\begin{align}\label{decomMain2}\notag
	&\big(P_{a}Q(f,g)-Q(f,P_{a}g),h\big)_{L^2_v}\\
	&\notag=\int_{\R^{2d}_{v,v_*}\times\S^{d-1}_\si}\,d\si dv_*dv
	\int_{\R^d_\zeta\times\R^d_{u}}\,dud\zeta
	\int_{\R^d_\eta\times\R^d_{w}}\,dwd\eta~
	f(v_*)g(u)\ol{h(w)}e^{2\pi i\zeta\cdot(v-u)}
	\\
	&\notag\quad\times \Big[
	B_c(v-v_*,\si)\Big(\wh{\Xi}(a^{-1}\eta)-\wh{\Xi}(a^{-1}\zeta)\Big)\Big(e^{2\pi i\eta\cdot(w-v')}-e^{2\pi i\eta\cdot(w-v)}\Big)\\
	&\notag\qquad+B_{\bar{c}}(v-v_*,\si)\Big(\wh{\Xi}(a^{-1}\eta)-\wh{\Xi}(a^{-1}\zeta)\Big)\Big(e^{2\pi i\eta\cdot(w-v')}-e^{2\pi i\eta\cdot(w-v)}\Big)\Big]\\
	&=:I^{\text{large}}+I^{\text{small}}. 
\end{align}

We begin by stating the main result of this section.
\begin{Thm}[{Commutator estimate}]\label{ThmComm}
	Let $\ga+s>-\frac{d}{2}$, $\wt{l_0}\in\R$, and $\om=\om_02^{\al j+rl_1}$ with $\om_0,\al>0$ and $l_1\ge 2s-1$. Denote $P_k,R_r$ as in \eqref{DejQkDef} and $\wt{P_k},\wt{R_r}$ as in \eqref{SlightlyLarger}. 
	Then for any suitable functions $f,g,h$, any $\wt{l_0}\in\R$ and $s'\le s$, we have
\begin{align}\label{ThmCommeswt}\notag
	\big|\big(P_kR_r\Ga(f,g)-\Ga(f,P_kR_rg),\wt{R_r}\wt{P_k}h\big)_{L^2_v}\big|
	&\le C
	2^{\wt{l_0}r}\|f\|_{L^2_v}\|\<v\>^{-\wt{l_0}+\frac{\ga+2s}{2}}g\|_{L^2_v}\|\wt{R_r}\wt{P_k}h\|_{L^2_D}
	\\
	&\quad
	+C\|\<v\>^{-N}f\|_{L^2_v}\|\<v\>^{\ga}R_rg\|_{L^2_v}\|\<D_v\>^{s}\wt{R_r}\wt{P_k}h\|_{L^2_v},
\end{align}
and similarly, 
\begin{align}\label{ThmCommes}
	\notag
	&
	\big|\big(P_kR_r\Ga(f,g)-\Ga(f,P_kR_rg),P_kR_rh\big)_{L^2_v}\big|
	\le 
	C2^{r\wt{l_0}}\|f\|_{L^2_v}\|\<v\>^{-\wt{l_0}+\frac{\ga+2s}{2}}g\|_{L^2_v}\|P_kR_rh\|_{L^2_D}\\
	&\qquad\qquad\qquad\qquad\qquad+C2^{r\wt{l_0}}(\om2^k)^{s-s'-1}\|f\|_{L^2_v}\|\<v\>^{-\wt{l_0}+\frac{\ga}{2}}g\|_{L^2_v}\|\<v\>^{\frac{\ga}{2}-1}\<D_v\>^{s'}R_rh\|_{L^2_v}.
\end{align}
Here we include the factor $2^{-r\wt{l_0}}$ to compensate for the loss of the weight $\<v\>^n$ on $g$.
\end{Thm}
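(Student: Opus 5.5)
The plan is to reduce $\Ga$ to $Q$ and then split the commutator of $P_kR_r$ with $Q(f,\cdot)$ into two parts: one from $P_k$ (frequency side) and one from $R_r$ (velocity side). First write
\[
\Ga(f,g)=Q(\mu^{\frac12}f,g)+\big(\Ga(f,g)-Q(\mu^{\frac12}f,g)\big).
\]
The remainder $\Ga-Q(\mu^{1/2}\cdot,\cdot)$ is handled by the commutator estimate \eqref{CommGaAndQ}. It has a smooth kernel with Gaussian decay in $v_*$, so commuting $P_kR_r$ through it yields only terms of the lower-order form
\[
\|f\|_{L^2_v}\,\|\<v\>^{\frac{\ga+2s}{2}}g\|_{L^2_v}\,\|\cdot\|_{L^2_D}.
\]
For the main part, decompose
\[
P_kR_rQ(F,g)-Q(F,P_kR_rg)=P_k\big(R_rQ(F,g)-Q(F,R_rg)\big)+\big(P_kQ(F,R_rg)-Q(F,P_kR_rg)\big),
\]
where $F=\mu^{\frac12}f$, and pair each piece with $\wt{R_r}\wt{P_k}h$ (respectively $P_kR_rh$).

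\textbf{The $R_r$ commutator.} Here $R_r(v)=\wh{\Psi_r}(\<v\>)$ is a multiplication operator, so the commutator in the weak form is
\[
\int B\,F_*\,g\,\big(R_r(v')-R_r(v)\big)\,h'.
\]
Taylor-expand $R_r(v')-R_r(v)$ in $v'-v$ using $|\na R_r|\lesssim 2^{-r}$ and $|\na^2R_r|\lesssim 2^{-2r}$. Together with $|v'-v|\le|v-v_*|\sin\frac\th2$, this cancels the angular singularity in the standard way, as in the weighted commutator \eqref{CommvlTemp}: the first-order term is symmetrized and the second-order term is integrable in $\th$. The factor $2^{-r}$ together with $\<v\>\approx 2^r$ on the support is converted, via \eqref{dissUpper1}, into the exchange $2^{\wt{l_0}r}\<v\>^{-\wt{l_0}}$. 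This gives the first terms on the right-hand sides of \eqref{ThmCommeswt} and \eqref{ThmCommes}.

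\textbf{The $P_a$ commutator.} For $P_a$ with $a=\om2^k$, use the Bobylev representation \eqref{decomMain2}, splitting into $I^{\text{large}}$ (from $B_{\bar c}$) and $I^{\text{small}}$ (from $B_c$). The symbol difference
\[
\wh{\Xi}(a^{-1}\eta)-\wh{\Xi}(a^{-1}\zeta)
\]
is supported where $|\eta|$ or $|\zeta|$ is about $a$. It is bounded by $a^{-1}|\eta-\zeta|$ times $\|\na\wh{\Xi}\|_\infty$, and $\eta-\zeta$ is controlled by $\eta^-$ and $\eta_*$. Each $|\eta_*|$ factor is absorbed into the decay of $\wh{F}$ (arbitrary decay, since $F$ carries $\mu^{1/2}$), which gives the $\|\<v\>^{-N}f\|$ factor. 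Each $|\eta^-|$ factor combines with $b$ through the angular integral \eqref{angulareta}. The result is a gain of $a^{-1}|\eta|^{2s}$ against $\<\eta\>^{2s}$, i.e. $(\om2^k)^{2s-1}$ at worst.

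The difficulty is that $\om$ contains $2^{rl_1}$, and we cannot afford to lose velocity weight. I expect this to be the main obstacle. What I would try first is to use the assumption $l_1\ge 2s-1$ so that $(\om 2^k)^{-1}\<\eta\>^{2s-1}$ with $|\eta|\lesssim\om2^k$ is paid for by the weight $\<v\>^{\ga}$ on $R_rg$ versus $\<v\>^{\frac{\ga+2s}{2}}$, since $\<v\>\approx2^r$. For the small relative velocity part $B_c$, where $\Th_c$ is singular, I would repeat the $Q_{c1}$–$Q_{c3}$ splitting of Subsection \ref{SecSmallQc} with an extra factor $a^{-1}|\eta^-|$. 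This requires $\ga+s>-\frac d2$, which is exactly the hypothesis of the statement. The output is the second term of \eqref{ThmCommeswt}, $\|\<v\>^{-N}f\|\,\|\<v\>^{\ga}R_rg\|\,\|\<D_v\>^s\wt{R_r}\wt{P_k}h\|$.

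\textbf{Deriving \eqref{ThmCommes}.} Finally, \eqref{ThmCommes} follows from \eqref{ThmCommeswt} by writing $P_kR_rh=\wt{R_r}\wt{P_k}P_kR_rh$ up to the commutator $[P_k,R_r]$, which is controlled by \eqref{CommawPa}. Then convert $\|\<D_v\>^s\cdot\|$ of the localized $h$ with \eqref{LowerSupportComm}: it produces the leading term $\|P_kR_rh\|_{L^2_D}$ plus the remainder
\[
(\om2^k)^{s-s'-1}\,\|\<v\>^{\frac\ga2-1}\<D_v\>^{s'}R_rh\|_{L^2_v},
\]
after redistributing the weight $\<v\>^{\frac{\ga}{2}}$ between $g$ and $h$ using $\<v\>\approx 2^r$.
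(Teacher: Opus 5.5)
Your architecture matches the paper's. It uses the same four-term telescoping of $P_kR_r\Ga(f,g)-\Ga(f,P_kR_rg)$: two $\Ga-Q(\mu^{\frac12}\cdot,\cdot)$ pieces, each bounded separately by \eqref{CommGaAndQ} with no derivative on $g$; an $R_r$-commutator, handled by Taylor expansion of $R_r$ as in \eqref{RrComm}; and a $P_a$-commutator in the Bobylev form \eqref{decomMain2}. Then \eqref{ThmCommes} comes from extracting the localization via \eqref{dissUpper1b}/\eqref{LowerSupportComm}. Those parts are fine in outline.

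The gap is in the $P_a$-commutator, which carries essentially all the content (in the paper this is Lemma~\ref{LemmaCommColli}, estimates \eqref{Ilargees1}--\eqref{PaComm}). Your sketch of it rests on two steps that fail.

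\emph{The first-order bound is too weak.} You bound $|\wh\Xi(a^{-1}\eta)-\wh\Xi(a^{-1}\zeta)|\lesssim a^{-1}|\eta-\zeta|$. On the support $|\eta|\sim a$, a single factor $a^{-1}|\eta^-|\approx\th$ against $b\,d\si\approx\th^{-1-2s}\,d\th$ leaves $\int_0\th^{-2s}\,d\th$. This diverges for $s\ge\frac12$, and \eqref{angulareta} needs a full factor $\sin^2\frac\th2$.

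\emph{The $|\eta_*|$ factors cannot go into ``arbitrary decay of $\wh F$''.} The Gaussian makes $\wh F=\wh{\mu^{1/2}}*\wh f$ smooth, not decaying. Since $f$ is only in $L^2_v$, each power of $|\eta_*|$ placed on $\wh F$ is a derivative of $f$, which $\|\<v\>^{-N}f\|_{L^2_v}$ cannot pay for.

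In the paper, both the cancellation and the decay come from the kernel.

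For $B_{\bar c}$, identity \eqref{ThetavThetau} rewrites $I^{\text{large}}=I^l_1+I^l_2+I^l_3$ with $I^l_2=0$.
In $I^l_1$ the symbol difference is $\wh\Xi(a^{-1}\eta)-\wh\Xi(a^{-1}\eta^+)$. This is second order in $\eta^-$, because $\wh\Xi$ is radial and $\eta^+\perp\eta^-$ (see \eqref{expanfetaplue}). The $v_*$-frequency decay is carried by the smooth function $\<v_*\>^{-M}\Th_{\bar c}(u-v_*)g(u)$, so only a velocity weight $\<v_*\>^{M}$ lands on $f$.
In $I^l_3$ one Taylor-expands $\Th_{\bar c}(u-v_*)-\Th_{\bar c}(v-v_*)$ across the scale $|u-v|\lesssim a^{-1}$ of the kernel of $P_a$. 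This gives $a^{-1}\|\<v\>^{\ga+2s-1}g\|_{L^2_v}\|\<D_v\>^{s+\ve}h\|_{L^2_v}$.

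For $B_c$, the shift inside the symbol is $\eta_*$, not $\eta^-$ (see \eqref{Ismall}). The difference is bounded by $2$ on $\{|\eta|\le 2\<\eta_*\>\}$ and by $C\<\eta_*\>/\<\eta\>$ on $\{|\eta|>2\<\eta_*\>\}$. The factor $\<\eta_*\>$ is absorbed by the finite decay \eqref{nakThc} of $\wh{\Th_c}$, and this is exactly where $\ga+s>-\frac d2$ is needed. This part gains nothing in $a$: it is only an order-$s$ bound ($s_1=s_2=0$, $s_3=s$ in \eqref{PaCommIsmall}), which is what the second term of \eqref{ThmCommeswt} records.

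Consequently, $l_1$ does not pay for a power of $\om2^k$ with velocity weight; a factor growing in $k$ at fixed $r$ cannot be paid that way. The exchange runs the other way. The excess weight $\<v\>^{2s-1}$ on $R_rg$ produced by $I^l_3$ is paid by the gain
$a^{-1}\|\<D_v\>^{s+\ve}\wt{P_a}h\|_{L^2_v}\lesssim a^{-1+\ve}\|\<D_v\>^{s}\wt{P_a}h\|_{L^2_v}$,
using $a=\om_02^{rl_1+k}\gtrsim 2^{rl_1}$ and $\<v\>\approx 2^r$ on the support of $R_r$.

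A minor point on \eqref{ThmCommes}: it is cleaner to rerun the four pieces with $h$ replaced by $P_kR_rh$, since $\wt{P_k}P_kR_rh=P_kR_rh$ exactly. Applying \eqref{ThmCommeswt} instead leaves the error term $[P_k,\wt{R_r}]R_rh$, which that estimate does not cover.
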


In the following subsections (see estimates \eqref{Ilargees}, \eqref{Ismalles}, and \eqref{CommRr}), we compute the commutator estimates for velocity and its frequency:
\begin{Lem}\label{LemmaCommColli}
Let $M\in\R$ and $\ga,s,s_1,s_2,s_3$ be such that $\ga>-d$ and
\begin{align*}
	\begin{cases}
s_2\ge 0,\ \ 
s_1\le s_3, \ \ 
-s_1+s_2+s_3\ge s,\ \  
s_2+s_3\ge 2s-1,
\\
\ga-s_1+s_2+s_3>-\frac{d}{2},\quad 
\ga-s_1+s_2+1>-\frac{d}{2}.
	\end{cases}
\end{align*}
Then for any small $\ve>0$, we have 
\begin{align}\label{Ilargees1}
	I^{\text{large}}
	&\le C\|\<v\>^{2|\ga|+7+\frac{d}{2}}f\|_{L^2_v}
	\big(a^{-s}\|\<v\>^{\ga}g\|_{L^2_v}\|\<D_v\>^sh\|_{L^2_v}
	+a^{-1}\|\<v\>^{\ga+2s-1}g\|_{L^2_v}\|\<D_v\>^{s+\ve}h\|_{L^2_v}
	\big),
\end{align}
and
\begin{align}
	\label{PaCommIsmall}
	I^{\text{small}}
	&\le C\|\<D_{v}\>^{-s_1}\<v\>^{-M}f\|_{L^2_{v}}\|\<D_{v}\>^{s_2}\<v\>^{M}g(v)\|_{L^2_{v}}\|\<D_v\>^{s_3}h\|_{L^2_v}. 
\end{align}
Consequently, we define $\wt{P_a}$ as in \eqref{SlightlyLarger}, 
which implies that $\|\<D_v\>^{s+\ve}\wt{P_a} h\|_{L^2_v}\le C a^{\ve}\|\<D_v\>^{s}\wt{P_a} h\|_{L^2_v}$.
If $s_1\in[0,s]$, $s_2=0$, $s_3=s_1+s$, and $\ga+\min\{s,1-s_1\}>-\tfrac{d}{2}$, then:
\begin{align}
	\label{PaComm}\notag
	&\big|\big(P_{a}Q(f,g)-Q(f,P_{a}g),\wt{P_a}h\big)_{L^2_v}\big|\\
	\notag
	&\ \le Ca^{-s}\|\<v\>^{4s+\frac{d+1}{2}}f\|_{L^2_v}\|(\<v\>^{\ga}g,\<v\>^{\ga+2s-1}g)\|_{L^2_v}\|\<D_v\>^s\wt{P_a}h\|_{L^2_v}\\
	&\ \ 
	+C\|\<D_{v}\>^{-s_1}\<v\>^{-M}f\|_{L^2_{v}}\|\<v\>^{M}g\|_{L^2_v}\|\<D_v\>^{s+s_1}\wt{P_a}h\|_{L^2_v}.
\end{align}
For the commutator with $R_r$, we have 
\begin{align}
	\label{RrComm}
	&\big|\big(R_rQ(f,g)-Q(f,R_rg),h\big)_{L^2_v}\big|
	\le 
	C2^{-sr}\|\<v\>^{3|\ga|+3+\frac{d+1}{2}}f\|_{L^2_v}\|\<v\>^{n+\frac{\ga+s}{2}}g\|_{L^2_v}\|\<v\>^{-n}h\|_{L^2_D}.
\end{align}
\end{Lem}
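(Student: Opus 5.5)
The plan mirrors the treatment of $Q_{\bar c}$ and $Q_c$ in Section~\ref{SecUpper}: rewrite each piece of \eqref{decomMain2} on the Fourier side via the Bobylev-type identities \eqref{deltafunction}, and extract the gain from the symbol difference $\wh{\Xi}(a^{-1}\eta)-\wh{\Xi}(a^{-1}\zeta)$. After integrating in $v$, the frequency $\zeta$ is tied to $\eta$ through $\eta^\pm$. The difference is then $\wh{\Xi}(a^{-1}\eta)-\wh{\Xi}(a^{-1}(\eta^+ +\text{shift}))$, where the shift is $\eta^- -\eta_*$ (large part) or comes from $\wh{G}$ (small part).

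\textbf{Large relative velocity.} For $I^{\text{large}}$ I would integrate by parts in $v_*$, exactly as in \eqref{Qbarc123Def}, so that $f$ is replaced by $f_2=\<v_*\>^M\<D_{v_*}\>^{-2}f$. This places the smooth kernel $\<v_*+u\>^{-M}\pa^{\al_1}\Th_{\bar c}$ into $\wh{G^{\al_1}}$, which decays rapidly in $\eta_*$ by \eqref{eswhG0}. I would then Taylor expand the symbol difference. Since $\na\wh{\Xi}(a^{-1}\cdot)$ is supported in $|\eta|\approx a$ and has size $a^{-1}$, the first-order term gives a factor $a^{-1}|\eta^-|$ times the rapidly decaying $\<\eta_*\>$-factor. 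Combined with the angular factor $|\eta^-|$ already present, the angular integral \eqref{angulareta} yields $a^{-1}|\eta|^{2s-1}$ for $s>1/2$, and $a^{-1}$ times a logarithm (absorbed by $\<D_v\>^{\ve}$) near $s=1/2$. This produces the $a^{-1}\|\<v\>^{\ga+2s-1}g\|\|\<D_v\>^{s+\ve}h\|$ term; the weight is shifted through the $u$-dependence of $\Th_{\bar c}$. Alternatively, bounding the difference by $\min\{1,a^{-1}|\eta^-|\}\lesssim a^{-s}|\eta^-|^{s}$ and integrating against $b$ gives the $a^{-s}$ term. The remainder pieces (the analogues of $Q_{\bar c,2}$ and $Q_{\bar c,3}$) are handled by the same splitting $|\eta^-|\lessgtr\frac12\<\eta_*\>$, with the cancellation over $\om$ from \eqref{decompositionofetaminus} applied to the odd first-order term.

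\textbf{Small relative velocity.} For $I^{\text{small}}$ no gain in $a$ is claimed, so I would simply bound $|\wh\Xi(\cdot)-\wh\Xi(\cdot)|\le 2$ in the terms without cancellation. I would reproduce the decomposition $Q_{c1},Q_{c2},Q_{c3}$ of \eqref{Qc123} with $G(v_*,v)=\chi_{|v-v_*|\le1}f(v_*)g(v)$. The localization $\<v\>\approx\<v_*\>$ permits an arbitrary weight $\<v\>^{-M}$ on $f$ and $\<v\>^{M}$ on $g$ in \eqref{esGS1}. The leading first-order term $\eta^-\cdot\na\wh{\Th_c}$, combined with the symbol difference, should be grouped so that the symbol is evaluated consistently before using the $\om$-symmetry. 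I expect this to be the main obstacle: the symbol difference breaks the symmetry in $\om$ that killed the first term of \eqref{decompositionofetaminus}. I would first try to write the difference as $\wh\Xi(a^{-1}\eta)-\wh\Xi(a^{-1}(\eta-\eta^-+\cdot))$, Taylor expand it, and absorb the extra $|\eta^-|$ using the condition $s_2+s_3\ge 2s-1$. The remaining integrals are controlled by the same $L^\infty_\eta L^2_{\eta_*}$ kernel bounds under the stated inequalities on $s_1,s_2,s_3$.

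\textbf{Consequences.} \eqref{PaComm} then follows by taking $h\mapsto\wt{P_a}h$, using $\|\<D_v\>^{s+\ve}\wt{P_a}h\|\le Ca^{\ve}\|\<D_v\>^s\wt{P_a}h\|$ so that $a^{-1+\ve}\le a^{-s}$, and specializing $s_2=0$, $s_3=s+s_1$. For \eqref{RrComm}, the same computation is carried out in physical space rather than frequency space. The difference $R_r(v')-R_r(v)$ is bounded by $\min\{1,2^{-r}|v'-v|\}$, with $|v'-v|=|v-v_*|\sin\frac\th2$. Integrating against $b$ gives $2^{-sr}|v-v_*|^{s}$, and the factor $|v-v_*|^{s}$ is absorbed into the weights $\<v\>^{\frac{\ga+s}{2}}$ and $\<v_*\>^{\cdots}$. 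The result is closed with the $L^2_D$ upper bound of \cite{Alexandre2013} applied to $h$.
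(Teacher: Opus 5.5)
Your treatment of $I^{\text{large}}$ has a genuine gap. It never separates the two mechanisms behind \eqref{Ilargees1}, and the bounds you substitute fail for $s\ge\frac12$.

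\textbf{How the paper gets the two terms.} The paper splits $I^{\text{large}}=I^l_1+I^l_2+I^l_3$ as in \eqref{Ilarge}, and $I^l_2=0$. In $I^l_1$, \eqref{ThetavThetau} moves $\Th_{\bar c}$ onto $g$ before $P_a$ acts, so the Bobylev identity yields the pure symbol difference $\wh{\Xi}(a^{-1}\eta)-\wh{\Xi}(a^{-1}\eta^+)$. Since $\wh\Xi$ is radial and $\eta^+\perp\eta^-$, the first-order Taylor term vanishes. By \eqref{expanfetaplue} the difference is then $O(\sin^2\frac{\th}{2})$ and supported in $|\eta|\approx a$. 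So $\int b\,\th^2\,d\si<\infty$ closes without paying the collision order, and $1\approx a^{-s}|\eta|^s$ on that support gives the $a^{-s}$ term. No integration by parts in $v_*$ is needed, since \eqref{Ilargees1} puts only weights on $f$. The $a^{-1}$ term comes from $I^l_3$, not from the symbol. There the kernel $a^d\Xi(a(v-u))$ localizes $|v-u|\lesssim a^{-1}$, so $\Th_{\bar c}(u-v_*)-\Th_{\bar c}(v-v_*)$ yields $a^{-1}\na\Th_{\bar c}$; this gives the weight $\ga-1$, with $+2s$ from the angular integral. The angular singularity is handled in physical space with the non-dilated decomposition \eqref{DecomHk} of $h$, split at $\th\lessgtr 2^{-k}|v-v_*|^{-1}$ ($\om$-cancellation plus second-order Taylor below, a crude bound above). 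The $\ve$ is the cost of the $\ell^1$ summation $\sum_k 2^{ks}\|H_k\|_{L^2_v}$.

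\textbf{Why your substitutes fail.} The bound $\min\{1,a^{-1}|\eta^-|\}\lesssim a^{-s}|\eta^-|^s$ is not integrable against $b\approx\th^{-1-2s}$ for any $s>0$. A bare factor $a^{-1}|\eta^-|$ is not integrable for $s\ge\frac12$. If you pair $a^{-1}|\eta^-|$ with the extra $|\eta^-|$ from your $v_*$-integration by parts, \eqref{angulareta} returns the full order $|\eta|^{2s}$, not $|\eta|^{2s-1}$. That is $a^{2s-1}$ on $|\eta|\approx a$, which is not bounded by $a^{-s}|\eta|^s+a^{-1}|\eta|^{s+\ve}$ uniformly in $a$ when $s>\frac12$. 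Moreover, after your integration by parts the second argument of $\wh{G^{\al_1}}$ is not the frequency of $g$, because $\<v_*+u\>^{-M}$ depends on $u$. So a commutator of $P_a$ with a $u$-dependent factor must be split off in any case.

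\textbf{Small relative velocity.} The obstacle you anticipate does not occur in the paper's variables. In \eqref{Ismall} all $\si$-dependence sits in $\wh{\Th_c}(\eta_*-\eta^-)$, and the symbol difference is $\wh{\Xi}(a^{-1}\eta)-\wh{\Xi}(a^{-1}(\eta-\eta_*))$, which is independent of $\si$. So the $\om$-cancellation in $J_1$ goes through unchanged; rewriting the argument via $\eta^+$ only introduces a spurious $\si$-dependence. What is missing is the Taylor bound $\lesssim\<\eta_*\>/\<\eta\>$ from \eqref{Xitoetastar} on $\{|\eta|>2\<\eta_*\>\}$. It is needed in \emph{every} piece restricted to that region ($J'_{12}$, $J_{33}$, $J_{34}$), not only the first-order one. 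Bounding the difference by $2$ there leaves kernel bounds growing like $\<\eta\>^{2s-s_2-s_3}$. This one-order gain is why the lemma assumes only $s_2+s_3\ge 2s-1$ and $-s_1+s_2+s_3\ge s$, instead of $s_2+s_3\ge 2s$ as in Lemma~\ref{LemUpper}. It is essential in the application: in \eqref{PaComm}, $s_2+s_3=s+s_1<2s$ (for example $s_1=0$ in the proof of Theorem~\ref{ThmComm}).

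\textbf{The $R_r$ commutator.} In \eqref{RrComm}, Cauchy--Schwarz against $\|\<v\>^{-n}h\|_{L^2_D}$ via \eqref{ColliUpperTriple} only controls the part carrying the difference $\<v'\>^{-n}h(v')-\<v\>^{-n}h(v)$. The remaining part $\int B f_* g\,(R_r(v')-R_r(v))h(v)$ needs the second-order expansion of $R_r$ together with the $\om$-symmetry that kills the $\sin\th\,\om$ component of $v'-v$ (the paper's $I^r_3$). The first-order bound $\min\{1,2^{-r}|v'-v|\}$ is not integrable against $b$ there when $s\ge\frac12$.
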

In \eqref{PaComm}, we made a rough choice of $s_1,s_2,s_3$. 
We first prove Theorem \ref{ThmComm} based on Lemma \ref{LemmaCommColli}.

\begin{proof}[Proof of Theorem \ref{ThmComm}]

To obtain the collisional estimate with a velocity exponential tail, we compute the commutator 
\begin{align}\label{commes1}
	P_{a}R_r\Ga(f,g)-\Ga(f,P_{a}R_rg)
	&\notag=P_{a}R_r\Ga(f,g)-P_{a}R_rQ(\mu^{\frac{1}{2}}f,g)
	+P_{a}R_rQ(\mu^{\frac{1}{2}}f,g)-P_{a}Q(\mu^{\frac{1}{2}}f,R_rg)\\
	&\ \ 
	+P_{a}Q(\mu^{\frac{1}{2}}f,R_rg)-Q(\mu^{\frac{1}{2}}f,P_{a}R_rg)
	+Q(\mu^{\frac{1}{2}}f,P_{a}R_rg)-\Ga(f,P_{a}R_rg).
\end{align}
For the first term, by \eqref{esD}, we simply take adjoint of $P_{a}$ and apply commutator estimate \eqref{CommGaAndQ} (for $\Ga-Q$) to deduce 
\begin{align*}\notag
	\big|\big(P_{a}R_r\Ga(f,g)-P_{a}R_rQ(\mu^{\frac{1}{2}}f,g),h\big)_{L^2_v}\big|
	&\notag= \big|\big(\Ga(f,g)-Q(\mu^{\frac{1}{2}}f,g),R_rP_{a}h\big)_{L^2_v}\big|\\
	&\notag \le C\|f\|_{L^2_v}\|\<v\>^{\frac{\ga+2s}{2}+n}g\|_{L^2_v}\|\<v\>^{-n}R_rP_{a}h\|_{L^2_D},
\end{align*}
and similarly, 
\begin{align*}
	\big|\big(Q(\mu^{\frac{1}{2}}f,P_{a}R_rg)-\Ga(f,P_{a}R_rg),h\big)_{L^2_v}\big|
	&\le C\|f\|_{L^2_v}\|\<v\>^{\frac{\ga+2s}{2}+n}P_{a}R_rg\|_{L^2_v}\|\<v\>^{-n}h\|_{L^2_D},
\end{align*}
for any $n\in\R$. 
By using \eqref{PaComm} and \eqref{RrComm}, whenever $\ga+s>-\frac{d}{2}$, we have 
\begin{align*}\notag
	\big|\big(P_{a}Q(\mu^{\frac{1}{2}}f,R_rg)-Q(\mu^{\frac{1}{2}}f,P_{a}R_rg),h\big)_{L^2_v}\big|
	&\notag
	\le Ca^{-s}\|\<v\>^{-N}f\|_{L^2_v}\|\<v\>^{\max\{\ga,\ga+2s-1\}}R_rg\|_{L^2_v}\|\<D_v\>^{s}\wt{P_a}h\|_{L^2_v}
	\\\notag
	&\quad+C\|\<v\>^{-N}f\|_{L^2_{v}}\|\<v\>^{\ga}R_rg\|_{L^2_v}\|\<D_v\>^{s}\wt{P_a}h\|_{L^2_v},
\end{align*}
and 
\begin{align*}\notag
	&\big|\big(P_{a}R_rQ(\mu^{\frac{1}{2}}f,g)-P_{a}Q(\mu^{\frac{1}{2}}f,R_rg),h\big)_{L^2_v}\big|
	\le C2^{-sr}\|\<v\>^{-N}f\|_{L^2_v}\|\<v\>^{n+\frac{\ga}{2}}g\|_{L^2_v}\|\<v\>^{-n}P_ah\|_{L^2_D}, 
\end{align*}
for any $N\ge 0$.  
Now, let $P_a=P_k$, where $a=\om_02^{rl_1+k}$ and $P_k$ is given in \eqref{DejQkDef}.
Substituting the above estimates into \eqref{commes1} and taking suitable $n\in\R$, we have 
\begin{align*}
	&\big|\big(P_kR_r\Ga(f,g)-\Ga(f,P_kR_rg),\wt{R_r}\wt{P_k}h\big)_{L^2_v}\big|\\
	&\ \le C
	\|f\|_{L^2_v}\|\<v\>^{\frac{\ga+2s}{2}+n}P_kR_rg\|_{L^2_v}\|\<v\>^{-n}\wt{R_r}\wt{P_k}h\|_{L^2_D}\\
	&\quad+Ca^{-s}\|\<v\>^{-N}f\|_{L^2_v}\|\<v\>^{\max\{\ga,\ga+2s-1\}}R_rg\|_{L^2_v}\|\<D_v\>^{s}\wt{R_r}\wt{P_k}h\|_{L^2_v}
	\\\notag
	&\quad+C\|\<v\>^{-N}f\|_{L^2_{v}}\|\<v\>^{\ga}R_rg\|_{L^2_v}\|\<D_v\>^{s}\wt{R_r}\wt{P_k}h\|_{L^2_v}\\
	&\quad+C2^{-sr}\|\<v\>^{-N}f\|_{L^2_v}\|\<v\>^{n+\frac{\ga}{2}}g\|_{L^2_v}\|\<v\>^{-n}\wt{R_r}\wt{P_k}h\|_{L^2_D}\\
	&\ \le C
	2^{-nr}\|f\|_{L^2_v}\|\<v\>^{\frac{\ga+2s}{2}+n}g\|_{L^2_v}\|\wt{R_r}\wt{P_k}h\|_{L^2_D}\\
	&\quad+C\|\<v\>^{-N}f\|_{L^2_v}\|\<v\>^{\ga}R_rg\|_{L^2_v}\|\<D_v\>^{s}\wt{R_r}\wt{P_k}h\|_{L^2_v},
\end{align*}
for any $n\in\R$ and $N\ge 0$, where we let $l_1\ge 2s-1$. 
Simiarly, for $P_kR_rh$, by suitably choosing $n$, utilizing the commutator estimate \eqref{dissUpper1b} with $\|\<v\>^{\fr\ga2}\<D_v\>^s(\cdot)\|_{L^2_v}\le C\|\cdot\|_{L^2_D}$, we could extract the support properties of $\wt{R_r}$ and $\wt{P_k}$ to deduce \eqref{ThmCommes}. 
This completes the proof of Theorem \ref{ThmComm}. 
\end{proof}

\subsection{Commutator estimate of \texorpdfstring{$[P_a,Q]$}{[Pa,Q]} with large relative velocity}
\label{SecLargeRelative}
In this part, we deal with $I^{\text{large}}$, defined in \eqref{decomMain2}, corresponding to the case where the relative velocity is away from the singularity at $0$. We decompose $I^{\text{large}}$ as 
\begin{align}\label{Ilarge}\notag
	I^{\text{large}}&=\int_{v,v_*,\si,\zeta,u,\eta,w}
	\Th_{\bar{c}}(v-v_*)b(\cos\th)f(v_*)g(u)\ol{h(w)}e^{2\pi i\zeta\cdot(v-u)}
	\\
	&\quad\notag\qquad\times\Big(\wh{\Xi}(a^{-1}\eta)-\wh{\Xi}(a^{-1}\zeta)\Big)\Big(e^{2\pi i\eta\cdot(w-v')}-e^{2\pi i\eta\cdot(w-v)}\Big)\\
	&\notag\quad=\int_{v,v_*,\si,\zeta,u,\eta,w}
	b(\cos\th)f(v_*)g(u)\ol{h(w)}e^{2\pi i\zeta\cdot(v-u)}\\
	&\notag\qquad\times
	\Big\{\Big(\Th_{\bar{c}}(v-v_*)\wh{\Xi}(a^{-1}\eta)-\Th_{\bar{c}}(u-v_*)\wh{\Xi}(a^{-1}\zeta)\Big)e^{2\pi i\eta\cdot(w-v')}\\
	&\notag\qquad\quad
	+
	\Big(\Th_{\bar{c}}(u-v_*)\wh{\Xi}(a^{-1}\zeta)-\Th_{\bar{c}}(v-v_*)\wh{\Xi}(a^{-1}\eta)\Big)e^{2\pi i\eta\cdot(w-v)}\\
	&\notag\qquad\quad
	+\Big(\Th_{\bar{c}}(u-v_*)-\Th_{\bar{c}}(v-v_*)\Big)\wh{\Xi}(a^{-1}\zeta)\Big(e^{2\pi i\eta\cdot(w-v')}-e^{2\pi i\eta\cdot(w-v)}\Big)\Big\}\\
	&\quad=:I^{l}_1+I^{l}_2+I^{l}_3,   
\end{align}
where for each term, we will integrate the corresponding ``good'' variable. 
In $I^{l}_1$, note that for any $v$, (since $g(u)\Th_{\bar{c}}(u-v_*)$ is integrable over $u$)
\begin{align}\label{ThetavThetau}
	&\int_{\zeta,u}g(u)\Th_{\bar{c}}(v-v_*)e^{2\pi i\zeta\cdot(v-u)}\,d\zeta du
	=g(v)\Th_{\bar{c}}(v-v_*)
	=\int_{\zeta,u}g(u)\Th_{\bar{c}}(u-v_*)e^{2\pi i\zeta\cdot(v-u)}\,d\zeta du.
\end{align}
Then we integrate with respect to $v$ and subsequently in $\zeta$, utilizing \eqref{deltafunction}, to obtain 
\begin{align*}
	I^{l}_1&=\int_{v,v_*,\si,\zeta,u,\eta,w}
	b(\cos\th)f(v_*)\Th_{\bar{c}}(u-v_*)g(u)\ol{h(w)}e^{2\pi i\zeta\cdot(v-u)}
	\Big(\wh{\Xi}(a^{-1}\eta)-\wh{\Xi}(a^{-1}\zeta)\Big)e^{2\pi i\eta\cdot(w-v')}\\
	&=\int_{v_*,\si,u,\eta}b\big(\frac{\eta}{|\eta|}\cdot\si\big)f(v_*)\Th_{\bar{c}}(u-v_*)g(u)\ol{\wh{h}(\eta)}
	\Big(\wh{\Xi}(a^{-1}\eta)-\wh{\Xi}(a^{-1}\eta^+)\Big)e^{-2\pi i\eta^+\cdot u-2\pi i\eta^-\cdot v_*}
\end{align*} 
Rewriting $f(v_*)=\<v_*\>^{M}f(v_*)\times\<v_*\>^{-M}$ where $M\ge\frac{d+1}{2}$, and integrating over $u,v_*$, we have 
\begin{align*}
	I^l_1
	&=\int_{\eta_*,\si,\eta}b\big(\frac{\eta}{|\eta|}\cdot\si\big)
	\ol{\wh{h}(\eta)}\Big(\wh{\Xi}(a^{-1}\eta)-\wh{\Xi}(a^{-1}\eta^+)\Big)\\
	&\qquad\qquad\times\F_{v_*}\big(\<v_*\>^{M}f(v_*)\big)(\eta_*)\F_{v_*,u}\big(\<v_*\>^{-M}\Th_{\bar{c}}(u-v_*)g(u)\big)(\eta_*-\eta^-,\eta^+). 
\end{align*}
By Taylor expansion \eqref{expanfetaplue} (note that, by \eqref{bobylevangle}, $|\eta^++\tau(\eta-\eta^+)|\approx|\eta|$) and noting that $\wh{\Xi}(a^{-1}\zeta)$ is a radial function whose derivative has compact support $\{\frac{6}{7}a\le |\zeta|\le 2a\}$, we have 
\begin{align*}
	|I^l_1|
	&\le \int_{\si,\eta,\eta_*}b\big(\frac{\eta}{|\eta|}\cdot\si\big)
	\big|\ol{\wh{h}(\eta)}\big|\min\big\{1,\,\sin^2\frac{\th}{2},\,a^{-2s}|\zeta|^{2s}\sin^2\frac{\th}{2}\big\}\\
	&\quad \times
	\big|\F_{v_*}\big(\<v_*\>^{M}f(v_*)\big)(\eta_*)\big|\big|\F_{v_*,u}\big(\<v_*\>^{-M}\Th_{\bar{c}}(u-v_*)g(u)\big)(\eta_*-\eta^-,\eta^+)\big|\\
	&\le C\Big(\int_{\si,\eta,\eta_*}b\big(\frac{\eta}{|\eta|}\cdot\si\big)\big|\F_{v_*}\big(\<v_*\>^{M}f(v_*)\big)(\eta_*)\big|^2\big|\ol{\wh{h}(\eta)}\big|^2
	a^{-2s}|\zeta|^{2s}\sin^2\frac{\th}{2}\Big)^{\frac{1}{2}}\\
	&\ \ \times\Big(\int_{\si,\eta,\eta_*}b\big(\frac{\eta}{|\eta|}\cdot\si\big)\big|\F_{v_*,u}\big(\<v_*\>^{-M}\Th_{\bar{c}}(u-v_*)g(u)\big)(\eta_*-\eta^-,\eta^+)\big|^2
	\sin^2\frac{\th}{2}\Big)^{\frac{1}{2}}.
\end{align*}
Note that $\F_{v_*,u}\big(\<v_*\>^{-M}\Th_{\bar{c}}(u-v_*)g(u)\big)(\eta_*-\eta^-,\eta^+)$ has a good decay $\<\eta_*-\eta^-\>^{-N}$. Thus, by the 
trivial angular integral $\int_{\si}b\th^2<\infty$, whenever $M\ge\frac{d+1}{2}$, we have 
\begin{align}\label{esLargeI1}\notag
	|I^l_1|
	&\le C\Big(\int_{v_*,v}|
	\<v_*\>^{M}f(v_*)|^2a^{-2s}|\<D_v\>^sh(v)|^2\Big)^{\frac{1}{2}}\Big(\int_{v_*,u}\big|\<v_*\>^{-M}\Th_{\bar{c}}(u-v_*)g(u)\big|^2\Big)^{\frac{1}{2}}\\
	&\le Ca^{-s}\|
	\<v\>^{\frac{d+1}{2}}f\|_{L^2_v}\|\<v\>^{\ga}g\|_{L^2_v}\|\<D_v\>^sh\|_{L^2_v}.
\end{align}
For the term $I^{l}_2$ in \eqref{Ilarge}, integrating over $v$, and utilizing \eqref{ThetavThetau} and \eqref{deltafunction}, we have 
{\small\begin{align}\label{esLargeI2}\notag
	I^{l}_2
	&\notag=\int_{v_*,\si,\zeta,u,\eta,w}
	b(\si_1)f(v_*)g(u)\ol{h(w)}\Th_{\bar{c}}(u-v_*)\Big(\wh{\Xi}(a^{-1}\zeta)-\wh{\Xi}(a^{-1}\eta)\Big)\\
	&\qquad\qquad\qquad\times e^{-2\pi i\zeta\cdot u+2\pi i\eta\cdot w}\de(\eta-\zeta)
	=0. 
\end{align}}
For the last term $I^l_{3}$ in \eqref{Ilarge}, applying the Taylor expansion to $\Th_{\bar{c}}$, we have
\begin{align*}
	I^{l}_{3}
	&=\int_{v,v_*,\si,u}
	b(\cos\th)f(v_*)g(u)\Big(\Th_{\bar{c}}(u-v_*)-\Th_{\bar{c}}(v-v_*)\Big)
	\\
	&\notag\qquad\qquad\times\Big(\ol{h(v')}a^d\Xi(a(v-u))-\ol{h(v)}a^d\Xi(a(v-u))\Big)\\
	&=\int_{v,v_*,\si,u}
	b(\cos\th)f(v_*)g(u)\int^1_0\wt{\Th}\,d\de~
	\big(\ol{H(v')}-\ol{H(v)}\big)\psi_1(v-u),
\end{align*}
where, for brevity, we denote  
\begin{align*}
	&H(z):=\ol{h(z)}a^{-1},\quad
	\psi_1(v-u)=a^{d+1}(v-u)\Xi(a(v-u))\\
	&\notag\wt{\Th}:=\na\Th_{\bar{c}}(\de(v-v_*)+(1-\de)(u-v_*)),, 
\end{align*}
which satisfies
\begin{align}\label{psi1es22}
	|\psi_1(v-u)|\le C_Na^{d}\<a(v-u)\>^{-N},\quad |\wt{\Th}|\le C_\ga\<u\>^{\ga-1}\<v-u\>^{|\ga-1|}\<v_*\>^{|\ga-1|}.
\end{align}
To reveal the fractional-derivative structure, we perform a non-dilated dyadic decomposition of $H$ in the velocity-frequency variable, i.e.,
\begin{align}\label{DecomHk}
	H(v)=\sum_{k\ge 0}P_kH(v)=:\sum_{k\ge 0}H_k,\ \text{ where }\ P_kH=\int_{\R^d_\zeta\times\R^d_u}e^{2\pi i\zeta\cdot(v-u)}\wh{\Psi_{k}}(\zeta)H(u)\,dud\zeta.  
\end{align}
For each $k\ge0$, we split the integration sphere $\S^{d-1}_\si$ into two parts with corresponding to $2^k$: 
\begin{align}\label{Il32Def}
	&\{\th\le 2^{-k}|v-v_*|^{-1}\}\cup\{\th> 2^{-k}|v-v_*|^{-1}\},
	\ 
	\text{ and split $I^{l}_{3}=\sum_{k\ge 0}(I^{l,s}_{3,k}+I^{l,l}_{3,k})$ accordingly}.
\end{align}
For the term $I_{3,k}^{l,s}$, applying Taylor's expansion to $H_k$ yields 
\begin{align*}
	I_{3,k}^{l,s}&:=\int_{v,v_*,\si,u,\de}\1_{\th\le 2^{-k}|v-v_*|^{-1}}bf(v_*)g(u)
	\psi_1(v-u)\wt{\Th}
	\cdot\Big(H_k(v')-H_k(v)\Big)\\
	&=\int_{v,v_*,\si,u,\de}\Big(\cdots\Big)
	\Big(\na_vH_k(v)\cdot(v'-v)+\int^1_0(1-\tau)\na_v^2H_k(v+\tau(v'-v)):(v'-v)^{\otimes 2}\,d\tau\Big)\\
	&
	=I_{3,k,1}^{l,s}+I_{3,k,2}^{l,s}.
\end{align*}
For $I_{3,k,1}^{l,s}$, by \eqref{vprivth}, i.e., $v'-v =\sin^2\frac{\th}{2}(v_*-v)+\frac{1}{2}|v-v_*|\sin\th\omega$, \eqref{angularintegral}, and the symmetry with respect to $\omega$, one has 
\begin{align*}
	I_{3,k,1}^{l,s}
	&\le C\int_{v,v_*,\si,u,\de}\1_{\th\le 2^{-k}|v-v_*|^{-1}}b\sin^2\frac{\th}{2}|v-v_*||f(v_*)g(u)\psi_1(v-u)\wt{\Th}\na_vH_k(v)| + 0\\
	&\le C(2^k)^{2s-2}\int_{v,v_*,u}|v-v_*|^{-1+2s}\<u\>^{\ga-1}\<v_*\>^{|\ga-1|}a^{d}\<a(v-u)\>^{-N}|f(v_*)g(u)||\na_vH_k(v)|, 
\end{align*}
where, in the last inequality, we used \eqref{psi1es22}.
Applying \eqref{gafg}, i.e., $\int_{\R^{2d}_{v,v_*}}|v-v_*|^{\ga}|f(v_*)g(v)|\,dv_*dv\le C\|\<v_*\>^{|\ga|+\frac{d+1}{2}}f\|_{L^2_v}\|\<v\>^{\ga}g\|_{L^1_v}$, to the terms involving singular $|v-v_*|^{-1+2s}$, and using $\<v\>^{k}\le C_k\<u\>^{k}\<a(v-u)\>^{|k|}$ ($k\in\R$) whenever $a\ge 1$, we have 
\begin{align*}
	|I_{3,k,1}^{l,s}|
	&\le C(2^k)^{2s-2}\|\<v_*\>^{|\ga-1|+|-1+2s|+\frac{d+1}{2}}f(v_*)\|_{L^2_{v_*}}\\
	&\qquad\times
	\Big\|\int_{u}\<v\>^{-1+2s}a^{d}\<a(v-u)\>^{-N}|\<u\>^{\ga-1}g(u)||\na_vH_k(v)|\,du\Big\|_{L^1_v}\\
	&\le C(2^k)^{2s-2}\|\<v\>^{|\ga-1|+|-1+2s|+\frac{d+1}{2}}f(v_*)\|_{L^2_{v}}\|\<v\>^{\ga+2s-2}g\|_{L^2_v}\|\na_vH_k\|_{L^2_v},
\end{align*}
where we used convolution-type estimate $\|G(v)(H*\<\cdot\>^{-N})(v)\|_{L^1_v}\le\|G(v)\|_{L^2_v}\|H\|_{L^2_v}\|\<\cdot\>^{-N}\|_{L^1_v}$. 
The second-order term $I_{3,k,2}^{l,s}$ is similar. Using $|v'-v|=|v-v_*|\sin\frac{\th}{2}$ and H\"older's inequality, we have 
\begin{align*}
	|I_{3,k,2}^{l,s}|
	&\le C\Big(\int_{v,v_*,\si,u,\de,\tau}\1_{\th\le 2^{-k}|v-v_*|^{-1}}b\sin^2\frac{\th}{2}|v-v_*|^{2}\frac{|f(v_*)||g(u)|^2}{\<v+\tau(v'-v)\>^{-2s}}
	a^{d}\<a(v-u)\>^{-N}|\wt{\Th}|^2
	\Big)^{\frac{1}{2}}\\
	&\quad\times\Big(\int_{v,v_*,\si,u,\de,\tau}\1_{\th\le 2^{-k}|v-v_*|^{-1}}b\sin^2\frac{\th}{2}|v-v_*|^{2}|f(v_*)|
	a^{d}\<a(v-u)\>^{-N}
	\frac{|\na_v^2H_k(v+\tau(v'-v))|^2}{\<v+\tau(v'-v)\>^{2s}}\Big)^{\frac{1}{2}}\\
	&=:(I^{l,s}_{3,2,g})^{\frac{1}{2}}\times(I^{l,s}_{3,2,h})^{\frac{1}{2}}.
\end{align*} 
For the $g$ term $I^{l,s}_{3,2,g}$, applying \eqref{psi1es22}, $\<v\>^{2s}\le C\<u\>^{2s}\<v-u\>^{2s}$, and $\<v+\tau(v'-v)\>^k\le C\<u\>^k\<v-u\>^{|k|}\<v_*\>^{|k|}$ via \eqref{distancevpriv} and \eqref{equivlam}, and integrating over $\si$ via \eqref{angularintegral}, we have 
\begin{align*}
	I^{l,s}_{3,2,g}
	&\le C(2^k)^{2s-2}\int_{v,v_*,u}|v-v_*|^{2s}\<u\>^{2\ga-2}\<v_*\>^{2|\ga-1|+2s}\frac{|f(v_*)||g(u)|^2}{\<u\>^{-2s}}
	a^{d}\<a(v-u)\>^{-N},\\
	&\le C(2^k)^{2s-2}\|\<v_*\>^{2|\ga-1|+4s+\frac{d+1}{2}}f(v_*)\|_{L^2_{v_*}}\int_{v,u}\<v\>^{2s}\<u\>^{2\ga-2}\frac{|g(u)|^2}{\<u\>^{-2s}}a^{d}\<a(v-u)\>^{-N}\\
	&\le C(2^k)^{2s-2}\|\<v\>^{2|\ga-1|+4s+\frac{d+1}{2}}f\|_{L^2_v}\|\<v\>^{\ga+2s-1}g\|_{L^2_v}^2. 
\end{align*}
For the $h$ term $I^{l,s}_{3,2,h}$, we write $z=v+\tau(v'-v)$ for brevity. Integrating over $u$, using \eqref{distancevpriv}, applying the change of variables $v\mapsto z=v+\tau(v'-v)$ as in \eqref{regularz}, and finally integrating $\si$ by \eqref{angularintegral}, we have 
\begin{align*}
	I^{l,s}_{3,2,h}
	&\le C(2^k)^{2s-2}\int_{v,v_*,\si,\tau}\1_{\th\le 2^{-k}|v-v_*|^{-1}}b\sin^2\frac{\th}{2}|z-v_*|^{2}|f(v_*)|
	\frac{|\na_v^2H_k(z)|^2}{\<z\>^{2s}}\\
	&\le 
	C(2^k)^{2s-2}\int_{z,v_*}|z-v_*|^{2s}|f(v_*)|
	\frac{|\na_v^2H_k(z)|^2}{\<z\>^{2s}}\\
	&\le C(2^k)^{2s-2}\|\<v_*\>^{2s+\frac{d+1}{2}}f(v_*)\|_{L^2_{v_*}}
	\|\na_v^2H_k\|_{L^2_v}^2. 
\end{align*}

For $I^{l,l}_{3}$, on the support $\{\th> 2^{-k}|v-v_*|^{-1}\}$, by \eqref{psi1es22} and H\"older's inequality, we compute
\begin{align*}
	|I^{l,l}_{3}|
	&:=\Big|\int_{v,v_*,\si,u,\de}\1_{\th> 2^{-k}|v-v_*|^{-1}}bf(v_*)g(u)
	\psi_1(v-u)\wt{\Th}
	\cdot(H_k(v')-H_k(v))\Big|\\
	&\notag\le C_N\Big(\int_{v,v_*,\si,u}\1_{\th> 2^{-k}|v-v_*|^{-1}}b\frac{|f(v_*)||g(u)|^2}{\<v\>^{-2s}}a^{d}\<a(v-u)\>^{-N}\<u\>^{2\ga-2}\<v_*\>^{2|\ga-1|}\Big)^{\frac{1}{2}}\\
	&\notag\qquad\times\Big(\int_{v,v_*,\si,u}\1_{\th> 2^{-k}|v-v_*|^{-1}}b|f(v_*)|a^{d}\<a(v-u)\>^{-N}\frac{|H_k(v)|^2+|H_k(v')|^2}{\<v\>^{2s}}\Big)^{\frac{1}{2}}\\
	&=:C_N\big(I_{3,k}^{l,g}\big)^{\frac{1}{2}}\times\big(I_{3,k}^{l,h}\big)^{\frac{1}{2}}.
\end{align*}
For the $g$ term, as before, 
applying angular integral \eqref{angularintegral} deduces  
\begin{align*}
	I_{3,k}^{l,g}
	&\le 
	C\int_{v,v_*,\si,u}\1_{\th> 2^{-k}|v-v_*|^{-1}}
	b\frac{|f(v_*)||g(u)|^2}{\<u\>^{-2s}}a^d\<a(v-u)\>^{-N}\<u\>^{2\ga-2}\<v_*\>^{2|\ga-1|}\\
	&\le C(2^k)^{2s}\int_{v,v_*,u}|v-v_*|^{2s}\<v_*\>^{2|\ga-1|}|f(v_*)|\<u\>^{2\ga+2s-2}|g(u)|^2a^d\<a(v-u)\>^{-N}\\
	&\le C(2^k)^{2s}\|\<v_*\>^{2|\ga-1|+2s+\frac{d+1}{2}}f(v_*)\|_{L^2_v}\|\<u\>^{\ga+2s-1}g(u)\|_{L^2_u}^2. 
\end{align*}
For the $h$ term, integrating over $u$, applying \eqref{distancevpriv}, using regular change of variables \eqref{regular} for $v'$ term, and then utilizing angular integral \eqref{angularintegral}, we have 
\begin{align*}
	I_{3,k}^{l,h}
	&\le C_N\int_{v,v_*,\si}b|f(v_*)|\Big(\1_{\th> 2^{-k}|v-v_*|^{-1}}\frac{|H_k(v)|^2}{\<v\>^{2s}}+\1_{\th> 2^{-k}|v-v_*|^{-1}}\frac{\<v_*\>^{2s}|H_k(v')|^2}{\<v'\>^{2s}}\Big)\\
	&\le C_N(2^k)^{2s}\int_{v,v_*}|v-v_*|^{2s}\<v_*\>^{2s}\<v\>^{-2s}|f(v_*)||H_k(v)|^2\\
	&\le C_N(2^k)^{2s}\|\<v_*\>^{4s+\frac{d+1}{2}}f(v_*)\|_{L^2_v}\|H_k(v)\|_{L^2_v}^2. 
\end{align*}

Lastly, we collect the above estimates for $I^l_{3}$ into \eqref{Il32Def} and sum over $k\ge0$ using \eqref{DecomHk}. For any $\ve>0$, thanks for Littlewood-Paley theorem ($\sum_{k\ge 0}(2^k)^s\|H_k\|_{L^2_v}\le C\|\<D_v\>^{s+\ve}H\|_{L^2_v}$), we have 
\begin{align}\label{esLargeI3}
	|I^l_{3}|
	&\le Ca^{-1}\|\<v\>^{2|\ga-1|+3s+\frac{d+3}{2}}
	f\|_{L^2_v}\|\<v\>^{\ga+2s-1}g\|_{L^2_v}\|\<D_v\>^{s+\ve}h\|_{L^2_v}.
\end{align}
Hence, substituting the estimates $I^l_1$ \eqref{esLargeI1}, $I^l_2$ \eqref{esLargeI2}, and $I^l_{3}$ \eqref{esLargeI3} into \eqref{Ilarge}, we obtain  
\begin{align}\label{Ilargees}\notag
	I^{\text{large}}
	&\le C\|\<v\>^{2|\ga-1|+3s+\frac{d+3}{2}}f\|_{L^2_v}
	\big(a^{-s}\|\<v\>^{\ga}g\|_{L^2_v}\|\<D_v\>^sh\|_{L^2_v}\\
	&\qquad\qquad\qquad\qquad\quad
	+a^{-1}\|\<v\>^{\ga+2s-1}g\|_{L^2_v}\|\<D_v\>^{s+\ve}h\|_{L^2_v}
	\big),
\end{align}
for any $\ve>0$. 
This implies the estimate of the large relative velocity $I^{\text{large}}$ part in \eqref{Ilargees1}.

\subsection{Commutator estimate of \texorpdfstring{$[P_a,Q]$}{[Pa,Q]} with small relative velocity}\label{SecCOmmSmall}
In this part, we will deal with the term $I^{\text{small}}$ in \eqref{decomMain2} corresponding to cross-section $\Th_{c}(v-v_*)=|v-v_*|^\ga\phi(v-v_*)\chi_{|v-v_*|\le 1}$ that has a singularity near $0$. As in \eqref{Gc}, we utilize 
\begin{align*}
	G(v_*,v)&:=\chi_{|v-v_*|\le 1}f(v_*)g(v). 
\end{align*}
Since $\Th_c$ is integrable, we can rewrite $\Th_{c}(v-v_*)=\int_{\zeta_*,u_*}\Th_{c}(u_*-v_*)e^{2\pi i\zeta_*\cdot (v-u_*)}$, and as in \eqref{QcBobylev} and \eqref{FourGvstar}, using the identities from \eqref{deltafunction} and integrating over $v$, we have 
\begin{align*}
	I^{\text{small}}
	&\notag=\int_{v,v_*,\si,\eta,u,\zeta,u_*,\zeta_*}
	b(\cos\th)\Th_c(u_*-v_*)G(v_*,u)\ol{\wh{h}(\eta)}
	e^{2\pi i\zeta\cdot(v-u)}e^{2\pi i\zeta_*\cdot (v-u_*)}\\
	&\quad\notag\qquad\times\Big(\wh{\Xi}(a^{-1}\eta)-\wh{\Xi}(a^{-1}\zeta)\Big)\Big(e^{-2\pi i\eta\cdot v'}-e^{-2\pi i\eta\cdot v}\Big)\\
	&\notag=\int_{v_*,\si,\eta,u,\zeta,u_*,\zeta_*}
	b\big(\frac{\eta}{|\eta|}\cdot\si\big)\Th_c(u_*-v_*)G(v_*,u)\ol{\wh{h}(\eta)}
	e^{-2\pi i\zeta\cdot u}e^{-2\pi i\zeta_*\cdot u_*}\\
	&\quad\notag\qquad\times\Big(\wh{\Xi}(a^{-1}\eta)-\wh{\Xi}(a^{-1}\zeta)\Big)\Big(e^{-2\pi i\eta^-\cdot v_*}\de(\zeta+\zeta_*-\eta^+)-\de(\zeta+\zeta_*-\eta)\Big).
\end{align*}
Integrating over $\zeta_*$ and then $u_*,v_*,u$, by change of variables $\zeta\mapsto \eta_*=\eta-\zeta$ (with fixed $\eta$), we have 
\begin{align}\notag
	I^{\text{small}}
	\label{Ismall}\notag
	&\notag=\int_{\eta_*,\eta,\si}
	b
	\wh{G}(\eta_*,\eta-\eta_*)
	\ol{\wh{h}(\eta)}
	\Big(\wh{\Th_c}(\eta_*-\eta^-)
	-\wh{\Th_c}(\eta_*)\Big)\Big(\wh{\Xi}(a^{-1}\eta)-\wh{\Xi}(a^{-1}(\eta-\eta_*))\Big)\\
	&=:J_1+J_2+J_3,
\end{align}
where we denote $\wh{G}(\eta_*,\eta)=\F_{v_*,v}G(\eta_*,\eta)$ while $I^{\text{small}}$ is further decomposed into three parts: using 
\begin{align*}
	\wh{\Th_c}(\eta_*-\eta^-)-\wh{\Th_c}(\eta_*)
	&=-\eta^-\cdot\na_\eta\wh{\Th_c}(\eta_*)+\int^1_0(1-\de)\na^2_\eta\wh{\Th_c}(\eta_*-\de\eta^-):\eta^-\otimes\eta^-\,d\de, 
\end{align*}
and rewriting $\Big(\cdots\Big)=b\wh{G}(\eta_*,\eta-\eta_*)\ol{\wh{h}(\eta)}\big\{\wh{\Xi}(a^{-1}\eta)-\wh{\Xi}(a^{-1}(\eta-\eta_*))\big\}$ temporarily, 
\begin{align}\notag\label{J123aaa}
	J_{1}&=\int_{\eta,\eta_*,\si}\1_{|\eta^-|\le\frac{1}{2}\<\eta_*\>}
	(-\eta^-)\cdot\na_\eta\wh{\Th_c}(\eta_*)\Big(\cdots\Big)
	\\
	J_{2}\notag&=\int_{\eta,\eta_*,\si}\1_{|\eta^-|\le\frac{1}{2}\<\eta_*\>}
	\int^1_0(1-\de)\na^2_\eta\wh{\Th_c}(\eta_*-\de\eta^-):\eta^-\otimes\eta^-\,d\de~\Big(\cdots\Big),\\
	J_{3}&=\int_{\eta,\eta_*,\si}\1_{|\eta^-|\ge\frac{1}{2}\<\eta_*\>}
	\big(\wh{\Th_c}(\eta_*-\eta^-)-\wh{\Th_c}(\eta_*)\big)\Big(\cdots\Big),
\end{align}
Using \eqref{nakThc}, we have $|\na^k\wh{\Th_c}(\eta)|\le C_{N,d,\ga}\<\eta\>^{-d-\ga-k}$, and 
on the support of $\1_{|\eta^-|\le\frac{1}{2}\<\eta_*\>}$, we have 
\begin{align*}
	|\na^2_\eta\wh{\Th_c}(\eta_*-\de\eta^-)|
	&\le C\<\eta_*-\de\eta^-\>^{-d-\ga-2}
	\le C\<|\eta_*|-\de|\eta^-|\>^{-d-\ga-2}\le C\<\eta_*\>^{-d-\ga-2}. 
\end{align*}
For the term $J_{1}$, we divide $\eta^-$ as in \eqref{decompositionofetaminus}: 
	$\eta^-
	=\frac{|\eta|}{2}\Big(\big(\frac{\eta}{|\eta|}\cdot\si\big)\frac{\eta}{|\eta|}-\si\Big)+\Big(1-\big(\frac{\eta}{|\eta|}\cdot\si\big)\Big)\frac{\eta}{2}$. 
Using decomposition $\si=\cos\th\,\mathbf{j}+\sin\th\,\omega$ in \eqref{decomsitoj} 
and the symmetry of $\omega$ in \eqref{symmetaomega}, the integral corresponding the first term $\frac{|\eta|}{2}\big((\frac{\eta}{|\eta|}\cdot\si)\frac{\eta}{|\eta|}-\si\big)$ vanishes (note that, by \eqref{bobylevangle}, $|\eta^-|=|\eta|\sin\frac{\th}{2}$ is independent of $\om$). For the second term, we have 
	$\big(1-(\frac{\eta}{|\eta|}\cdot\si)\big)\frac{\eta}{2}
	\equiv(1-\cos\th)\frac{\eta}{2}
	=\eta\sin^2\frac{\th}{2}$. 
For $J_{1}$ and $J_{2}$, we apply the Taylor expansion
\begin{align*}
    \wh{\Xi}(a^{-1}\eta)-\wh{\Xi}(a^{-1}(\eta-\eta_*))
    =\int^1_0 \nabla \wh{\Xi} (a^{-1} (\eta-\tau \eta_*))\,d\tau \cdot a^{-1}\eta_*
    \quad \text{when}\ |\eta|>2\langle \eta_*\rangle.
\end{align*}
To calculate the difference $\wh{\Xi}(a^{-1}\eta)-\wh{\Xi}(a^{-1}(\eta-\eta_*))$, we split $\1=\1_{|\eta|\le 2\langle \eta_*\rangle}+\1_{|\eta|> 2\langle \eta_*\rangle}$, and on their supports, we will utilize 
\begin{align}\label{Xitoetastar}
	\begin{aligned}
	|\wh{\Xi}(a^{-1}\eta)-\wh{\Xi}(a^{-1}(\eta-\eta_*))|\1_{|\eta|\le 2\langle \eta_*\rangle}&\le 2\1_{|\eta|\le 2\langle \eta_*\rangle},\\
	\Big|\int^1_0 \nabla \wh{\Xi} (a^{-1} (\eta-\tau \eta_*))\,d\tau \cdot a^{-1}\eta_*\Big|\1_{|\eta|> 2\langle \eta_*\rangle}
	&\le C\frac{\<\eta_*\>}{\<\eta\>}\1_{|\eta|>2\langle \eta_*\rangle},
	\end{aligned}
\end{align}
where we used the fact that, on the support of $\1_{|\eta|> 2\langle \eta_*\rangle}$, 
	$|\eta-\tau\eta_*|\ge |\eta|-|\eta_*|>\frac{|\eta|}{2}>1$, for $\tau\in(0,1)$. 
Substituting the above estimates of $|\wh{\Xi}(a^{-1}\eta)-\wh{\Xi}(a^{-1}(\eta-\eta_*))|$ and $|\eta^-|=|\eta|\sin\frac{\th}{2}$ into $J_1$ and $J_2$, 
\begin{align*}\notag
    J_{1}+J_2&\le 
		C\int_{\eta,\eta_*,\si,\tau}\
		\1_{|\eta^-|\le\frac{1}{2}\<\eta_*\>} b\sin^2\frac{\theta}{2}\Big(|\eta||\na_\eta\wh{\Th_c}(\eta_*)|
		+|\eta|^2|\na_\eta^2\wh{\Th_c}(\eta_*-\tau\eta^-)|\Big)
		\\&\qquad\qquad\qquad\times|\wh{G}(\eta_*,\eta-\eta_*)\ol{\wh{h}(\eta)}|\Big(\1_{|\eta|\le 2\langle \eta_*\rangle}
		+\frac{\<\eta_*\>}{\<\eta\>}\1_{|\eta|>2\langle \eta_*\rangle}\Big).
\end{align*}
Thus, together with angular integral \eqref{angulareta} and estimate of $\wh{\Th_c}$ from \eqref{nakThc}, we can obtain 
\begin{align*}
	|J_{1}|+|J_{2}|
	&\le C\int_{{\eta,\eta_*}}\min\{1,|\eta|^{-2+2s}\<\eta_*\>^{2-2s}\}\Big(\frac{|\eta|}{\<\eta_*\>^{d+\ga+1}}+\frac{|\eta|^2}{\<\eta_*\>^{d+\ga+2}}\Big)
		\\&\qquad\times\Big|\wh{G}(\eta_*,\eta-\eta_*)\wh{h}(\eta)\Big|\Big(\1_{|\eta|\le 2\langle \eta_*\rangle}
		+\frac{\<\eta_*\>}{\<\eta\>}\1_{|\eta|>2\langle \eta_*\rangle}\Big)
	=:J'_{11}+J'_{12},
\end{align*}
where we split $J'_{11}+J'_{12}$ according to $\1_{|\eta|\le 2\langle \eta_*\rangle}
		+\frac{\<\eta_*\>}{\<\eta\>}\1_{|\eta|>2\langle \eta_*\rangle}$. 
For $J'_{11}$, applying H\"older's inequality and \eqref{esGS1} to $\wh{G}$, for any $M\in\R$, we have 
\begin{align*}
	J'_{11}
	&\le 
	C\int_{\eta,\eta_*}\frac{\1_{|\eta|\le 2\<\eta_*\>}\<\eta\>}{\<\eta_*\>^{d+\ga-s_1+1}\<\eta-\eta_*\>^{s_2}\<\eta\>^{s_3}}\Big|\<\eta_*\>^{-s_1}\<\eta-\eta_*\>^{s_2}\wh{G}(\eta_*,\eta-\eta_*)\<\eta\>^{s_3}\wh{h}(\eta)\Big|\\
	&\le 
	C\|\<\eta_*\>^{-s_1}\<\eta-\eta_*\>^{s_2}\wh{G}(\eta_*,\eta-\eta_*)\|_{L^2_{\eta,\eta_*}}
	\Big\|\frac{\1_{|\eta|\le 2\<\eta_*\>}\<\eta\>^{s_3-1}\wh{h}(\eta)}{\<\eta_*\>^{d+\ga-s_1+1}\<\eta-\eta_*\>^{s_2}\<\eta\>^{s_3}}\Big\|_{L^2_{\eta,\eta_*}}
	\\
	&\le C\|\<D_v\>^{-s_1}\<v\>^{-M}f\|_{L^2_v}\|\<D_v\>^{s_2}\<v\>^{M}g\|_{L^2_v}\|\<D_v\>^{s_3}h\|_{L^2_v},
\end{align*}
for any $M\in\R$. Here, in the last inequality, we can split the support $\1_{|\eta|\le 2\<\eta_*\>}=\1_{|\eta|\le\frac{1}{2}\<\eta_*\>}+\1_{\frac{1}{2}\<\eta_*\>\le|\eta|\le 2\<\eta_*\>}$ and use \eqref{equivetastar} to deduce 
\begin{align*}
	\int_{\R^d_{\eta_*}}
	\frac{\1_{|\eta|\le\frac{1}{2}\<\eta_*\>}\1_{\<\eta-\eta_*\>\approx \<\eta_*\>\gtrsim\<\eta\>}}{\<\eta_*\>^{2d+2\ga-2s_1+2}\<\eta-\eta_*\>^{2s_2}\<\eta\>^{2s_3-2}}\,d\eta_*
	&\le \frac{1}{\<\eta\>^{d+2\ga-2s_1+2s_2+2s_3}}\le C,\\
	\int_{\R^d_{\eta_*}}\frac{\1_{\frac{1}{2}\<\eta_*\>\le|\eta|\le 2\<\eta_*\>}\1_{\<\eta\>\approx\<\eta_*\>\gtrsim\<\eta-\eta_*\>}}{\<\eta_*\>^{2d+2\ga-2s_1+2}\<\eta-\eta_*\>^{2s_2}\<\eta\>^{2s_3-2}}\,d\eta_*
	&\le \int_{\R^d_{\eta_*}}\frac{1}{\<\eta-\eta_*\>^{2d+2\ga-2s_1+2s_2+2s_3}}\,d\eta_*\le C,
\end{align*}
provided that $2d+2\ga-2s_1+2+2s_2>d$, $2d+2\ga-2s_1+2s_3\ge0$, and $2d+2\ga-2s_1+2s_2+2s_3>d$, where, in the first inequality, we have used \eqref{integrallargeR}. 

\smallskip 
For $J'_{12}$, similarly, by H\"older's inequality and \eqref{esGS1}, for any $M\in\R$, we have 
\begin{align*}
	J'_{12}&\le C\int_{\R^{2d}_{\eta,\eta_*}}\frac{\<\eta_*\>^{2-2s}}{|\eta|^{2-2s}}\frac{|\eta|^2}{\<\eta_*\>^{d+\ga+2}}\frac{\<\eta_*\>}{\<\eta\>}\Big|\wh{G}(\eta_*,\eta-\eta_*)\wh{h}(\eta)\Big|
    \1_{|\eta|> 2\<\eta_*\>}\,d\eta_*d\eta\\
	&\le C\int_{\R^{2d}_{\eta,\eta_*}}\frac{
	\<\eta\>^{-1+2s}\1_{|\eta|> 2\<\eta_*\>}|\<\eta_*\>^{-s_1}\<\eta-\eta_*\>^{s_2}\wh{G}(\eta_*,\eta-\eta_*)\<\eta\>^{s_3}\wh{h}(\eta)|
	}{\<\eta_*\>^{d+\ga+2s-1-s_1}\<\eta-\eta_*\>^{s_2}\<\eta\>^{s_3}}\\
	&\le C\|\<D_{v}\>^{-s_1}\<v\>^{-M}f\|_{L^2_{v}}\|\<D_{v}\>^{s_2}\<v\>^{M}g(v)\|_{L^2_{v}}\|\<D_v\>^{s_3}h\|_{L^2_v},
\end{align*}
where, in the last inequality, by \eqref{equivetastar} ($\<\eta\>\approx\<\eta-\eta_*\>\gtrsim\<\eta_*\>$ on the support of $\1_{|\eta|>2\<\eta_*\>}$), we can calculate 
\begin{align*}
	&\int_{\R^{d}_{\eta_*}}\frac{\<\eta\>^{-2+4s}\1_{|\eta|> \frac{3}{2}\<\eta_*\>}\1_{\<\eta\>\approx\<\eta-\eta_*\>\gtrsim\<\eta_*\>}}{\<\eta_*\>^{2d+2\ga+4s-2-2s_1}\<\eta-\eta_*\>^{2s_2}\<\eta\>^{2s_3}}\,d\eta_*\\
	&\quad\le C\int_{\R^{d}_{\eta_*}}\frac{\<\eta-\eta_*\>^{-2+4s-2s_2-2s_3}\1_{|\eta|> \frac{3}{2}\<\eta_*\>}\1_{\<\eta\>\approx\<\eta-\eta_*\>\gtrsim\<\eta_*\>}}{\<\eta_*\>^{2d+2\ga+4s-2-2s_1}}\,d\eta_*\\
	&\quad\le C\int_{\R^{d}_{\eta_*}}\frac{1}{\<\eta_*\>^{2d+2\ga-2s_1+2s_2+2s_3}}\,d\eta_*\le C,
\end{align*}
provided that $2d+2\ga-2s_1+2s_2+2s_3>d$ and $s_2+s_3\ge 2s-1$.

\medskip 
For $J_{3}$ in \eqref{J123aaa}, rewriting 
 $\big(\cdots\big)=b\big(\frac{\eta}{|\eta|}\cdot\si\big)\wh{G}(\eta_*,\eta-\eta_*)\ol{\wh{h}(\eta)}\big\{\wh{\Xi}(a^{-1}\eta)-\wh{\Xi}(a^{-1}(\eta-\eta_*))\big\}$,
we split
\begin{align*}
	J_{3}&=\int_{\eta,\eta_*,\si}\1_{|\eta^-|\ge\frac{1}{2}\<\eta_*\>}\big(\wh{\Th_c}(\eta_*-\eta^-)-\wh{\Th_c}(\eta_*)\big)\Big(\cdots\Big)\\
	&=\int_{\eta,\eta_*,\si}\1_{|\eta^-|\ge\frac{1}{2}\<\eta_*\>}\wh{\Th_c}(\eta_*-\eta^-)\1_{|\eta^-|^2\le2|\eta_*\cdot\eta^-|}
	\1_{\frac{1}{2}|\eta|\le\<\eta_*\>}\1_{\<\eta-\eta_*\>\le\<\eta_*-\eta^-\>}\Big(\cdots\Big)\\
	&\ +\int_{\eta,\eta_*,\si}\1_{|\eta^-|\ge\frac{1}{2}\<\eta_*\>}\wh{\Th_c}(\eta_*-\eta^-)\1_{|\eta^-|^2\le2|\eta_*\cdot\eta^-|}
	\1_{\frac{1}{2}|\eta|\le\<\eta_*\>}\1_{\<\eta-\eta_*\>>\<\eta_*-\eta^-\>}\Big(\cdots\Big)\\
	&\ +\int_{\eta,\eta_*,\si}\1_{|\eta^-|\ge\frac{1}{2}\<\eta_*\>}\wh{\Th_c}(\eta_*-\eta^-)\1_{|\eta^-|^2\le2|\eta_*\cdot\eta^-|}
	\1_{\frac{1}{2}|\eta|>\<\eta_*\>}\Big(\cdots\Big)\\
	&\ +\int_{\eta,\eta_*,\si}\1_{|\eta^-|\ge\frac{1}{2}\<\eta_*\>}\Big(\wh{\Th_c}(\eta_*-\eta^-)\1_{|\eta^-|^2>2|\eta_*\cdot\eta^-|}-\wh{\Th_c}(\eta_*)\Big)\Big(\cdots\Big)\\
	&=:(J_{31}+J_{32}+J_{33}+J_{34}).
\end{align*}
Here, we list the properties of these supports as in \eqref{supportQc3}: 
\begin{align}\label{supportJ3}
	\begin{cases}
		\text{in general,}~
		&b(\cos\th)\lesssim \th^{-d+1-2s}\lesssim\frac{|\eta|^{d-1+2s}}{|\eta^-|^{d-1+2s}},\\
		\text{when}~\frac{1}{2}|\eta|>\<\eta_*\>,
		&\<\eta_*\>\lesssim\<\eta\>\approx\<\eta-\eta_*\>,\\
		\text{when}~|\eta^-|^2>2|\eta_*\cdot\eta^-|,&\<\eta_*-\eta^-\>\ge \<\eta_*\>,\\
		\text{when}~|\eta^-|^2\le2|\eta_*\cdot\eta^-|,
		&|\eta^-|\lesssim |\eta_*|\ \text{ and }\ \<\eta_*-\eta^-\>\lesssim \<\eta_*\>,\\
		\text{when}~ |\eta^-|\ge\frac{1}{2}\<\eta_*\>
		&\frac{1}{2}\<\eta_*\>\le \frac{1}{\sqrt{2}}|\eta|\ \text{ and }\ \<\eta_*-\eta^-\>\lesssim|\eta|\approx\<\eta_*\>,\\
		\qquad\ \ \text{ and }\frac{1}{2}|\eta|\le\<\eta_*\>,
		&\text{ and }b(\cos\th)\lesssim \th^{-d+1-2s}\lesssim\frac{|\eta|^{d-1+2s}}{|\eta^-|^{d-1+2s}}\lesssim 1.
	\end{cases}
\end{align}
Moreover, by \eqref{nakThc}, we know that $|\wh{\Th_c}(\eta_*-\eta^-)|\le C\<\eta_*-\eta^-\>^{-d-\ga}$. 
For $J_{31}$, by the estimates in \eqref{supportJ3} related to its support, i.e., $\<\eta\>\approx\<\eta_*\>\gtrsim\<\eta_*-\eta^-\>\ge\<\eta-\eta_*\>$ and $b(\cos\th)\le C$, H\"older's inequality, and estimate \eqref{esGS1} for $\wh{G}$, for any $M\in\R$, we can estimate 
\begin{align*}
	J_{31}
	&\le C\Big(\int_{\eta,\eta_*,\si}
	\frac{\1_{|\eta^-|\ge\frac{1}{2}\<\eta_*\>}\1_{|\eta^-|^2\le2|\eta_*\cdot\eta^-|}
	\1_{\frac{1}{2}|\eta|\le\<\eta_*\>}\1_{\<\eta-\eta_*\>\le\<\eta_*-\eta^-\>}}{\<\eta_*-\eta^-\>^{2d+2\ga}\<\eta_*\>^{-2s_1}\<\eta-\eta_*\>^{2s_2}\<\eta\>^{2s_3}}
	|\<\eta\>^{s_3}\wh{h}(\eta)|^2\Big)^{\frac{1}{2}}\\&\quad\times 
	\Big(\int_{\eta,\eta_*,\si}\<\eta\>^{-2s_1}\<\eta-\eta_*\>^{2s_2}|\wh{G}(\eta_*,\eta-\eta_*)|^2\Big)^{\frac{1}{2}}\\
	&\le C\Big(\int_{\eta,\eta_*}
	\frac{|\<\eta\>^{s_3}\wh{h}(\eta)|^2}{\<\eta-\eta_*\>^{2d+2\ga-2s_1+2s_2+2s_3}}
	\,d\eta_*d\eta\Big)^{\frac{1}{2}}\|\<D_v\>^{-s_1}f\|_{L^2_v}\|\<D_v\>^{s_2}g\|_{L^2_v}\\
	&\le C\|\<D_{v}\>^{-s_1}\<v\>^{-M}f\|_{L^2_{v}}\|\<D_{v}\>^{s_2}\<v\>^{M}g(v)\|_{L^2_{v}}\|\<D_v\>^{s_3}h\|_{L^2_v},
\end{align*}
provided that $2d+2\ga>0$, $-s_1+s_3\ge 0$ and $2d+2\ga-2s_1+2s_2+2s_3>d$. 

\smallskip
The estimate of $J_{32}$ is similar. For any $M\in\R$, as in \eqref{supportJ3}, using $\<\eta\>\approx\<\eta_*\>\gtrsim\<\eta_*-\eta^-\>$ and $\<\eta-\eta_*\>>\<\eta_*-\eta^-\>$ on its support yields
\begin{align*}
	J_{32}
	&\le 
	C\int_{\eta,\eta_*,\si}\frac{\1_{|\eta^-|\ge\frac{1}{2}\<\eta_*\>}
	\1_{\frac{1}{2}|\eta|\le\<\eta_*\>}\1_{\<\eta-\eta_*\>>\<\eta_*-\eta^-\>}}{\<\eta_*-\eta^-\>^{d+\ga}\<\eta_*\>^{-s_1}\<\eta-\eta_*\>^{s_2}\<\eta\>^{s_3}}|\<\eta_*\>^{-s_1}\<\eta-\eta_*\>^{s_2}\wh{G}(\eta_*,\eta-\eta_*)\<\eta\>^{s_3}\ol{\wh{h}}(\eta)|\\
	&\le 
	C\int_{\eta,\eta_*,\si}\frac{1}{\<\eta_*-\eta^-\>^{d+\ga-s_1+s_2+s_3}}|\<\eta_*\>^{-s_1}\<\eta-\eta_*\>^{s_2}\wh{G}(\eta_*,\eta-\eta_*)\<\eta\>^{s_3}\ol{\wh{h}}(\eta)|\\
	&\le 
	C\|\<D_{v}\>^{-s_1}\<v\>^{-M}f\|_{L^2_{v}}\|\<D_{v}\>^{s_2}\<v\>^{M}g(v)\|_{L^2_{v}}\|\<D_v\>^{s_3}h\|_{L^2_v}, 
\end{align*}
provided that $-s_1+s_3\ge 0$, $s_2\ge 0$, and $2d+2\ga-2s_1+2s_2+2s_3>d$.

\smallskip
For the term $J_{33}$, we further use $\wh{\Xi}(a^{-1}\eta)-\wh{\Xi}(a^{-1}(\eta-\eta_*))$ (Taylor expansion and \eqref{Xitoetastar}) to introduce $\frac{|\eta_*|}{|\eta|}$ on the support of $\1_{|\eta|>2\<\eta_*\>}$, which provides the desired frequency in $\eta$.
Therefore, by using H\"older's inequality, 
$\<\eta_*-\eta^-\>\lesssim\<\eta_*\>\approx |\eta^-|\le |\eta|$ and, $\<\eta_*-\eta^-\>\lesssim\<\eta_*\>\lesssim\<\eta\>\approx\<\eta-\eta_*\>$ on the support of $J_{33}$, we have 
\begin{align*}
	J_{33}
	&\le C\int_{\eta,\eta_*,\si}
	b\big(\frac{\eta}{|\eta|}\cdot\si\big)
	\frac{\1_{|\eta^-|\ge\frac{1}{2}\<\eta_*\>}\1_{|\eta^-|^2\le2|\eta_*\cdot\eta^-|}\1_{\frac{1}{2}|\eta|>\<\eta_*\>}}{\<\eta_*-\eta^-\>^{d+\ga}}\Big|\wh{G}(\eta_*,\eta-\eta_*)\wh{h}(\eta)\Big|\frac{\<\eta_*\>}{\<\eta\>}\\
	&\le C\Big(\int_{\eta,\eta_*,\si}
	b\1_{|\eta^-|\ge\frac{1}{2}\<\eta_*\>}|\eta|^{-2s}\<\eta_*\>^{2s}|\<\eta_*\>^{-s_1}\<\eta-\eta_*\>^{s_2}\wh{G}(\eta_*,\eta-\eta_*)|^2\Big)^{\frac{1}{2}}\\
	&\quad\times\Big(\int_{\eta,\eta_*,\si}
	\frac{b\1_{|\eta^-|\ge\frac{1}{2}\<\eta_*\>}\1_{\<\eta_*-\eta^-\>\lesssim\<\eta_*\>\lesssim\<\eta\>}\<\eta_*\>^{2+2s_1-2s}}{\<\eta_*-\eta^-\>^{2d+2\ga}\<\eta\>^{2-2s+2s_2+2s_3}}|\<\eta\>^{s_3}{\wh{h}(\eta)}|^2\Big)^{\frac{1}{2}},
\end{align*}
where $\cos\th=\frac{\eta}{|\eta|}\cdot\si$ and $\th\approx\frac{|\eta|}{|\eta^-|}$.
The first right-hand factor can be estimated by angular integral \eqref{angulareta} and estimate of $\wh{G}$ \eqref{esGS1}. 
For the last factor, we can use the support of $\1_{\<\eta_*-\eta^-\>\lesssim\<\eta_*\>\lesssim\<\eta\>}$, translation $\eta_*\mapsto\eta_*-\eta^-$, \eqref{equivetastar} and angular integral \eqref{angulareta} to deduce that it is 
\begin{align*}
	&\le \int_{\eta,\eta_*,\si}
	\frac{b(\frac{\eta}{|\eta|}\cdot\si)\1_{|\eta^-|\ge\frac{1}{2}\<\eta_*\>}\1_{\<\eta_*-\eta^-\>\lesssim\<\eta_*\>\lesssim\<\eta\>}\<\eta_*\>^{2+2s_1-2s}}{\<\eta_*-\eta^-\>^{2d+2\ga}\<\eta\>^{2-4s+2s_2+2s_3}\<\eta\>^{2s}}|\<\eta\>^{s_3}{\wh{h}(\eta)}|^2\\
	&\le \int_{\eta,\eta_*,\si}
	\frac{b(\frac{\eta}{|\eta|}\cdot\si)\1_{|\eta^-|\ge\frac{1}{C}\<\eta_*\>}}{\<\eta_*\>^{2d+2\ga
	-2s-2s_1+2s_2+2s_3}\<\eta\>^{2s}}|\<\eta\>^{s_3}{\wh{h}(\eta)}|^2\\
	&\le C\int_{\eta,\eta_*}
	\frac{\<\eta\>^{2s}\<\eta_*\>^{-2s}}{\<\eta_*\>^{2d+2\ga
	-2s-2s_1+2s_2+2s_3}\<\eta\>^{2s}}|\<\eta\>^{s_3}{\wh{h}(\eta)}|^2
	\le C\|\<D_v\>^{s_3}h\|_{L^2_v},
\end{align*}
provided that $2-4s+2s_2+2s_3\ge 0$, $2s+2s_1-2s_2-2s_3\le 0$, and $2d+2\ga
	-2s_1+2s_2+2s_3>d$.
Note that $|\eta^-|\ge\frac{1}{2}\<\eta_*+\eta^-\>$ implies $|\eta^-|\gtrsim \<\eta_*\>$. 
Therefore, for any $M\in\R$,
\begin{align*}
	J_{33}
	&\le 
	C\|\<D_{v}\>^{-s_1}\<v\>^{-M}f\|_{L^2_{v}}\|\<D_{v}\>^{s_2}\<v\>^{M}g(v)\|_{L^2_{v}}\|\<D_v\>^{s_3}h\|_{L^2_v},
\end{align*}
provided that $s_2+s_3\ge 2s-1$, $-s_1+s_2+s_3\ge s$, and $\ga
	-s_1+s_2+s_3>-\frac{d}{2}$. 

\smallskip 
For the term $J_{34}$, on the support of $\1_{|\eta^-|^2>2|\eta_*\cdot\eta^-|}$, we have 
	$|\eta_*-\eta^-|^2=|\eta_*|^2-2\eta_*\cdot\eta^-+|\eta^-|^2>|\eta_*|^2$,
and hence, by using \eqref{nakThc}, we have
\begin{align*}
	|\wh{\Th_c}(\eta_*-\eta^-)|\1_{|\eta^-|^2>2|\eta_*\cdot\eta^-|}+|\wh{\Th_c}(\eta_*)|\le C\<\eta_*\>^{-d-\ga},
\end{align*}
whenever $-d-\ga<0$. Therefore, for any $M\in\R$, by angular integral \eqref{angulareta}, estimate \eqref{Xitoetastar} for $\wh{\Xi}(a^{-1}\eta)-\wh{\Xi}(a^{-1}(\eta-\eta_*))$, and H\"older's inequality, we can estimate $J_{34}$ as 
\begin{align*}
	J_{34}
	&\le 
	C\int_{\eta,\eta_*,\si}\frac{b\1_{|\eta^-|\ge\frac{1}{2}\<\eta_*\>}}{\<\eta_*\>^{d+\ga}}|\wh{G}(\eta_*,\eta-\eta_*){\wh{h}(\eta)}|\\
	&\le 
	C\int_{\R^{2d}_{\eta,\eta_*}}
	\frac{\<\eta\>^{2s}}{\<\eta_*\>^{d+\ga+2s-s_1}\<\eta-\eta_*\>^{s_2}\<\eta\>^{s_3}}|\<\eta_*\>^{-s_1}\<\eta-\eta_*\>^{s_2}\wh{G}(\eta_*,\eta-\eta_*)|\\
	&\qquad\times\Big(|\<\eta\>^{s_3}\wh{h}(\eta)|\1_{|\eta|\le2\<\eta_*\>}+|\<\eta_*\>\<\eta\>^{s_3-1}\wh{h}(\eta)|\1_{|\eta|>2\<\eta_*\>}\Big)\\
	&\le \|\<\eta_*\>^{-s_1}\<\eta-\eta_*\>^{s_2}\wh{G}(\eta_*,\eta-\eta_*)\|_{L^2_{\eta,\eta_*}}
	\Big\{\Big\|\frac{\<\eta\>^{2s-s_3}\1_{|\eta|\le2\<\eta_*\>}}{\<\eta_*\>^{d+\ga+2s-s_1}\<\eta-\eta_*\>^{s_2}}\Big\|_{L^\infty_{\eta}L^2_{\eta_*}}\|\<\eta\>^{s_3}\wh{h}(\eta)\|_{L^2_{\eta}}\\
	&
	\qquad\qquad
	+\Big\|\frac{\<\eta\>^{2s-s_3-1}\1_{|\eta|>2\<\eta_*\>}}{\<\eta_*\>^{d+\ga+2s-s_1-1}\<\eta-\eta_*\>^{s_2}}\Big\|_{L^\infty_{\eta}L^2_{\eta_*}}\|\<\eta\>^{s_3}\wh{h}(\eta)\|_{L^2_{\eta}}
	\Big\}
	\\
	&\le 
	C\|\<D_{v}\>^{-s_1}\<v\>^{-M}f\|_{L^2_{v}}\|\<D_{v}\>^{s_2}\<v\>^{M}g(v)\|_{L^2_{v}}\|\<D_v\>^{s_3}h\|_{L^2_v},
\end{align*}
where, in the last inequality, we used the fact that, by \eqref{equivetastar}, 
\begin{align*}
	&\Big\|\frac{\<\eta\>^{2s-s_3}\1_{|\eta|\le 2\<\eta_*\>}}{\<\eta_*\>^{d+\ga+2s-s_1}\<\eta-\eta_*\>^{s_2}}\Big\|_{L^\infty_{\eta}L^2_{\eta_*}}\\
	& \le C\Big\|\frac{\<\eta\>^{2s-s_3}}{\<\eta_*\>^{d+\ga+2s-s_1}\<\eta-\eta_*\>^{s_2}}\Big(\1_{\<\eta_*\>>2|\eta|}
	+\1_{\frac{1}{2}|\eta|\le\<\eta_*\>\le2|\eta|}
	\Big)\Big\|_{L^\infty_{\eta}L^2_{\eta_*}}\\
	& \le 
	C\Big\|\frac{\1_{\<\eta_*\>\approx\<\eta-\eta_*\>}}{\<\eta_*\>^{d+\ga+2s-s_1}\<\eta-\eta_*\>^{-2s+s_2+s_3}}\Big\|_{L^\infty_{\eta}L^2_{\eta_*}}
	+ C\Big\|\frac{\1_{\<\eta_*\>\gtrsim\<\eta-\eta_*\>}}{\<\eta_*\>^{d+\ga-s_1+s_3}\<\eta-\eta_*\>^{s_2}}\Big\|_{L^\infty_{\eta}L^2_{\eta_*}}\\
	& \le 
	C\Big\|\frac{1}{\<\eta_*\>^{d+\ga-s_1+s_2+s_3}}\Big\|_{L^2_{\eta_*}}+C\Big\|\frac{1}{\<\eta-\eta_*\>^{d+\ga-s_1+s_2+s_3}}\Big\|_{L^\infty_{\eta}L^2_{\eta_*}}\le C,
\end{align*}
provided that $2s-s_3\le 0$, $d+\ga-s_1+s_3\ge 0$, and $\ga-s_1+s_2+s_3>-\frac{d}{2}$, as well as  
\begin{align*}
	\Big\|\frac{\<\eta\>^{2s-s_3-1}\1_{\<\eta_*\><\frac{1}{2}|\eta|}}{\<\eta_*\>^{d+\ga+2s-s_1+1}\<\eta-\eta_*\>^{s_2}}\Big\|_{L^\infty_{\eta}L^2_{\eta_*}}
	&\le C\Big\|\frac{\<\eta\>^{2s-s_2-s_3-1}}{\<\eta_*\>^{d+\ga+2s-s_1+1}}\1_{\<\eta\>\approx\<\eta-\eta_*\>\gtrsim\<\eta_*\>}\Big\|_{L^\infty_{\eta}L^2_{\eta_*}}\\
	&\le C\Big\|\frac{1}{\<\eta_*\>^{d+\ga-s_1+s_2+s_3}}\Big\|_{L^\infty_{\eta}L^2_{\eta_*}}
	\le C,
\end{align*}
provided that $s_2+s_3\ge2s-1$, $d+\ga-s_1+s_2+s_3>\frac{d}{2}$. 
Substituting all the above estimates into \eqref{Ismall}, for any $M\in\R$, we deduce 
\begin{align}\label{Ismalles}
	I^{\text{small}}
	&\le C\|\<D_{v}\>^{-s_1}\<v\>^{-M}f\|_{L^2_{v}}\|\<D_{v}\>^{s_2}\<v\>^{M}g(v)\|_{L^2_{v}}\|\<D_v\>^{s_3}h\|_{L^2_v},
\end{align}
provided that 
$\ga>-d$,
$s_1\le s_3$, 
$s_2\ge 0$, 
$\ga-s_1+s_2+s_3>-\frac{d}{2}$, 
$\ga-s_1+s_2+1>-\frac{d}{2}$, 
$-s_1+s_2+s_3\ge s$, and 
$s_2+s_3\ge 2s-1$. 
This concludes the estimate of $I^{\text{small}}$ in \eqref{PaCommIsmall}.

\subsection{Commutator estimate of $[R_r,Q]$}
In this Subsection, we consider the commutator estimate for the dyadic decomposition about velocity, i.e., $\sum_{r=0}^\infty R_r(v)=1$, where 
	$R_rf(v)=R_r(v)f(v)=\wh{\Psi_r}(\<v\>)f(v)$ is defined in \eqref{DejQkDef}.
Note that $\<v\>$ has a natural lower bound of $1$.
For any $n\in\R$, we can split 
\begin{align}\label{RrQIlrIls2}\notag
	\big(R_rQ(f,g)-Q(f,R_rg),h\big)_{L^2_v}
	&\notag=\int_{\R^{2d}_{v,v_*}\times\S^{d-1}_\si}B(v-v_*,\si)f(v_*)g(v)\ol{h(v')}\big(R_r(v')-R_r(v)\big)\\
	&\notag=\int_{v,v_*,\si}Bf(v_*)g(v)\big(R_r(v')-R_r(v)\big)
	\Big(\big(\<v'\>^{-n}\ol{h(v')}-\<v\>^{-n}\ol{h(v)}\big)\<v'\>^n\\
	&\qquad\qquad\notag
	+\<v\>^{-n}\ol{h(v)}\big(\<v'\>^n-\<v\>^n\big)
	+\ol{h(v)}\<v\>^n\Big)\\
	&=:I^{r}_{1}+I^{r}_{2}+I^{r}_{3}. 
\end{align}
For the term $I^r_1$, by H\"older's inequality and \eqref{ColliUpperTriple}, we have 
\begin{align*}
	|I^r_1|&\le C\|\<v\>^{|n|+3|\ga|+2+\frac{d+1}{2}}f\|_{L^2_v}^{\frac{1}{2}}\|\<v\>^{-n}h\|_{L^2_D}\\
	&\qquad\times
	\Big(\int_{v,v_*,\si}b(\cos\th)|v-v_*|^{\ga}\<v_*\>^{-|n|}|f(v_*)||g(v)|^2\<v'\>^{2n}|R_r(v')-R_r(v)|^2\Big)^{\frac{1}{2}}.
\end{align*}
Moreover, by Taylor expansion, $\na_zR_r(z)\le C2^{-\frac{1}{2}r}\<z\>^{-\frac{1}{2}}$, and \eqref{distancevpriv} (i.e., $\<z\>^{-1}\le\<v\>^{-1}\<v_*\>$), we can estimate 
\begin{align}\label{esRrTaylor}\notag
	|R_r(v')-R_r(v)|
	&\le\min\Big\{2,\,\Big|(v'-v)\cdot\int^1_0\na_zR_r(z)\Big|_{z=v'+\tau(v'-v)}\,d\tau\Big|\Big\}\\
	&\le C\min\{1,\,2^{-\frac{1}{2}r}\<v_*\>^{\frac{1}{2}}\<v\>^{-\frac{1}{2}}|v'-v|,\,|v'-v|\}.
\end{align}
Moreover, by angular integral estimate \eqref{angularintegral}, one has 
\begin{align}\label{es647}\notag
	&\int_{\S^{d-1}_\si}b(\cos\th)\min\{1,\,2^{-r}\<v_*\>\<v\>^{-1}|v'-v|^2\}\,d\si
	\notag\le 
	\int_{\si}b(\cos\th)\1_{\th>2^{\frac{1}{2}r}\<v_*\>^{-\frac{1}{2}}\<v\>^{\frac{1}{2}}|v-v_*|^{-1}}\\
	&\qquad\qquad\notag\quad
	+\int_{\si}b(\cos\th)\sin^2\frac{\th}{2}\1_{\th\le 2^{\frac{1}{2}r}\<v_*\>^{-\frac{1}{2}}\<v\>^{\frac{1}{2}}|v-v_*|^{-1}}2^{-r}\<v_*\>\<v\>^{-1}|v-v_*|^2\\
	&\qquad\le C
	2^{-sr}\<v_*\>^{s}\<v\>^{-s}|v-v_*|^{2s}.
\end{align}
Thus, by estimates \eqref{es647} and \eqref{gafg}, and \eqref{distancevpriv} (i.e., $\<v'\>^{n}\le C_n\<v_*\>^{|n|}\<v\>^{n}$), we have 
\begin{align}\label{es648}\notag
	|I^r_1|&\le C\|\<v\>^{|n|+3|\ga|+2+\frac{d+1}{2}}f\|_{L^2_v}^{\frac{1}{2}}\|\<v\>^{-n}h\|_{L^2_D}\\
	&\notag\quad 
	\times 2^{-sr}\|\<v\>^{|n|+|\ga+s|+\frac{d+1}{2}}f\|_{L^2_v}^{\frac{1}{2}}
	\|\<v\>^{n+\frac{\ga+s}{2}}g\|_{L^2_v}\\
	&\le C2^{-sr}\|\<v\>^{|n|+3|\ga|+2+\frac{d+1}{2}}f\|_{L^2_v}\|\<v\>^{n+\frac{\ga+s}{2}}g\|_{L^2_v}\|\<v\>^{-n}h\|_{L^2_D}, 
\end{align}
provided that $\ga+2s>-\frac{d}{2}$. 
The term $I^r_2$ can be treated similarly. Applying \eqref{esRrTaylor} and a similar Taylor expansion to $\<v\>^{n}$, we have 
\begin{align*}
	|R_r(v')-R_r(v)||\<v'\>^n-\<v\>^n|
	&\le C\min\{1,\,2^{-\frac{1}{2}r}\<v_*\>^{|n|+\frac{3}{2}}\<v\>^{n-\frac{3}{2}}|v'-v|^2\}.
\end{align*}
Using the angular integral estimate \eqref{angularintegral}, similar to \eqref{es647}, we have 
\begin{align*}
	&\int_{\S^{d-1}_\si}b(\cos\th\min\{1,\,2^{-\frac{1}{2}r}\<v_*\>^{|n|+\frac{1}{2}}\<v\>^{n-\frac{1}{2}}|v'-v|^2\}\,d\si\\
	&\quad\le C
	2^{-sr}\<v_*\>^{|n|+s}\<v\>^{-n-s}|v-v_*|^{2s}.
\end{align*} 
Therefore, by \eqref{gafg}, $I^r_2$ can be estimated as 
\begin{align*}
	|I^r_2|
	&\le C2^{-sr}\int_{v,v_*,\si}b|v-v_*|^{\ga+2s}|f(v_*)g(v)
	\<v\>^{-n}\ol{h(v)}|\\
	&\qquad\times\big(\<v'\>^n-\<v\>^n\big)\<v_*\>^{|n|+1+s}\<v\>^{-n-1-s}\\
	&\le C2^{-sr}\|\<v\>^{|n|+|\ga+2s|+s+\frac{d+1}{2}}f\|_{L^2_v}\|\<v\>^{n+\frac{\ga+s}{2}}g\|_{L^2_v}\|\<v\>^{-n}h\|_{L^2_v},
\end{align*}
provided that $\ga+2s>-\frac{d}{2}$. 
The term $I^r_3$ is ``regular''. Applying the second-order Taylor expansion to $R_r(v)=\wh{\Psi}(2^{-r}\<v\>)$, we have 
\begin{align*}
	R_r(v')-R_r(v)
	&=(v'-v)\cdot \na_vR_r(v)
	+(v'-v)^{\otimes2}:\int_{0}^{1}\na^2_z\big(\wh{\Psi}(2^{-r}\<z\>)\big)\big|_{z=v+\tau(v'-v)}(1-\tau)\,d\tau,
\end{align*}
where $|\na_vR_r(v)|\le C2^{-\frac{1}{2}r}\<v\>^{-\frac{1}{2}}$ and $|\na^2_zR_r(z)|\le C2^{-r}\<z\>^{-1}\le C2^{-r}\<v\>^{-1}\<v_*\>$. 
For the first-order term, we apply \eqref{vprivth}, i.e., $v'-v=\sin^2\frac{\th}{2}(v_*-v)+\frac{1}{2}|v-v_*|\sin\th\omega$, angular integral \eqref{angularintegral}, and the symmetry with respect to $\omega$ (which makes the term with $\om$ vanish) to deduce 
\begin{align*}
	I^r_2
	&\le \int_{v,v_*,\si}Bf(v_*)g(v)\ol{h(v)}\sin^2\frac{\th}{2}\Big((v_*-v)\cdot \na_vR_r(v)
	+0\\
	&\qquad\quad+(v'-v)\otimes(v'-v):\int_{0}^{1}\na^2_z\big(\wh{\Psi}(2^{-r}\<z\>)\big)(1-\tau)\,d\tau\Big).
\end{align*}
Together with the trivial upper bound $|R_r(v')-R_r(v)|\le 2$ and \eqref{gafg}, we have 
\begin{align}\label{es648a}\notag
	|I^r_2|&\notag\le 
	C\int_{v,v_*,\si}B
	|f(v_*)g(v)\ol{h(v)}\min\{1,\,2^{-2r}|v-v_*|\sin^2\frac{\th}{2},\,2^{-r}\<v_*\>\<v\>^{-1}|v-v_*|^2\sin^2\frac{\th}{2}\}\\
	&\notag\le 
	C\int_{v,v_*}
	2^{-sr}\big(|v-v_*|^{\ga+s}+\<v_*\>^{s}\<v\>^{-s}|v-v_*|^{\ga+2s}\big)
	|f(v_*)g(v)h(v)|\\
	&\le C2^{-sr}\|\<v\>^{|\ga|+3s+\frac{d+1}{2}}f\|_{L^2_v}\|\<v\>^{n+\ga+s}g\|_{L^2_v}\|\<v\>^{-n}h\|_{L^2_v},
\end{align}
provided that $\ga+s>-\frac{d}{2}$. Collecting \eqref{es648} and \eqref{es648a} into \eqref{RrQIlrIls2}, we have, for any $n\in\R$, 
\begin{align}
	\label{CommRr}
	&|\big(R_rQ(f,g)-Q(f,R_rg),h\big)_{L^2_v}|
	\le C2^{-sr}\|\<v\>^{|n|+3|\ga|+3+\frac{d+1}{2}}f\|_{L^2_v}\|\<v\>^{n+\frac{\ga+s}{2}}g\|_{L^2_v}\|\<v\>^{-n}h\|_{L^2_D}.
\end{align}
Collecting the estimates \eqref{Ilargees}, \eqref{CommRr}, and \eqref{Ismalles}, one can conclude Lemma \ref{LemmaCommColli}.
Using the same arguments, but replacing $R_r(v)=\wh{\Psi_r}(\<v\>)$ with $\<v\>^l$ (which shares similar properties, but without needing the factor $2^{-sr}\approx\<v\>^{-s}$ on the support of $R_r$), we obtain, for any $l,n\in\R$,   
\begin{align}
	\label{CommvlTemp}
	|\big(\<v\>^lQ(f,g)-Q(f,\<v\>^lg),h\big)_{L^2_v}|
	\le C\|\<v\>^{|n|+3|\ga|+3+\frac{d+1}{2}}f\|_{L^2_v}\|\<v\>^{l+n+\frac{\ga}{2}}g\|_{L^2_v}\|\<v\>^{-n}h\|_{L^2_D}.
\end{align}


\subsection{A Key Lemma, the commutator $Q-\Ga$, $L^p$ estimates, and a coercive estimate}
In this part, we calculate the commutator estimate of $Q(\mu^{\fr12}f,g)-\Ga(f,g)$, and the $L^p$ estimate for $\Ga(G,f)$ with any $p\ge2$ and $G=\mu^{\fr12}+g\ge 0$ (with the convention that $|\cdot|^{\fr p2}=(\cdot)$ when $p=2$). 

\subsubsection{A key lemma}
To calculate the $L^p$ estimate, we need the following key lemma. 

\begin{Lem}\label{Lem21}
	For any suitable $f,g$, we have the following. 
	\begin{itemize}[leftmargin=1.5em]
		\item 
	Let $1<p'\le 2\le p<\infty$ satisfy $\frac{1}{p}+\fr1{p'}=1$. If $\ga>-\fr{d}{p'}$, then  
	\begin{align}
		\label{gafg}
		\int_{\R^{2d}_{v,v_*}}|v-v_*|^{\ga}|f(v_*)g(v)|\,dv_*dv
		&\le C\|\<v\>^{|\ga|+\frac{d+1}{p'}}f\|_{L^p_v}\|\<v\>^{\ga}g\|_{L^1_v}. 
	\end{align}
	\item Let $s\in(0,1)$ and $1<p'\le 2\le p<\infty$ satisfy $\frac{1}{p}+\fr1{p'}=1$. If $\ga-s>-\fr{d}{p'}$ and $G\ge 0$, then we have the following rough estimate:
	\begin{align}
		\label{gafgDs}
		\int_{\R^{2d}_{v,v_*}}|v-v_*|^{\ga}G(v_*)|f(v)|\,dv_*dv&\le C\|\<v\>^{|\ga|+d+1}\<D_v\>^{-s}G\|_{L^p_v}\|\<v\>^{\ga}f\|_{L^1_v}. 
	\end{align}
	\end{itemize}
\end{Lem}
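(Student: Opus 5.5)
The plan is to treat both estimates in Lemma~\ref{Lem21} as pointwise bounds, in $v$, on the convolution $(|\cdot|^\ga*F)(v)$ for $F=|f|$ or $F=G$. We then integrate against $|g(v)|$ (respectively $|f(v)|$) in $L^1_v$. Throughout we use Peetre's inequality $\<v_*\>^{-|\ga|}\<v\>^{\ga}\lesssim\<v-v_*\>^{\ga}$, in the form $\<v-v_*\>^{\ga}\le C\<v\>^{\ga}\<v_*\>^{|\ga|}$ valid for any real $\ga$. We also use that $\<v\>^{-\frac{d+1}{p'}}\in L^{p'}_v$.

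\textbf{Proof of \eqref{gafg}.} Split the kernel into $\1_{|v-v_*|>1}$ and $\1_{|v-v_*|\le 1}$.
\begin{itemize}[leftmargin=1.5em]
\item On $\{|v-v_*|>1\}$ we have $|v-v_*|^{\ga}\le C\<v-v_*\>^{\ga}\le C\<v\>^{\ga}\<v_*\>^{|\ga|}$. H\"older's inequality in $v_*$ gives
\begin{align*}
\int_{v_*}\<v_*\>^{|\ga|}|f(v_*)|\,dv_*\le \|\<v\>^{-\frac{d+1}{p'}}\|_{L^{p'}_v}\,\|\<v\>^{|\ga|+\frac{d+1}{p'}}f\|_{L^p_v}.
\end{align*}
The remaining factor is $\int\<v\>^{\ga}|g|=\|\<v\>^{\ga}g\|_{L^1_v}$.
\item On $\{|v-v_*|\le1\}$ we have $\<v\>\approx\<v_*\>$. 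For fixed $v$, H\"older in $v_*$ gives
\begin{align*}
\int_{|v-v_*|\le1}|v-v_*|^{\ga}|f(v_*)|\,dv_*\le \big\||z|^{\ga}\1_{|z|\le1}\big\|_{L^{p'}_z}\,\|f\1_{B(v,1)}\|_{L^p}.
\end{align*}
The first factor is finite exactly when $\ga p'>-d$, which is the hypothesis. For the second, $\|f\1_{B(v,1)}\|_{L^p}\le C\<v\>^{-|\ga|}\|\<\cdot\>^{|\ga|}f\|_{L^p}\le C\<v\>^{\ga}\|\<\cdot\>^{|\ga|}f\|_{L^p}$. Integrating against $|g(v)|$ then closes this part.
\end{itemize}

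\textbf{Proof of \eqref{gafgDs}.} Since $G\ge0$, the $v_*$-integral equals $(|\cdot|^{\ga}*G)(v)$ with no absolute value inside. This identity is the only place positivity is used, and it lets us move derivatives onto the kernel. Write $G=\<D_v\>^{s}H$ with $H:=\<D_v\>^{-s}G$. Split $|z|^{\ga}=K_c+K_{\bar c}$ with $K_c=|z|^{\ga}\chi_{|z|\le1}$, so that $K*G=(\<D\>^sK)*H$ for each piece.
\begin{itemize}[leftmargin=1.5em]
\item The far piece $K_{\bar c}$ is smooth, with derivatives $|\pa^{\al}K_{\bar c}(z)|\lesssim\<z\>^{\ga-|\al|}$. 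Standard kernel estimates for $\<D\>^{s}$ acting on such symbols-in-$z$ then give $|\<D\>^sK_{\bar c}(z)|\lesssim\<z\>^{\ga}$. The argument of the far case above then applies with $f$ replaced by $H$.
\item The near piece has $\wh{K_c}(\eta)$ a symbol of order $-d-\ga$; this is the bound $|\na^k\wh{\Th_c}|\lesssim\<\eta\>^{-d-\ga-k}$ already used in \eqref{nakThc}. Hence $\<\eta\>^{s}\wh{K_c}$ is a symbol of order $-d-\ga+s$. Its inverse Fourier transform therefore satisfies $|\<D\>^sK_c(z)|\lesssim|z|^{\ga-s}$ near $0$ and $\lesssim C_N|z|^{-N}$ for $|z|\ge1$.
\end{itemize}
With these bounds, repeat the two cases of \eqref{gafg} with $\ga$ replaced by $\ga-s$ near the origin. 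Local $L^{p'}$-integrability of $|z|^{\ga-s}$ is exactly the hypothesis $\ga-s>-\frac d{p'}$. The tail of $\<D\>^sK_c$ is no longer compactly supported, but its rapid decay absorbs the Peetre factor $\<v-u\>^{|\ga|}$. The weight $\<v\>^{|\ga|+d+1}$ is generous enough to cover both the Peetre loss and the $L^{p'}$ integrability of $\<v\>^{-(d+1)/p'}$.

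\textbf{Main obstacle.} The delicate step is the kernel bound for $\<D\>^sK_c$: the $|z|^{\ga-s}$ singularity at the origin together with rapid decay at infinity. I would prove it by a dyadic decomposition of $\<\eta\>^s\wh{K_c}(\eta)$ in $|\eta|\sim2^k$ and integration by parts in $\eta$ on each block. The $k$-th block is bounded by $2^{k(d-d-\ga+s)}\min\{1,(2^k|z|)^{-N}\}$, and summing over $k$ gives $|z|^{\ga-s}$ for $|z|\le1$ and $O(|z|^{-N})$ for $|z|\ge1$, using $d+\ga-s>0$.
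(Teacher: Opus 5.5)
Your proof is correct. For \eqref{gafg} it is the paper's argument: split at $|v-v_*|=1$, apply H\"older with $|z|^{\ga}\1_{|z|\le1}\in L^{p'}$ near the diagonal, and use Peetre plus $\<v\>^{-\frac{d+1}{p'}}\in L^{p'}$ away from it. The paper only adds a separate shortcut for $\ga\ge0$.

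For \eqref{gafgDs} you take a genuinely different route. Both proofs start from the same duality transfer of $\<D_v\>^{s}$ onto the kernel, which is the only place $G\ge0$ is used. The paper then stays in physical space. It bounds the $L^{p'}_{v_*}$ norm of $\<v_*\>^{-M}\<D_{v_*}\>^{s}|v-v_*|^{\ga}$ by a $W^{s,p'}$ norm, using the embedding $W^{s,p'}\hookrightarrow H^{s,p'}$ (this is where $p'\le2$ is needed) together with the weight commutation \eqref{equiv}. It then estimates the Gagliardo seminorm of $\<v\>^{-M}|u-v|^{\ga}$ by a three-region case analysis to get \eqref{esFract1}.

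You instead prove a pointwise bound for the kernel $\<D\>^{s}|z|^{\ga}$ on the Fourier side. For the singular local piece you use the symbol bound \eqref{nakThc}; for the smooth tail, the Bessel-potential representation $\<D\>^{s}=\<D\>^{s-2}(I-\De)$ with its positive, exponentially decaying kernel makes your ``standard kernel estimate'' immediate. You then rerun the proof of \eqref{gafg} with $\ga$ replaced by $\ga-s$ near the diagonal. Your route needs no fractional Sobolev embedding, so the restriction $p'\le2$ disappears, and it avoids the Gagliardo case analysis. It also gives the better weight $\<v\>^{|\ga|+\frac{d+1}{p'}}$ instead of $\<v\>^{|\ga|+d+1}$. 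The cost is reliance on \eqref{nakThc}, which the paper imports from the literature, and on standard kernel bounds for negative-order symbols. The paper's argument is more self-contained and real-variable.

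One imprecision needs fixing. The near-origin bound $|\<D\>^{s}K_c(z)|\lesssim|z|^{\ga-s}$ holds only when $\ga<s$. In your dyadic sum $\sum_k 2^{k(s-\ga)}\min\{1,(2^k|z|)^{-N}\}$, what decides the behaviour is the sign of $s-\ga$, not the condition $d+\ga-s>0$ you cite. For $\ga>s$ the kernel is bounded near $0$, and for $\ga=s$ it has a logarithmic singularity. In every case the kernel lies in $L^{p'}_{\mathrm{loc}}$ when $\ga-s>-\frac{d}{p'}$, which is all your argument uses, so the conclusion stands. In the paper's application $\ga<-2s$, so only the case $\ga<s$ arises.
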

\begin{proof}
To obtain the optimal weight function of $g$, we will frequently apply 
\begin{align*}
	\begin{cases}
		|v-v_*|^{\ga}\le C_\ga\<v\>^\ga\<v_*\>^{\ga},&\text{ if }\ga\ge 0,\\
		|v-v_*|^{\ga}\le C_\ga\<v-v_*\>^{\ga}\le C_\ga\<v_*\>^{|\ga|}\<v\>^{\ga},&\text{ if }\ga< 0\text{ and }|v-v_*|\ge 1,\\
		\int_{\R^d_{v_*}}\1_{|v-v_*|<1}|v-v_*|^{p'\ga}\,dv_*<\infty,&\text{ if }-\frac{d}{p'}<\ga< 0,\\
\<v\>\approx\<v_*\>,&\text{ if }|v-v_*|<1.
	\end{cases}
\end{align*}
Therefore, when $\ga\ge 0$, it's direct to calculate $|v-v_*|^\ga\le \<v\>^{\ga}\<v_*\>^{\ga}$ and hence, 
	\begin{align*}
		\int_{\R^{2d}_{v,v_*}}|v-v_*|^{\ga}|f(v_*)g(v)|\,dv_*dv\le C_\ga\|\<v\>^{\ga}f\|_{L^1_v}\|\<v\>^{\ga}g\|_{L^1_v}.
	\end{align*}
When $-\frac{d}{p'}<\ga<0$, we split $1=\1_{|v-v_*|< 1}+\1_{|v-v_*|\ge 1}$ and calculate
	\begin{align*}
		\int_{|v-v_*|>1}|v-v_*|^{\ga}|f(v_*)|\,dv_*&\le C_\ga\|\<v_*\>^{|\ga|}\<v\>^{\ga}f(v_*)\|_{L^1_{v_*}},\\
		\int_{|v-v_*|\le 1}|v-v_*|^{\ga}|f(v_*)|\,dv_*&\le \Big(\int_{|v-v_*|\le 1}|v-v_*|^{p'\ga}\,dv_*\Big)^{\frac{1}{p'}}\|\1_{|v-v_*|\le1}f(v_*)\|_{L^p_{v_*}}\\
		&\le C_\ga\|\<v_*\>^{|\ga|}\<v\>^{\ga}f(v_*)\|_{L^p_{v_*}}.
	\end{align*}
Integrating this over $v$ against $g(v)$ yields \eqref{gafg}.
Finally, if $G\ge 0$, for any $s\in(0,1)$ and $p\ge 2$, using the equivalence in \eqref{equiv}, and the embedding $W^{s,p'}_{v_*}\hookrightarrow H^{s,p'}$ as in \cite[Chap. 1]{Triebel1992}, we have  
\begin{align*}
	\int_{v,v_*}|v-v_*|^{\ga}G(v_*)|f(v)|
	&\le\int_{v,v_*}\<D_{v_*}\>^{-s}G(v_*)\<D_{v_*}\>^{s}|v-v_*|^{\ga}|f(v)|\\
	&\le 
	C\|\<v_*\>^{M}\<D_{v_*}\>^{-s}G(v_*)\|_{L^p_{v_*}}\|\<v_*\>^{-M}\<D_{v_*}\>^{s}|v-v_*|^{\ga}f(v)\|_{L^{p'}_{v_*}L^1_v}\\
	&\le 
	C\|\<v_*\>^{M}\<D_{v_*}\>^{-s}G(v_*)\|_{L^p_{v_*}}\|\<v_*\>^{-M}|v-v_*|^{\ga}f(v)\|_{L^1_vW^{s,p'}_{v_*}}\\
	&\le 
	C\|\<v\>^{M}\<D_{v}\>^{-s}G(v)\|_{L^p_{v}}
	\|\<v\>^{\ga}f(v)\|_{L^1_v},
\end{align*}
provided that $\ga-s>-\frac{d}{p'}$ and $M>\frac{d}{p'}$, where we claimed that the fractional Sobolev-Slobodeckij norm $\|\<v\>^{-M}|u-v|^{\ga}\|_{W^{s,p'}_{v}}$ satisfies 
\begin{align}\label{esFract1}\notag
	\|\<v\>^{-M}|u-v|^{\ga}\|_{W^{s,p'}_{v}}
	&:=\notag\|\<v\>^{-M}|u-v|^{\ga}\|_{L^{p'}_{v}}
	+\Big(\int_{v,v_*}\fr{|\<v\>^{-M}|u-v|^{\ga}-\<v_*\>^{-M}|u-v_*|^{\ga}|^{p'}}{|v-v_*|^{sp'+d}}\Big)^{\fr1{p'}}\\
	&\le C\<u\>^{\ga},\ \ \text{ provided that $\ga-s>-\fr{d}{p'}$.}
\end{align}
For simplicity, we can choose $M\ge |\ga|+d+1$. 
The estimate $\|\<v\>^{-M}|u-v|^{\ga}\|_{L^{p'}_{v}}\le C\<u\>^{\ga}$ follows from standard methods whenever $\ga>-\frac{d}{p'}$. The Gagliardo seminorm can be handled by splitting $1=\1_{|v-v_*|\ge 1}+\1_{|v-v_*|<1}$. The part corresponding to $\1_{|v-v_*|\ge 1}$ has no singularity and can be estimated as in the first term; details are omitted for brevity. 
For the part $\1_{|v-v_*|<1}$, we split 
\begin{align}\label{split1a}\notag
	\1_{|v-v_*|<1}&\le\1_{|v-v_*|<1}\big(\1_{|v-v_*|\le \fr{1}{2}|u-v|}+\1_{|v-v_*|\le \fr{1}{2}|u-v_*|}\\
	&\qquad\qquad\qquad+\1_{|v-v_*|>\fr{1}{2}|u-v|}\1_{|v-v_*|>\fr{1}{2}|u-v_*|}\big),
\end{align}
and, due to symmetry, it suffices to deal with the first and last terms in \eqref{split1a}. 


For the part involving $\1_{|v-v_*|<1}\1_{|v-v_*|\le \fr{1}{2}|u-v|}$, we have $|u-v_*|\ge |u-v|-|v-v_*|\ge\frac{1}{2}|u-v|\ge \fr12|v-v_*|$ and $|u-v_*|\le |u-v|+|v-v_*|\le \fr32|u-v|$, which imply $|u-v_*|\approx|u-v|\approx|u-v_*+\tau(v-v_*)|$ for any $\tau\in(0,1)$. Moreover, when $|v-v_*|\le1$, we have $\<v\>\approx\<v_*\>\approx\<v_*+\tau(v-v_*)\>$ for any $\tau\in(0,1)$. Thus, the Taylor expansion yields
\begin{align*}
	|\<v\>^{-M}|u-v|^{\ga}-\<v_*\>^{-M}|u-v_*|^{\ga}|
	&\ =\Big|\int^1_0(v-v_*)\cdot\na_z\big(\<z\>^{-M}|u-z|^{\ga}\big)\big|_{z=v_*+\tau(v-v_*)}\,d\tau\Big|\\
	&\ \le C|v-v_*|\<v\>^{-M}\big(|u-v|^{\ga}+|u-v|^{\ga-1}\big).
\end{align*}
Thus, under the cutoff $\1_{|v-v_*|<1}\1_{|v-v_*|\le \fr{1}{2}|u-v|}$, using $\int_{v_*}\1_{|v-v_*|\le\min\{1,\fr12|u-v|\}}|v-v_*|^{-(s-1)p'-d}\le C\min\{1,|u-v|\}^{-(s-1)p'}$ whenever $s<1$, we can calculate the corresponding Gagliardo seminorm, for $Mp'>|\ga|p'+d$ and $\ga-s>-\fr{d}{p'}$,  as 
\begin{align}\label{es226a}\notag
	&\Big(\int_{v,v_*}\1_{|v-v_*|<1}\1_{|v-v_*|\le \fr{1}{2}|u-v|}\fr{|\<v\>^{-M}|u-v|^{\ga}-\<v_*\>^{-M}|u-v_*|^{\ga}|^{p'}}{|v-v_*|^{sp'+d}}\Big)^{\fr1{p'}}\\
	&\notag\le C\Big(\int_{v}\<v\>^{-Mp'}\min\{1,|u-v|\}^{-(s-1)p'}\big(|u-v|^{\ga p'}+|u-v|^{(\ga-1)p'}\big)\Big)^{\fr1{p'}}\\
	&\le C\<u\>^{\ga-s}. 
\end{align}
For the part involving $\1_{|v-v_*|<1}\1_{|v-v_*|>\fr{1}{2}|u-v|}\1_{|v-v_*|>\fr{1}{2}|u-v_*|}$, the denominator $|v-v_*|$ is less singular than $|u-v|$ and $|u-v_*$, and hence, we can simply estimate $|\<v\>^{-M}|u-v|^{\ga}-\<v_*\>^{-M}|u-v_*|^{\ga}|^{p'}\le C\<v\>^{-Mp'}|u-v|^{\ga p'}+C\<v_*\>^{-Mp'}|u-v_*|^{\ga p'}$ and then integrate over corresponding $v$ or $v_*$ as 
\begin{align}\label{es226b}\notag
	&\Big(\int_{v,v_*}\1_{|v-v_*|<1}\1_{|v-v_*|>\fr{1}{2}|u-v|}\1_{|v-v_*|>\fr{1}{2}|u-v_*|}\fr{|\<v\>^{-M}|u-v|^{\ga}-\<v_*\>^{-M}|u-v_*|^{\ga}|^{p'}}{|v-v_*|^{sp'+d}}\Big)^{\fr1{p'}}\\
	&\notag\le C\Big(\int_{v,v_*}\1_{|v-v_*|<1}\fr{\<v\>^{-Mp'}|u-v|^{(\ga-s-\ve)p'}+\<v_*\>^{-Mp'}|u-v_*|^{(\ga-s-\ve)p'}}{|v-v_*|^{-\ve p'+d}}\Big)^{\fr1{p'}}\\
	&\le C\Big(\int_{v}\<v\>^{-Mp'}|u-v|^{(\ga-s-\ve)p'}\Big)^{\fr1{p'}}
	\le C\<u\>^{\ga-s-\ve},
\end{align}
provided that $\ve>0$, $Mp'>|\ga|p'+d$ and $\ga-s-\ve>-\fr{d}{p'}$. 
Combining decomposition \eqref{split1a}, and estimates \eqref{es226a} and \eqref{es226b}, we deduce the claim \eqref{esFract1} for $\ga-s-\ve>-\fr{d}{p'}$ and $M\ge |\ga|+d+1$. Finally, one can choose $\ve>0$ sufficiently small so that $\ga-s-\ve>-\fr{d}{p'}$ holds, provided that $\ga-s>-\fr{d}{p'}$. 
This concludes Lemma \ref{Lem21}.
\end{proof}

\subsubsection{Commutator estimate of \texorpdfstring{$Q-\Gamma$}{Q-Gamma}}
To obtain the dissipation effect, we need a more delicate estimate of the commutator $\Ga(\cdot,\cdot)-Q(\mu^{\frac{1}{2}}(\cdot),\cdot)$. Since we are considering non-negative solutions, for simplicity we restrict our attention to non-negative functions $f\ge 0$. We then rewrite
\begin{align}\label{GminusQDef}\notag
	&\big(\Ga(f,g)-Q(\mu^{\frac{1}{2}}f,g),h\big)_{L^2_v}
	=\int_{v,v_*,\si}B\big(\mu(v'_*)^{\frac{1}{2}}-\mu(v_*)^{\frac{1}{2}}\big)f(v_*)g(v)h(v')\\
	&\notag=\int_{v,v_*,\si}B\big(\mu(v'_*)^{\frac{1}{4}}-\mu(v_*)^{\frac{1}{4}}\big)^2f(v_*)g(v)h(v')\\
	&\notag\quad+2\int_{v,v_*,\si}B\big(\mu(v'_*)^{\frac{1}{4}}-\mu(v_*)^{\frac{1}{4}}\big)\mu(v_*)^{\frac{1}{4}}f(v_*)g(v)h(v)\\
	&\notag\quad+2\int_{v,v_*,\si}B\big(\mu(v'_*)^{\frac{1}{4}}-\mu(v_*)^{\frac{1}{4}}\big)\mu(v_*)^{\frac{1}{4}}f(v_*)g(v)\big(h(v')-h(v)\big)\\
	&=:I_1+I_2+I_3.
\end{align}
For the term $I_1$, the factor $(\cdots)^2$ provides sufficient angular regularity $\th^2$ via Taylor expansion. Thus, by H\"older's inequality, the regular change of variables $v\mapsto v'$ as in \eqref{regular}, angular integral \eqref{angularintegral}, and the $\<D_v\>^{-s}$ estimate \eqref{gafgDs}, we obtain
\begin{align*}
	|I_1|
	&\le C\int_{v,v_*,\si}b\min\big\{1,|v-v_*|^2\sin^2\fr{\th}{2}\big\}|v-v_*|^{\ga}|f(v_*)||g(v)||h(v')|\\
	&\le C\Big(\int_{v,v_*}|v-v_*|^{\ga+2s}|f(v_*)||g(v)|^2\<v'\>^{-2n}\<v_*\>^{-|n|}\Big)^{\fr12}\\
	&\quad\times \Big(\int_{v,v_*}|v-v_*|^{\ga+2s}|f(v_*)||h(v')|^2\<v'\>^{2n}\<v_*\>^{|n|}\Big)^{\fr12}\\
	&\le C\|\<v\>^{|n|+|\ga|+d+1}\<D_v\>^{-s}f\|_{L^r_v}\|\<v\>^{-n+\fr{\ga+2s}{2}}g\|_{L^2_v}\|\<v\>^{n+\fr{\ga+2s}{2}}h\|_{L^2_v},
\end{align*}
for any $n\in\R$, provided that $\ga+s>-\fr{d}{r'}$ and $f\ge 0$. Here, we used \eqref{distancevpriv} to control $\<v'\>^{-2s}$.

For the term $I_2$, we note that the first-order Taylor expansion of $\mu(v'*)^{\frac{1}{4}}-\mu(v*)^{\frac{1}{4}}$ yields $v'_*-v_*=v'_*  =\sin^2\frac{\th}{2}(v-v_*)-\frac{1}{2}|v-v_*|\sin\th\omega$ where the integral involving $\omega$ vanishes due to symmetry. Thus, using the Taylor expansion up to second order and the $\<D_v\>^{-s}$ estimate \eqref{gafgDs},
\begin{align*}
	|I_2|
	&\le \int_{v,v_*,\si}B\min\big\{1,\sin^2\frac{\th}{2}|v-v_*|+\sin^2\frac{\th}{2}|v-v_*|^2\big\}\mu(v_*)^{\frac{1}{4}}|f(v_*)g(v)h(v)|\\
	&\le C\|\<v\>^{-N}\<D_v\>^{-s}f\|_{L^r_v}\|\<v\>^{-n+\fr{\ga+2s}{2}}g\|_{L^2_v}\|\<v\>^{n+\fr{\ga+2s}{2}}h\|_{L^2_v},
\end{align*}
for any $n\in\R$ and $N\ge 0$, provided that $\ga>-\fr{d}{r'}$ and $f\ge 0$. 

For $I_3$, by H\"older's inequality, Taylor expansion and the $\<D_v\>^{-s}$ estimate \eqref{gafgDs} (with $\fr{d-2s}{2}$ therein), we have 
\begin{align*}
	|I_3|
	&\le 
	C\Big(\int_{v,v_*,\si}B\big(\mu(v'_*)^{\frac{1}{4}}-\mu(v_*)^{\frac{1}{4}}\big)^2\mu(v_*)^{\frac{1}{4}}f(v_*)|g(v)|^2\<v\>^{-2n}\Big)^{\fr12}\\
	&\ \ \times \Big(\int_{v,v_*,\si}B\mu(v_*)^{\frac{1}{4}}f(v_*)\Big\{\big(\<v'\>^{n}h(v')-\<v\>^{n}h(v)\big)^2+\big(\<v'\>^{n}-\<v\>^n\big)^2(h(v'))^2\Big\}\Big)^{\fr12}\\
	&\le C\Big(\int_{v,v_*,\si}b\min\big\{1,|v-v_*|^2\sin^2\fr{\th}{2}\}\mu(v_*)^{\frac{1}{4}}f(v_*)|g(v)|^2\Big)^{\fr12}\\
	&\ \ \times 
	\Big(\int_{v,v_*,\si}B\mu(v_*)^{\frac{1}{4}}f(v_*)
	\Big\{-2\<v\>^nh(v)\big(\<v'\>^nh(v')-\<v\>^nh(v)\big)\\
	&\qquad\qquad\qquad\qquad+(\<v'\>^nh(v'))^2-(\<v\>^nh(v))^2+\big(\<v'\>^{n}-\<v\>^n\big)^2(h(v'))^2\Big\}\Big)^{\fr12}\\
	&\le C\|\<v\>^{-N}\<D_v\>^{-\fr{d-2s}{2}}f\|^{\fr12}_{L^r_v}\|\<v\>^{-n+\fr{\ga+2s}{2}}g\|_{L^r_v}\\
	&\quad\times \Big(|(Q(\mu^{\fr14}f,\<v\>^nh),\<v\>^nh)_{L^2_v}|+\|\<v\>^{-N}\<D_v\>^{-\fr{d-2s}{2}}f\|^{\fr12}_{L^r_v}\|\<v\>^{n+\fr{\ga}{2}}h\|_{L^2_v}\Big), 
\end{align*}
for any $N\ge 0$ and $n\in\R$, 
provided that $\ga-\fr{d-2s}{2}>-\fr{d}{r'}$. 
Here, the term $(\<v'\>^nh(v'))^2-(\<v\>^nh(v))^2$ is of cancellation form and can be handled using the regular change of variables \eqref{regular}. The term $\big(\<v'\>^{n}-\<v\>^n\big)^2$ provides sufficient angular regularity $\th^2$ and can be treated via Taylor expansion and \eqref{distancevpriv}. Lastly, for the collision term, we apply the upper bound \eqref{QfghUpper} to deduce 
\begin{align*}
	|I_3|
	&\le C\|\<v\>^{-N}\<D_v\>^{-\fr{d-2s}{2}}f\|_{L^r_v}\|\<v\>^{-n+\fr{\ga+2s}{2}}g\|_{L^2_v}\|\<v\>^{n}h\|_{L^2_D}.
\end{align*}
Collecting the above estimates of $I_i$'s into \eqref{GminusQDef}, we obtain for $N\ge 0$ and $n\in\R$ that  
\begin{align}\label{esGaminQTemp}\notag
	\big|\big(\Ga(f,g)-Q(\mu^{\frac{1}{2}}f,g),h\big)_{L^2_v}\big|
	\notag
	&\le C\|\<v\>^{|n|+|\ga|+d+1}\<D_v\>^{-s}f\|_{L^r_v}\|\<v\>^{-n+\fr{\ga+2s}{2}}g\|_{L^2_v}\|\<v\>^{n+\fr{\ga+2s}{2}}h\|_{L^2_v}\\
	&\quad+C\|\<v\>^{-N}\<D_v\>^{-\fr{d-2s}{2}}f\|_{L^r_v}\|\<v\>^{-n+\fr{\ga+2s}{2}}g\|_{L^2_v}\|\<v\>^{n}h\|_{L^2_D},
\end{align}
provided that $\ga-\fr{d-2s}{2}>-\fr{d}{r'}$ and $f\ge 0$.

\subsubsection{$L^p$ estimate for the Boltzmann collision}
In this part, we derive a simple $L^p$ estimate for the Boltzmann collision operator. By the pre-post change of variables, 
\begin{align*}
	&\big(\Ga(\mu^{\fr12},f),f|f|^{p-2}\big)_{L^2_v}
	=\int_{\R^{2d}_{v,v_*}\times\S^{d-1}_\si}B\mu^{\fr12}(v_*)f(v)\big(\mu^{\frac{1}{2}}(v'_*)(f|f|^{p-2})(v')-\mu^{\frac{1}{2}}(v_*)(f|f|^{p-2})(v)\big). 
\end{align*}
Since $\mu^{\fr12}\ge 0$, for any $p\ge 2$, one has 
\begin{align*}\notag
	&f(v)\big(\mu^{\frac{1}{2}}(v'_*)(f|f|^{p-2})(v')-\mu^{\frac{1}{2}}(v_*)(f|f|^{p-2})(v)\big)
	\le \mu^{\frac{1}{2}}(v'_*)|f(v)||f(v')|^{p-1}-\mu^{\frac{1}{2}}(v_*)|f(v)|^{p}.
\end{align*}
Moreover, we apply the basic inequality $\th^{\frac{2}{p'}}-1\le\frac{1}{p'}(\th^2-1)-\frac{1}{\max\{p,p'\}}(\th-1)^2$ ($\th\ge 0$, $p\in(1,\infty]$ and $\frac{1}{p}+\frac{1}{p'}=1$) from \cite[Lemma 1]{Alonso2019}. Thus, for any $p\ge 2$ and non-negative $f\ge 0$, we have 
\begin{align*}\notag
	&\mu^{\fr12}(v_*)f(v)\big(\mu^{\frac{1}{2}}(v'_*)f^{p-1}(v')-\mu^{\frac{1}{2}}(v_*)f^{p-1}(v)\big)
	\\
	&\le 
	\mu(v_*)f^p(v)\Big(\frac{(\mu^{\frac{p'}{4}}(v'_*)f^{\frac{p}{2}}(v'))^{\frac{2}{p'}}}{(\mu^{\frac{p'}{4}}(v_*)f^{\frac{p}{2}}(v))^{\frac{2}{p'}}}-1\Big)\\
	&\le\frac{\mu(v_*)f^p(v)}{p'}\Big(\frac{\mu^{\frac{p'}{2}}(v'_*)|f|^{p}(v')}{\mu^{\frac{p'}{2}}(v_*)|f|^{p}(v)}-1\Big)
	-\frac{\mu(v_*)f^p(v)}{\max\{p,p'\}}\Big(\frac{\mu^{\frac{p'}{4}}(v'_*)f^{\frac{p}{2}}(v')}{\mu^{\frac{p'}{4}}(v_*)f^{\frac{p}{2}}(v)}-1\Big)^2,
\end{align*}
where $p'$ is given by $\frac{1}{p}+\frac{1}{p'}=1$, and hence, we can decompose $\int_{v}\Ga(\mu^{\fr12},f)f|f|^{p-2}$ correspondingly:
\begin{align}\label{esLpT1}
	\big(\Ga(\mu^{\fr12},f),f|f|^{p-2}\big)_{L^2_v}\notag
	&\le \int_{v,v_*,\si}B\mu^{1-\frac{p'}{2}}(v_*)\Big\{\frac{1}{p'}\Big(\mu^{\frac{p'}{2}}(v'_*)|f|^{p}(v')-\mu^{\frac{p'}{2}}(v_*)|f|^{p}(v)\Big)
	\\
	&\quad-\frac{1}{p}\Big(\mu^{\frac{p'}{4}}(v'_*)|f|^{\frac{p}{2}}(v')-\mu^{\frac{p'}{4}}(v_*)|f|^{\frac{p}{2}}(v)\Big)^2\Big\}
	:=I^p_1+I^p_2. 
\end{align} 
The term $I^p_1$ can be computed by pre-post and regular change of variables \eqref{regular}, as it is of the cancellation form. That is, 
\begin{align}\label{esLpT2}\notag
	I^p_1&\notag=\frac{1}{p'}\int_{v,v_*,\si}
	B\Big(\mu^{1-\frac{p'}{2}}(v'_*)-\mu^{1-\frac{p'}{2}}(v_*)\Big)\mu^{\frac{p'}{2}}(v_*)|f|^{p}(v)\\
	&=\frac{1}{p'}\int_{v,v_*,\si}
	B\Big(\mu^{1-\frac{p'}{2}}(v'_*)\big(\mu^{\frac{p'}{2}}(v_*)-\mu^{\frac{p'}{2}}(v'_*)\big)
	+\big(\mu(v'_*)-\mu(v_*)\big)\Big)|f|^{p}(v)\notag\\
	&=:I^p_{1,1}+I^p_{1,2}.
\end{align}
For the term $I^p_{1,1}$, we use Taylor expansion to $\mu^{\frac{p'}{2}}(v_*)-\mu^{\frac{p'}{2}}(v'_*)$ for the case $\th\le|v-v_*|^{-1}$, apply the vanishing property \eqref{regularvprime} to the first-order expansion (replacing $(v,v')$ with $(v_*,v'_*)$ therein), and utilize the vanishing regular change of variables $v_*\mapsto v'_*$ \eqref{regularvprime} and the angular integral \eqref{angularintegral} to the second-order expansion, together with the estimate \eqref{gafg}: 
for any $N\ge 0$, 
\begin{align}\label{esLpT3}\notag
	|I^p_{1,1}|
	&= 
    \frac{C}{p'}\int_{v,v_*,\si}\1_{\th>|v-v_*|^{-1}}B\mu^{1-\frac{p'}{2}}(v'_*)|f|^{p}(v)\\
    &\notag\ \ +
    0 + \frac{1}{p'}\Big|\int_{v,v_*,\si}B\1_{\th\le|v-v_*|^{-1}}\mu^{1-\frac{p'}{2}}(v'_*)|f|^{p}(v)
	\int^1_0\na^2\mu^{\frac{p'}{2}}(v_*'+\tau(v_*-v_*')):(v_*-v_*')^{\otimes 2}\,d\tau\Big|\\
	&\notag\le \frac{C}{p'}\int_{v,v_*,\si}b(\cos\th)\sin^2\fr\th2|v-v_*|^{\ga+2}\mu^{1-\frac{p'}{2}}(v'_*)|f|^{p}(v)\\
	&\le C_p\int_{v,v_*}|v-v_*|^{\ga+2s}\mu^{1-\frac{p'}{2}}(v_*)|f|^{p}(v)
	\le C_p
	\|\<v\>^{\fr{\ga+2s}{2}}|f|^{\fr{p}{2}}\|_{L^2_v}^2.  
\end{align}
For the term $I^p_{1,2}$, we use the regular change of variables \eqref{regular} and the estimate \eqref{gafg} to deduce 
\begin{align}\label{esLpT4}
	I^p_{1,2}
	&= \frac{1}{p'}\int_{v,v_*,\si}b|v-v_*|^{\ga}\big(\frac{1}{\cos^{d+\ga}\fr\th2}-1\big)\mu(v_*)|f|^{p}(v)
	\le C_p
	\|\<v\>^{\frac{\ga}{2}}|f|^{\fr{p}{2}}\|_{L^2_v}.
(2.23)\end{align}
For the term $I^p_2$ in \eqref{esLpT1}, as in \cite[Prop. 2.16]{Alexandre2012}, by $(a+b)^2\ge\frac{1}{2}a^2-b^2$, we have 
\begin{align*}
	&\mu^{1-\frac{p'}{2}}(v_*)\Big(\mu^{\frac{p'}{4}}(v'_*)|f|^{\frac{p}{2}}(v')-\mu^{\frac{p'}{4}}(v_*)|f|^{\frac{p}{2}}(v)\Big)^2\\
	&\notag= \mu^{1-\frac{p'}{2}}(v_*)\Big(\big(\mu^{\frac{p'}{4}}(v'_*)-\mu^{\frac{p'}{4}}(v_*)\big)|f|^{\frac{p}{2}}(v')-\mu^{\frac{p'}{4}}(v_*)\big(|f|^{\frac{p}{2}}(v')-|f|^{\frac{p}{2}}(v)\big)\Big)^2
\\
&\ge \frac{\mu(v_*)}{2}\big(|f|^{\frac{p}{2}}(v')-|f|^{\frac{p}{2}}(v)\big)^2-\mu^{1-\frac{p'}{2}}(v_*)\big(\mu^{\frac{p'}{4}}(v'_*)-\mu^{\frac{p'}{4}}(v_*)\big)^2|f|^{p}(v').
\end{align*}
Moreover, the second factor of the first right-hand term can be further rewritten as 
\begin{align*}
	\big(|f|^{\frac{p}{2}}(v')-|f|^{\frac{p}{2}}(v)\big)^2
	&=-2|f|^{\frac{p}{2}}(v)\big(|f|^{\frac{p}{2}}(v')-|f|^{\frac{p}{2}}(v)\big)
	+|f|^{p}(v')-|f|^{p}(v). 
\end{align*}
Thus, by \eqref{CollBoltzDef} and \eqref{esLpT2}, we can rewrite $I^p_2$ as 
\begin{align}\label{esLpT5}\notag
	I^p_2
	&\le \frac{1}{p}\int_{v,v_*,\si}B\Big\{\mu(v_*)|f|^{\frac{p}{2}}(v)\big(|f|^{\frac{p}{2}}(v')-|f|^{\frac{p}{2}}(v)\big)
	+\frac{\mu(v_*)}{2}\big(|f|^{p}(v')-|f|^{p}(v)\big)\\
	&\notag\qquad\qquad\qquad+\mu^{1-\frac{p'}{2}}(v_*)\big(\mu^{\frac{p'}{4}}(v'_*)-\mu^{\frac{p'}{4}}(v_*)\big)^2|f|^{p}(v')\Big\}\\
	&=:\fr1p\big(\Ga(\mu^{\fr12},|f|^{\frac{p}{2}}),|f|^{\frac{p}{2}}\big)_{L^2_v}
	+I^p_{2,1}+I^p_{2,2}.
\end{align}
The term $I^p_{2,1}$ is of the cancellation form while $I^p_{2,2}$ contains the regular angular regularity $\th^2$ factor arising from $\big(\mu^{\frac{p'}{4}}(v'_*)-\mu^{\frac{p'}{4}}(v_*)\big)^2$. Thus, similar to \eqref{esLpT3} and \eqref{esLpT4}, by regular change of variable \eqref{regular}, angular integral \eqref{angularintegral}, and estimate \eqref{gafg}, we have 
\begin{align}\label{esLpT6}
	I^p_{2,1}\le C_p\|\<v\>^{\frac{\ga}{2}}|f|^{\fr{p}{2}}\|_{L^2_v},\ \ \
	I^p_{2,2}\le C_p\|\<v\>^{\frac{\ga+2s}{2}}|f|^{\fr{p}{2}}\|_{L^2_v}. 
\end{align}
Collecting estimates \eqref{esLpT3}, \eqref{esLpT4}, and \eqref{esLpT6} into \eqref{esLpT2} and \eqref{esLpT5}, and then into \eqref{esLpT1}, we obtain that, for any $d\ge 2$, $p\ge 2$ and $N\ge 0$,
\begin{align*}
	\big(\Ga(\mu^{\fr12},f),f|f|^{p-2}\big)_{L^2_v}
	&\le p^{-1}\big(\Ga(\mu^{\fr12},|f|^{\frac{p}{2}}),|f|^{\frac{p}{2}}\big)_{L^2_v}
	+C_{p,N}\|\<v\>^{\fr{\ga+2s}{2}}|f|^{\fr{p}{2}}\|_{L^2_v}^2. 
\end{align*}
provided that $\ga+2s>-d$.
Therefore, by \eqref{esAf}, we obtain for $d\ge 2$ and $p\ge 2$ that 
\begin{align}\label{GaLpes}
	\big(\Ga(\mu^{\frac{1}{2}},f),f|f|^{p-2}\big)_{L^2_v}
	&\le -c_{0}p^{-1}\||f|^{\frac{p}{2}}\|^2_{L^2_D}+C_p\|\<v\>^{\fr{\ga+2s}{2}}|f|^{\fr{p}{2}}\|_{L^2_v}^2, 
\end{align}
for some $c_0,C_p>0$, provided that $\ga+2s>-d$. 

\subsubsection{A coercive estimate}
Assume that $\Phi_1=\mu+\mu^{\fr12}\phi_1$ satisfies \eqref{lowerphi}. We calculate the coercive estimate of $(Q(\Phi_1,f),f)_{L^2_v}$. By the pre-post change of variables, 
\begin{align*}
	&(Q(\Phi_1,f),f)_{L^2_v}
	=\int_{v,v_*,\si}B(v-v_*,\si)\Phi_1(v_*)f(v)\big(f(v')-f(v)\big)\\
	&=-\fr12\int_{v,v_*,\si}B\Phi_1(v_*)\big(f(v')-f(v)\big)^2
	+\fr12\int_{v,v_*,\si}B\Phi_1(v_*)(f(v')^2-f(v)^2)\\
	&\le -\fr1C\int_{v,v_*,\si}Be^{-\fr{L_0|v_*|^2}{C}}\big(f(v')-f(v)\big)^2+C\int_{v,v_*}|v-v_*|^{\ga}\Phi_1(v_*)f(v)^2
\end{align*} 
where the first right-hand term can be rewritten as the first right-hand term of \eqref{vertiii} by dilation (the second right-hand term of \eqref{vertiii} satisfies \cite[Lemma 2.6]{Alexandre2012}), and the second term is calculated via the regular change of variables \eqref{regular}, the angular integral \eqref{angularintegral}. Thus, applying estimate \eqref{gafgDs} to the last term, we have 
\begin{align*}
	(Q(\Phi_1,f),f)_{L^2_v}
	&\le -c_0\|f\|_{L^2_D}^2+C\|\<v\>^{\fr{\ga+2s}{2}}f\|_{L^2_v}^2
	+C\|\<v\>^{|\ga|+d+1}\<D_v\>^{-\fr{d-2s}{2}}\Phi_1\|_{L^r_v}\|\<v\>^{\ga}f\|_{L^2_v}^2, 
\end{align*}
for some generic constant $c_0=c_0(\ga,s,C_{\mathrm{low}},L_0)>0$, provided that $\ga-\fr{d-2s}{2}>-\fr{d}{r'}$. 
Together with the commutator estimate $\Ga-Q$ in \eqref{esGaminQTemp}, we have 
\begin{align}\label{GaPhi1ff}\notag
	&(\Ga(\mu+\phi_1,f),f)_{L^2_v}
	\le -c_0\|f\|_{L^2_D}^2+C\|\<v\>^{\fr{\ga+2s}{2}}f\|_{L^2_v}^2\\
	&\notag
	\qquad
	+C\|\<v\>^{|\ga|+d+1}\<D_v\>^{-\fr{d-2s}{2}}\mu^{\fr12}\phi_1\|_{L^r_v}\|\<v\>^{\ga}f\|_{L^2_v}^2\\
	&\notag\qquad+C\|\<v\>^{|n|+|\ga|+d+1}\<D_v\>^{-s}\phi_1\|_{L^r_v}\|\<v\>^{-n+\fr{\ga+2s}{2}}f\|_{L^2_v}\|\<v\>^{n+\fr{\ga+2s}{2}}f\|_{L^2_v}\\
	&\qquad+C\|\<v\>^{-N}\<D_v\>^{-\fr{d-2s}{2}}\phi_1\|_{L^r_v}\|\<v\>^{-n+\fr{\ga+2s}{2}}f\|_{L^2_v}\|\<v\>^{n}f\|_{L^2_D}, 
\end{align}
provided that $\ga-\fr{d-2s}{2}>-\fr{d}{r'}$ and $\phi_1$ satisfies \eqref{lowerphi}, for any $n\in\R$ and $N\ge 0$.

\section{Spatial upper bounds and commutator estimates}
To apply the pseudo-differential operator $\De_j=\int_{\R^{d}_{\xi}}e^{2\pi i\xi\cdot x}\wh{\Psi_j}(\varpi^{-1}\xi)\F_xf(\xi)\,d\xi$, defined in \eqref{DejQkDef}, to the collision terms, we will consider the decomposition $f=f_\de+\wt{f}_\de$ in \eqref{splitphi} later, where $f_\de$ is regular while $\wt{f}_\de$ is irregular but suitably small. We deal with these two parts in the following subsections, while the detailed decomposition will be given by \eqref{splitphi} later. 

\subsection{Commutator estimate of $[\texorpdfstring{\De_jP_kR_r}{DeltajPkRr},\texorpdfstring{\Ga}{Gamma}(f,\cdot)]$ for regular $f$}
In this part, we calculate the commutator $\De_jP_kR_r\Ga(f,g)-\Ga(f,\De_jP_kR_rg)$ by assuming that $f$ is sufficiently smooth with respect to $x$. Firstly, noting $\F_x\De_jf(\xi)=\wh{\Psi_j}(\varpi^{-1}\xi)\wh{f}(\xi)$, we have  
\begin{align*}
	&\De_j(fg)-f\De_jg
	=\int_{\xi,\xi_*}e^{2\pi i\xi\cdot x}
	\Big(\wh{\Psi_j}(\varpi^{-1}\xi)-\wh{\Psi_j}(\varpi^{-1}\xi_*)\Big)\wh{f}(\xi-\xi_*)\wh{g}(\xi_*)\\
	&=\int_{\xi,\xi_*}e^{2\pi i\xi\cdot x}\int^1_0\,d\tau~
	\varpi^{-1}(\xi-\xi_*)\na\wh{\Psi_j}(\varpi^{-1}(\xi_*+\tau(\xi-\xi_*)))\wh{f}(\xi-\xi_*)\wh{g}(\xi_*)\\
	&=\int_{\xi,\xi_*,\tau}\fr{e^{2\pi i\xi\cdot x}}{2\pi i}
	\varpi^{-1}\na\wh{\Psi_j}(\varpi^{-1}(\xi_*+\tau(\xi-\xi_*)))\wh{\na_xf}(\xi-\xi_*)\wh{g}(\xi_*).
\end{align*}
Applying this estimate to $\De_j\Ga(f,g)-\Ga(f,\De_jg)$ and using the collision estimate \eqref{UpperGaNoSum} (with $s_1=s$ and $n=0$ therein), we have 
\begin{align}\label{CommDejfTemp}\notag
	&\big|\big(\De_j\Ga(f,g)-\Ga(f,\De_jg),h\big)_{L^2_{x,v}}\big|\\
	\notag
	&\le \int_{\xi,\xi_*,\tau}
	\varpi^{-1}\Big|\na\wh{\Psi_j}(\varpi^{-1}(\xi_*+\tau(\xi-\xi_*)))\Big(\Ga\big(\wh{\na_xf}(\xi-\xi_*),\wh{g}(\xi_*)\big),\wh{h}(\xi)\Big)_{L^2_v}\Big|\\
	&\notag\le C\int_{\xi,\xi_*}
	\varpi^{-1}2^{-j}
	\Big(
	\|\<D_v\>^{-s}\<v\>^{-N}\wh{\na_xf}(\xi-\xi_*)\|_{L^2_v}\|\wh{g}(\xi_*)\|_{L^2_D}\|\wh{h}(\xi)\|_{L^2_D}
	\\
	&\notag\qquad\qquad\qquad +\|\wh{\na_xf}(\xi-\xi_*)\|_{L^2_v}\|\<v\>^{\frac{\ga+2s}{2}}\wh{g}(\xi_*)\|_{L^2_v}\|\wh{h}(\xi)\|_{L^2_D}\Big)
	\\
	&\le 
	C\varpi^{-1}2^{-j}
	\|\<D_x\>^{\frac{d+3}{2}}f\|_{L^2_{x,v}}\|g\|_{L^2_xL^2_D}\|h\|_{L^2_xL^2_D}, 
\end{align}
provided that $\ga+s>-\frac{d}{2}$, 
where we simply used $\|\na\wh{\Psi_j}\|_{L^\infty}\le C2^{-j}$ and $\|\wh{f}\|_{L^1_{\xi}}\le C\|\<D_x\>^{\frac{d+1}{2}}f\|_{L^2_x}$. Thus, using $\|\cdot\|_{L^2_D}\le C\|\<v\>^{\fr{\ga+2s}{2}}\<D_v\>^{s}(\cdot)\|_{L^2_v}$ with $\ga+2s<0$, applying upper bound \eqref{dissUpper} to the $g,h$ terms and noticing $\varpi=\varpi_02^{rl_0}$, we have for any $\ka>0$ that 
\begin{align}\label{CommDejf}\notag
	&\sum_{r,j,k\ge 0}(\om2^k)^{-2s}2^{2rl}\big|\big(\De_j\Ga(f,P_kR_rg)-\Ga(f,\De_jP_kR_rg),\De_jP_kR_rh\big)_{L^2_{x,v}}\big|
	\\
	&\notag\le C_\ka\varpi_0^{-2}
	\|\<D_x\>^{\frac{d+3}{2}}f\|_{L^2_{x,v}}^2
	\|\<v\>^{-l_0+l+\fr{\ga+2s}{2}}g\|_{L^2_{x,v}}^2\\
	&\quad
	+\ka\sum_{r,j,k\ge 0}(\om2^k)^{-2s}2^{2rl}\|\De_jP_kR_rh\|_{L^2_xL^2_D}^2. 
\end{align}

\subsection{Bony-type upper bounds of \texorpdfstring{$\De_j$}{Delta j}\texorpdfstring{$\Ga(f,g)$}{Gamma(f,g)} for irregular $f$}\label{SecSpatialNonlinear}
We deal with the nonlinear collision term $\De_j\Ga(f,g)$ that appears for $t\in[0,T]$ in equation \eqref{eqkinetic12}. 
In this Subsection, we write $\om=\om(j)$ and $P^k=P^j_k$ to emphasize the {\bf dependence} of $\om$ on $j$, and use the spatial and velocity pseudo-differential operators $\De_j,P^j_k$ defined in \eqref{DejQkDef} with dilation $\varpi$ and $\om=\om(j)$, respectively, i.e., 
\begin{align}
	\label{Def712}\notag
	\De_jf(x)&=\int_{\R^{d}_{\xi}}e^{2\pi i\xi\cdot x}\wh{\Psi_j}(\varpi^{-1}\xi)\F_{x}f(\xi)\,d\xi,\qquad\qquad \varpi=\varpi_02^{rl_0},\\
	P^j_kf(v)&=\int_{\R^d_\eta}e^{2\pi i\eta\cdot v}\wh{\Psi_{k}}((\om(j))^{-1}\eta)\F_vf(\eta)\,d\eta,\quad \om(j)=\om_02^{\al j+rl_1},
\end{align}
Moreover, we assume the partition-of-unity property of $\Psi_j$ (see \eqref{sum1}) in this subsection. Hence,
	$f(x)=\sum_{j=0}^\infty\De_jf(x)$ with any fixed $r\ge 0$. 
For the collision term $\Ga(f,P^j_kR_rg)$, we have the upper bound from Theorem \ref{ThmUpper}: for any $N>0$, 
\begin{align}\label{UpperGa}\notag
	|(\Ga(f,P^j_kR_rg),P^j_kR_rh)_{L^2_v}|
	&\notag\le C\|\<D_v\>^{-s}\<v\>^{-N}f\|_{L^2_v}\|P^j_kR_rg\|_{L^2_D}\|P^j_kR_rh\|_{L^2_D}\\
  &\ \ +C\|f\|_{L^2_v}\|\<v\>^{\frac{\ga+2s}{2}}P^j_kR_rg\|_{L^2_v}\|P^j_kR_rh\|_{L^2_D}. 
\end{align}

\subsubsection{The Bony decomposition}\label{SubsecBony}
The Bony decomposition was also used in \cite{Duan2015a,Morimoto2016} for kinetic equations. 
Splitting $f=\sum_{j}\De_jf,g=\sum_{j}\De_jg$, we have the Bony paraproduct decomposition 
\begin{align*}
	f(x)g(x)=\sum_{q=0}^\infty S_{q-1}f\De_qg+
	\sum_{p=0}^\infty \De_pfS_{p-1}g+\sum_{|p-q|\le 1}\De_pf\De_qg,
\end{align*}
where $S_{p-1}=\sum_{q\le p-2}\De_q$. 
Applying $\De_j$ to the Bony decomposition and applying their corresponding support on the time-spatial-frequency variable, we have 
\begin{align}\label{eqBony}\notag
	\De_j(fg)
	&=\De_j\sum_{|q-j|\le 4}S_{q-1}f\De_qg
	+\De_j\sum_{|p-j|\le 4}\De_pfS_{p-1}g
  \\
	&\quad+\De_j\sum_{\max\{p,q\}\ge j-2}\sum_{|p-q|\le 1}\De_pf\De_qg.
\end{align}
We will multiply by $(\om(j)2^k)^{-2s}$ to eliminate the $s$-order derivative in our main energy estimate. 
That is, taking $\sum_{j,k\ge 0}(\om(j)2^k)^{-2s}\times\big(\De_j\Ga(f,P^j_kR_rg),P^j_kR_rh\big)_{L^2_{x,v}}$, and using the estimate \eqref{UpperGa}, we have, for any $N>0$, 
\begin{align}\label{UpperTT}\notag
	&\sum_{j,k\ge 0}(\om(j)2^k)^{-2s}\big(\De_j\Ga(f,P^j_kR_rg),P^j_kR_rh\big)_{L^2_{x,v}}\\
	&\notag\le C\sum_{j,k\ge 0}(\om(j)2^k)^{-2s}
	\bigg\{\sum_{|q-j|\le 4}\int_{\R^d_x}
	\|S_{q-1}\<D_v\>^{-s}\<v\>^{-N}f\|_{L^2_v}\|\De_{q}P^j_kR_rg\|_{L^2_D}\|\De_jP^j_kR_rh\|_{L^2_D}\\
	&\quad\notag+
	\sum_{|p-j|\le 4}\int_{\R^d_x}
	\|\De_q\<D_v\>^{-s}\<v\>^{-N}f\|_{L^2_v}\|S_{q-1}P^j_kR_rg\|_{L^2_D}\|\De_jP^j_kR_rh\|_{L^2_D}\\
	&\quad\notag+
	\sum_{p,q\ge 0}\1_{\substack{|p-q|\le 1,\qquad\\\max\{p,q\}\ge j-2}}
	\int_{\R^d_x}\|\De_p\<D_v\>^{-s}\<v\>^{-N}f\|_{L^2_v}\|\De_{q}P^j_kR_rg\|_{L^2_D}\|\De_jP^j_kR_rh\|_{L^2_D}\\
	&\quad\notag+
	\sum_{|q-j|\le 4}\int_{\R^d_x}
	\|S_{q-1}f\|_{L^2_v}\|\<v\>^{\frac{\ga+2s}{2}}\De_{q}P^j_kR_rg\|_{L^2_v}\|\De_jP^j_kR_rh\|_{L^2_D}\\
	&\quad\notag+
	\sum_{|p-j|\le 4}\int_{\R^d_x}
	\|\De_pf\|_{L^2_v}\|\<v\>^{\frac{\ga+2s}{2}}S_{p-1}P^j_kR_rg\|_{L^2_v}\|\De_jP^j_kR_rh\|_{L^2_D}\\
	&\quad\notag+
	\sum_{p,q\ge 0}
	\1_{\substack{|p-q|\le 1,\qquad\\\max\{p,q\}\ge j-2}}\int_{\R^d_x}
	\|\De_pf\|_{L^2_v}\|\<v\>^{\frac{\ga+2s}{2}}\De_{q}P^j_kR_rg\|_{L^2_v}\|\De_jP^j_kR_rh\|_{L^2_D}\bigg\}\\
	&=:\sum_{i=1}^6\Ga_i(f,g,h).
\end{align}

\subsubsection{Basic observations and properties}\label{SubsecBasicBony}
For the operator $S_{q-1}=\sum_{j\le q-2}\De_j$, since its kernel can be rewritten as $\sum_{j\le q-2}\wh{\Psi_j}(\varpi^{-1}\xi)=\wh\vp(2^{-(q-2)}\varpi^{-1}\xi)$ with some Schwartz function $\wh\vp$, we have \emph{Young's convolution inequality} for any $p\in[1,\infty]$, given by 
\begin{align}\notag
	S_{q-1}f(x)&=\int_{\R^{2d}_{y,\xi}}\sum_{j\le q-2}\wh{\Psi_j}(\varpi^{-1}\xi)f(y)e^{2\pi i\xi\cdot(x-y)}\,dyd\xi\\
	\notag&=\int_{\R^d_x}(2^{q-2}\varpi)^d\vp(2^{q-2}\varpi(x-y))f(y)\,dy,\\
	\label{esSq}
	\|S_{q-1}f(x)\|_{L^p_xL^2_v}&\le \Big\|\int_{\R^d_x}(2^{q-2}\varpi)^d\vp(2^{q-2}\varpi(x-y))\|f(y)\|_{L^2_v}\,dy\Big\|_{L^p_x}\le \|f\|_{L^p_xL^2_v}. 
\end{align}
The same estimate holds for the $L^2_D$ norm replacing the $L^2_v$ norm, since $\|f\|_{L^2_D}=\|\ti{a}^wf\|_{L^2_v}$. 
Moreover, we use the following Bernstein-type inequality. Let $r_0,r_0^*\in(1,\infty]$ satisfying $1+\frac{1}{r_0}=\frac{1}{2}+\frac{1}{(1-\ve/d)^{-1}}$ with any $\ve\in(0,d)$, by Young's convolution inequality as in \eqref{esSq}, we have 
\begin{align}\label{SobolevEmbeddLp}\notag
	\|\De_jf\|_{L^{r_0}_xL^2_D}&=\|\wt{\De}_j\De_jf\|_{L^{r_0}_xL^2_D}
	\le \|\varpi^d\wt{\Psi_j}(\varpi(\cdot))\|_{L^{(1-\ve/d)^{-1}}_x}\|\De_jf\|_{L^2_xL^2_D}\\
	&
	\notag\le C(\varpi2^j)^{d(1-(1-\ve/d))}\|\De_jf\|_{L^2_xL^2_D}
	\\
	&
	\le C(\varpi_02^{rl_0}2^{j})^{\ve}\|\De_jf\|_{L^2_xL^2_D}. 
\end{align}
This will be utilized when $r_0>2$ is close to $2$. 
Similar estimates hold for the $L^2_v$ norm replacing the $L^2_D$ norm.
Here and below, we use the technique of $\De_j=\De_j\wt{\De}_j$ to preserve the term $\De_j$, where $\wt{\De_j}$ is defined as in \eqref{SlightlyLarger}. 
Furthermore, the equivalence of weight $2^{r}\approx\<v\>$ will be used frequently due to the support of $R_r(v)$. 

\smallskip 
Finally, we need to deal with the term $\|\De_qP^j_kR_rg\|_{L^2_D}$. Recall that our dilated pseudo-differential operator $P^j_k$ with respect to the velocity depends on spatial-frequency index $j$, while the term of $g$ in \eqref{UpperTT} has a different spatial-frequency index. Our goal is to apply translation in $k$ to $\De_qP^j_k$ to obtain $\De_jP^j_k$, which has the {\bf same} index $j$ in both the spatial dyadic decomposition and the dilation $\om(j)$ in $P^j_k$. 
Notice that the kernel $\wh{\Psi_0}(\om(j)^{-1}\eta)$ of $P^j_0$ and the kernel $\wh{\Psi_k}(\om(j)^{-1}\eta)$ of $P^j_k$ $(k\in\R)$ are supported in (recall that $\om(j)=\om_02^{\al j+rl_1}$)
\begin{align*}
	\{|\eta|\le 2\om(j)\}&=\{|\eta|\le 2\om(q)2^{\al(j-q)}\},\\
	\big\{\frac{6}{7}\om(j)2^k\le |\eta|\le 2\om(j)2^k\big\}&=\big\{\frac{6}{7}\om(q)2^{\al(j-q)+k}\le |\eta|\le 2\om(q)2^{\al(j-q)+k}\big\},\quad k\ge 1,
\end{align*}
respectively. We denote the floor function of $z\in\R$ by $\lfloor z \rfloor$, the greatest integer less than or equal to $z$. Using the partition-of-unity property in \eqref{sum1} and their supports, we have 
\begin{align}\label{Pj0ksum}
	\begin{aligned}
	P^j_0&=P^j_0\big(P^q_0+P^q_1+\cdots+P^q_{\max\{0,\,\lfloor\al(j-q)\rfloor+2\}}\big),\\
	P^j_k&=P^j_k\big(P^q_{\max\{0,\,k+\lfloor\al(j-q)\rfloor-1\}}+\cdots+P^q_{\max\{0,\,k+\lfloor\al(j-q)\rfloor+2\}}\big).
	\end{aligned}
\end{align}
Note that $P^j_k=P^j_kP^q_0$ whenever $k+\lfloor\al(j-q)\rfloor\le -2$. 
For the upper bound of $\|P^j_kg\|_{L^2_D}=\|\ti{a}^wP^j_kg\|_{L^2_v}$, note that $\ti{a}\in S(\ti{a})$, as given by \eqref{tiaa}, admits an invertible Weyl quantization $\ti{a}^w$ (as shown in \cite{Global2019,Deng2020a}), and $\ti{a}^wP^j_k\in \operatorname{Op}(\ti{a}^w)$). Using \eqref{boundDea} and Lemma \ref{basicCommuLem}, we obtain
\begin{align*}
	\|P^j_kg\|_{L^2_D}=\|\underbrace{\ti{a}^wP^j_k}_{\in\operatorname{Op}(\ti{a})}g\|_{L^2_{v}}\le C\|g\|_{L^2_D},
\end{align*} 
for $j,k\ge 0$ and any suitable $g$, 
where $C>0$ is independent of $j,k,q$. 

\smallskip For any $j\ge 0$, on the support of $\1_{q\le j+4}$, using \eqref{Pj0ksum} and taking translation $k\mapsto k+\lfloor \al(j-q)\rfloor$ in the last inequality (which is valid due to the indicator $\1_{q\le j+4}$; otherwise, if $q-j\to \infty$, there are infinitely many $P_0$ in the summation $\sum_{k\ge 0}$ which is not summable), we have 
\begin{align}
	\label{esg719}\notag
	&\sum_{k,q\ge 0}\1_{q\le j+4}(\om(j)2^k)^{-2s}\|\De_{q}P^j_kg\|^2_{L^2_D}\\
	&\notag\le C
	\sum_{q\ge 0}
	\1_{q\le j+4}
	(\om(q)2^{\al(j-q)})^{-2s}
	\|\De_{q}P^j_0\big(P^q_0+P^q_1+\cdots+P^q_{\max\{0,\,\lfloor\al(j-q)\rfloor+2\}}\big)g\|^2_{L^2_D}
	\\
	&\ \notag +
	C\sum_{q\ge 0}\sum_{k\ge 1}
	\1_{q\le j+4}(\om(q)2^{k+\al(j-q)})^{-2s}\sum_{-1\le m\le 2}
	\|\De_{q}P^j_kP^q_{\max\{0,\,k+\lfloor\al(j-q)\rfloor+m\}}g\|^2_{L^2_D}
	\\
	&\le C\sum_{q,k\ge 0}(\om(q)2^{k})^{-2s}\|\De_{q}P^q_{k}g\|^2_{L^2_D}, 
\end{align}
and similarly, on the support of $\1_{|q-j|\le 4}$, we have 
\begin{align}
	\label{esg719s}\notag
	&\sum_{j,k,q\ge 0}\1_{|q-j|\le 4}(\om(j)2^k)^{-2s}\|\De_{q}P^j_kg\|^2_{L^2_D}\\
	&\notag\le 
	\sum_{j,q\ge 0}\1_{|q-j|\le 4}
	(\om(q)2^{\al(j-q)})^{-2s}
	\|\De_{q}P^j_0\big(P^q_0+P^q_1+\cdots+P^q_{\max\{0,\,\lfloor\al(j-q)\rfloor+2\}}\big)g\|^2_{L^2_D}
	\\
	&\notag\ +
	\sum_{j,q\ge 0}
	\sum_{k\ge 1}
	\1_{|q-j|\le 4}(\om(q)2^{k+\al(j-q)})^{-2s}\sum_{-1\le m\le 2}
	\|\De_{q}P^j_kP^q_{\max\{0,\,k+\lfloor\al(j-q)\rfloor+m\}}g\|^2_{L^2_D}\\
	&\le C\sum_{j,q,k\ge 0}\1_{|q-j|\le 4}(\om(q)2^{k})^{-2s}\|\De_{q}P^q_{k}g\|^2_{L^2_D}
	\le C\sum_{q,k\ge 0}(\om(q)2^{k})^{-2s}\|\De_{q}P^q_{k}g\|^2_{L^2_D}. 
\end{align}
Next, we deal with the case of $\max\{p,q\}\ge j-2$ and $|p-q|\le 1$ in the Bony decomposition \eqref{eqBony}. 
The case of $q\le j+4$ is calculated in \eqref{esg719}. 
For the case of $q\ge j+5$ and ${|p-q|\le 1}$, using the property of supports in \eqref{Pj0ksum}, splitting the summation $k\ge 0$ into $0\le k\le -\lfloor\al(j-q)\rfloor$ and $k\ge -\lfloor\al(j-q)\rfloor+1$, applying $\|\cdot\|_{L^2_D}\le C\|\<v\>^{\frac{\ga+2s}{2}}\<D_v\>^s(\cdot)\|_{L^2_v}$ to the case of small $k$ and translation $k\mapsto k+\lfloor\al(j-q)\rfloor$ to the large $k$, and used inequality $
(q+rl_0+\log_2\vpi_0+1)^{-1}
\le C_{\om_0}(2^{\al q+rl_1})^{-2s}+\om_0^{-2s}
\le C_{\om_0,l_0,l_1}\om^{-2s}+\om_0^{-2s}$, we have 
\begin{align}\notag\label{ges1a}\notag
	&\sum_{k,q\ge 0}\1_{|p-q|\le 1}\1_{q\ge j+5}(\om(j)2^k)^{-2s}\|\De_{q}P^j_kg\|^2_{L^2_D}\\
	&\notag\le C
	\sum_{q\ge 0}\sum_{k=0}^{-\lfloor\al(j-q)\rfloor}
	\1_{|p-q|\le 1}\1_{q\ge j+5}
	(\om(j)2^k)^{-2s}
	\|\De_{q}\<v\>^{\fr{\ga+2s}{2}}\underbrace{\<D_v\>^sP^j_k}_{\in\operatorname{Op}((\om2^k)^{s})}P^q_0g\|^2_{L^2_v}
	\\
	&\notag\ \ \ +
	C\sum_{q\ge 0}\sum_{k=-\lfloor\al(j-q)\rfloor+1}^\infty
	\1_{|p-q|\le 1}\1_{q\ge j+5}(\om(q)2^{k+\al(j-q)})^{-2s}
	\sum_{m=-1}^2\|\De_{q}P^j_kP^q_{\max\{0,\,k+\lfloor\al(j-q)\rfloor+m\}}g\|^2_{L^2_D}
	\\
	&\notag\le C
	\sum_{q\ge 0}\al(q-j)
	\1_{|p-q|\le 1}\1_{q\ge j+5}
	\|\De_{q}\<v\>^{\fr{\ga+2s}{2}}P^q_0g\|^2_{L^2_v}
	+
	C\sum_{q\ge 0}\sum_{k=1}^\infty
	\1_{|p-q|\le 1}\1_{q\ge j+5}(\om(q)2^{k})^{-2s}
	\|\De_{q}P^q_{k}g\|^2_{L^2_D}
	\\
	&\notag\le C_\al\1_{p\ge 1}(q+1)(q+rl_0+\log_2\vpi_0+1)
	\sum_{q\ge 1}(q+rl_0+\log_2\vpi_0+1)^{-1}
	\|\<v\>^{\fr{\ga+2s}{2}}\De_{q}P^q_0g\|^2_{L^2_v}
	\\
	&\notag\ \ 
	+
	C\sum_{q,k\ge 0}(\om(q)2^{k})^{-2s}
	\|\De_{q}P^q_{k}g\|^2_{L^2_D}
	\\
	&\notag\le C_\al\1_{p\ge 1}(q+rl_0+\log_2\vpi_0+1)^2
	\Big(\sum_{q\ge 1}
		C_{\om_0,l_0,l_1}(\om(q))^{-2s}\|\<v\>^{\fr{\ga+2s}{2}}\De_{q}P^q_0g\|^2_{L^2_v}
	+\om_0^{-2s{}}\|g\|_{L^2_v}^2\Big)\\
	&\ \ \ +
	C\sum_{q,k\ge 0}(\om(q)2^{k})^{-2s}
	\|\De_{q}P^q_{k}g\|^2_{L^2_D},
\end{align}
where we used $\sum_{q\ge 1}\|\<v\>^{\fr{\ga+2s}{2}}\De_{q}P^q_0g\|^2_{L^2_v}\le \|g\|_{L^2_v}^2$ and simply estimate $q+1\le q+rl_0+\log_2\vpi_0+1$. 
	In the first sum, we use that $q\ge j+5\ge1$ and $p\ge q-1\ge1$, so that the cases $p=0$ and $q=0$ are excluded, which is a \textbf{crucial} point in our estimate.

\subsubsection{Estimate of $\Ga_i(\phi_1,f,f)$ and $\Ga_i(f,\phi_2,f)$}\label{SecGammai}
For the difference $f=\phi_1-\phi_2$, using the trilinear form $\Ga_i(f,g,h)$ given by \eqref{UpperTT}, two nonlinear terms $\Ga(\phi_1,f)$ and $\Ga(f,\phi_2)$ shall be analyzed as 
\begin{align}\label{DecomGafg}
	\begin{aligned}
	\sum_{j,k\ge 0}(\om(j)2^k)^{-2s}\big(\De_j\Ga(\phi_1,P^j_kR_rf),P^j_kR_rf\big)_{L^2_{x,v}}
	\le \sum_{i=1}^6\Ga_i(\phi_1,f,f),\\
	\sum_{j,k\ge 0}(\om(j)2^k)^{-2s}\big(\De_j\Ga(f,P^j_kR_r\phi_2),P^j_kR_rf\big)_{L^2_{x,v}}
	\le \sum_{i=1}^6\Ga_i(f,\phi_2,f).
	\end{aligned}
\end{align}
Then we aim at estimating $\Ga_i(\phi_1,f,f)$ and $\Ga_i(f,\phi_2,f)$ for $1\le i\le 6$. In what follows, we let $r_0,r_0^*\in(2,\infty)$ be such that $\frac{1}{2}=\frac{1}{r_0}+\frac{1}{r_0^*}$. 

\smallskip For the $\Ga_i(\phi_1,f,f)$ $(1\le i\le 3)$ terms, we will use the  $l^2_{j,k}-l^2_{j,k}$ and the  $L^\infty_{x}-L^2_{x}-L^2_{x}$ H\"older's inequality to $\phi_i$-$f$-$f$.
For the term $\Ga_1(\phi_1,f,f)$, by estimate \eqref{esSq}, H\"older's inequality, $\sum_{q\ge 0}\1_{|q-j|\le 4}|v_q|\le C\big(\sum_{q\ge 0}\1_{|q-j|\le 4}|v_q|^2\big)^{\fr12}$, and estimate \eqref{esg719s}, we have 
\begin{align*}
	\Ga_1(\phi_1,f,f)
	&\notag\le C\|\<D_v\>^{-s}\<v\>^{-N}\phi_1\|_{L^\infty_{x}L^2_v}
	\Big(\sum_{j,k,q\ge 0}\1_{|q-j|\le 4}(\om(j)2^k)^{-2s}\|\De_{q}P^j_kR_rf\|^2_{L^2_{x}L^2_D}\Big)^{\frac{1}{2}}\\
	&\quad\notag\times\Big(\sum_{j,k,q\ge 0}\1_{|q-j|\le 4}(\om(j)2^k)^{-2s}\|\De_jP^j_kR_rf\|^2_{L^2_{x}L^2_D}\Big)^{\frac{1}{2}}\\
	&\le 
		C\|\<D_v\>^{-s}\<v\>^{-N}\phi_1\|_{L^\infty_{x}L^2_v}\sum_{j,k\ge 0}(\om(j)2^k)^{-2s}\|\De_jP^j_kR_rf\|^2_{L^2_{x}L^2_D}.
\end{align*}
Similarly, for the term $\Ga_2$, by splitting $S_{p-1}=\sum_{q\le p-2}\De_q$, using $\sum_{j,p,q\ge 0}\1_{|p-j|\le 4}\1_{q\le p-2}|u_p|^2\le C\sum_{p\ge 0}(p+1)|u_p|^2$, and applying \eqref{esg719}, we have 
\begin{align*}
	&\Ga_2(\phi_1,f,f)
	\notag\le\sum_{j,k,p,q\ge 0}(\om(j)2^k)^{-2s}\1_{|p-j|\le 4}\1_{q\le p-2}\|\De_{p}\<D_v\>^{-s}\<v\>^{-N}\phi_1\|_{L^\infty_{x}L^2_v}\\
	&\qquad\qquad\qquad\qquad\qquad\qquad\notag\times\|\De_{q}P^j_kR_rf\|_{L^2_{x}L^2_D}\|\De_jP^j_kR_rf\|_{L^2_{x}L^2_D}\\
	&\notag\le 
	\Big(\sum_{j,k\ge 0}\Big\{\sum_{p,q\ge 0}\1_{|p-j|\le 4}\1_{q\le p-2}\|\De_{p}\<D_v\>^{-s}\<v\>^{-N}\phi_1\|_{L^\infty_{x}L^2_v}^2\Big\}\\
	&\notag\quad\times \Big\{\sum_{p,q\ge 0}\1_{|p-j|\le 4}\1_{q\le j+2}(\om(j)2^k)^{-2s}\|\De_{q}P^j_kR_rf\|^2_{L^2_{x}L^2_D}\Big\}\Big)^{\frac{1}{2}}
	\Big(\sum_{j,k\ge 0}(\om(j)2^k)^{-2s}\|\De_jP^j_kR_rf\|^2_{L^2_{x}L^2_D}\Big)^{\frac{1}{2}}\\
	&\le 
	C\Big(\sum_{p\ge 0}(p+1)\|\De_{p}\<D_v\>^{-s}\<v\>^{-N}\phi_1\|_{L^\infty_{x}L^2_v}^2\Big)^{\frac{1}{2}}
	\sum_{j,k\ge 0}(\om(j)2^k)^{-2s}\|\De_jP^j_kR_rf\|_{L^2_{x}L^2_D}^2. 
\end{align*}

For the term $\Ga_3$, using $l^2-l^2$ H\"older's inequality, we have from \eqref{esg719s} (for $j-3\le q\le j+4$) and \eqref{ges1a} 
(for $q\ge j+5$) that 
\begin{align*}
	&\Ga_3(\phi_1,f,f)\\
	&\notag\le\Big(\sum_{j,k\ge 0}\Big\{\sum_{p,q\ge 0}
	(q+rl_0+\log_2\vpi_0+1)^2\1_{q\ge j+5}
	\1_{\substack{|p-q|\le 1,\qquad\\\max\{p,q\}\ge j-2}}\|\De_p\<D_v\>^{-s}\<v\>^{-N}\phi_1\|^2_{L^\infty_{x}L^2_v}
	\\
	&\notag\qquad\times
		\sum_{p,q\ge 0}
	(q+rl_0+\log_2\vpi_0+1)^{-2}\1_{q\ge j+5}
	\1_{\substack{|p-q|\le 1,\qquad\\\max\{p,q\}\ge j-2}}(\om(j)2^k)^{-2s}\|\De_qP^j_kR_rf\|_{L^2_{x}L^2_D}^2
		\Big\}\\
	&\quad
	+\Big\{\sum_{p,q\ge 0}
	\1_{|q-j|\le 4}
	\1_{\substack{|p-q|\le 1,\qquad\\\max\{p,q\}\ge j-2}}\|\De_p\<D_v\>^{-s}\<v\>^{-N}\phi_1\|^2_{L^\infty_{x}L^2_v}
	\\
	&\notag\qquad\times\sum_{p,q\ge 0}
	\1_{|q-j|\le 4}
	\1_{\substack{|p-q|\le 1,\qquad\\\max\{p,q\}\ge j-2}}(\om(j)2^k)^{-2s}\|\De_qP^j_kR_rf\|_{L^2_{x}L^2_D}^2\Big\}
	\Big)^{\frac{1}{2}}
	\\
	&\quad\times\notag
	\Big(\sum_{j,k\ge 0}(\om(j)2^k)^{-2s}\|\De_jP^j_kR_rf\|^2_{L^2_{x}L^2_D}\Big)^{\frac{1}{2}}\\
	&\le\notag
	C_\al \Big(\sum_{p\ge 0}(q+rl_0+\log_2\vpi_0+1)^2\|\De_p\<D_v\>^{-s}\<v\>^{-N}\phi_1\|^2_{L^\infty_{x}L^2_v}
	\Big)^{\frac{1}{2}}\\
	&\ \ \times
	\Big(
		C_{\om_0,l_0,l_1}\sum_{q\ge 1}\om^{-2s}\|\De_{q}P^q_0R_rf\|^2_{L^2_{x,v}}
	+\om_0^{-2s{}}\|R_rf\|_{L^2_{x,v}}^2
	+
	\sum_{q,k\ge 0}(\om(q)2^{k})^{-2s}
	\|\De_{q}P^q_{k}R_rf\|^2_{L^2_{x}L^2_D}\Big)^{\frac{1}{2}}\\
	&\ \ \times
	\Big(\sum_{j,k\ge 0}(\om(j)2^k)^{-2s}\|\De_jP^j_kR_rf\|^2_{L^2_{x}L^2_D}\Big)^{\frac{1}{2}}. 
\end{align*}
For the terms $\Ga_i(\phi_1,f,f)$ $(4\le i\le 6)$ in \eqref{UpperTT}, we use the  $l^2_k-l^2_k$, $l^2_{p,q}-l^2_{p,q}$ H\"older's inequality for sequences, followed by the  $L^{r_0^*}_{x}-L^2_{x}-L^{r_0}_{x}$ H\"older's inequality with $\frac{1}{2}=\frac{1}{r_0}+\frac{1}{r_0^*}$ and $r_0,r_0^*\in(2,\infty)$.
By Littlewood-Paley Theorem \ref{LPThm} (e.g., $\sum_{k\ge 0}\|P^j_kf\|^2_{L^2_{v}}\le C\|f\|_{L^2_{v}}^2$), noting that $\sum_{q\ge 0}\1_{|q-j|\le 4}|v_q|\le C\big(\sum_{q\ge 0}|v_q|^2\big)^{\fr12}$ and $\sum_{p,q\ge 0}\1_{|p-q|\le 1}|u_p||v_q|\le C\big(\sum_{p,q\ge 0}|u_p|^2|v_q|^2\big)^{\fr12}$, we have 
\begin{align*}
	&\sum_{4\le i\le 6}\Ga_i(\phi_1,f,f)
	\le
	C\int_{x}\sum_{j\ge 0}\Big(\sum_{k\ge 0}(\om(j)2^k)^{-4s}\|\De_jP^j_kR_rf\|_{L^2_D}^2\Big)^{\frac{1}{2}}
	\\
	&\qquad \times 
	\Big\{
	\sum_{q\ge 0}\1_{|q-j|\le 4}
	\|S_{q-1}\phi_1\|_{L^2_v}\Big\{\sum_{k\ge 0}\|\<v\>^{\frac{\ga+2s}{2}}\De_{q}P^j_kR_rf\|_{L^2_v}^2\Big\}^{\frac{1}{2}}\\
	&\qquad\quad\notag
	+
	\sum_{p\ge 0}\1_{|p-j|\le 4}
	\|\De_p\phi_1\|_{L^2_v}\Big\{\sum_{k\ge 0}\|\<v\>^{\frac{\ga+2s}{2}}S_{p-1}P^j_kR_rf\|_{L^2_v}^2\Big\}^{\frac{1}{2}}\\
	&\qquad\quad\notag+
	\sum_{p,q\ge 0}\1_{|p-q|\le 1}
	\1_{\substack{|p-q|\le 1,\qquad\\\max\{p,q\}\ge j-2}}
	\|\De_p\phi_1\|_{L^2_v}\Big\{\sum_{k\ge 0}\|\<v\>^{\frac{\ga+2s}{2}}\De_{q}P^j_kR_rf\|_{L^2_v}^2\Big\}^{\frac{1}{2}}\Big\}\\
	&\le
	C\Big\|\sum_{j\ge 0}\Big(\sum_{k\ge 0}(\om(j)2^k)^{-4s}\|\De_jP^j_kR_rf\|_{L^2_D}^2\Big)^{\frac{1}{2}}\Big\|_{L^{r_0}_{x}}
	\\
	&\qquad \times 
	\Big\{
	\Big\|\Big(\sum_{q\ge 0}\1_{|q-j|\le 4}\|S_{q-1}\phi_1\|_{L^2_v}^2\Big)^{\frac{1}{2}}\Big\|_{L^{r_0^*}_{x}}\Big\|\Big(\sum_{q\ge 0}\|\<v\>^{\frac{\ga+2s}{2}}\De_{q}R_rf\|_{L^2_v}^2\Big)^{\frac{1}{2}}\Big\|_{L^2_{x}}\\
	&\qquad\quad\notag
	+
	\Big\|\Big(\sum_{p\ge 0}\|\De_p\phi_1\|_{L^2_v}^2\Big)^{\frac{1}{2}}\Big\|_{L^{r_0^*}_{x}}
	\Big\|\Big(\sum_{p\ge 0}\1_{|p-j|\le 4}\|\<v\>^{\frac{\ga+2s}{2}}S_{p-1}R_rf\|_{L^2_v}^2\Big)^{\frac{1}{2}}\Big\|_{L^2_{x}}\\
	&\qquad\quad\notag+
	\Big\|\Big(\sum_{p\ge 0}\|\De_p\phi_1\|_{L^2_v}^2\Big)^{\frac{1}{2}}\Big\|_{L^{r_0^*}_{x}}
	\Big\|\Big(\sum_{q\ge 0}\|\<v\>^{\frac{\ga+2s}{2}}\De_{q}R_rf\|_{L^2_v}^2\Big)^{\frac{1}{2}}\Big\|_{L^2_{x}}\Big\}\\
	&\le C2^{r(\frac{\ga+2s}{2})}\Big\|\sum_{j\ge 0}\Big(\sum_{k\ge 0}(\om(j)2^k)^{-4s}\|\De_jP^j_kR_rf\|_{L^2_D}^2\Big)^{\frac{1}{2}}\Big\|_{L^{r_0}_{x}}
	\|\phi_1\|_{L^{r_0^*}_{x}L^2_v}\|R_rf\|_{L^2_{x,v}}, 
\end{align*}
where we have applied \eqref{esSq} to deduce that, for any $r\in[2,\infty]$ and suitable $u$, 
\begin{align}\label{esSump}
	\Big\|\Big(\sum_{p\ge 0}\1_{|p-j|\le 4}\|S_{p-1}u\|_{L^2_v}^2\Big)^{\frac{1}{2}}\Big\|_{L^r_{x}}
	&\le \Big(\sum_{p\ge 0}\1_{|p-j|\le 4}\|S_{p-1}u\|_{L^r_{x}L^2_v}^2\Big)^{\frac{1}{2}}\le 9^{\frac{1}{2}}\|u\|_{L^r_{x}L^2_v}, 
\end{align}
and applied the equivalence of Triebel-Lizorkin spaces (see, for instance \cite[Remark 2.2.2]{Grafakos2014a}), i.e., 
\begin{align}
	\label{equivTriebelLizonkin}
	\Big\|\Big(\sum_{p\ge 0}\|\De_pu\|_{L^2_v}^2\Big)^{\frac{1}{2}}\Big\|_{L^{r}_{x}}\approx \|u\|_{L^{r}_{x}L^2_v}, \qquad 1<r<\infty.
\end{align}

Next, we estimate $\Ga_i(f,\phi_2,f)$ $(1\le i\le 6)$ from \eqref{UpperTT}. The $\phi_2$ term can be directly calculated using $\|\cdot\|_{L^2_D}\le C\|\<v\>^{\frac{\ga+2s}{2}}\<D_v\>^{s}(\cdot)\|_{L^2_v}$, since any excess weight function can be allocated to $\phi_2$ as needed. Moreover, \eqref{equivTriebelLizonkin} will be used from time to time.
By estimate \eqref{dissUpper} and the support $\{|\eta|\le 2\om(j)2^k\}$ of the kernel of $P^j_k$, we have 
\begin{align}\label{esphi2}
	\sum_{k\ge 0}(\om(j)2^k)^{-2s}\|\De_{q}P^j_kR_r\phi_2\|^2_{L^2_D}
	&\le C\|\<v\>^{\frac{\ga+2s}{2}}\De_{q}R_r\phi_2\|_{L^{2}_v}^2. 
\end{align}
For $\Ga_1(f,\phi_2,f)$, applying the $l^2_{j,k,q}-l^2_{j,k,q}$ H\"older's inequality, splitting $S_{q-1}=\sum_{p\le q-2}\De_p$ with $\sum_{p\ge 0}|v_p|\le \big(\sum_{p\ge 0}(p+1)^2|v_p|^2\big)^{\frac{1}{2}}$, using estimates \eqref{esphi2} and \eqref{equivTriebelLizonkin}, and the $L^{r_0}_{x}-L^{r_0^*}_{x}-L^{2}_{x}$ H\"older's inequality to $f$-$\phi_2$-$f$, we have 
\begin{align*}
	&\Ga_1(f,\phi_2,f)\le C\int_{x}
	\Big(\sum_{j,q\ge 0}\1_{|q-j|\le 4}
	\|S_{q-1}\<D_v\>^{-s}\<v\>^{-N}f\|_{L^2_v}^2
	\|\<v\>^{\frac{\ga+2s}{2}}\De_{q}R_r\phi_2\|_{L^2_v}^2\Big)^{\frac{1}{2}}
	\\
	&\qquad\qquad\qquad\quad\times 
	\Big(\sum_{j,k,q\ge 0}\1_{|q-j|\le 4}(\om(j)2^k)^{-2s}\|\De_jP^j_kR_rf\|_{L^2_D}^2\Big)^{\frac{1}{2}}\\
	&\le C\Big\|
	\Big(\sum_{p\ge 0}(p+1)^2\|\De_p\<D_v\>^{-s}\<v\>^{-N}f\|_{L^2_v}^2\Big)^{\frac{1}{2}}\Big\|_{L^{r_0}_{x}}
	\Big\|\Big(\sum_{j,q\ge 0}\1_{|q-j|\le 4}\|\<v\>^{\frac{\ga+2s}{2}}\De_{q}R_r\phi_2\|_{L^2_v}^2\Big)^{\frac{1}{2}}\Big\|_{L^{r_0^*}_{x}}
	\\
	&\quad\times 
	\Big\|\Big(\sum_{j,k\ge 0}(\om(j)2^k)^{-2s}\|\De_jP^j_kR_rf\|_{L^2_D}^2\Big)^{\frac{1}{2}}\Big\|_{L^2_{x}}\\
	&\le C
	\Big(\sum_{p\ge 0}(p+1)^2\|\De_p\<D_v\>^{-s}\<v\>^{-N}f\|_{L^{r_0}_{x}L^2_v}^2\Big)^{\frac{1}{2}}
	\|\<v\>^{\frac{\ga+2s}{2}}R_r\phi_2\|_{L^{r_0^*}_{x}L^2_v}
	\Big(\sum_{j,k\ge 0}(\om(j)2^k)^{-2s}\|\De_jP^j_kR_rf\|_{L^2_{x}L^2_D}^2\Big)^{\frac{1}{2}}.
\end{align*}
For $\Ga_2(f,\phi_2,f)$, similarly, applying estimate \eqref{esphi2}, the $l^2_p-l^2_p$ H\"older's inequality, splitting $S_{q-1}=\sum_{p\le q-2}\De_p$ with $\sum_{q\ge 0}\1_{q\le p-2}|v_q|\le (p+1)^{\frac{1}{2}}\big(\sum_{q\ge 0}|v_q|^2\big)^{\frac{1}{2}}$ and estimate \eqref{esSq}, and the $L^{r_0}_{x}-L^{r_0^*}_{x}-L^{2}_{x}$ H\"older's inequality to $f$-$\phi_2$-$f$, we have 
\begin{align*}
	\Ga_2(f,\phi_2,f)
	&\le C
	\sum_{p\ge 0}
	\|\De_{p}\<D_v\>^{-s}\<v\>^{-N}f\|_{L^{r_0}_{x}L^2_v}\|\<v\>^{\frac{\ga+2s}{2}}S_{p-1}R_r\phi_2\|_{L^{r_0^*}_{x}L^2_v}\\
	&\qquad\times 
	\Big(\sum_{j,k\ge 0}(\om(j)2^k)^{-2s}\|\De_jP^j_kR_rf\|_{L^2_{x}L^2_D}^2\Big)^{\frac{1}{2}}\\
	&\le C
	\Big(\sum_{p\ge 0}
	(p+1)^2\|\De_{p}\<D_v\>^{-s}\<v\>^{-N}f\|_{L^{r_0}_{x}L^2_v}^2\Big)^{\frac{1}{2}}\|\<v\>^{\frac{\ga+2s}{2}}R_r\phi_2\|_{L^{r_0^*}_{x}L^2_v}
	\\
	&\qquad\times 
	\Big(\sum_{j,k\ge 0}(\om(j)2^k)^{-2s}\|\De_jP^j_kR_rf\|_{L^2_{x}L^2_D}^2\Big)^{\frac{1}{2}}.
\end{align*}
For $\Ga_3(f,\phi_2,f)$ in \eqref{UpperTT}, by the $l^2_{j,k}-l^2_{j,k}$ H\"older's inequality, $\sum_{j\ge 0}\1_{\max\{p,q\}\ge j-2}\1_{|p-q|\le 1}\le C(p+1)$, $\sum_{p,q\ge 0}\1_{|p-q|\le 1}|u_p||v_q|\le C\big(\sum_{p,q\ge 0}|u_p|^2|v_q|^2\big)^{\fr12}$, and $L^{r_0}_{x}-L^{r_0^*}_{x}-L^{2}_{x}$ H\"older's inequality, we have 
\begin{align*}
	\Ga_3(f,\phi_2,f)
	&\le C
	\Big\|\Big(\sum_{p\ge 0}(p+1)
	\|\De_p\<D_v\>^{-s}\<v\>^{-N}f\|_{L^2_v}^2\Big)^{\frac{1}{2}}\Big\|_{L^{r_0}_{x}}
	\Big\|\Big(\sum_{q\ge 0}\|\<v\>^{\frac{\ga+2s}{2}}\De_{q}R_r\phi_2\|_{L^2_v}^2\Big)^{\frac{1}{2}}\Big\|_{L^{r_0^*}_{x}}\\
	&\quad\times\Big\|\Big(\sum_{j,k\ge 0}(\om(j)2^k)^{-2s}\|\De_jP^j_kR_rf\|_{L^2_D}^2\Big)^{\frac{1}{2}}\Big\|_{L^{2}_{x}}\\
	&\le C
	\Big(\sum_{p\ge 0}(p+1)
	\|\De_p\<D_v\>^{-s}\<v\>^{-N}f\|_{L^{r_0}_{x}L^2_v}^2\Big)^{\frac{1}{2}}
	\|\<v\>^{\frac{\ga+2s}{2}}\De_{q}R_r\phi_2\|_{L^{r_0^*}_{x}L^2_v}\\
	&\quad\times\Big(\sum_{j,k\ge 0}(\om(j)2^k)^{-2s}\|\De_jP^j_kR_rf\|_{L^{2}_{x}L^2_D}^2\Big)^{\frac{1}{2}}, 
\end{align*}
where we also used \eqref{equivTriebelLizonkin} in the last inequality. 
For $\Ga_i(f,\phi_2,f)$ $(4\le i\le 6)$, similar to the estimate of $\Ga_i(\phi,f,f)$, we allocate the weight $(\om(j)2^k)^{-2s}$ to the second $f$, and apply the $l^2_k-l^2_k$, $l^2_{p,q}-l^2_{p,q}$, $L^{r_0^*}_{x}-L^2_{x}-L^{r_0}_{x}$ H\"older's inequality. 
Thus, using \eqref{esSump} and \eqref{equivTriebelLizonkin}, we have 
\begin{align*}
	&\sum_{4\le i\le 6}\Ga_i(f,\phi_2,f)
	\le
	C\sum_{j\ge 0}\int_{x}\Big(\sum_{k\ge 0}(\om(j)2^k)^{-4s}\|\De_jP^j_kR_rf\|_{L^2_D}^2\Big)^{\frac{1}{2}}
	\\
	&\qquad \times 
	\Big\{
	\Big(\sum_{q\ge 0}\1_{|q-j|\le 4}\|S_{q-1}f\|_{L^2_v}^2\Big)^{\frac{1}{2}}\Big(\sum_{q\ge 0}\1_{|q-j|\le 4}\|\<v\>^{\frac{\ga+2s}{2}}\De_{q}R_r\phi_2\|_{L^2_v}^2\Big)^{\frac{1}{2}}\\
	&\qquad\quad\notag
	+
	\Big(\sum_{p\ge 0}\|\De_pf\|_{L^2_v}^2\Big)^{\frac{1}{2}}
	\Big(\sum_{p\ge 0}\1_{|p-j|\le 4}\|\<v\>^{\frac{\ga+2s}{2}}S_{p-1}R_r\phi_2\|_{L^2_v}^2\Big)^{\frac{1}{2}}\\
	&\qquad\quad\notag+
	\Big(\sum_{p\ge 0}\|\De_pf\|_{L^2_v}^2\Big)^{\frac{1}{2}}
	\Big(\sum_{q\ge 0}\|\<v\>^{\frac{\ga+2s}{2}}\De_{q}R_r\phi_2\|_{L^2_v}^2\Big)^{\frac{1}{2}}\Big\}\\
	&\le C\Big\|\sum_{j\ge 0}\Big(\sum_{k\ge 0}(\om(j)2^k)^{-4s}\|\De_jP^j_kR_rf\|_{L^2_D}^2\Big)^{\frac{1}{2}}\Big\|_{L^{r_0}_{x}}
	\|f\|_{L^{2}_{x,v}}\|\<v\>^{\frac{\ga+2s}{2}}R_r\phi_2\|_{L^{r_0^*}_{x}L^2_v}. 
\end{align*}

\subsubsection{Combining the estimates}
For the $\|\cdot\|_{L^2_t L^{r_0}_x L^2_D}$ term involving the coefficient $(\om(j)2^k)^{-4s}$, we apply the Bernstein-type estimate \eqref{SobolevEmbeddLp} and use $\om(j)=\om_0 2^{\al j+rl_1}$ with $\alpha>0$ to absorb $(j+1)^22^{2\ve j}$ (for small $\varepsilon \in (0,\min\{\frac{s\alpha}{4},d\})$). This yields that, for any $r_0>2$ satisfying $1+\frac{1}{r_0}=\frac{1}{2}+\frac{1}{(1-\ve/d)^{-1}}$,
\begin{align}\label{esinter1}\notag
	&2^{r(\frac{\ga+2s}{2})}\Big\|\sum_{j\ge 0}\Big(\sum_{k\ge 0}(\om(j)2^k)^{-4s}2^{4rl}\|\De_jP^j_kR_rf\|_{L^2_D}^2\Big)^{\frac{1}{2}}\Big\|_{L^{r_0}_{x}}\\
	&\notag \le C\Big(\sum_{j,k\ge 0}\om_0^{-6s}(\om(j)2^k)^{-2s}2^{2rl-r\ve l_0}\\
	&\notag\quad\ \times\Big(\om_0^{4s}(j+1)^2(\varpi_02^{rl_0}2^{j})^{2\ve}(2^{\al j+rl_1+k})^{-2s}2^{r(\ve l_0+\ga+2s+2l)}\Big)\|\De_jP^j_kR_rf\|_{L^{2}_{x}L^2_D}^2\Big)^{\frac{1}{2}}\\
	&\notag \le C\Big(\sum_{j,k\ge 0}\om_0^{-6s}(\om(j)2^k)^{-2s}2^{2rl-r\ve l_0}\\
	&\notag\quad\ \times\Big(1+\Big\{\om_0^{4s}\varpi_0^{2\ve}2^{r(\ga+2s+3\ve l_0-2sl_1+2l)}2^{(-\frac{3}{2}s\al+2\ve)j-2sk}\Big\}^M\Big)\|\De_jP^j_kR_rf\|_{L^{2}_{x}L^2_D}^2\Big)^{\frac{1}{2}}\\
	& \le C\Big(\om_0^{-6s}\sum_{j,k\ge 0}(\om(j)2^k)^{-2s}2^{2rl-r\ve l_0}\|\De_jP^j_kR_rf\|_{L^{2}_{x}L^2_D}^2
	\notag\\
	&\quad\ 
	+C_{\om_0,\vpi_0}
	\sum_{j,k\ge 0}(\om(j)2^k)^{-2s}2^{2rl-r\ve l_0}\|\De_jP^j_kR_rf\|_{L^{2}_{x,v}}^2\Big)^{\frac{1}{2}},
\end{align}
where we used $(j+1)^2\le C2^{\frac{s\al j}{2}}$, Young's inequality with sufficiently large $M>0$ such that 
\begin{align}
	\label{choiceM2temp}
	M(\ga+2s+3\ve l_0-2sl_1+2l)\le -2sl_1,\ \ 
	M(-\frac{3}{2}s\al+2\ve)j\le -2s\al j, 
\end{align}
and $\|P^j_kf\|_{L^2_D} \le C(\om(j)2^k)^s\|\<v\>^{\fr{\ga+2s}{2}}P^j_kf\|_{L^2_v}$ as in \eqref{supportPk} for the second term.  
This constraint can be fulfilled by choosing $\ga+2s<0$, $sl_1=l$, and $\ve l_0\ll 1$. 
The term $2^{-r\ve l_0}$ is designed for the convergence of summation $\sum_{r\ge 0}2^{-r\ve l_0}<C_\ve$. Although the above constant depends on $\ve$, we omit this dependence since $\ve$ will be chosen to depend only on $l_0, l_1, l, \ga, s$, and $l_0, l_1, l$ depend only on $\ga, s$ (see \eqref{choice10}). 
The term $3\ve l_0+\ga+2s$ is one of the reasons for considering \textbf{soft potentials} $\ga+2s<0$.
Substituting all the estimates of $\Ga_i$'s from Subsection \ref{SecGammai} into \eqref{DecomGafg}, using estimate \eqref{esinter1}, multiplying the resulting expression by $2^{2rl}$, and summing over $r\ge 0$, we proceed as follows for any $r_0>2$ (with $\frac{1}{r_0}+\frac{1}{r_0^*}=\frac{1}{2}$). 
Integrating \eqref{DecomGafg} over $t\in[0,T]$, and applying \eqref{dissUpper1} to establish the equivalence $\<v\>\approx2^r$ from the decomposition $R_r(v)$ (where we simply absorb the unfavorable weight $2^{rl}\approx \<v\>^l$ into $\phi_2$), we obtain: 
\begin{align}
	\label{DejGaMaines}\notag
	&\sum_{r,j,k\ge 0}(\om(j)2^k)^{-2s}2^{2rl}\big|\big(\De_j\Ga(\phi_1,P^j_kR_rf)+\De_j\Ga(f,P^j_kR_r\phi_2),P^j_kR_rf\big)_{L^2_t([0,T])L^2_{x,v}}\big|\\
	&\notag\le C
	\Big(\|\<D_v\>^{-s}\<v\>^{-N}\phi_1\|_{L^\infty_t([0,T])L^\infty_{x}L^2_v}
	+\|\<v\>^{2l}\phi_2\|_{L^{r_0^*}_t([0,T])L^{r_0^*}_xL^2_v}\\
	&\qquad\notag
	+\sup_{r\ge 0}\Big(\sum_{j\ge 0}(j+rl_0+\log_2\vpi_0+1)^2\|\De_j\<D_v\>^{-s}\<v\>^{-N}\phi_1\|_{L^\infty_t([0,T])L^\infty_{x}L^2_v}^2\Big)^{\frac{1}{2}}\Big)
	\\
	&\notag\ \times\Big(\sum_{r,j,k\ge 0}(\om(j)2^k)^{-2s}2^{2rl}\|\De_jP^j_kR_rf\|^2_{L^2_t([0,T])L^2_{x}L^2_D}
	+\om_0^{-2s{}}\|
	f\|_{L^2_t([0,T])L^2_{x,v}}^2
	\\
	&\qquad\notag
	+C_{\om_0}\sum_{r,j,k\ge 0}
	(\om(j)2^k)^{-2s}2^{2rl}\|\De_jP^j_kR_rf\|_{L^2_t([0,T])L^{2}_{x,v}}^2\Big)\\
	&\notag\ \ +\|(\phi_1,\,\<v\>^{2l}\phi_2)\|_{L^\infty_t([0,T])L^{r_0^*}_xL^2_v}\Big(
	\om_0^{-4s}\|f\|_{L^2_t([0,T])L^2_{x,v}}^2\\
	&\qquad \notag
	+\om_0^{-2s}\sum_{r,j,k\ge 0}(\om(j)2^k)^{-2s}2^{2rl}\|\De_jP^j_kR_rf\|_{L^2_t([0,T])L^{2}_{x}L^2_D}^2\\
	&\qquad \notag
	+C_{\om_0,\varpi_0,l_0,l_1}\sum_{r,j,k\ge 0}(\om(j)2^k)^{-2s}2^{2rl}\|\<v\>^{\fr{\ga+2s}{2}}\De_jP^j_kR_rf\|_{L^2_t([0,T])L^{2}_{x,v}}^2\Big)
	\\
	&\ \ 
	+
	\|\<v\>^{2l}\phi_2\|_{L^{r_0^*}_t([0,T])L^{r_0^*}_xL^2_v}\sup_{r\ge 0}\sum_{p\ge 0}(p+1)^2\|\De_p\<D_v\>^{-s}\<v\>^{-N}f\|_{L^{r_0}_t([0,T])L^{r_0}_{x}L^2_v}^2,
\end{align}
for any large $N>0$ (coming from the decay of $\mu^{\frac{1}{2}}$), 
provided that
\begin{align}
	\label{choiceM2}
	\ga+2s+3\ve l_0-2sl_1+2l<0,\qquad
	-\frac{3}{2}s\al+2\ve<0.
\end{align}
Under these conditions, we can choose $M=M(\ga,s,l_0,l_1,l,\al,\ve)>0$ sufficiently large so that \eqref{choiceM2temp} holds.
The $(\om2^k)^{-2s}\|\cdot\|_{L^2_t([0,T])L^2_{x,v}}^2$ terms in \eqref{DejGaMaines} have large coefficients and will be controlled by Gr\"onwall's inequality in the end.

\subsection{Bony-type commutator estimate with irregular \texorpdfstring{$\phi$}{phi}}
As before, we let $r_0,r_0^*\in(2,\infty)$ such that $\frac{1}{2}=\frac{1}{r_0}+\frac{1}{r_0^*}$, and consider the commutator terms 
$\De_jP_kR_r\Ga(\phi_1,f)-\De_j\Ga(\phi_1,P_kR_rf)$ and $\De_jP_kR_r\Ga(f,\phi_2)-\De_j\Ga(f,P_kR_r\phi_2)$. They can be handled similarly, as they are the lower-order terms.
As in Subsection \ref{SecGammai}, we will allocate the $L^{r_0^*}_{x}$ norm to $\phi_1$ and $\phi_2$, the $L^{r_0^*}_{x}$ norm to $(\om2^k)^{-2s}\|f\|_{L^2_D}$, where $\frac{1}{2}=\frac{1}{r_0}+\frac{1}{r_0^*}$. 
Applying commutator Theorem \ref{ThmComm} (with $\wt{l_0}\in\R$ and $s'=-s$ therein) and using commutator estimate \eqref{dissUpper1b} (applied to the first right-hand term in \eqref{ThmCommes} and to convert the $H^s_v$ into $L^2_D$ norm; some redundant and good coefficients can be dropped), we have (letting $[0,T]$ be the underlying time interval of the norms)
\begin{align}\label{es713aaphi1}\notag
	&\notag\sum_{r,j,k\ge 0}(\om2^k)^{-2s}2^{2rl}\big(\De_jP_kR_r\Ga(\phi_1,f)-\De_j\Ga(\phi_1,P_kR_rf),\De_jP_kR_rf\big)_{L^2_t([0,T])L^2_{x,v}}\\
	&\notag+\sum_{r,j,k\ge 0}(\om2^k)^{-2s}2^{2rl}\big(\De_jP_kR_r\Ga(f,\phi_2)-\De_j\Ga(f,P_kR_r\phi_2),\De_jP_kR_rf\big)_{L^2_t([0,T])L^2_{x,v}}\\
	&\notag\le 
	C\int_{t,x}\sum_{r,j,k\ge 0}(\om2^k)^{-2s}2^{2rl}\Big(2^{r(\wt{l_1}+\frac{\ga+2s}{2})}\|\phi_1\|_{L^2_v}\|\<v\>^{-\wt{l_1}}f\|_{L^2_v}\|\De_jP_kR_rf\|_{L^2_D}\\
	&\notag\qquad\qquad\quad
	+2^{r(\wt{l_2}+\frac{\ga+2s}{2})}(\om2^k)^{s-1}\|\phi_1\|_{L^2_v}\|\<v\>^{-\wt{l_2}}f\|_{L^2_v}\|\<v\>^{\frac{\ga+2s}{2}-1}\De_jR_rf\|_{L^2_v}\\
	&\notag\qquad\qquad\quad
	+2^{r(\wt{l_3}+\frac{\ga+2s}{2})}\|f\|_{L^2_v}\|\<v\>^{-\wt{l_3}}\phi_2\|_{L^2_v}\|\De_jP_kR_rf\|_{L^2_D}\\
	&\qquad\qquad\quad
	+2^{r(\wt{l_4}+\frac{\ga+2s}{2})}(\om2^k)^{s-1}\|f\|_{L^2_v}\|\<v\>^{-\wt{l_4}}\phi_2\|_{L^2_v}\|\<v\>^{\frac{\ga+2s}{2}-1}\De_jR_rf\|_{L^2_v}\Big)
	=:\sum_{i=1}^{4}\wt{\Ga}_i,
\end{align}
for any $\wt{l_j}\in\R$ ($1\le j\le 4$).
We focus on the right-hand term $\wt{\Ga}_1$, since it is the leading term. Similar to estimate \eqref{esinter1}, by Bernstein-type inequality \eqref{SobolevEmbeddLp}, for any $n\in\R$, we have 
\begin{align}
	\label{esinter2}\notag
	&2^{r(\fr{\ga+2s}{2})}\sum_{j,k\ge 0}(\om2^k)^{-2s}2^{2rl}\|\De_jP_kR_rf\|_{L^{r_0}_{x}L^2_D}\\
	&\notag\le C\Big(\sum_{j,k\ge 0}\om_0^{-6s}(\om2^k)^{-2s}2^{r(2l-\ve l_0)}\|\De_jP^j_kR_rf\|_{L^{2}_{x}L^2_D}^2\\
	&\notag\qquad\ \times\Big(\om_0^{4s}(j+1)^2(k+1)^2(\varpi_02^{rl_0}2^{j})^{2\ve}(2^{\al j+rl_1+k})^{-2s}2^{r(\ga+2s+\ve l_0+2l)}\Big)\Big)^{\frac{1}{2}}\\
	&\notag \le C2^{-\fr{r\ve l_0}{2}}\Big(\om_0^{-6s}\sum_{j,k\ge 0}(\om2^k)^{-2s}2^{2rl}\|\De_jP^j_kR_rf\|_{L^{2}_{x}L^2_D}^2\\
	&\qquad\qquad 
	+C_{\om_0,\varpi_0}\sum_{j,k\ge 0}(\om2^k)^{-2s}2^{2rl}\|\De_jP^j_kR_rf\|_{L^{2}_{x,v}}^2\Big)^{\frac{1}{2}},
\end{align}
provided that \eqref{choiceM2} holds. 
Thus, by $\sum_{r\ge0}2^{-\fr{r\ve l_0}{2}}|u_r|\le C_{\ve,l_0}(\sum_{r\ge0}|u_r|^2)^{\fr12}$ and choosing suitable $\wt{l_1}$, we estimate $\wt{\Ga}_1$ as 
\begin{align*}
	\wt{\Ga}_1
	&\le 
	\int_{t}\sum_{r,j,k\ge 0}2^{r(\wt{l_1}+\frac{\ga+2s}{2})}(\om2^k)^{-2s}2^{2rl}\|\phi_1\|_{L^{r_0^*}_{x}L^2_v}\|\<v\>^{-\wt{l_1}}f\|_{L^2_{x}L^2_v}\|\De_jP_kR_rf\|_{L^{r_0}_{x}L^2_D}\\
	&\le 
	C\int_{t}\sum_{r\ge 0}2^{r(\wt{l_1}-\fr{r\ve l_0}{2})}2^{2rl}\|\phi_1\|_{L^{r_0^*}_{x}L^2_v}\|\<v\>^{-\wt{l_1}}f\|_{L^2_{x}L^2_v}\\
	&\quad\times
	\Big(\om_0^{-6s}\sum_{j,k\ge 0}(\om2^k)^{-2s}
	\|\De_jP_kR_rf\|_{L^{2}_{x}L^2_D}^2\\
	&\qquad
	+C_{\om_0,\varpi_0}
	\sum_{j,k\ge 0}
	(\om2^k)^{-2s}2^{2rl}\|\De_jP_kR_rf\|_{L^2_{x,v}}^2\Big)^{\frac{1}{2}}\\
	&\le 
	C\|\phi_1\|_{L^\infty_tL^{r_0^*}_{x}L^2_v}
	\Big(
	\om_0^{-4s}\|f\|_{L^2_t([0,T])L^2_{x,v}}^2\\
	&\qquad \notag
	+\om_0^{-2s}\sum_{r,j,k\ge 0}(\om2^k)^{-2s}2^{2rl}\|\De_jP^j_kR_rf\|_{L^2_t([0,T])L^{2}_{x}L^2_D}^2\\
	& \notag
	+\om_0^{4sM}\varpi_0^{2\ve M}\sum_{r,j,k\ge 0}(\om2^k)^{-2s}2^{2rl}\|\<v\>^{\fr{\ga+2s}{2}}\De_jP^j_kR_rf\|_{L^2_t([0,T])L^{2}_{x,v}}^2\Big).
\end{align*}
The term $\wt{\Ga}_2$ can be estimated similarly as in \eqref{esinter2}. By Littlewood-Paley theorem \ref{LPThm}, we have 
\begin{align*}
	\sum_{j,k\ge 0}(\om2^k)^{-s-1}\|\<v\>^n\De_jR_rf\|_{L^{r_0}_{x}L^2_v}
	&\le C\om_0^{-s-1}\varpi_0^{\ve}2^{r(\ve l_0+(-s-1)l_1)}\Big(\sum_{j\ge 0}\|\<v\>^n\De_jR_rf\|_{L^2_{x,v}}^2\Big)^{\fr12}\\
	&\le C\om_0^{-s-1}\varpi_0^{\ve}\|\<v\>^{\ve l_0+(-s-1)l_1+n}R_rf\|_{L^2_{x,v}}.
\end{align*}
and hence, by $\sum_{r\ge0}2^{-\ve l_0}|u_r|\le C_{\ve,l_0}(\sum_{r\ge0}|u_r|^2)^{\fr12}$ and choosing suitable $\wt{l_2}$, we have 
\begin{align*}
	&\wt{\Ga}_2
	\le 
	\int_{t}\sum_{r,j,k\ge 0}2^{r\wt{l_2}}(\om2^k)^{-s-1}2^{2rl}\|\phi_1\|_{L^{r_0^*}_{x}L^2_v}\|\<v\>^{-\wt{l_2}+\frac{\ga+2s}{2}}f\|_{L^2_{x}L^2_v}\|\<v\>^{\frac{\ga+2s}{2}-1}\De_jR_rf\|_{L^{r_0}_{x}L^2_v}\\
	&\le 
	C\om_0^{-s-1}\varpi_0^{\ve}\int_{t}\sum_{r\ge0}2^{r\wt{l_2}}\|\phi_1\|_{L^{r_0^*}_{x}L^2_v}\|\<v\>^{-\wt{l_2}+\frac{\ga+2s}{2}}f\|_{L^2_{x}L^2_v}\|\<v\>^{\ve l_0+(-s-1)l_1+l+\frac{\ga+2s}{2}-1}R_rf\|_{L^2_{x,v}}\\
	&\le 
	C\om_0^{-s-1}\varpi_0^{\ve}\|\phi_1\|_{L^\infty_tL^{r_0^*}_{x}L^2_v}\|f\|_{L^2_{t,x}L^2_v}\|\<v\>^{2\ve l_0+(-s-1)l_1+l+\ga+2s-1}f\|_{L^{2}_{t,x}L^2_v}.
\end{align*}
The lower-order terms $\wt{\Ga_3}$ and $\wt{\Ga_4}$ can be estimated more easily, as we can allocate the redundant weight function to $\phi_2$. That is, 
\begin{align*}
	\wt{\Ga}_3+\wt{\Ga}_4
	&\le C\|f\|_{L^2_{t,x}L^2_v}\|\<v\>^{C_{l_0,l_1,l,\ga,s}}\phi_2\|_{L^\infty_tL^{r_0^*}_{x}L^2_v}
	\times \Big(\om_0^{-6s{}}\sum_{r,j,k\ge 0}(\om2^k)^{-2s}
	\|\De_jP_kR_rf\|_{L^{2}_{t,x}L^2_D}^2\\
	&\ \ 
	+\om_0^{4sM-4s}\varpi_0^{2\ve}
	\sum_{r,j,k\ge 0}
	(\om2^k)^{-2s}\|\De_jP_kR_rf\|_{L^2_{t,x,v}}^2
	+\om_0^{-2s-2}\varpi_0^{2\ve M}\|f\|_{L^2_{t,x,v}}^2\Big)^{\frac{1}{2}}. 
\end{align*}
Collecting the above estimates for $\wt{\Ga}_i$ ($1\le i\le 4$) into \eqref{es713aaphi1} and using the \emph{a priori} estimate \eqref{apriori2}, we obtain, for any $\ka>0$, that if \eqref{choiceM2} holds, then
\begin{align}\label{es713aaphi2}
	&\notag\sum_{r,j,k\ge 0}(\om2^k)^{-2s}2^{2rl}\big(\De_jP_kR_r\Ga(\phi_1,f)-\De_j\Ga(\phi_1,P_kR_rf),\De_jP_kR_rf\big)_{L^2_t([0,T])L^2_{x,v}}\\
	&\notag\ +\sum_{r,j,k\ge 0}(\om2^k)^{-2s}2^{2rl}\big(\De_jP_kR_r\Ga(f,\phi_2)-\De_j\Ga(f,P_kR_r\phi_2),\De_jP_kR_rf\big)_{L^2_t([0,T])L^2_{x,v}}\\
	&\notag\le 
	C_\ka \om_0^{-4s{}}\|(\phi_1\,\<v\>^{C_{l_0,l_1,l,\ga,s}}\phi_2)\|_{L^\infty_tL^{r_0^*}_{x}L^2_v}^2\|f\|_{L^2_{t,x}L^2_v}^2
	\notag\\
	&\ \notag
	+\ka\sum_{r,j,k\ge 0}(\om2^k)^{-2s}
	\|\De_jP_kR_rf\|_{L^{2}_{t,x}L^2_D}^2
	+\om_0^{4sM}\varpi_0^{2\ve M}
	\sum_{r,j,k\ge 0}
	(\om2^k)^{-2s}\|\De_jP_kR_rf\|_{L^2_{t,x,v}}^2\\
	&\ +C\om_0^{-s-1}\varpi_0^{\ve}\|(\phi_1,\,\<v\>^{C_{l_0,l_1,l,\ga,s}}\phi_2)\|_{L^\infty_tL^{r_0^*}_{x}L^2_v}
	\|(\<v\>^{2\ve l_0+(-s-1)l_1+l+\ga+2s-1}f,\,f)\|_{L^{2}_{t,x}L^2_v}^2. 
\end{align}

\section{Littlewood-Paley analysis on kinetic equations}\label{SecLPkinetic}
In this section, we apply the Littlewood-Paley operators $P_k$ and $\De_j$, defined in \eqref{DejQkDef}, to the Boltzmann equation. Let $T_*>0$ be the fixed time as in Theorem~\ref{MainThm1}, and let $T\in(0,T_*]$. Let $\phi_1(t),\phi_2(t)$ be two solutions of the Boltzmann equation \eqref{B1} on $[0,T]$ with initial data $\phi_{1,0},\phi_{2,0}$, respectively. Then the difference $f=\phi_1-\phi_2$ solves the kinetic equation on $[0,T]\times\R^d_x\times\R^d_v$ given by 
\begin{align*}
	\begin{cases}
	(\pa_t+v\cdot\na_x)f=g,&\text{in }\ [0,T]\times\R^d_x\times\R^d_v,\\
	f(0,x,v) = f_0:=\phi_{1,0}-\phi_{2,0} & \text{in }\  \R^d_x\times\R^d_v,
	\end{cases}
\end{align*}
in the weak sense, as in \eqref{weakf}, where $g$ denotes the difference of the collision operator, i.e., 
\begin{align}\label{eqfgxxgg}
	g=\Ga(\mu^{\frac{1}{2}}+\phi_1,f)+\Ga(f,\mu^{\frac{1}{2}}+\phi_2).
\end{align}
Thus, for any function $\vp\in C^2_c(\R_t\times\R^d_x\times\R^d_v)$ and a.e. $s\in[0,T]$, 
\begin{align}\label{5weaka}
	\big(f(t),\vp(t)\big)_{L^2_{x,v}}\big|_{t=0}^{t=s} = \big(f,(-\pa_t-v\cdot\na_x)\vp\big)_{L^2_t(0,s)L^2_{x,v}}+(g,\vp)_{L^2_t(0,s)L^2_{x,v}}.
\end{align}

\subsection{Extension to the whole time and decomposition of $\phi_1,\phi_2$}
To utilize the Fourier transform in time, we first extend the solution $\phi_1(t),\phi_2(t)$ (and hence, $f=\phi_1-\phi_2$) from $[0,T]$ to the whole time $t\in\R$. Specifically, we consider the following extension of the transport equation in $(-\infty,0)$ and $(T,\infty)$ with ``good'' collision $\Ga(\mu^{\fr12},f)$ and a sufficiently large constant $A_0=A_0(T)>0$:
\begin{align}\label{extensionTT}
	\begin{cases}
		\pa_tf+v\cdot\na_xf=A_0f-\Ga(\mu^{\fr12},f) &\text{ in }(-\infty,0)\times\R^d_x\times\R^d_v,\\
		\pa_tf+v\cdot\na_xf=-A_0f+\Ga(\mu^{\fr12},f) &\text{ in }(T,\infty)\times\R^d_x\times\R^d_v,
	\end{cases}
\end{align}
with given initial data $f(0),\,f(T)\in L^2_{x,v}$.
Their weak solutions can be obtained by the standard linear parabolic method and by using the weak forms: for any function $\Phi\in C^2_c(\R^{1+2d}_{t,x,v})$,
\begin{align}
	\begin{aligned}
		\label{5weakb}
	\big(f,\Phi\big)_{L^2_{x,v}}\Big|^{t=0}_{t=s}
	=  \big(f,(\pa_t+v\cdot\na_x+A_0)\Phi-\Ga(\mu^{\fr12},\Phi)\big)_{L^2_t(s,0)L^2_{x,v}},\  \text{ for }s\in(-\infty,0),&\\
	\big(f,\Phi\big)_{L^2_{x,v}}\Big|^{t=s}_{t=T}
	=  \big(f,(\pa_t+v\cdot\na_x-A_0)\Phi+\Ga(\mu^{\fr12},\Phi)\big)_{L^2_t(T,s)L^2_{x,v}},\  \text{ for }s\in(T,\infty),&
	\end{aligned}
\end{align}
where, as implied by \eqref{GaBolt}, $\Ga(\mu^{\fr12},\cdot)$ is self-adjoint.
By collision estimate \eqref{esAf}, the standard $L^2$ energy estimates in $(-\infty,0)$ and $(T,\infty)$ yield 
\begin{align}\label{esExtenL2Energy}
	\begin{aligned}
	\|f\|^2_{L^\infty_t(-\infty,0)L^2_{x,v}}+A_0\|f\|^2_{L^2_t(-\infty,0)L^2_{x,v}}+\|f\|^2_{L^2_t(-\infty,0)L^2_xL^2_D}&\le C\|f(0)\|^2_{L^2_{x,v}},\\
	\|f\|^2_{L^\infty_t(T,\infty)L^2_{x,v}}+A_0\|f\|^2_{L^2_t(T,\infty)L^2_{x,v}}+\|f\|^2_{L^2_t(T,\infty)L^2_xL^2_D}&\le C\|f(T)\|^2_{L^2_{x,v}},
	\end{aligned}
\end{align}
if $\fr{A_0}{2}$ is large than the constant in \eqref{esAf}. 
Moreover, $f$ has exponential decay as $t\to\pm\infty$, so that $\Delta_jf$ has sufficient decay as $t\to\pm\infty$.
Moreover, noting $\ga+2s<0$ and using the $L^p$ collision estimate \eqref{GaLpes}, the standard $L^p$ energy estimate for $t\in(T,\infty)$, and similarly for $t\in(-\infty,0)$, implies that, for any $p>2$, 
\begin{align}
	\label{LpTinftyEnergy}
	\begin{aligned}
	\|f\|^p_{L^\infty_t(T,\infty)L^p_{x,v}}+A_0\|f\|^p_{L^p_t(T,\infty)L^p_{x,v}}+\||f|^{\frac{p}{2}}\|^2_{L^2_t(T,\infty)L^2_xL^2_D}&\le C\|f(T)\|^p_{L^p_{x,v}},\\
	\|f\|^p_{L^\infty_t(-\infty,0)L^p_{x,v}}+A_0\|f\|^p_{L^p_t(-\infty,0)L^p_{x,v}}+\||f|^{\frac{p}{2}}\|^2_{L^2_t(-\infty,0)L^2_xL^2_D}&\le C\|f(0)\|^p_{L^p_{x,v}},
	\end{aligned}
\end{align}
having assumed that $A_0=A_0(p)>0$ is sufficiently large (this is one of the places where we need to choose $A_0$ to be large).

\smallskip 
\noindent
\textbf{Case of the solution $\phi_1(t),\phi_2(t)$.}
If $\phi$ solves the Boltzmann equation \eqref{B1} on $t\in[0,T]$, gluing the solution $\phi(t)$ in $(-\infty,0)$, $(0,T)$, and $(T,\infty)$ by summing their weak forms \eqref{5weaka} and \eqref{5weakb}, we obtain the solution $\phi(t)$ ($t\in\R$) that solves 
\begin{align}\label{eqkinetic1}
		\pa_t\phi+v\cdot\na_x\phi=
	\begin{cases}-A_0\phi+\Ga(\mu^{\frac{1}{2}},\phi) &\text{in }\ (T,\infty)\times\R^d_x\times\R^d_v,\\
		\Ga(\mu^{\frac{1}{2}}+\phi,\phi)+\Ga(\phi,\mu^{\frac{1}{2}}) &\text{in }\ [0,T]\times\R^d_x\times\R^d_v,\\
		A_0\phi-\Ga(\mu^{\frac{1}{2}},\phi) &\text{in }\ (-\infty,0)\times\R^d_x\times\R^d_v. 
	\end{cases}
\end{align}

\smallskip 
\noindent\textbf{Case of the difference $f=\phi_1-\phi_2$.}
Recalling \eqref{eqfgxxgg}, we let 
\begin{align}\label{gDefdifference}
G=\begin{cases}
-A_0f+\Ga(\mu^{\frac{1}{2}},f) &\text{in }\ (T,\infty),\\
\Ga(\mu^{\frac{1}{2}}+\phi_1,f)+\Ga(f,\mu^{\frac{1}{2}}+\phi_2)&\text{in }\ (0,T),\\
A_0f-\Ga(\mu^{\frac{1}{2}},f)&\text{in }\ (-\infty,0),
\end{cases}
\end{align}
and extend the equation for $f=\phi_1-\phi_2$, by summing their weak forms \eqref{5weaka} and \eqref{5weakb}, as 
\begin{align}
	\label{eqkinetic12}
	\pa_tf+v\cdot\na_xf = G, \quad& \text{ in } \R_t\times\R^d_x\times\R^d_v,
\end{align}
In summary, we can extend $\phi_1,\phi_2$ as in \eqref{eqkinetic1} and $f=\phi_1-\phi_2$ as in \eqref{eqkinetic12}. 

\smallskip\noindent{\bf Decomposition of $\phi_1,\phi_2$.} 
We split $\phi_1,\phi_2$ into a regular part and an irregular part with small energy in order to utilize the dissipative effect and obtain smallness. Let $\chi^x_R=\chi_R(x)$ and $\chi^{x,v}_R=\chi_R(x,v)$ be the $C^\infty_c$ functions in $x$ and $(x,v)$ (as in \eqref{smoothcutoff}), respectively, which satisfy 
\begin{align*}
	\1_{|x|\le R}\chi^x_R(x)\le \1_{|x|\le\frac{4}{3}R},\quad \1_{|(x,v)|\le R}\chi^{x,v}_R(x,v)\le \1_{|(x,v)|\le\frac{4}{3}R}. 
\end{align*}
Let $\vp^x\in C^\infty_c(\R^{d}_{x})$ and $\vp^{x,v}\in C^\infty_c(\R^{2d}_{x,v})$ satisfy $\vp^x,\vp^{x,v}\ge 0$ and $\int_{x}\vp^x=1$, $\int_{x,v}\vp^{x,v}=1$, and define the standard mollifiers by $\vp^x_\de(x)=\de^{-d}\vp^x(\de^{-1}x)$ and $\vp^{x,v}_\de(x,v)=\de^{-2d}\vp^{x,v}(\de^{-1}(x,v))$.
Then we let 
\begin{align}\label{es78}
	\phi_{1,\de}=\vp^x_\de*_x\phi_1,\qquad \phi_{2,\de}=\vp^{x,v}_\de*_{x,v}\phi_2. 
\end{align}
Then, by the properties of the approximation identity, $\phi_{i,\de}\to\phi_i$ in $L^p_t([0,T])L^p_{x,v}(\<v\>^C)$ as $\de\to\infty$ for $\phi_i\in L^p_t([0,T])L^p_{x,v}(\<v\>^C)$, $p\in[1,\infty)$, and $T,C>0$, where $L^p_{x,v}(\<v\>^C)$ is equipped with the norm $\|\phi\|_{L^p_{x,v}(\<v\>^C)}:=\|\<v\>^C\phi\|_{L^p_{x,v}}$. Moreover, if $\mu+\mu^{\fr12}\phi_1\ge C_{\mathrm{low}}^{-1}\mu^{L_0}$ for some $L_0>0$, then, since $\vp^x_\de*_x1=1$, 
\begin{align*}
	\mu+\mu^{\fr12}\phi_{1,\de}
	&\ge \mu+\mu^{\fr12}\vp^x_\de*_x\phi_1
	\ge \mu+\vp^x_\de*_x(C_{\mathrm{low}}^{-1}\mu^{L_0}-\mu)
	\ge C_{\mathrm{low}}^{-1}\mu^{L_0},
\end{align*}
Therefore, for any $p\in[1,\infty)$ and $\phi_1,\phi_2\in L^p_t([0,T])L^p_{x,v}$, there exist functions $\phi_{1,\de}\in L^p_tH^{\infty,p}(\R^d_x;L^p_v)$ and $\phi_{2,\de}\in L^p_tH^{\infty,p}(\R^{2d}_{x,v})$ (defined by \eqref{es78}) such that 
\begin{align}\label{Propphide}
	\begin{aligned}
	&\phi_{1,\de}\to \phi_1 \quad\text{and}\quad
	\phi_{2,\de}\to \phi_2 \ \ \text{ in }L^p_t([0,T])L^p_{x,v}(\<v\>^C), \ \ \text{for any $C>0$, as }\de\to 0,
	\\
	&\text{if $\phi_{1}$ satisfies the lower bound \eqref{lowerphi}, then $\phi_{1,\de}$ satisfies the same lower bound}.
	\end{aligned}
\end{align}
For convergence in $(t,x,v)$, see estimate \eqref{es718} below.
 Therefore, we split 
 \begin{align}\label{splitphi}
	\phi_{i}=\phi_{i,\de_i}+\wt\phi_{i,\de_i} \quad(i=1,2),
 \end{align}
 where $\phi_{i,\de_i}$ is the regular part and $\wt\phi_{i,\de_i}$ is the irregular but small part, i.e., $\|\<v\>^C\wt\phi_{i,\de_i}\|_{L^p_t([0,T])L^p_{x,v}}=o_{\de_i}(1)\to 0$ as $\de_i\to 0$. 
 We emphasize that we use \textbf{different convergence rate} $\de_1,\de_2$ for $\wt\phi_{1,\de_1},\wt\phi_{2,\de_2}$, respectively. 
Moreover, for the brevity of calculation, under such a decomposition, we assume the \emph{a priori} assumption: $\phi_1,\phi_2$ satisfy \eqref{assumption}, and for $i=1,2$, 
\begin{align}
	\label{apriori2}
	\begin{cases}
	\sup_{r\ge 0}\sum_{j\ge 0}(j+rl_0+\log_2\vpi_0+1)^2\|\De_j\<D_v\>^{-s}\<v\>^{-N}\wt\phi_{1,\de_1}\|^2_{L^\infty_t([0,T])L^\infty_{x}L^2_v}\le o_{\de_1}(1),\\
	\|\<D_v\>^{-s}\<v\>^{-N}\wt\phi_{1,\de_1}\|_{L^\infty_t([0,T])L^\infty_{x}L^2_v}\le o_{\de_1}(1),\\
	\|\<D_v\>^{-s}\<v\>^{-N}\phi_{1,\de_1}\|_{L^\infty_t([0,T])L^\infty_xL^2_v}\le C_{M_0},\\
	\|\<v\>^{|\ga|+d+1}\<D_v\>^{-\fr{d-2s}{2}}\phi_{1,\de_1}\|_{L^\infty_t([0,T])L^\infty_xL^r_v}\le C_{M_0},\\
	\|\<D_x\>^{\frac{d+3}{2}}\phi_{1,\de_1}\|_{L^\infty_t([0,T])L^2_{x,v}}^2\le C_{\de_1,M_0},\\
	\|\<v\>^{C_{l_0,l_1,l,\ga,s}}\<D_x\>^{\fr{d+1}{2}}\<D_v\>^{2s}\phi_{2,\de_2}\|_{L^\infty_t([0,T])L^2_{x,v}}\le C_{\de_2,M_0},\\
	\|\wt\phi_{1,\de_1}\|_{L^{r_0^*}_t([0,T])L^{r_0^*}_{x,v}}\le o_{\de_1}(1),\\
	\|\<v\>^{C_{l_0,l_1,l,\ga,s}}\wt\phi_{2,\de_2}\|_{L^{r_0^*}_t([0,T])L^{r_0^*}_xL^2_v}\le o_{\de_2}(1),\\
	\|(\phi_1,\,\<v\>^{C_{l_0,l_1,l,\ga,s}}\phi_2)\|_{L^\infty_t([0,T])L^{r_0^*}_xL^2_v}\le CM_0,
	\end{cases}
\end{align}
where $r_0^*=r_0^*(\ga,s,d)$ is determined by estimate \eqref{es77} and $\frac{1}{2}=\frac{1}{r_0}+\frac{1}{r_0^*}$.
The constant implicit in $o_{\de_i}(1)$ is independent of the non-fixed parameters $\om_0,\varpi_0$, but depends on the fixed constants $A_0,M_0,l_0,l_1,l,\ga,s$. Here $l_0,l_1,l>0$ are chosen sufficiently large depending only on $\ga,s$, and $A_0=A_0(M_0)>0$, as specified in \eqref{choice10} and \eqref{con5}.

\smallskip 
The first three lines of the \emph{a priori} estimate \eqref{apriori2} follow from the approximation property of Besov spaces. Indeed, using the definition of the dilated Littlewood-Paley decomposition \eqref{DejQkDef} and $\vpi=\vpi_02^{rl_0}$, we know that, for any $J>0$, 
\begin{align*}
	&\sum_{j\ge J+1}(j+rl_0+\log_2\vpi_0+1)^2\|\F^{-1}_x\wh{\Psi_j}(\varpi^{-1}\xi)\F_xf(\xi)\|^2_{L^\infty_t([0,T])L^\infty_{x}L^2_v}\\
	&\ =\sum_{j\ge J+1+\lfloor rl_0+\log_2\vpi_0\rfloor}(j+1)^2\|\F^{-1}_x\wh{\Psi}(2^{-j-rl_0-\log_2\vpi_0+\lfloor rl_0+\log_2\vpi_0\rfloor}\xi)\F_xf(\xi)\|^2_{L^\infty_t([0,T])L^\infty_{x}L^2_v}\\
	&\ \le C\sum_{j\ge J}(j+1)^2\|\F^{-1}_x\wh{\Psi_j}(\xi)\F_xf(\xi)\|^2_{L^\infty_t([0,T])L^\infty_{x}L^2_v}, 
\end{align*}
where $\lfloor \cdot\rfloor$ is the floor function and we used almost-orthogonality like $\Psi_j=\Psi_j(\Psi_{j-1}+\Psi_j+\Psi_{j+1})$ (by letting $\Psi_{-1}=0$) 
We can apply such a dilation to the first line of \eqref{apriori2} and rewrite it as a standard dyadic decomposition as follows:
\begin{align*}
	&\sup_{r\ge 0}\sum_{j\ge 0}(j+rl_0+\log_2\vpi_0+1)^2\|\De_j\<D_v\>^{-s}\<v\>^{-N}\wt\phi_{1,\de_1}\|^2_{L^\infty_t([0,T])L^\infty_{x}L^2_v}
	\\
	&\le C\sum_{j\ge 0}(j+1)^2\|\De^{\mathrm{std}}_j\<D_v\>^{-s}\<v\>^{-N}\wt\phi_{1,\de_1}\|^2_{L^\infty_t([0,T])L^\infty_{x}L^2_v}. 
\end{align*}
For the standard dyadic decomposition, by assumption \eqref{assumption}, for any $\ve>0$, there exists $J>0$ such that 
\begin{align*}
	&\sum_{j\ge 0}(j+1)^2\|\De^{\mathrm{std}}_j\<D_v\>^{-s}\<v\>^{-N}\wt\phi_{1,\de_1}\|^2_{L^\infty_t([0,T])L^\infty_{x}L^2_v}
	<\fr{\ve}{2}. 
\end{align*}
On the other hand, with such a fixed $J>0$, noting that 
\begin{align*}
	\|\wt\De_j\phi_{1,\de_1}\|_{L^\infty_{t,x}L^2_v}
	&\le\int_{\R^d_y}\vp^x(y)\|\De_j\phi_1(x-\de_1y)-\De_j\phi_1(x)\|_{L^\infty_{t,x}L^2_v}\,dy\\
	&\le \de_1\|\na\wt{\Delta}_j\De_j\phi_1\|_{L^\infty_{t,x}L^2_v}
	\le \de_1C_j\|\De_j\phi_1\|_{L^\infty_{t,x}L^2_{v}}, 
\end{align*}
where we used $\Delta_j=\wt{\Delta}_j \Delta_j$ as in \eqref{SlightlyLarger}, 
one can deduce 
\begin{align*}
	&\sum_{0\le j< J}(j+1)^2\|\De^{\mathrm{std}}_j\<D_v\>^{-s}\<v\>^{-N}\wt\phi_{1,\de_1}\|^2_{L^\infty_t([0,T])L^\infty_{x}L^2_v}\\
	&\ \le \de_1C\sum_{0\le j<J}C_j
	(j+1)^2\|\De^{\mathrm{std}}_j\<D_v\>^{-s}\<v\>^{-N}\phi_{1}\|^2_{L^\infty_t([0,T])L^\infty_{x}L^2_v}\\
	&\ \le \de_1C_JM_0<\fr{\ve}{2},  
\end{align*}
provided that $\de_1<\fr{\ve}{2C_JM_0}$. 
Combining the above two estimates, we obtain the first line of the \emph{a priori} estimate \eqref{apriori2}. The second line follows from the first by noting that 
$\wt\phi_{1,\de_1}=\sum_{j\ge0}\De_j\wt\phi_{1,\de_1}$ and 
$\sum_{j\ge 0}|u_j|\le \big(\sum_{j\ge 0}(j+1)^{2}|u_j|^2\big)^{\fr12}$.
The third line follows from the second line and the identity 
$\phi_{1,\de_1}=\phi_1-\wt\phi_{1,\de_1}$, together with the bound 
$o_{\de_1}(1)\le C_{M_0}$ appearing in the second line.
The proof of the fourth line is similar, but involves an additional weight $\<v\>^{|\ga|+d+1}$ and uses $L^r_v$ in place of $L^2_v$, together with the assumption in \eqref{assumption}.


\smallskip 
The fifth through sixth lines in the \emph{a priori} assumptions \eqref{apriori2} can be obtained straightforwardly from assumption \eqref{assumption} and their constructions in \eqref{es78} via mollification. 
For the seventh and eighth lines, noting that $\|\vp^x_\de*_x\phi_1\|_{L^{r_0^*}_x}\le \|\phi_1\|_{L^{r_0^*}_x} \in L^{r_0^*}_t([0,T])L^{r_0^*}_v$, dominated convergence theorem yields 
\begin{align}\label{es718}
\lim_{\de_1\to 0} \|\wt\phi_{1,\de_1}\|_{L^{r_0^*}_t([0,T])L^{r_0^*}_{x,v}}
\le \big\|\lim_{\de_1\to 0}\|\wt\phi_{1,\de_1}\|_{L^{r_0^*}_{x}}\big\|_{L^{r_0^*}_t([0,T])L^p_v} \to 0,
\end{align}
while the term $\wt\phi_{2,\de_2}$ follows by the same argument.
The last line of the \emph{a priori} estimate \eqref{apriori2} follows directly from the assumption \eqref{assumption}. 

\smallskip 
For clarity, we indicate the dependence of the positive parameters below:
\begin{align*}
	\begin{cases}
	A_0=A_0(M_0)\gg 1,\ \ \de_1\ll 1,\ \ \de_3\ll 1,\ \ \de_4=\de_4(A_0,M_0)\ll 1, \\ \om_0=\om_0(A_0,M_0,\de_1,\de_3,\de_4)\gg 1,\ \ \de_2=\de_2(\om_0)\ll 1.
	\end{cases}
\end{align*}


\subsection{Littlewood-Paley decomposition for kinetic equations}\label{SubsecLPkinetic}
For any $j,k,r\ge 0$, applying the Littlewood-Paley decomposition $\De_jP_kR_r$ from \eqref{DejQkDef} to equation \eqref{eqkinetic12}, we have 
\begin{align}\label{eqfgxx1}
	\pa_tP_k\De_jR_rf+v\cdot\na_xP_k\De_jR_rf+[P_k,v\cdot\na_x]\De_jR_rf=P_k\De_jR_rG.
\end{align}
Note that $\De_j$ and $R_r$ commute with $\pa_t$ and $v\cdot\na_x$. Although we haven't written the superscript $j$ in $P_k^j$ in this subsection, all the $P_k$ here are \textbf{indeed} $P_k^j$ as in \eqref{Def712}. 
However, all the estimates in this subsection use the \textbf{same} index $j$ for $\De_j$ and $P_k^j$, except for intermediate steps of the nonlinear collision in Subsection \ref{SecSpatialNonlinear}, which is the only place where the Bony decomposition is applied, i.e., in the estimate \eqref{DejGaMaines}.
By the convolution form of $P_k$, the commutator $[P_k,v\cdot\na_x]$ can be rewritten as   
\begin{align*}
	[P_k,v\cdot\na_x]\De_jf(v)&=
	\int_{\R^d_{v_*}}\om^d\Psi_{k}(\om(v-v_*))(v_*-v)\cdot\na_x\De_jf(v_*)\,dv_*\\
	&=-(\om2^k)^{-1}\na_x\cdot P_k^{(1)}\De_jf(v),
\end{align*}
where $P_k^{(1)}$ is a derivative variant of $P_k$ given by 
\begin{align}\label{wtPk}
	P_k^{(1)}f(v)&=\int_{\R^d_{v_*}}\om^d\Psi_k^{(1)}(\om(v-v_*))f(v_*)\,dv_*,\quad\text{ where }~\Psi_k^{(1)}(v)=2^kv\Psi_k(v),
\end{align}
and thus, $\wh{\Psi_k^{(1)}}(\eta)=\frac{-1}{2\pi i}\na\wh{\Psi_k}(\eta)$, i.e., derivative of $\wh{\Psi_k}$. 
Taking the $L^2$ inner product of \eqref{eqfgxx1} with $\De_jP_kR_rf$ over $\R^d_x\times\R^d_v$, we obtain the main Littlewood-Paley decomposition equation: 
\begin{align}\label{eqDeftemp}\notag
	\frac{1}{2}\pa_t\|\De_jP_kR_rf\|_{L^2_{x,v}}^2
	&=(\om2^k)^{-1}\big(\na_x\cdot P_k^{(1)}\De_jR_rf,\De_jP_kR_rf\big)_{L^2_{x,v}}\\
	&\quad+\big(\De_jP_kR_rG,\De_jP_kR_rf\big)_{L^2_{x,v}}.
\end{align}
We will multiply the energy estimate \eqref{eqDeftemp} by $(\om2^k)^{-2s}2^{2rl}$ and estimate the commutator term $\nabla_x\cdot P_k^{(1)}$ and the collision term $G$ in the following subsections.

\subsubsection{The estimate of the commutator term}\label{SecEsCommTerm}
For the commutator $[P_k, v\cdot \na_x]$, we utilize the lower bound of $\eta$ (arising from the derivative version $\nabla \widehat{\Psi_k}(\eta)$ in \eqref{wtPk}).
For any $k\ge 0$, applying the upper bound $|\xi|\le 2\varpi_02^j2^{rl_0}$ of the kernel of $\De_j$ to $|\na_x|$, we have 
\begin{align*}
	&\big|(\om2^k)^{-1-2s}\big(\na_x\cdot P_k^{(1)}\De_jR_rf,\De_jP_kR_rf\big)_{L^2_{x,v}}\big|\\
	&\notag=\big|(\om2^k)^{-1-2s}\big(\<D_v\>^{-s}\na_x\cdot P_k^{(1)}\De_jR_rf,\<D_v\>^{s}\De_jP_kR_rf\big)_{L^2_{x,v}}\big|\\
	&\le C(\om2^k)^{-1-2s}\varpi_02^j2^{rl_0}\|\<v\>^{-\frac{\ga}{2}}\<D_v\>^{-s}P_k^{(1)}\De_jR_rf\|_{L^2_{x,v}}\|\<v\>^{\frac{\ga}{2}}\<D_v\>^{s}\De_jP_kR_rf\|_{L^2_{x,v}}.
\end{align*}
Thus, summing over $k\ge 0$ and using $\|\<v\>^{\frac{\ga}{2}}\<D_v\>^{s}f\|_{L^2_v}\le C\|f\|_{L^2_D}$, we have 
\begin{align}\notag\label{es811}
	&\sum_{k\ge 0}\big|(\om2^k)^{-1-2s}\big(\na_x\cdot P_k^{(1)}\De_jR_rf,\De_jP_kR_rf\big)_{L^2_{x,v}}\big|\\
	&
	\le C\Big(\sum_{k\ge 0}(\om2^k)^{-2-2s}\varpi_0^22^{2j}2^{2rl_0}\|\<v\>^{-\frac{\ga}{2}}\<D_v\>^{-s}P_k^{(1)}\De_jR_rf\|_{L^2_{x,v}}^2\Big)^{\frac{1}{2}}
	\Big(\sum_{k\ge 0}(\om2^k)^{-2s}\|\De_jP_kR_rf\|_{L^2_{x}L^2_D}^2\Big)^{\frac{1}{2}}.
\end{align}
For the first right-hand factor, using the equivalence of weight $2^{r}\approx \<v\>$ induced by the cutoff $R_r$, we apply the commutator estimate \eqref{LowerSupportComm} (with $l=-(1+2s)l_1+l_0$, $n=-\frac{\gamma}{2}$, $m=-s$, and $s'=0$ therein) to deduce 
\begin{align*}\notag
	&2^{2rl_0}\|\<v\>^{-\frac{\ga}{2}}\<D_v\>^{-s}P_k^{(1)}\De_jR_rf\|_{L^2_v}^2\\
	&\le C2^{(2+4s)rl_1}
	\Big((\om2^k)^{-4s}\|\<v\>^{-(1+2s)l_1+l_0-\frac{\ga}{2}}\<D_v\>^{s}(P_{k-1}+P_k+P_{k+1})\De_jR_rf\|_{L^2_v}^2\\
	&\qquad\qquad\qquad\quad+(\om2^k)^{-2s-2}\|\<v\>^{-(1+2s)l_1+l_0-\frac{\ga}{2}-1}\De_jR_rf\|_{L^2_v}^2\Big),
\end{align*}
where we set $P_{-1}=0$. 
We would like to choose $l_0\ge 0$ to be as large as possible. Consequently, recalling our choice $\om=\om_02^{\al j+rl_1}$, we set 
\begin{align}\label{choice3}
	\al=\frac{1}{1+2s},
	\quad (-1-2s)l_1+l_0-\frac{\ga}{2}=\frac{\ga}{2}. 
\end{align}
For the term $P_{k-1}+P_k+P_{k+1}=P^j_{k-1}+P^j_k+P^j_{k+1}$, we can apply a simple translation in $k$ as in \eqref{esg719s}.
Thus, by \eqref{esD}, $\|\<v\>^{(-1-2s)l_1+l_0-\frac{\ga}{2}}\<D_v\>^{s}(\cdot)\|_{L^2_v}\le C\|\cdot\|_{L^2_D}$, and the first factor in \eqref{es811} is 
\begin{align*}
	&\sum_{k\ge 0}(\om2^k)^{-2-2s}\varpi_0^22^{2j}2^{2rl_0}\|\<v\>^{-\frac{\ga}{2}}\<D_v\>^{-s}P_k^{(1)}\De_jR_rf\|_{L^2_{x,v}}^2\\
	&\le C\sum_{k\ge 0}(\om_02^{\al j}2^k)^{-2-4s}\varpi_0^22^{2j}
	\Big((\om2^k)^{-2s}\|(P_{k-1}+P_k+P_{k+1})\De_jR_rf\|_{L^2_xL^2_D}^2
	\\
	&\qquad\qquad\qquad\qquad\quad
	+(\om_02^{\al j+rl_1}2^k)^{-2}\|\<v\>^{(-1-2s)l_1+l_0-\frac{\ga}{2}-1}\De_jR_rf\|_{L^2_{x,v}}^2
	\Big)\\
	&\le C\om_0^{-2-4s}\varpi_0^2
	\Big(
	\sum_{k\ge 0}(\om2^k)^{-2s}\|P_k\De_jR_rf\|_{L^2_{x}L^2_D}^2
	+
	(\om_02^{\al j})^{-2}\|\<v\>^{(-2-2s)l_1+l_0-\frac{\ga}{2}-1}\De_jR_rf\|_{L^2_{x}L^2_v}^2
	\Big), 
\end{align*}
The coefficient $2^{\al j}$ in the second right-hand term can also be thrown away since it's the lower-order term.
Thus, using $l_0=\ga+(1+2s)l_1$ and summing $r,j\ge 0$ in \eqref{es811} yields  
\begin{align}\notag\label{es729}
	&\sum_{r,j,k\ge 0}(\om2^k)^{-2s}2^{2rl}\big|(\om2^k)^{-1}\big(\na_x\cdot P_k^{(1)}\De_jR_rf,\De_jP_kR_rf\big)_{L^2_{x,v}}\big|\\
	&
	\ \ \le C\om_0^{-1-2s}\varpi_0\sum_{r,j,k\ge 0}(\om2^k)^{-2s}2^{2rl}\|\De_jP_kR_rf\|_{L^2_{x}L^2_D}^2
	+
	C\om_0^{-3-2s}\varpi_0\|\<v\>^{-l_1+l+\frac{\ga}{2}-1}f\|_{L^2_{x}L^2_v}^2,
\end{align}
where we will verify that $-l_1+l+\frac{\ga}{2}-1\le -sl_1+l$ later.

\subsubsection{The estimate of collision terms}
Here, we estimate the collision term $G$, defined in \eqref{gDefdifference}, restricted to $[0,\infty)$:
\begin{align*}
	G\1_{t\in[0,\infty)}&=\Ga(\mu^{\frac{1}{2}},f)\1_{t\in[T,\infty)}-A_0f\1_{t\in(T,\infty)}
 	+\big(\Ga(f,\mu^{\frac{1}{2}})+\Ga(\mu^{\fr12}+\phi_1,f)+\Ga(f,\phi_2)\big)\1_{t\in[0,T]}.
\end{align*}
In the following, we calculate the collision terms and insert the cutoffs $\1_{t\in(T,\infty)}$ and $\1_{t\in[0,T]}$ later, where necessary.
For the trivial part $A_0f$, we have 
 \begin{align}\label{esA00}
	\sum_{k\ge 0}(\om2^k)^{-2s}2^{2rl}\big(\De_jP_kR_rA_0f,\De_jP_kR_rf\big)_{L^2_t([T,\infty))L^2_{x,v}}
	&= A_0\sum_{k\ge 0}(\om2^k)^{-2s}2^{2rl}\|\De_jP_kR_rf\|_{L^2_t([T,\infty))L^2_{x,v}}^2.
 \end{align}
For any linearized operator $L$ with respect to $v$, it's straightforward to verify that $\De_jLf=L\De_jf$; hence, a Bony-type or commutator estimate is not needed. 
For the linear regular term $\Ga(f,\mu^{\frac{1}{2}})$ on $[0,T]$, by collision estimate \eqref{esKf}, commutator estimate \eqref{dissUpper1b}, and $\sum_{j,k\ge 0}(\om2^k)^{-s}\le C\om_0^{-s}$, we have for any $N>0$ that
\begin{align}\label{es713a}
	&\notag
	\sum_{r,j,k\ge 0}(\om2^k)^{-2s}2^{2rl}\big(\De_jP_kR_r\Ga(f,\mu^{\frac{1}{2}}),\De_jP_kR_rf\big)_{L^2_t([0,T])L^2_{x,v}}\\
	&\le \notag
	C\sum_{r,j,k\ge 0}(\om2^k)^{-2s}2^{2rl}\|\<v\>^{-N}f\|_{L^2_t([0,T])L^2_{x,v}}\|\<v\>^{-N}\De_jP_kR_rf\|_{L^2_t([0,T])L^2_{x,v}}\\
	&\le \notag
	C\sum_{r,j,k\ge 0}(\om2^k)^{-2s}2^{2rl}\|\<v\>^{-N}f\|_{L^2_t([0,T])L^2_{x,v}}
	2^{-rN}\big(\|\De_jP_kR_rf\|_{L^2_t([0,T])L^2_{x,v}}
	+\om_0^{-1}\|R_rf\|_{L^2_t([0,T])L^2_{x,v}}
	\big)\\
	&
	\le C\om_0^{-1-2s}\|\<v\>^{-N}f\|_{L^2_t([0,T])L^2_{x,v}}^2
	+C\om_0\sum_{r,j,k\ge 0}(\om2^k)^{-2s}2^{2rl}\|\De_jP_kR_rf\|^2_{L^2_t([0,T])L^2_{x,v}}. 
\end{align}
For the linear term $\Ga(\mu^{\frac{1}{2}},f)$ and its commutator in $P_kR_r$, applying the commutator estimate \eqref{ThmCommes} from Theorem \ref{ThmComm} (with any $\wt{l_0}\in\R$ and $s'=0$ therein) and \eqref{dissUpper1b}, and using $\<v\>^{l}\approx 2^{rl}$ and $|\eta|\le \om2^k$ on the support of $R_r$ and the kernel of $P_k$, respectively, we have 
\begin{align*}
	&\sum_{k\ge 0}(\om2^k)^{-2s}2^{2rl}\big(\De_jP_kR_r\Ga(\mu^{\frac{1}{2}},f)-\Ga(\mu^{\frac{1}{2}},\De_jP_kR_rf),\De_jP_kR_rf\big)_{L^2_v}\\
	&\le 
	C\sum_{k\ge 0}(\om2^k)^{-2s}2^{2rl}2^{r\wt{l_0}}\|\<v\>^{-\wt{l_0}+\frac{\ga+2s}{2}}\De_jf\|_{L^2_v}\big(\|\De_jP_kR_rf\|_{L^2_D}+(\om2^k)^{s-1}\|\<v\>^{\frac{\ga}{2}-1}\De_jR_rf\|_{L^2_v}\big).
\end{align*}
For the term $\|\De_jP_kR_rf\|_{L^2_D}$, similar to \eqref{esinter1}, we have 
\begin{align*}
	&\sum_{k\ge 0}(\om2^k)^{-2s}2^{2rl}\|\De_jP_kR_rf\|_{L^2_D}
	\le C\Big(\sum_{k\ge 0}(k+1)^2(\om2^k)^{-4s}2^{4rl}\|\De_jP_kR_rf\|^2_{L^2_D}\Big)^{\fr12}\\
	&\le C\Big(\sum_{k\ge 0}\big((\om2^k)^{-2s}2^{2rl}+(k+1)^2(\om2^k)^{-6s}2^{6rl}\big)\|\De_jP_kR_rf\|^2_{L^2_D}\Big)^{\fr12}\\
	&\le C\Big(\sum_{k\ge 0}(\om2^k)^{-2s}2^{2rl}\|\De_jP_kR_rf\|^2_{L^2_D}\Big)^{\fr12}
	+C\Big(\sum_{k\ge 0}2^{r(-4sl_1+4l)}(\om2^k)^{-2s}2^{2rl}\|\De_jP_kR_rf\|^2_{L^2_v}\Big)^{\fr12}
\end{align*}
Thus, letting $\wt{l_0}=\fr{\ga+2s}{2}+sl_1-l$ and using $\sum_{r\ge 0}2^{r(\ga+2s)}<\infty$, we integrate over $(t,x)\in[T,\infty)\times\R^d$, sum over $r,j$, and apply the support condition of $R_r$ and the kernel of $P_k$, together with the help of commutator estimate \eqref{dissUpper1b} and Littlewood-Paley theorem \ref{LPThm}. It follows that, for any $N\ge 0$, we have for any $\ka>0$ that 
\begin{align}\label{es713temp}
	&\sum_{r,j,k\ge 0}(\om2^k)^{-2s}2^{2rl}\big(\De_jP_kR_r\Ga(\mu^{\frac{1}{2}},f)-\Ga(\mu^{\frac{1}{2}},\De_jP_kR_rf),\De_jP_kR_rf\big)_{L^2_t([T,\infty))L^2_{x,v}}
	\notag\\
	& \le 
	\ka\sum_{r,j,k\ge 0}(\om2^k)^{-2s}2^{2rl}\|\De_jP_kR_rf\|_{L^2_{t}([T,\infty))L^{2}_{x}L^2_D}^2
	+C_\ka\om_0^{-2s}\|\<v\>^{-sl_1+l}f\|_{L^2_t([T,\infty))L^2_{x,v}}^2,
\end{align}
For the part $\Ga(\mu^{\frac{1}{2}},\De_jP_kR_rf)$, using the dissipation estimate \eqref{esAf}, 
\begin{align}\label{es713}\notag
	&\sum_{r,j,k\ge 0}(\om2^k)^{-2s}2^{2rl}\big(\Ga(\mu^{\frac{1}{2}},\De_jP_kR_rf),\De_jP_kR_rf\big)_{L^2_t([T,\infty))L^2_{x,v}}\\
	&\ \le 
	-c_0\sum_{r,j,k\ge 0}(\om2^k)^{-2s}2^{2rl}\|\De_jP_kR_rf\|^2_{L^2_t([T,\infty))L^2_{x}L^2_D}
	+C\om_0^{-2s}\|\<v\>^{-N}f\|_{L^2_t([T,\infty))L^2_{x}L^2_v}^2.
\end{align}
For the term $\Ga(\mu^{\fr12}+\phi_1,f)+\Ga(f,\phi_2)$ in $[0,T]$, together with the decomposition \eqref{splitphi}, we split  
\begin{align}\label{es711}\notag
	&\De_jP_kR_r\big(\Ga(\mu^{\fr12}+\phi_1,f)+\Ga(f,\phi_2)\big)\\
	&\notag=
	\big(\De_jP_kR_r\Ga(\mu^{\frac{1}{2}},f)-\Ga(\mu^{\frac{1}{2}},\De_jP_kR_rf)\big)\\
	&\notag\ 
	+
	\big(\De_jP_kR_r\Ga(\phi_1,f)-\De_j\Ga(\phi_1,P_kR_rf)
	+\De_jP_kR_r\Ga(f,\phi_2)-\De_j\Ga(f,P_kR_r\phi_2)\big)\\
	&\notag\ +\big(\De_j\Ga(\phi_{1,\de_1},P_kR_rf)-\Ga(\phi_{1,\de_1},\De_jP_kR_rf)\big)\\
	&\notag\ 
	+\big(\De_j\Ga(\wt\phi_{1,\de_1},P_kR_rf)+\De_j\Ga(f,P_kR_r\wt\phi_{2,\de_2})\big)\\
	&\ 
	+\Ga(\mu^{\fr12}+\phi_{1,\de_1},\De_jP_kR_rf)
	+\De_j\Ga(f,P_kR_r\phi_{2,\de_2})=:\sum_{m=1}^6\Ga_m. 
\end{align}
For the first to third right-hand terms, we apply the same arguments as those that deduced estimates \eqref{es713temp} (replacing $[T,\infty)$ with $[0,T]$), \eqref{es713aaphi2}, and \eqref{CommDejf}, respectively. In fact, together with the \emph{a priori} assumption \eqref{apriori2}, we have 
\begin{align}\label{es730z}\notag
	&\sum_{m=1}^3\sum_{r,j,k\ge 0}(\om2^k)^{-2s}2^{2rl}\big(\Ga_m,\De_jP_kR_rf\big)_{L^2_t([0,T])L^2_{x,v}}\\
	&\notag\le \ka\sum_{r,j,k\ge 0}(\om2^k)^{-2s}2^{2rl}\|\De_jP_kR_rf\|_{L^2_{t}L^{2}_{x}L^2_D}^2
	+C_\ka\om_0^{-2s}\|\<v\>^{-sl_1+l}f\|_{L^2_tL^2_{x}L^2_v}^2\\
	&\notag\ 
	+
	C_\ka \om_0^{-2s{}}\|(\phi_1\,\<v\>^{C_{l_0,l_1,l,\ga,s}}\phi_2)\|_{L^\infty_tL^{r_0^*}_{x}L^2_v}^2\|f\|_{L^2_{t,x}L^2_v}^2
	\notag\\
	&\ \notag
	+\ka\sum_{r,j,k\ge 0}(\om2^k)^{-2s}2^{2rl}
	\|\De_jP_kR_rf\|_{L^{2}_{t,x}L^2_D}^2\\
	&\notag\ 
	+\om_0^{4sM}\varpi_0^{2\ve M}
	\sum_{r,j,k\ge 0}
	(\om2^k)^{-2s}2^{2rl}\|\De_jP_kR_rf\|_{L^2_{t,x,v}}^2\\
	&\notag\ +C\om_0^{-s-1}\varpi_0^{\ve}\|(\phi_1,\,\<v\>^{C_{l_0,l_1,l,\ga,s}}\phi_2)\|_{L^\infty_tL^{r_0^*}_{x}L^2_v}
	\|(\<v\>^{2\ve l_0+(-s-1)l_1+l+\ga+2s-1}f,\,f)\|_{L^{2}_{t,x}L^2_v}^2
	\\
	&\notag\ +
	C_\ka\varpi_0^{-2}
	\|\<D_x\>^{\frac{d+3}{2}}\phi_{1,\de_1}\|_{L^2_tL^2_{x,v}}^2
	\|\<v\>^{-l_0+l+\fr{\ga+2s}{2}}f\|_{L^2_tL^2_{x,v}}^2\\
	&
	\notag
	\le 
	2\ka\sum_{r,j,k\ge 0}(\om2^k)^{-2s}2^{2rl}\|\De_jP_kR_rf\|_{L^2_{t}([0,T])L^{2}_{x}L^2_D}^2
	\\
	&\ 
	+C_{\ka} \big(M_0^2
		\om_0^{-2s{}}
		+M_0\om_0^{-s-1}\varpi_0^{\ve}
		+C_{\de_1,M_0}\varpi_0^{-2}
		\big)\|f\|_{L^2_{t}([0,T])L^2_{x}L^2_v}^2\notag\\
	&\ +C_{\om_0,\varpi_0}
	\sum_{r,j,k\ge 0}
	(\om2^k)^{-2s}2^{2rl}\|\De_jP_kR_rf\|_{L^2_{t,x,v}}^2, 
\end{align}
for any $\ka,\ve>0$,
provided that \eqref{choiceM2}, and the conditions in \eqref{choideM3} below hold.
\begin{align}
\label{choideM3}
5\ve l_0
+\frac{\ga+2s}{2}\le 0,\ \ 
-sl_1+l
\le 0,\ \ 
-l_0+l+\fr{\ga+2s}{2}\le 0.
\end{align}
For $\Ga_4$, by Bony-type estimate \eqref{DejGaMaines} and the \emph{a priori} assumption \eqref{apriori2}, 
\begin{align}\label{es730}\notag
	&\sum_{r,j,k\ge 0}(\om2^k)^{-2s}2^{2rl}\big(\Ga_4,\De_jP_kR_rf\big)_{L^2_t([0,T])L^2_{x,v}}\\
	&\notag\le C
	\Big(\|\<D_v\>^{-s}\<v\>^{-N}\phi_1\|_{L^\infty_t([0,T])L^\infty_{x}L^2_v}
	+\|\<v\>^{2l}\phi_2\|_{L^{r_0^*}_t([0,T])L^{r_0^*}_xL^2_v}\\
	&\quad\notag
	+\sup_{r\ge 0}\Big(\sum_{j\ge 0}(j+rl_0+\log_2\vpi_0+1)^2\|\De_j\<D_v\>^{-s}\<v\>^{-N}\phi_1\|_{L^\infty_t([0,T])L^\infty_{x}L^2_v}^2\Big)^{\frac{1}{2}}\Big)
	\\
	&\notag\ \times\Big(\sum_{r,j,k\ge 0}(\om(j)2^k)^{-2s}2^{2rl}\|\De_jP^j_kR_rf\|^2_{L^2_t([0,T])L^2_{x}L^2_D}
	\\
	&\quad\notag
	+\om_0^{-2s{}}\|
	f\|_{L^2_t([0,T])L^2_{x,v}}^2
	+C_{\om_0}\sum_{r,j,k\ge 0}
	(\om(j)2^k)^{-2s}2^{2rl}\|\De_jP^j_kR_rf\|_{L^2_t([0,T])L^{2}_{x,v}}^2\Big)\\
	&\notag +\|(\phi_1,\,\<v\>^{2l}\phi_2)\|_{L^\infty_t([0,T])L^{r_0^*}_xL^2_v}\Big(
	\om_0^{-4s}\|f\|_{L^2_t([0,T])L^2_{x,v}}^2\\
	&\quad \notag
	+\om_0^{-2s}\sum_{r,j,k\ge 0}(\om(j)2^k)^{-2s}2^{2rl}\|\De_jP^j_kR_rf\|_{L^2_t([0,T])L^{2}_{x}L^2_D}^2\\
	&\quad \notag
	+C_{\om_0,\varpi_0}\sum_{r,j,k\ge 0}(\om(j)2^k)^{-2s}2^{2rl}\|\<v\>^{\fr{\ga+2s}{2}}\De_jP^j_kR_rf\|_{L^2_t([0,T])L^{2}_{x,v}}^2\Big)
	\\
	&
	+
	\|\<v\>^{2l}\phi_2\|_{L^{r_0^*}_t([0,T])L^{r_0^*}_xL^2_v}\sup_{r\ge 0}\sum_{p\ge 0}(p+1)^2\|\De_p\<D_v\>^{-s}\<v\>^{-N}f\|_{L^{r_0}_t([0,T])L^{r_0}_{x}L^2_v}^2\notag
	\\
	&
	\notag
	\le 
	C\big(o_{\de_1}(1)+o_{\de_2}(1)+
	M_0\om_0^{-2s}
	\big)\\
	&\notag\ \ \times \Big(\sum_{r,j,k\ge 0}(\om2^k)^{-2s}2^{2rl}\|\De_jP_kR_rf\|^2_{L^2_t([0,T])L^2_{x}L^2_D}
	+\om_0^{-2s{}}\|f\|_{L^2_t([0,T])L^2_{x,v}}^2\Big)
	\\
	&\notag\ \ 
	+C_{\om_0,\varpi_0,M_0}
	\sum_{r,j,k\ge 0}
	(\om2^k)^{-2s}2^{2rl}\|\De_jP_kR_rf\|_{L^2_t([0,T])L^{2}_{x,v}}^2
	\notag\\
	&\ \ 
	+
	o_{\de_2}(1)\sup_{r\ge 0}\sum_{j\ge 0}(j+1)^2\|\De_j\<D_v\>^{-s}\<v\>^{-N}f\|_{L^{r_0}_t([0,T])L^{r_0}_{x}L^2_v}^2, 
\end{align}
for any $\ve,\ka>0$, provided that \eqref{choiceM2} hold. For the term $\Ga_5=\Ga(\mu^{\fr12}+\phi_{1,\de_1},\De_jP_kR_rf)$ in \eqref{es711}, since $\phi_1$ satisfies the lower bound \eqref{lowerphi}, by \eqref{Propphide}, $\phi_{1,\de_1}$ satisfies the same lower bound \eqref{lowerphi}. Thus, using \eqref{GaPhi1ff}, we have 
\begin{align*}\notag
	&(\Ga(\mu^{\fr12}+\phi_{1,\de_1},\De_jP_kR_rf),\De_jP_kR_rf)_{L^2_{v}}
	\le -\fr{c_0}{2}\|\De_jP_kR_rf\|_{L^2_D}^2
	\\
	&+C\|\<v\>^{\fr{\ga+2s}{2}}\De_jP_kR_rf\|_{L^2_v}^2\\
	\notag
	&+C\|(\<v\>^{|\ga|+d+1}\<D_v\>^{-s}\phi_{1,\de_1},\,\<v\>^{-N}\<D_v\>^{-\fr{d-2s}{2}}\phi_{1,\de_1})\|_{L^r_v}^2\|\<v\>^{\fr{\ga+2s}{2}}\De_jP_kR_rf\|_{L^2_v}^2,
\end{align*}
provided that $r=r(\ga,s,d)\in(2,\infty)$ is sufficiently large. Indeed, our assumption $\ga+s>-\fr{d}{2}$ implies $\ga-\fr{d-2s}{2}>-d$, so that $\ga-\fr{d-2s}{2}>-\fr{d}{r'}$ for some $r'>1$ sufficiently close to $1$, and hence a large $r\in(2,\infty)$ exists with $\fr1r+\fr{1}{r'}=1$.
This, together with $\|\<v\>^{\cdots}\<D_v\>^{-(\cdots)}\phi_{1,\de_1}\|_{L^\infty_t([0,T])L^\infty_xL^r_v}\le C_{M_0}$ in the \emph{a priori} assumption \eqref{apriori2} and Cauchy-Schwarz inquality, implies that 
\begin{align}\notag\label{esGa5}
	&\sum_{r,j,k\ge 0}(\om2^k)^{-2s}2^{2rl}(\Ga(\mu^{\fr12}+\phi_{1,\de_1},\De_jP_kR_rf),\De_jP_kR_rf)_{L^2_t([0,T])L^2_{x,v}}\\
	&
	\le -\fr{c_0}{2}\sum_{r,j,k\ge 0}(\om2^k)^{-2s}2^{2rl}\|\De_jP_kR_rf\|_{L^2_t([0,T])L^2_xL^2_D}^2
	+C_{M_0}\sum_{r,j,k\ge 0}(\om2^k)^{-2s}2^{2rl}\|\De_jP_kR_rf\|_{L^2_t([0,T])L^2_{x,v}}^2.
\end{align}
For the last term $\Ga_6=\De_j\Ga(f,P_kR_r\phi_{2,\de_2})$ in \eqref{es711}, applying \eqref{64eq} and placing the $2s$ derivative and the bad weight on $\phi_{2,\de_2}$ yields
\begin{align}\label{esGa6}\notag
	&\sum_{r,j,k\ge 0}(\om2^k)^{-2s}2^{2rl}\big(\De_j\Ga(f,P_kR_r\phi_{2,\de_2}),\De_jP_kR_rf\big)_{L^2_t([0,T])L^2_{x,v}}\\
	&\notag\ \le 
	C\sum_{r,j,k\ge 0}(\om2^k)^{-2s}2^{2rl}\|f\|_{L^2_{t,x,v}}\|\<v\>^{\ga+2s}\<D_v\>^{2s}P_kR_r\phi_{2,\de_2}\|_{L^\infty_{t,x}L^2_v}\|\De_jP_kR_rf\|_{L^2_{t,x,v}}\\
	&\notag\ \le 
	C\om_0^{-s}\|\<v\>^{-sl_1+l+\ga+2s}\<D_x\>^{\fr{d+1}{2}}\<D_v\>^{2s}\phi_{2,\de_2}\|_{L^\infty_t([0,T])L^2_{x,v}}\|f\|_{L^2_t([0,T])L^2_{x,v}}\\
	&\notag\quad\times\Big(\sum_{r,j,k\ge 0}(\om2^k)^{-2s}2^{2rl}\|\De_jP_kR_rf\|_{L^2_t([0,T])L^2_xL^2_v}^2\Big)^{\fr12}\\
	&\ \le \om_0^{-2s}\|f\|_{L^2_t([0,T])L^2_{x,v}}^2+C_{\de_2,M_0}\sum_{r,j,k\ge 0}(\om2^k)^{-2s}2^{2rl}\|\De_jP_kR_rf\|_{L^2_t([0,T])L^2_xL^2_v}^2. 
\end{align}
Therefore, using the decomposition \eqref{es711}, collecting the estimates \eqref{esA00}, \eqref{es713a}, \eqref{es713temp}, \eqref{es713}, \eqref{es730z} \eqref{es730}, \eqref{esGa5}, and \eqref{esGa6} into the estimate of $G$ with sufficiently small $\ka,\, o_{\de_1}(1),o_{\de_2}(1)>0$ (to absorb the $L^2_D$ terms; the smallness depends only on $c_0>0$, and therefore on $\ga, s$, and the size of the lower bound \eqref{lowerphi}), and applying the \emph{a priori} assumption \eqref{apriori2}, we obtain 
\begin{align}\notag\label{es733}
	&\sum_{r,j,k\ge 0}(\om2^k)^{-2s}2^{2rl}\Big(\De_jP_kR_rG\1_{t\in[0,\infty)},\De_jP_kR_rf\Big)_{L^2_t([0,\infty))L^2_{x,v}}\\
	&\notag\ \le -A_0\sum_{k\ge 0}(\om2^k)^{-2s}2^{2rl}\|\De_jP_kR_rf\|_{L^2_t([T,\infty))L^2_{x,v}}^2
	\\
	&\notag\ \ 
	-\fr{c_0}{2}\sum_{r,j,k\ge 0}(\om2^k)^{-2s}2^{2rl}\|\De_jP_kR_rf\|^2_{L^2_t([T,\infty))L^2_{x}L^2_D}\\
	&\notag\ \ 
	-\fr{c_0}{4}\sum_{r,j,k\ge 0}(\om2^k)^{-2s}2^{2rl}\|\De_jP_kR_rf\|_{L^2_t([0,T])L^2_xL^2_D}^2+\om_0^{-2s}\|f\|_{L^2_t([T,\infty))L^2_{x,v}}^2\\
	&\notag\ \ 
	+C\big(
		M_0^2\om_0^{-2s{}}
	+M_0\om_0^{-s-1}\varpi_0^{\ve}
	+C_{\de_1,M_0}\varpi_0^{-2}
	\big)\|f\|_{L^2_{t}([0,T])L^2_{x}L^2_v}^2\\
	&\notag\ \ 
	+
	o_{\de_2}(1)\sup_{r\ge 0}\sum_{j\ge 0}(j+1)^2\|\De_j\<D_v\>^{-s}\<v\>^{-N}f\|_{L^{r_0}_t([0,T])L^{r_0}_{x}L^2_v}^2\\
	&\ \ 
	+C_{\de_2,\om_0,\varpi_0,M_0}\sum_{r,j,k\ge 0}(\om2^k)^{-2s}2^{2rl}\|\De_jP_kR_rf\|_{L^2_t([0,T])L^2_xL^2_v}^2,
\end{align}
for any $\ve,\de>0$ and a generic constant $c_0=c_0(\ga,s,C_{\mathrm{low}},L_0)>0$, provided that \eqref{choiceM2} and \eqref{choideM3} hold. 
Here, we also simply verify $\big(o_{\de_1}(1)+o_{\de_2}(1)\big)\om_0^{-2s}\le \om_0^{-2s}$. 

\subsubsection{The main $L^2$ energy estimates}
Assume that $\al,l_0,l_1\ge 0$ satisfy \eqref{choice3}. 
We take the summation $\sum_{r,j,k\ge 0}\int_{[0,\tau)}\eqref{eqDeftemp}\,dt$ with any $\tau>0$, utilize the \emph{a priori} assumption \eqref{apriori2}, use the commutator estimate \eqref{es729} and the collision estimate \eqref{es733}, 
and finally take $\sup_{\tau\ge 0}$. 
For the energy of the initial data, we simply apply Littlewood-Paley theorem \ref{LPThm} and $\om=\om_02^{\al j+rl_1}\ge \om_02^{rl_1}$ to obtain 
\begin{align*}
\sum_{r,j,k\ge 0}(\om2^k)^{-2s}2^{2rl}\|\De_jP_kR_rf_0\|_{L^2_{x,v}}^2\le
C\om_0^{-2s}\|\<v\>^{-sl_1+l}f_0\|_{L^2_{x,v}}^2. 
\end{align*}
Together with \eqref{choice3}, we choose $l_0,l_1,l,\om_0,\varpi_0,\kappa\ge0$ such that \eqref{choiceM2},and \eqref{choideM3} hold, and
\begin{align}
	\label{choice6}
	\begin{cases}
	(-1-2s)l_1+l_0=\ga,\\
	\om_0^{-1-2s}\varpi_0+\om_0^{-2s} +o_{\de_1}(1)+o_{\de_2}(1) \le \frac{c_0}{8C},&\text{(for \eqref{es729} and \eqref{es733})},
	\end{cases}
\end{align}
where we used the assumption of soft potentials $\ga+2s<0$. 
Then we obtain the energy estimate 
%
\begin{align}\notag
	\label{es21}
	&\Big\|\sum_{r,j,k\ge 0}(\om2^k)^{-2s}2^{2rl}\|\De_jP_kR_rf\|_{L^2_{x,v}}^2\Big\|_{L^\infty_t([0,\infty))}
	\notag
	+
	\frac{c_0}{4}\sum_{r,j,k\ge 0}(\om2^k)^{-2s}2^{2rl}\|\De_jP_kR_rf\|_{L^{2}_{t}([0,\infty))L^{2}_{x}L^2_D}^2\\
	&\ \ \notag
	+A_0\sum_{r,j,k\ge 0}(\om2^k)^{-2s}2^{2rl}\|\De_jP_kR_rf\|_{L^{2}_{t}([0,\infty))L^2_{x,v}}^2
	\le C\om_0^{-2s}\|f_0\|_{L^2_{x,v}}^2
	+\om_0^{-2s}\|f\|_{L^2_t([T,\infty))L^2_{x,v}}^2
	\\
	&\notag\ \
	+
	C\big(
		M_0^2\om_0^{-2s{}}
	+M_0\om_0^{-s-1}\varpi_0^{\ve}
	+C_{\de_1,M_0}\varpi_0^{-2}
	\big)\|f\|_{L^2_{t}([0,T])L^2_{x}L^2_v}^2\\
	&\notag\ \ 
	+C_{\de_2,\om_0,\varpi_0,M_0}\sum_{r,j,k\ge 0}(\om2^k)^{-2s}2^{2rl}\|\De_jP_kR_rf\|_{L^2_t([0,T])L^2_xL^2_v}^2\\
	&\ \ 
	+
	o_{\de_2}(1)\sup_{r\ge 0}\sum_{j\ge 0}(j+1)^2\|\De_j\<D_v\>^{-s}\<v\>^{-N}f\|_{L^{r_0}_t([0,T])L^{r_0}_{x}L^2_v}^2,
\end{align}
for some $c_0=c_0(\ga,s,C_{\mathrm{low}},L_0)>0$, any $N>0$, and any small $\ve,\de>0$. Note $-sl_1+l\le 0$ as in \eqref{choideM3}.  

\subsubsection{The high velocity-frequency $P_k\De_j$ with $k\ge 1$ and $j\ge 0$}
For the part $k\ge 1$, using the commutator \eqref{dissUpper1b} and the lower bound of the dissipation in \eqref{dissipationLower}, we have 
\begin{align}\label{esRHS1}\notag
	&\sum_{r,j\ge0,\,k\ge 1}2^{2rl+r\ga}\|\De_jP_kR_rf\|^2_{L^2_t([0,\infty))L^2_{x,v}}\\
	&\notag\le C\sum_{r,j\ge0,\,k\ge 1}2^{2rl}\Big(\|\<v\>^{\fr\ga2}\De_jP_kR_rf\|^2_{L^2_t([0,\infty))L^2_{x,v}}
	+(\om2^k)^{-2}2^{2rl+r\ga-2r}\|\De_jR_rf\|^2_{L^2_t([0,\infty))L^2_{x,v}}\Big)\\
	&
	\le C\sum_{r,j\ge0,\,k\ge 1}(\om2^k)^{-2s}2^{2rl}\|\De_jP_kR_rf\|^2_{L^2_t([0,\infty))L^2_xL^2_D}
	+C\om_0^{-2}\|\<v\>^{-l_1+l+\frac{\ga}{2}-1}f\|^2_{L^2_t([0,\infty))L^2_{x,v}}. 
\end{align}
where we used $\|\<D_v\>^{s-1}\<v\>^{\frac{\ga}{2}-1}(\cdot)\|_{L^2}\le C\|\cdot\|_{L^2_D}$. 
We will also verify $\<v\>^{-l_1+l+\fr\ga2-1}\le\<v\>^{-sl_1+l}$ later.

\subsubsection{The low velocity-frequency $\De_jP_0$ with $k=0$ and $j\ge 1$}
For the $P_0$ part, we apply Theorem \ref{Thmlow} to equation \eqref{eqkinetic12}, i.e., $\pa_tf+v\cdot\na_xf = G$ in $\R^{1+2d}_{t,x,v}$ where $G$ is given by \eqref{gDefdifference}. This yields, for $j \ge 1$ and $k = 0$ (note that the time domain should be taken as $\mathbb{R}$),
\begin{align}\label{hypoes11}\notag
	\|\De_jP_0R_rf\|_{L^2_t(\R)L^2_{x,v}}^2
	&\le C_0^{-1}\|\De_jQ_0R_rf\|_{L^2_t(\R)L^2_{x,v}}^2
	+CC_0\om^{-2}\|\<v\>^{-1}\De_jR_rf\|_{L^2_t(\R)L^2_{x,v}}^2\notag\\
	&\notag\quad
	+CC_0\om^{-2s}2^{-r\ga}\|\<v\>^{\frac{\ga}{2}}\<D_v\>^{s}(P_0+P_1)\De_jR_rf\|_{L^2_t(\R)L^2_{x,v}}^2\\
	&\quad
	+CC_0\om^2(\varpi2^j)^{-2}\|\De_jQ_0R_rG\|_{L^2_t(\R)L^2_{x,v}}^2.
\end{align}
For the first right-hand term of \eqref{hypoes11}, applying $Q_0=Q_0(P_0+P_1)$ and \eqref{LowerSupportComm} (to $P_1$) yields
\begin{align}\label{es741}\notag
	C_0^{-1}\|\De_jQ_0R_rf\|_{L^2_t(\R)L^2_{x,v}}^2
	&\le C_0^{-1}\|\De_jP_0R_rf\|_{L^2_t(\R)L^2_{x,v}}^2
	\\
	&\notag\ \ 
	+CC_0^{-1}\om^{-2s}2^{-r\ga}\|\<v\>^{\frac{\ga}{2}}\<D_v\>^{s}(P_0+P_1)\De_jR_rf\|_{L^2_t(\R)L^2_{x,v}}^2\\
	&\ \ 
	+CC_0^{-1}\om^{-2}\|\<v\>^{-1}\De_jR_rf\|_{L^2_t(\R)L^2_{x,v}}^2,
\end{align}
where the first right-hand term can be absorbed into the left-hand side of \eqref{hypoes11} if $C_0>0$ is sufficiently large. The remaining $f$-terms in \eqref{es741} and \eqref{hypoes11} are lower-order and require no further treatment.

\smallskip 
To estimate the collision term $\|\De_jQ_0R_r\Ga(f,g)\|_{L^2_tL^2_xL^2_v}$ appearing in $G$ in \eqref{hypoes11}, we prepare the following. Applying $Q_0=Q_0(P_0+P_1)$, using $R_r=\wt{R_r}R_r,P_k=\wt{P_k} P_k$ as in \eqref{SlightlyLarger}, we split 
\begin{align}\label{splitTheWtf}\notag
	Q_0R_r\Ga(f,g)
	&\notag=\sum_{k=0,1}Q_0\wt{P_k}[P_k,\wt{R_r}]R_r\Ga(f,g)
	+\sum_{k=0,1}Q_0\wt{P_k}\wt{R_r}\Big(P_kR_r\Ga(f,g)-\Ga(f,P_kR_rg)\\
	&\qquad\qquad+\Ga(f,P_kR_rg)-Q(\mu^{\fr12}f,P_kR_rg)+Q(\mu^{\fr12}f,P_kR_rg)\Big),
\end{align}
and apply the duality arguments. 
For the commutator $[P_k,\wt{R_r}]$, using \eqref{symbolic} and collisional estimate \eqref{64eq}, we have 
\begin{align}\label{es751comm}\notag
	\|Q_0\wt{P_k}[P_k,\wt{R_r}]R_r\Ga(f,g)\|_{L^2_v}
	&\notag=\|\underbrace{Q_0\wt{P_k}}_{\in\operatorname{Op}(\<D_v\>^{-2s}(\om2^k)^{2s})}\underbrace{[P_k,\wt{R_r}]}_{\in\operatorname{Op}(2^{-r}(\om2^k)^{-1})}\underbrace{R_r}_{\in\operatorname{Op}(\<v\>^{\wt{l_0}}2^{-r\wt{l_0}})}\Ga(f,g)\|_{L^2_v}\\
	&\notag\le C(\om2^k)^{2s-1}2^{-r-r\wt{l_0}}\|\<v\>^{\wt{l_0}}\<D_v\>^{-2s}\Ga(f,g)\|_{L^2_v}\\
	&\le C(\om2^k)^{2s-1}2^{-r-r\wt{l_0}}\|f\|_{L^2_v}\|\<v\>^{\wt{l_0}+\ga+2s}g\|_{L^2_v}, 
\end{align}
for any $\wt{l_0}\in\R$. 
For the commutator part $P_kR_r\Ga(f,g)-\Ga(f,P_kR_rg)$, by the commutator estimate \eqref{ThmCommeswt}, for $k=0,1$ and any $\wt{l_1}\in\R$, we have	
\begin{align}\notag\label{es751commColli}
	&\|Q_0\wt{P_k}\wt{R_r}(P_kR_r\Ga(f,g)-\Ga(f,P_kR_rg))\|_{L^2_v}
	\\
	&\notag=\sup_{\|h\|_{L^2_v}\le 1}|(P_kR_r\Ga(f,g)-\Ga(f,P_kR_rg),\wt{R_r}\wt{P_k}Q_0h)_{L^2_v}|\\
	&\notag\le C\sup_{\|h\|_{L^2_v}\le 1}\Big(2^{r\wt{l_1}}\|f\|_{L^2_v}\|\<v\>^{-\wt{l_1}+\ga+2s}g\|_{L^2_v}\|\<D_v\>^s\wt{R_r}\wt{P_k}Q_0h\|_{L^2_v}\\
	&\notag\qquad\qquad\qquad+C(\om2^k)^{s-1}\|f\|_{L^2_{v}}\|\<v\>^{-\wt{l_1}+\ga+2s-1}g\|_{L^2_v}\|\wt{R_r}Q_0h\|_{L^2_v}\Big)\\
	&\le C2^{r\wt{l_1}}\om^{s}\|f\|_{L^2_v}\|\<v\>^{-\wt{l_1}+\ga+2s}g\|_{L^2_v}.
\end{align}
For the commutator $\Ga(f,P_kR_rg)-Q(\mu^{\fr12}f,P_kR_rg)$, applying \eqref{CommGaAndQ}, we have, for any $\wt{l_2}\in\R$,
\begin{align}\label{es751commGaQ}
	\|Q_0\wt{P_k}\wt{R_r}(\Ga(f,P_kR_rg)-Q(\mu^{\fr12}f,P_kR_rg))\|_{L^2_v}
	&\le C2^{r\wt{l_2}}\om^{s}\|f\|_{L^2_v}\|\<v\>^{-\wt{l_2}+\ga+2s}g\|_{L^2_v}.
\end{align}
The commutator estimates \eqref{es751commColli} and \eqref{es751commGaQ} admit the same upper bound, which contributes only to the lower-order terms. For the leading term $Q(\mu^{\frac{1}{2}}f, P_k R_r g)$, using \eqref{QfghUpper}, when $\ga+s>-\frac{d}{2}$, for $k=0,1$ and any $N \ge 0$,
\begin{align}\label{es751leading}\notag
	&\|Q_0\wt{P_k}\wt{R_r}Q(\mu^{\fr12}f,P_kR_rg)\|_{L^2_v}
	=\sup_{\|h\|_{L^2_v}\le 1}|(Q(\mu^{\fr12}f,P_kR_rg),\wt{R_r}\wt{P_k}Q_0h)_{L^2_v}|\\
	&\notag\le C\sup_{\|h\|_{L^2_v}\le 1}\|\<D_v\>^{-s}\<v\>^{-N}f\|_{L^2_v}
		\|P_kR_rg\|_{L^2_D}\|\wt{R_r}\wt{P_k}Q_0h\|_{L^2_D}\\
	&\le C2^{r(\frac{\ga+2s}{2})}\om^s\|\<D_v\>^{-s}\<v\>^{-N}f\|_{L^2_v}\|P_kR_rg\|_{L^2_D}. 
\end{align}
We now handle each term in $G$ appearing in the last term $\|\De_jQ_0R_rG\|_{L^2}$ in \eqref{hypoes11}, with $G$ defined in \eqref{gDefdifference}.

\smallskip 
{\bf The linear term $\Ga(\mu^{\frac{1}{2}},f)$.}
Applying $\De_j\Ga(\mu^{\fr12},f)=\Ga(\mu^{\fr12},\De_jf)$, the estiamtes \eqref{es751comm}--\eqref{es751leading} (with suitable constants $\wt{l_i}$), using commutator estimate \eqref{dissUpper1b}, and recalling $\om=\om_02^{\al j+rl_1}$ and $\varpi=\varpi_02^{rl_0}$ with $\al=\frac{1}{1+2s}$, we have 
\begin{align}\label{es748}\notag
	&\sum_{j\ge 1}\om^2(\varpi2^j)^{-2}\|\De_jQ_0R_r\Ga(\mu^{\frac{1}{2}},f)\|_{L^2_t(\R)L^2_{x,v}}^2\\
	&\notag\le C\sum_{j\ge 1,\,
	k=0,1}\om^{2+2s}(\varpi2^j)^{-2}
	2^{2r(\frac{\ga+2s}{2})}\|\De_jP_kR_rf\|_{L^2_t(\R)L^2_xL^2_D}^2\\
	&\notag\ \ 
	+C\sum_{j\ge 1}\om^{2+2s}(\varpi2^j)^{-2}2^{2r(\frac{\ga+2s}{2})}2^{-2r\wt{l_0}}
		\|\<v\>^{\wt{l_0}}\De_jf\|_{L^2_t(\R)L^2_{x,v}}^2
	\\
	&\notag\le C\om_0^{2+4s}\varpi_0^{-2}\sum_{j,k\ge0}(\om2^k)^{-2s}2^{r(-\ga+2s)}\|\De_jP_kR_rf\|_{L^2_t(\R)L^2_xL^2_D}^2\\
	&\quad
	+C\om_0^{2+2s}\varpi_0^{-2}2^{-2rsl_1
		+r(-\ga+2s)-2r\wt{l_0}}\|
	\<v\>^{\wt{l_0}}
	f\|_{L^2_t(\R)L^2_{x,v}}^2.
\end{align} 
for any $\wt{l_0}\in\R$, 
where we applied $l_0=\ga+(1+2s)l_1$ in \eqref{choice6}.

\smallskip{\bf The linear term $A_0f$ on $t\in(-\infty,0)\cup(T,\infty)$.}
Using $l_0=\ga+(1+2s)l_1$, it's direct to obtain 
\begin{align}\label{es748a}\notag
	&\sum_{j\ge 1}\om^2(\varpi2^j)^{-2}\|\De_jQ_0R_rA_0f\|_{L^2_t((-\infty,0)\cup(T,\infty))L^2_{x,v}}^2\\
	&\notag\le 
	\om_0^{2+2s}\varpi_0^{-2}\sum_{j\ge 1}\om^{-2s}2^{(2+2s)\al j-2j}2^{r(2+2s)l_1-2rl_0}\|\De_jQ_0R_rA_0f\|_{L^2_t((-\infty,0)\cup(T,\infty))L^2_{x,v}}^2\\
	&\le A_0^2\om_0^{2+2s}\varpi_0^{-2}2^{-2rsl_1-2r\ga}
	\sum_{j\ge 1}\om^{-2s}\|\De_j(P_0+P_1)R_rf\|^2_{L^2_t((-\infty,0)\cup(T,\infty))L^2_{x,v}}.
\end{align}

{\bf The linear (regular) term $\Ga(f,\mu^{\frac{1}{2}})$ on $t\in[0,T]$}. By \eqref{esKf} and $l_0=\ga+(1+2s)l_1$, we have 
\begin{align}\label{es748b}
	\notag\sum_{j\ge 1}\om^2(\varpi2^j)^{-2}\|\De_jQ_0R_r\Ga(f,\mu^{\frac{1}{2}})\|_{L^2_t([0,T])L^2_{x,v}}^2
	&\le \om_0^2\varpi_0^{-2}
	2^{2rl_1-2rl_0}\|\mu^{\frac{1}{200}}f\|_{L^2_t([0,T])L^2_{x,v}}^2
	\notag\\
	&\le \om_0^2\varpi_0^{-2}
	2^{-4rsl_1-2r\ga}\|\mu^{\frac{1}{200}}f\|_{L^2_t([0,T])L^2_{x,v}}^2. 
\end{align}

\noindent{\bf The nonlinear term $\Ga(\phi_1,f)$}. We take $[0,T]$ as the underlying time interval and split it as in \eqref{splitTheWtf}.
For the commutator part in \eqref{splitTheWtf}, i.e., $\De_jQ_0R_r\Ga(\wt\phi_{1,\de_1},f)-\De_jQ_0\wt{P_k}\wt{R_r}Q(\mu^{\fr12}\wt\phi_{1,\de_1},(P_0+P_1)R_rf)$, we first apply the Bernstein-type inequality as in \eqref{SobolevEmbeddLp} to obtain  
\begin{align*}
	\|\De_ju\|_{L^2_{x}L^2_v}^2
	&\le C(\varpi2^{j})^{2\ve}\|\De_ju\|_{L^{r_1}_{x}L^2_v}^2,\quad 
	1+\frac{1}{2}=\frac{1}{r_1}+\frac{1}{(1-\ve/d)^{-1}},
\end{align*}
for some $r_1\in[1,2)$ and $\ve\in(0,d)$, and any suitable function $u$. 
Moreover, since $\al=\frac{1}{1+2s}$, the corresponding coefficient satisfies $\om^{2+2s}(\varpi2^j)^{-2+2\ve}=\om_0^{2+2s}\varpi_0^{-2+2\ve}2^{r(2+2s)l_1+r(-2+2\ve)l_0}2^{\frac{-2sj}{1+2s}+2\ve j}$, which leads to 
\begin{align*}
	\Big(\sum_{j\ge 0}2^{\frac{-2sj}{1+2s}+2\ve j}\|\De_ju\|_{L^{r_1}_{x}L^2_v}^2\Big)^{\frac{1}{2}}
	&\le \Big\|\Big(\sum_{j\ge 0}2^{\frac{-2sj}{1+2s}+2\ve j}\|\De_ju\|_{L^2_v}^2\Big)^{\frac{1}{2}}\Big\|_{L^{r_1}_{x}}\le C\|u\|_{L^{r_1}_{x}L^2_v}. 
\end{align*}
provided that $\ve<\frac{s}{1+2s}$, where the last inequality is simply $F^{0}_2[L^{r_1}]=H^{0,r_1}$ as in Subsection \ref{SubsecTL_Besov}. 
Thus, utilizing the decomposition \eqref{splitTheWtf}, applying the commutator estimates \eqref{es751comm}--\eqref{es751commGaQ} (with suitable $\wt{l_0},\wt{l_1},\wt{l_2}$ therein), using H\"older's inequality in $x$ with index $\frac{1}{r_1}=\frac{1}{2}+\frac{1}{r_1^*}$ where $r_1^*=\frac{d}{\ve}$, and utilizing $l_0=\ga+(1+2s)l_1$, we have
\begin{align}\label{es779}\notag
	&\sum_{j\ge 1}\om^2(\varpi2^j)^{-2}\|\De_jQ_0R_r\Ga(\phi_{1},f)-\sum_{k=0,1}\De_jQ_0\wt{P_k}\wt{R_r}Q(\mu^{\fr12}\phi_{1},(P_0+P_1)R_rf)\|_{L^2_t([0,T])L^2_{x,v}}^2\\
	&\notag\le C\om_0^{2+2s}\varpi_0^{-2+2\ve}2^{r(2+2s)l_1+r(-2+2\ve)l_0+r(\ga+2s)-r\wt{l_0}}
	\Big\|\|\phi_{1}\|_{L^2_v}
	\|\<v\>^{\wt{l_0}}f\|_{L^2_v}\Big\|_{L^2_tL^{r_1}_x}^2\\
	&\notag\le C\om_0^{2+2s}\varpi_0^{-2+2\ve}2^{r(2+2s)l_1+r(-2+2\ve)l_0+r(\ga+2s)-r\wt{l_0}}
	\|\phi_{1}\|^2_{L^\infty_tL^{r_1^*}_xL^2_v}\|\<v\>^{\wt{l_0}}f\|^2_{L^2_{t,x,v}}\\
	&\le CM_0^2\om_0^{2+2s}\varpi_0^{-2+2\ve}
	2^{r(-2s)l_1+r(2\ve)l_0+r(-\ga+2s)
	-r\wt{l_0}
	}
	\|
	\<v\>^{\wt{l_0}}
	f\|^2_{L^2_{t}([0,T])L^2_{x,v}},
\end{align}
for any $\wt{l_0}\in\R$, 
where we used $\|\phi_{1}\|^2_{L^\infty_t([0,T])L^{r_1^*}_xL^2_v}\le M_0^2$ as in the \emph{a priori} assumption \eqref{apriori2}.
{On the other hand}, for the leading term
\begin{align*}
	\sum_{j\ge 1,\,k=0,1}\om^2(\varpi2^j)^{-2}\|\De_jQ_0\wt{P_k}\wt{R_r}Q(\mu^{\fr12}\phi_{1},(P_0+P_1)R_rf)\|_{L^2_t([0,T])L^2_{x,v}}^2, 
\end{align*}
we split $\phi_1=\phi_{1,\de_3}+\wt\phi_{1,\de_3}$ as in \eqref{splitphi}. We emphasize that we use a \textbf{different convergence rate} $\de_3$ from $\de_1$ in \eqref{es21}. 
For the regular part $\phi_{1,\de_3}$ of the leading term, we apply the commutator estimate \eqref{CommDejfTemp} and upper bound \eqref{QfghUpper} to deduce 
\begin{align*}\notag
	&\|\De_jQ_0\wt{P_k}\wt{R_r}Q(\mu^{\fr12}\phi_{1,\de_3},(P_0+P_1)R_rf)\|_{L^2_{x,v}}\\
	&\notag\le \sum_{k=0,1}\sup_{\|h\|_{L^2_{x,v}}\le 1}
	\Big(\De_jQ(\mu^{\fr12}\phi_{1,\de_3},P_kR_rf)
	-Q(\mu^{\fr12}\phi_{1,\de_3},\De_jP_kR_rf)
	+Q(\mu^{\fr12}\phi_{1,\de_3},\De_jP_kR_rf)
	,Q_0\wt{R_r}h\Big)_{L^2_{x,v}}\\
	&\notag\le C\sum_{k=0,1}\sup_{\|h\|_{L^2_{x,v}}\le 1}\Big(\varpi^{-1}2^{-j}
	\|\<D_x\>^{\frac{d+3}{2}}\mu^{\fr12}\phi_{1,\de_3}\|_{L^2_{x,v}}\|\De_jP_kR_rf\|_{L^2_xL^2_D}\|Q_0\wt{R_r}h\|_{L^2_xL^2_D}\\
	&\notag\qquad\qquad\qquad\qquad+\|\<D_v\>^{-s}\<v\>^{-N}\phi_{1,\de_3}\|_{L^\infty_xL^2_v}\|\De_jP_kR_rf\|_{L^2_xL^2_D}\|Q_0\wt{R_r}h\|_{L^2_xL^2_D}\Big)\\
	&\le \Big(C_{\de_3,M_0}\varpi^{-1}2^{-j}+C_{M_0}\Big)\om^{s}2^{r(\fr{\ga+2s}{2})}\sum_{k=0,1}\|\De_jP_kR_rf\|_{L^2_xL^2_D}, 
\end{align*}
for any $N>0$, provided that $\ga+s>-\fr{d}{2}$, 
where we used $\|\<D_x\>^{\frac{d+3}{2}}\mu^{\fr12}\phi_{1,\de_3}\|_{L^\infty_tL^2_{x,v}}\le C_{\de_3,M_0}$ and $\|\<D_v\>^{-s}\<v\>^{-N}\phi_{1,\de_3}\|_{L^\infty_{t,x}L^2_v}\le C_{M_0}$ as in the \emph{a priori} assumption \eqref{apriori2}. Therefore, by $\varpi=\varpi_02^{rl_0}$ and $l_0=\ga+(1+2s)l_1\ge 0$, we have 
\begin{align}\label{es777regular}\notag
	&\sum_{j\ge 1}\om^2(\varpi2^j)^{-2}2^{2rl}\|\De_jQ_0\wt{P_k}\wt{R_r}Q(\mu^{\fr12}\phi_{1,\de_3},(P_0+P_1)R_rf)\|_{L^2_{x,v}}^2\\
	&\notag \le \Big(C_{\de_3,M_0}\varpi_0^{-1}+C_{M_0}\Big)\sum_{j\ge 1,\,k=0,1}\om^{2+2s}\varpi^{-2}2^{-2j+r(\ga+2s)+2rl}\|\De_jP_kR_rf\|_{L^2_xL^2_D}^2\\
	& \le \Big(C_{\de_3,M_0}\varpi_0^{-1}+C_{M_0}\Big)\om_0^{2+4s}\varpi_0^{-2}\sum_{j\ge 1,\,k=0,1}(\om2^k)^{-2s}2^{2r(-\ga+2s)+2rl}\|\De_jP_kR_rf\|_{L^2_xL^2_D}^2, 
\end{align}
For the irregular small part $\wt\phi_{1,\de_3}$ of the leading term,
we apply the Bony decomposition \eqref{eqBony} and the estimate \eqref{es751leading} to obtain that, for $k=0,1$, 
\begin{align}\label{es751}\notag
	&\sum_{j\ge 1,\,k=0,1}\om(j)^2(\varpi2^j)^{-2}\|\De_jQ_0(\wt{P_0}+\wt{P_1})\wt{R_r}Q(\mu^{\fr12}\wt\phi_{1,\de_3},P^j_kR_rf)\|_{L^2_t([0,T])L^2_{x,v}}^2
	\\
	&\notag\le \om_0^{2+4s}\varpi_0^{-2}\sum_{j\ge 1,\,k=0,1}(\om(j)2^k)^{-2s}2^{r(-\ga+2s)}\\
	&\ \notag\ \times 
	\bigg\{\sum_{|q-j|\le 4}\int_{[0,T]\times\R^d_x}
	\|S_{q-1}\<D_v\>^{-s}\<v\>^{-N}\wt\phi_{1,\de_3}\|_{L^2_v}^2\|\De_{q}P^j_kR_rf\|_{L^2_D}^2\\
	&\notag\quad\notag+
	\sum_{|p-j|\le 4}\int_{[0,T]\times\R^d_x}
	\|\De_q\<D_v\>^{-s}\<v\>^{-N}\wt\phi_{1,\de_3}\|_{L^2_v}^2\|S_{q-1}P^j_kR_rf\|_{L^2_D}^2\\
	&\quad+
	\sum_{p,q\ge 0}\1_{\substack{|p-q|\le 1,\qquad\\\max\{p,q\}\ge j-2}}
	\int_{[0,T]\times\R^d_x}\|\De_p\<D_v\>^{-s}\<v\>^{-N}\wt\phi_{1,\de_3}\|_{L^2_v}^2\|\De_{q}P^j_kR_rf\|_{L^2_D}^2\bigg\}
	\notag
\\
	&
	=:\om_0^{2+4s}\varpi_0^{-2}\sum_{i=1}^3\ol{\Ga}_i(\wt\phi_{1,\de_3},f),
\end{align}
where we compute the coefficient by using $\om=\om_02^{\al j+rl_1}$, $\varpi=\varpi_02^{rl_0}$, and $\al=\fr{1}{1+2s}$ with $l_0=\ga+(1+2s)l_1$ in \eqref{choice3}: $(\om(j))^{2+4s}(\varpi2^j)^{-2}=\om_0^{2+4s}\varpi_0^{-2}2^{-2r\ga}$. 
Here, we again rewrite $\om=\om(j)$ and $P_k=P^j_k$ to emphasize the dependence on $j$ in the Bony decomposition. 
The right-hand terms can be calculated as in the Bony-decomposition-type estimate of $\Ga_i(\wt\phi_{1,\de_3},f,f)$ $(1\le i\le 3)$ in Subsections \ref{SubsecBasicBony} and \ref{SecGammai}.

\smallskip 
For $\ol{\Ga}_1(\phi_1,f)$, applying the H\"older's inequality, \eqref{esSq}, $\sum_{q\ge 0}\1_{|q-j|\le 4}|v_q|\le C\big(\sum_{q\ge 0}\1_{|q-j|\le 4}|v_q|^2\big)^{\fr12}$, and estimate \eqref{esg719s}, we have 
\begin{align*}
	&\ol\Ga_1(\phi_1,f)
	\notag\le C\|\<D_v\>^{-s}\<v\>^{-N}\phi_1\|_{L^\infty_{t,x}L^2_v}^2\sum_{j,k,q\ge 0}\1_{|q-j|\le 4}(\om(j)2^k)^{-2s}\|\De_{q}P^j_kR_rf\|^2_{L^2_{t,x}L^2_D}\\
	&\le C\|\<D_v\>^{-s}\<v\>^{-N}\phi_1\|_{L^\infty_{t}([0,T])L^\infty_{x}L^2_v}^2\sum_{j,k\ge 0}(\om(j)2^k)^{-2s}\|\De_jP^j_kR_rf\|^2_{L^2_{t}([0,T])L^2_{x}L^2_D}.
\end{align*}
For $\ol{\Ga}_2(\phi_1,f)$, by splitting $S_{p-1}=\sum_{q\le p-2}\De_q$, using $\sum_{j,p,q\ge 0}\1_{|p-j|\le 4}\1_{q\le p-2}|u_p|^2\le C\sum_{p\ge 0}(p+1)|u_p|^2$, and applying \eqref{esg719}, we have 
\begin{align*}
	&\ol\Ga_2(\phi_1,f)
	\notag\le\sum_{j,k,p,q\ge 0}(\om(j)2^k)^{-2s}\1_{|p-j|\le 4}\1_{q\le p-2}\|\De_{p}\<D_v\>^{-s}\<v\>^{-N}\phi_1\|_{L^\infty_{t,x}L^2_v}^2\|\De_{q}P^j_kR_rf\|_{L^2_{t,x}L^2_D}^2\\
	&\notag\le 
	\sum_{j,k\ge 0}\Big\{\sum_{p,q\ge 0}\1_{|p-j|\le 4}\1_{q\le p-2}\|\De_{p}\<D_v\>^{-s}\<v\>^{-N}\phi_1\|_{L^\infty_{t,x}L^2_v}^2\Big\}\\
	&\notag\quad\quad\times \Big\{\sum_{p,q\ge 0}\1_{|p-j|\le 4}\1_{q\le j+2}(\om(j)2^k)^{-2s}\|\De_{q}P^j_kR_rf\|^2_{L^2_{t,x}L^2_D}\Big\}\\
	&\le 
	C\sum_{p\ge 0}(p+1)\|\De_{p}\<D_v\>^{-s}\<v\>^{-N}\phi_1\|_{L^\infty_{t}([0,T])L^\infty_{x}L^2_v}^2
	\sum_{j,k\ge 0}(\om(j)2^k)^{-2s}\|\De_jP^j_kR_rf\|_{L^2_{t}([0,T])L^2_{x}L^2_D}^2. 
\end{align*}
For the term $\ol\Ga_3(\phi_1,f)$, using \eqref{esg719s} (for $j-3\le q\le j+4$), and \eqref{ges1a} (for $q\ge j+5$) that 
\begin{align*}
	&\ol\Ga_3(\phi_1,f)
	\le\int_{[0,T]}\sum_{j,k\ge 0}\Big\{\sum_{p,q\ge 0}
	(p+rl_0+\log_2\vpi_0+1)^2\1_{q\ge j+5}
	\1_{\substack{|p-q|\le 1,\qquad\\\max\{p,q\}\ge j-2}}\|\De_p\<D_v\>^{-s}\<v\>^{-N}\phi_1\|^2_{L^\infty_{x}L^2_v}
	\\
	&\notag\qquad\qquad\qquad\times
		\sum_{p,q\ge 0}
	(p+rl_0+\log_2\vpi_0+1)^{-2}\1_{q\ge j+5}
	\1_{\substack{|p-q|\le 1,\qquad\\\max\{p,q\}\ge j-2}}(\om(j)2^k)^{-2s}\|\De_qP^j_kR_rf\|_{L^2_{x}L^2_D}^2
		\\
	&\qquad\qquad\quad
	+
		\sum_{p,q\ge 0}
	\1_{|q-j|\le 4}
	\1_{\substack{|p-q|\le 1,\qquad\\\max\{p,q\}\ge j-2}}\|\De_p\<D_v\>^{-s}\<v\>^{-N}\phi_1\|^2_{L^\infty_{x}L^2_v}
	\\
	&\notag\qquad\qquad\qquad\times\sum_{p,q\ge 0}
	\1_{|q-j|\le 4}
	\1_{\substack{|p-q|\le 1,\qquad\\\max\{p,q\}\ge j-2}}(\om(j)2^k)^{-2s}\|\De_qP^j_kR_rf\|_{L^2_{x}L^2_D}^2\Big\}
	\\
	&\le\notag
	C_\al 
		\sum_{p\ge 0}(p+rl_0+\log_2\vpi_0+1)^2\|\De_p\<D_v\>^{-s}\<v\>^{-N}\phi_1\|^2_{L^\infty_{x}L^2_v}
	\\
	&\ \ \times
	\Big(
		C_{\om_0,l_0,l_1}\sum_{q\ge 1}\om^{-2s}\|\De_{q}P^q_0R_rf\|^2_{L^2_{x,v}}
	+\om_0^{-2s{}}\|R_rf\|_{L^2_{x,v}}^2
	+
	\sum_{q,k\ge 0}(\om(q)2^{k})^{-2s}
	\|\De_{q}P^q_{k}R_rf\|^2_{L^2_{x}L^2_D}\Big). 
\end{align*}
Summing the above cases into \eqref{es751} and substituting $\phi_1=\wt\phi_{1,\de_3}$, since noting that $\om^{2+4s}(\varpi2^j)^{-2}=\om_0^{2+4s}\varpi_0^{-2}2^{r(2+4s)l_1-2rl_0}$, 
and using the commutator estimate \eqref{dissUpper1b}, 
\begin{align}\label{es777}\notag
	&\sum_{j\ge 1}\om^2(\varpi2^j)^{-2}\|\De_jQ_0\Ga(\wt\phi_{1,\de_3},(P^j_0+P^j_1)R_rf)\|_{L^2_t([0,T])L^2_{x,v}}^2\\
	&\notag\le \om_0^{2+4s}\varpi_0^{-2}2^{r(2+4s)l_1-2rl_0+r(\ga+2s)}\\
	&\notag\ \times 
	\bigg\{
		C\|\<D_v\>^{-s}\<v\>^{-N}\wt\phi_{1,\de_3}\|_{L^\infty_{t,x}L^2_v}^2\sum_{j,k\ge 0}(\om(j)2^k)^{-2s}\|\De_jP^j_kR_rf\|^2_{L^2_{t,x}L^2_D}\\
		&\notag\quad+C\sum_{p\ge 0}(p+1)\|\De_{p}\<D_v\>^{-s}\<v\>^{-N}\wt\phi_{1,\de_3}\|_{L^\infty_{t,x}L^2_v}^2
	\sum_{j,k\ge 0}(\om(j)2^k)^{-2s}\|\De_jP^j_kR_rf\|_{L^2_{t,x}L^2_D}^2\\
	&\notag\quad+C\sum_{j\ge 0}(j+rl_0+\log_2\vpi_0+1)^2\|\De_j\<D_v\>^{-s}\<v\>^{-N}\wt\phi_{1,\de_3}\|^2_{L^\infty_{t,x}L^2_v}\\
	&\notag\qquad\times
	\Big(
	C_{\om_0,l_0,l_1}\sum_{q\ge 1}\om^{-2s}\|\De_{q}P^q_0R_rf\|^2_{L^2_tL^2_{x,v}}
	+\om_0^{-2s{}}\|R_rf\|_{L^2_tL^2_{x,v}}^2\\
	&\notag\qquad\quad
	+\sum_{q,k\ge 0}(\om(q)2^{k})^{-2s}\|\De_{q}P^q_{k}R_rf\|^2_{L^2_tL^2_{x}L^2_D}
	\Big)
	\bigg\}\\
	&\notag\le Co_{\de_3}(1)\om_0^{2+4s}\varpi_0^{-2}
	2^{r(-\ga+2s)}
	\Big\{\sum_{j,k\ge 0}(\om(j)2^k)^{-2s}\|\De_jP^j_kR_rf\|^2_{L^2_{t}([0,T])L^2_{x}L^2_D}
	\\
	&\qquad
	+\om_0^{-2s{}}\|R_rf\|_{L^2_t([0,T])L^2_{x,v}}^2
	+
	C_{\om_0,l_0,l_1}\sum_{j\ge 1}\om^{-2s}\|\De_jP^j_0R_rf\|^2_{L^2_t([0,T])L^2_{x,v}}\Big\}.
\end{align}
where we have used $l_0=\ga+(1+2s)l_1$ and the \emph{a priori} assumption \eqref{apriori2} to control $\wt\phi_{1,\de_3}$.  

\smallskip 
{\bf The nonlinear term $\Ga(f,\phi_2)$ for $t\in[0,T]$.} 
Similarly, using \eqref{splitTheWtf}, we can split 
\begin{align*}
	\De_jQ_0R_r\Ga(f,\phi_2)
	&=\big(\De_jQ_0R_r\Ga(f,\phi_2)
	-\De_jQ_0\wt{P_k}\wt{R_r}Q(\mu^{\fr12}f,(P_0+P_1)R_r\phi_2)\big)\\
	&\ \ +\De_jQ_0\wt{P_k}\wt{R_r}Q(\mu^{\fr12}f,(P_0+P_1)R_r\phi_2).
\end{align*}
The \emph{commutator part} can be estimated as in \eqref{es779} (with suitable $\wt{l_0}$ and H\"older's inequality therein):
\begin{align}
	\label{es780zgg}\notag
	&\sum_{r\ge 0,\,j\ge 1}\om^2(\varpi2^j)^{-2}2^{2rl}\|\De_jQ_0R_r\Ga(f,\phi_2)-\De_jQ_0\wt{P_k}\wt{R_r}Q(\mu^{\fr12}f,(P_0+P_1)R_r\phi_2)\|_{L^2_t([0,T])L^2_{x,v}}^2\\
	&\notag\le C\om_0^{2+2s}\varpi_0^{-2+2\ve}
	\|f\|_{L^2_{t}([0,T])L^2_{x}L^2_v}^2
	\|\<v\>^{l+(-2s+2\ve(1+2s))l_1+(-\ga+2\ve\ga+2s)}\phi_2\|^2_{L^\infty_{t}([0,T])L^{r_1^*}_xL^2_{v}}\\
	&\le CM_0^2\om_0^{2+2s}\varpi_0^{-2+2\ve}
	\|f\|_{L^2_{t}([0,T])L^2_{x}L^2_v}^2. 
\end{align}
For the \emph{leading part}, by splitting $\phi_2=\phi_{2,\de_4}+\wt\phi_{2,\de_4}$ as in \eqref{splitphi} (again we use a \textbf{different convergence rate} $\de_4$ from $\de_2$ in \eqref{es21}), the regular part $\phi_{2,\de_4}$ can be estimated in the $H^{2s}$ norm as in \eqref{esGa6}, while the irregular small part $\Ga_i(f,\wt\phi_{2,\de_4},f)$ can be estimated as in Subsection \ref{SecGammai}. 
That is, for the regular part $\phi_{2,\de_4}$, applying \eqref{64eq} and placing the $2s$-derivative and the redundant weight $\<v\>^{C}$ on $\wt\phi_{2,\de_4}$ yields
\begin{align}\label{es780zz}\notag
	&\sum_{r\ge0,\,j\ge 1}(\om2^k)^{-2s}2^{2rl}\|\De_jQ_0\wt{P_k}\wt{R_r}Q(\mu^{\fr12}f,(P_0+P_1)R_r\phi_2)\|_{L^2_t([0,T])L^2_{x,v}}\\
	&\notag\ \le 
	C\sum_{r\ge0,\,j\ge 1}(\om2^k)^{-2s}2^{2rl}\|f\|_{L^2_{t,x,v}}^2\|\<v\>^{\ga+2s}(P_0+P_1)R_r\phi_{2,\de_4}\|_{L^\infty_{t,x}H^{2s}_v}^2\\
	&\ \le C_{\de_4,M_0}\om_0^{-2s}\|f\|_{L^2_t([0,T])L^2_{x,v}}^2. 
\end{align}
For the \emph{leading part} of irregular small part $\wt\phi_{2,\de_4}$, similar to the Bony-type estimate in \eqref{es751}, we can apply \eqref{es751leading} and the Bony-decomposition-type estimate of $\Ga_i(f,\wt\phi_{2,\de_4},f)$ $(1\le i\le 3)$ in Subsection \ref{SecGammai}. That is, 
\begin{align*}
	&\sum_{j\ge 1}\om^2(\varpi2^j)^{-2}2^{2rl}\|\De_jQ_0\wt{P_k}\wt{R_r}Q(\mu^{\fr12}f,(P_0+P_1)R_r\wt\phi_{2,\de_4})\|_{L^2_t([0,T])L^2_{x,v}}^2\\
	&\notag\le \om_0^{2+4s}\varpi_0^{-2}2^{r(-\ga+2s)+2rl}\sum_{j\ge 1,\,k=0,1}(\om(j)2^k)^{-2s}\\
	&\ \notag\ \times 
	\bigg\{\sum_{|q-j|\le 4}\int_{[0,T]\times\R^d_x}
	\|S_{q-1}\<D_v\>^{-s}\<v\>^{-N}f\|_{L^2_v}^2\|\De_{q}P^j_kR_r\wt\phi_{2,\de_4}\|_{L^2_D}^2\\
	&\notag\quad+
	\sum_{|p-j|\le 4}\int_{[0,T]\times\R^d_x}
	\|\De_q\<D_v\>^{-s}\<v\>^{-N}f\|_{L^2_v}^2\|S_{q-1}P^j_kR_r\wt\phi_{2,\de_4}\|_{L^2_D}^2\\
	&\quad+
	\sum_{p,q\ge 0}\1_{\substack{|p-q|\le 1,\qquad\\\max\{p,q\}\ge j-2}}
	\int_{[0,T]\times\R^d_x}\|\De_p\<D_v\>^{-s}\<v\>^{-N}f\|_{L^2_v}^2\|\De_{q}P^j_kR_r\wt\phi_{2,\de_4}\|_{L^2_D}^2\bigg\}\\
	&
	=:\om_0^{2+4s}\varpi_0^{-2}2^{r(-\ga+2s)+2rl}\sum_{i=1}^3\mathring\Ga_i(f,\wt\phi_{2,\de_4}).
\end{align*}
where we compute the coefficient as in \eqref{es751}, i.e., $(\om(j))^{2+4s}(\varpi2^j)^{-2}=\om_0^{2+4s}\varpi_0^{-2}2^{-2r\ga}$.
For $\mathring\Ga_1(f,\phi_2)$, applying H\"older's inequality with $\fr{1}{r_0}+\fr{1}{r_0^*}=\fr12$, splitting $S_{q-1}=\sum_{p\le q-2}\De_p$ with $\sum_{p\ge 0}|v_p|\le \big(\sum_{p\ge 0}(p+1)^2|v_p|^2\big)^{\frac{1}{2}}$, using estimates \eqref{esphi2}, \eqref{equivTriebelLizonkin}, we have 
\begin{align*}
	&\mathring\Ga_1(f,\phi_2)\le C\int_{t,x}
		\sum_{j,q\ge 0}\1_{|q-j|\le 4}
	\|S_{q-1}\<D_v\>^{-s}\<v\>^{-N}f\|_{L^2_v}^2
	\|\<v\>^{\frac{\ga+2s}{2}}\De_{q}R_r\phi_2\|_{L^2_v}^2
	\\
	&\le C\Big\|
	\Big(\sum_{p\ge 0}(p+1)^2\|\De_p\<D_v\>^{-s}\<v\>^{-N}f\|_{L^2_v}^2\Big)^{\frac{1}{2}}\Big\|_{L^{r_0}_{t,x}}^2
	\Big\|\Big(\sum_{j,q\ge 0}\1_{|q-j|\le 4}\|\<v\>^{\frac{\ga+2s}{2}}\De_{q}R_r\phi_2\|_{L^2_v}^2\Big)^{\frac{1}{2}}\Big\|_{L^{r_0^*}_{t,x}}^2
	\\
	&\le C
	\sum_{p\ge 0}(p+1)^2\|\De_p\<D_v\>^{-s}\<v\>^{-N}f\|_{L^{r_0}_{t,x}L^2_v}^2
	\|\<v\>^{\frac{\ga+2s}{2}}R_r\phi_2\|_{L^{r_0^*}_{t,x}L^2_v}^2.
\end{align*}
For $\mathring\Ga_2(f,\phi_2)$, applying estimate \eqref{esphi2}, the H\"older's inequality, and splitting $S_{q-1}=\sum_{p\le q-2}\De_p$ with $\sum_{q\ge 0}\1_{q\le p-2}|v_q|\le (p+1)^{\frac{1}{2}}\big(\sum_{q\ge 0}|v_q|^2\big)^{\frac{1}{2}}$ and estimate \eqref{esSq}, we have 
\begin{align*}
	\mathring\Ga_2(f,\phi_2)
	&\le C
	\sum_{p\ge 0}
	\|\De_{p}\<D_v\>^{-s}\<v\>^{-N}f\|_{L^{r_0}_{t,x}L^2_v}^2\|\<v\>^{\frac{\ga+2s}{2}}S_{p-1}R_r\phi_2\|_{L^{r_0^*}_{t,x}L^2_v}^2\\
	&\le C
	\sum_{p\ge 0}
	(p+1)^2\|\De_{p}\<D_v\>^{-s}\<v\>^{-N}f\|_{L^{r_0}_{t,x}L^2_v}^2\|\<v\>^{\frac{\ga+2s}{2}}R_r\phi_2\|_{L^{r_0^*}_{t,x}L^2_v}^2.
\end{align*}
For $\mathring\Ga_3(f,\phi_2)$, by H\"older's inequality, $\sum_{j\ge 0}\1_{\max\{p,q\}\ge j-2}\1_{|p-q|\le 1}\le C(p+1)$, $\sum_{p,q\ge 0}\1_{|p-q|\le 1}|u_p||v_q|\le C\big(\sum_{p,q\ge 0}|u_p|^2|v_q|^2\big)^{\fr12}$, and estimate \eqref{equivTriebelLizonkin}, we have 
\begin{align*}
	\mathring\Ga_3(f,\phi_2)
	&\le C
	\Big\|\Big(\sum_{p\ge 0}(p+1)
	\|\De_p\<D_v\>^{-s}\<v\>^{-N}f\|_{L^2_v}^2\Big)^{\frac{1}{2}}\Big\|_{L^{r_0}_{t,x}}^2
	\Big\|\Big(\sum_{q\ge 0}\|\<v\>^{\frac{\ga+2s}{2}}\De_{q}R_r\phi_2\|_{L^2_v}^2\Big)^{\frac{1}{2}}\Big\|_{L^{r_0^*}_{t,x}}^2\\
	&\le C
	\sum_{p\ge 0}(p+1)
	\|\De_p\<D_v\>^{-s}\<v\>^{-N}f\|_{L^{r_0}_{t,x}L^2_v}^2
	\|\<v\>^{\frac{\ga+2s}{2}}\De_{q}R_r\phi_2\|_{L^{r_0^*}_{t,x}L^2_v}^2. 
\end{align*}
Substituting $\phi_2=\wt{\phi}_{2,\de_4}$ and collecting the above estimates into \eqref{es780zz}, 
\begin{align}\label{es780New}\notag
	&\sum_{r\ge 0,\,j\ge 1}\om^2(\varpi2^j)^{-2}2^{2rl}\|\De_jQ_0\wt{P_k}\wt{R_r}Q(\mu^{\fr12}f,(P_0+P_1)R_r\wt\phi_{2,\de_4})\|_{L^2_t([0,T])L^2_{x,v}}^2\\
	&\le Co_{\de_4}(1)\om_0^{2+4s}\varpi_0^{-2}\sum_{j\ge 0}(j+1)^2\|\De_j\<D_v\>^{-s}\<v\>^{-N}f\|_{L^{r_0}_{t}([0,T])L^{r_0}_{x}L^2_v}^2,
\end{align}
for any $r_0>2$ with $\fr{1}{r_0}+\fr{1}{r_0^*}=\fr{1}{2}$, and sufficiently large $N\ge 0$, where $\|\<v\>^{C_{l_1,l_0,l,\ga,s,d}}\wt\phi_{2,\de_4}\|_{L^{r_0^*}_{t}([0,T])L^{r_0^*}_{x,v}}^2 \le o_{\de_4}(1)$ is used as in the \emph{a priori} assumption \eqref{apriori2}.

\smallskip Finally, observing that the largest velocity weight in the above estimates is $2^{r(-\ga+2s)+r(2\ve l_0)}\approx\<v\>^{-\ga+2s+2\ve l_0}$, we substitute the estimates \eqref{es741}, \eqref{es748}, \eqref{es748a}, \eqref{es748b}, \eqref{es779}, \eqref{es777regular}, \eqref{es777}, \eqref{es780zgg}, \eqref{es780zz} and \eqref{es780New} into the linear combination $\sum_{r\ge0,\,j\ge1}2^{2rl+r(\ga-2s)+r(\ga+2s)}\times\eqref{hypoes11}$, by choosing suitable $\wt{l_0}$, where we choose the constant $\ga+2s<0$ for simplicity (so that $\sum_{r\ge0}2^{r(\ga+2s)}\le C$; in fact, any negative exponent suffices), and take $C_0>1$ sufficiently large.
Moreover, we choose $\om_0,\varpi_0\ge 1$, and
\begin{align}\label{choice7}
	\begin{cases}
	\ga+2s<0, \ \ l_0,l_1,l\ge 0,
	&l_0=(1+2s)l_1+\ga,\\
	-2l_1+l+2\ga-1\le 0,&\text{(for last term in \eqref{es741}),}\\
	-4sl_1+2l\le 0,&\text{(for \eqref{es748b}),}\\
	2\ve l_0+\ga+2s\le 0, &\text{(for \eqref{es779})}.
	\end{cases}
\end{align}
Thus, dropping the redundant coefficients (noting $\ga-2s< 0$ and $\ga+2s<0$), applying the Littlewood-Paley theorem \ref{LPThm} and $L^2$ energy estimate \eqref{esExtenL2Energy} to control the energy on $t\in(-\infty,0)$, we deduce 
\begin{align}\label{MainEnergyP0Dej}\notag
	&\sum_{r\ge 0,\,j\ge 1}2^{2rl+2r\ga}\|\De_jP_0R_rf\|_{L^2_t(\R)L^2_{x,v}}^2\\
	&\ \le \notag
	C_{C_0,M_0,\om_0,\varpi_0,A_0}\|(\<v\>^{-sl_1+l}f_0,\, f_0)\|_{L^2_{x,v}}^2
	+C\big(C_0\om_0^{-2}+\om_0^{2+2s}\varpi_0^{-2}\big)\|(\<v\>^{-sl_1+l}f,\,f)\|_{L^2_t([0,\infty))L^2_{x,v}}^2\\
	&\notag\quad
	+C\big(\om_0^2\varpi_0^{-2}
	+M_0^2\om_0^{2+2s}\varpi_0^{-2+2\ve}
	+C_{\de_4,M_0}\om_0^{-2s}
	\big)\|(\<v\>^{-sl_1+l}f,\,f)\|_{L^2_t([0,T])L^2_{x,v}}^2\\
	&\notag\quad
	+C\big(C_0
	+C_{M_0}\om_0^{2+4s}\varpi_0^{-2}
	+C_{\de_3,M_0}\om_0^{2+4s}\varpi_0^{-3}
	\big)
	\sum_{r,j,k\ge0}(\om2^k)^{-2s}2^{2rl}\|\De_jP_kR_rf\|_{L^2_t([0,\infty))L^2_xL^2_D}^2\\
	&\notag\quad+ 
	A_0^2\om_0^{2+2s}\varpi_0^{-2}\sum_{r,k,j\ge 1}(\om2^k)^{-2s}2^{2rl}\|\De_jP_kR_rf\|^2_{L^2_t((T,\infty))L^2_{x,v}}\\
	&\notag\quad
	+
	o_{\de_3}(1)C_{\om_0,\varpi_0,l_0,l_1}\sum_{r\ge 0,\,j\ge 1}\om^{-2s}2^{2rl}\|\De_jP^j_0R_rf\|^2_{L^2_t([0,T])L^2_{x,v}}
	\\
	&\quad+ Co_{\de_4}(1)\om_0^{2+4s}\varpi_0^{-2}\sup_{r\ge 0}\sum_{j\ge 0}(j+1)^2\|\De_j\<D_v\>^{-s}\<v\>^{-N}f\|_{L^{r_0}_t([0,T])L^{r_0}_{x}L^2_v}^2,
\end{align}
for any $N>0$, where we used $o_{\de_3}(1)\om_0^{2+2s}\varpi_0^{-2}\le M_0^2\om_0^{2+2s}\varpi_0^{-2+2\ve}$ for small $\de_3>0$ and $\varpi_0\ge 1$, $2^r\approx\<v\>$ on the support of $R_r$, and $C_0^{-1}<C_0$. 


\subsubsection{The low-low frequency case $P_0\De_0$}
For the case $k=j=0$, we simply use the instant energy $L^\infty_t([0,\infty))$ on $[0,T]$ and strong dissipation $A_0\|\cdot\|_{L^2_t((T,\infty))}^2$ on $(T,\infty)$. For the part $(T,\infty)$, picking out the case $k=j=0$ from the summation and noting that $\om=\om_02^{\al j+rl_1}$, we have 
\begin{align}\label{jk0}
	&A_0\sum_{r,j,k\ge 0}(\om2^k)^{-2s}2^{2rl}\|\De_jP_kR_rf\|_{L^{2}_{t}((T,\infty))L^2_{x,v}}^2
	\ge 
	A_0\om_0^{-2s}\sum_{r\ge 0}2^{-2srl_1}2^{2rl}
	\|\De_0P_0R_rf\|_{L^{2}_{t}((T,\infty))L^2_{x,v}}^2.
\end{align}

\subsubsection{The negative-order Bessel potential term $\<D_v\>^{-s}f$}\label{SubsecNegaBessel}
For the negative Bessel potential term $\<D_v\>^{-s}$ in \eqref{es21} and \eqref{MainEnergyP0Dej}, we would like to pull the $L^{r_0}_{x}$ ($r_0>0$ is close to $2$) norm back to an $L^2_{x}$ norm.
Splitting the cases $j=0$ and $j\ge 1$, using the fact that $(\varpi2^{j})^{\ve}\De_j\<D_x\>^{-\ve}$ and $\<D_x\>^{\ve}\<D_{t,x}\>^{-\ve}$ are $L^p$ multipliers for $j\ge 1$ and $p\in(1,\infty)$, together with $\sum_{j\ge 1}2^{-\fr{r_0\ve j}{2}}\le C_\ve$, 
we can control the last terms in \eqref{es21} and \eqref{MainEnergyP0Dej} by 
\begin{align}\label{es765}\notag
	&
	\Big(\sup_{r\ge 0}\sum_{j\ge 0}(j+1)^2\|\De_j\<D_v\>^{-s}\<v\>^{-N}f\|_{L^{r_0}_{t}([0,T])L^{r_0}_{x}L^2_v}^2\Big)^{\frac{1}{2}}\\
	&\notag \le 
	C_\ve\Big(\sup_{r\ge 0}\|\De_0\<D_v\>^{-s}\<v\>^{-N}f\|_{L^{r_0}_{t}(\R)L^{r_0}_{x}L^2_v}^{r_0}
	+\sup_{r\ge 0}\sum_{j\ge 1}2^{-\fr{r_0\ve j}{2}}(\varpi2^{j})^{r_0\ve}\|\De_j\<D_v\>^{-s}\<v\>^{-N}f\|_{L^{r_0}_{t}(\R)L^{r_0}_{x}L^2_v}^{r_0}\Big)^{\frac{1}{r_0}}\\
	&
	 \le C_\ve
		\|\<D_{t,x}\>^{\ve}\<D_v\>^{-s}\<v\>^{-N}f\|_{L^{r_0}_{t}(\R)L^{r_0}_{x}L^2_v},
\end{align}
for any $\ve>0$.
On the other hand, by applying the Embedding Theorem \ref{BesovembeddThm} and the upper bound estimate in Corollary \ref{CoroNegativeHypo} (with $m=s$ and $p=2$ therein), there exist $r_0 \in (2,\infty)$ (close to $2$), $r^*\in (2,\infty)$ (relatively large), and $\beta>0$ such that, by simply choosing $N>d+2+\max\{\ga+2s,0\}$, 
\begin{align}\label{es77}\notag
	&\quad\notag
	\|\<D_{t,x}\>^{\ve}\<D_v\>^{-s}\<v\>^{-N}f\|_{L^{r_0}_{t}(\R)L^{r_0}_{x}L^{2}_v}
	\le \|\<D_{t,x}\>^{\beta}\<D_v\>^{-s}\<v\>^{-N}f\|_{L^{2}_{t}(\R)L^{2}_{x}L^{2}_v}\\
	&\quad\notag\le C\big(1+\|\phi_1\|_{L^{\infty}_{t}([0,T])L^{\infty}_{x}L^2_v}+\|\<v\>^{1+\ga+2s-N}\phi_2\|_{L^{r^*}_{t}([0,T])L^{r^*}_{x}L^2_v}\big)\|\<v\>^{1-N}f\|_{L^2_{t}([0,T])L^2_{x}L^2_v}\\
	&\qquad\notag
	+C(1+A_0)\|\<v\>^{1-N}f\|_{L^2_{t}((-\infty,0)\cup(T,\infty))L^2_{x}L^2_v}\\
	&\quad\le C(1+M_0)\|f\|_{L^2_t([0,T])L^2_{x}L^2_v}+C(1+A_0)(\|f\|_{L^2_{t}((T,\infty))L^2_{x,v}}+A_0^{-1/2}\|f_0\|_{L^2_{x,v}}),
\end{align}
where we used $L^2$ estimate \eqref{esExtenL2Energy} for $t\in(-\infty,0)$ and applied assumption \eqref{assumption} to control $\phi_i$ (by choosing $r=r(d,\ga,s)>2$ in \eqref{assumption} sufficiently large).
Moreover, we will let $M_0,A_0\ge 1$ for simplicity. 
This is the \text{only} place where we determine $r_0>2$.
Note that $N>0$ can be taken arbitrarily large due to the decay of $\mu^{\frac{1}{2}}$, and that $\beta>0$ in Theorem \ref{ThmNegativeHypo} and Corollary \ref{CoroNegativeHypo} depends only on $s$ and $d$ when $p=2$.

\subsubsection{Summing the above cases}
When $\ga+2s<0$, for any $\om_0,C_0,\varpi_0>1$, we take combination $\eqref{es21}+c_0(8C)^{-1}\times\eqref{esRHS1}+c_0(8CC_0)^{-1}\times\eqref{MainEnergyP0Dej}$ and apply estimates \eqref{es765} and \eqref{es77} to control the $\|\<D_v\>^{-s}\<v\>^{-N}f\|$ term. The choice of the coefficients $c_0(8C)^{-1},c_0(8CC_0)^{-1}$ is made in order to absorb the $L^2_D$ dissipation terms. 
This yields the energy estimate 
 \begin{align}\notag\label{esSumEverything}
	&\Big\|\sum_{r,j,k\ge 0}(\om2^k)^{-2s}2^{2rl}\|\De_jP_kR_rf\|_{L^2_{x,v}}^2\Big\|_{L^\infty_t([0,\infty))}
	+
	\fr{3c_0}{4}\sum_{r,j,k\ge 0}(\om2^k)^{-2s}2^{2rl}\|\De_jP_kR_rf\|_{L^{2}_{t}([0,\infty))L^{2}_{x}L^2_D}^2
	\\
	& \notag
	+
	A_0\sum_{r,j,k\ge 0}(\om2^k)^{-2s}2^{2rl}\|\De_jP_kR_rf\|_{L^{2}_{t}((T,\infty))L^2_{x,v}}^2
	+\frac{c_0}{8C}\sum_{r,j\ge0,\,k\ge 1}2^{2rl+r\ga}\|\De_jP_kR_rf\|^2_{L^2_t([0,\infty))L^2_{x,v}}
	\\
	&\notag +\frac{c_0}{8CC_0}\sum_{r\ge 0,\,j\ge 1}
	2^{2rl+2r\ga}\|\De_jP_0R_rf\|_{L^2_t([0,\infty))L^2_{x,v}}^2\\
	&\le \notag
	C_{C_0,M_0,\om_0,\varpi_0,A_0}\|f_0\|_{L^2_{x,v}}^2
	+C\big(\om_0^{-2}+C_0^{-1}\om_0^{2+2s}\varpi_0^{-2}\big)\|f\|_{L^2_t([0,\infty))L^2_{x,v}}^2\\
	&\notag\ 
	+C\big(
	M_0^2\om_0^{-2s}
	+M_0\om_0^{-s-1}\varpi_0^{\ve}
	+C_{\de_1,M_0}\varpi_0^{-2}
	\\
	& \notag\quad
	+C_0^{-1}M_0^2\om_0^{2+2s}\varpi_0^{-2+2\ve}
	+C_{\de_4,M_0}C_0^{-1}\om_0^{-2s}
	\big)\|f\|_{L^2_t([0,T])L^2_{x,v}}^2\\
	&\notag\  
	+CC_0^{-1}\om_0^{2+4s}\varpi_0^{-2}\big(
	C_{M_0}
	+C_{\de_3,M_0}\varpi_0^{-1}
	\big)\sum_{r,j,k\ge0}(\om2^k)^{-2s}2^{2rl}\|\De_jP_kR_rf\|_{L^2_t([0,\infty))L^2_xL^2_D}^2\\
	&\notag\  + 
	C_0^{-1}A_0^2\om_0^{2+2s}\varpi_0^{-2}\sum_{r,k,j\ge 1}(\om2^k)^{-2s}2^{2rl}\|\De_jP_kR_rf\|^2_{L^2_t((T,\infty))L^2_{x,v}}\\
	&\notag\  + C\big(o_{\de_2}(1)+o_{\de_4}(1)C_0^{-1}\om_0^{2+4s}\varpi_0^{-2}\big)
	\big(CM_0^2\|f\|^2_{L^2_t([0,T])L^2_{x,v}}+CA_0^2\|f\|_{L^2_{t}((T,\infty))L^2_{x,v}}^2\big)
	\\
	&\  
	+C_{\de_2,\de_3,\om_0,\varpi_0,M_0,l_0,l_1}\sum_{r,j,k\ge 0}(\om2^k)^{-2s}2^{2rl}\|\De_jP_kR_rf\|_{L^2_t([0,T])L^2_xL^2_v}^2,
 \end{align}
 provided that \eqref{choiceM2}, \eqref{choideM3}, and \eqref{choice7} hold, 
 for any $\ve>0$, where we also used $\<v\>^{\fr{\ga+2s}{2}}\le 1$ and $-l_1+l+\fr{\ga}{2}-1\le -s l_1+l$, and dropped some redundant terms. Here, for simplicity, we let 
\begin{align}\label{choicerho0}
	sl_1=l,\quad\text{and}\quad \varpi_0=\om_0^{1+2s}\times\frac{c_0}{100C},
	\quad\text{and}\quad C_0^{-1}=A_0\om_0^{-2s}.
\end{align}
where the relation $\varpi_0=\om_0^{1+2s}\times\frac{c_0}{100C}$ is chosen to satisfy the third condition of \eqref{choice6}, with $\frac{c_0}{100C}$ being merely a fixed constant (depending only on $\ga,s$). 
Here, $A_0=\om_0^{2s}C_0^{-1}\ge 1$ should be chosen sufficiently large so that the $L^p$ estimate \eqref{LpTinftyEnergy} hold for the case $p=2$. 
Moreover, $C_0\ge 1$ should be chosen sufficiently large so that the first term on the right-hand side of \eqref{es741} is controlled, which will be achieved by taking $C_0=C_0(M_0)\gg M_0$ sufficiently large while $p=p(s,d)$ is fixed.
Furthermore, let $\varpi_0,\om_0,C_0>1$ be sufficiently large and $\de_i\in(0,1)$ be sufficiently small such that 
\begin{align}\label{choice9}
	\begin{cases}
	CC_0^{-1}\om_0^{2+4s}\varpi_0^{-2}\big(
		C_{M_0}+
	C_{\de_3,M_0}\varpi_0^{-1}
	\big)
	\le\frac{c_0}{8},&\text{($L^2_D$ dissipation absorbed),}\\
	C_0^{-1}A_0\om_0^{2+2s}\varpi_0^{-2}\le \frac{1}{4}, &\kern-3em\text{($A_0^2\sum_{r,j,k}\|\cdot\|_{L^2_t((T,\infty))}^2$ absorbed)}.
	\end{cases}
\end{align}
Applying \eqref{jk0} to capture the lower-frequency dissipation of $\De_0P_0$, utilizing the choice in \eqref{choicerho0}, dropping the second and third terms (after absorbing the dissipation $L^2_D$ and the large-time term $A_0\|\cdot\|_{L^2_t((T,\infty))}^2$), and splitting the $L^2_t([0,T])$ and $L^2_t([T,\infty))$ norms, the main energy estimate \eqref{esSumEverything} reduces to
 \begin{align}\notag
	\label{es21c}
	&\Big\|\sum_{r,j,k\ge 0}(\om2^k)^{-2s}2^{2rl}\|\De_jP_kR_rf\|_{L^2_{x,v}}^2\Big\|_{L^\infty_t([0,\infty))}
	+
	\notag
	\frac{c_0}{8C}\sum_{r,j\ge0,\,k\ge 1}2^{2rl+r\ga}\|\De_jP_kR_rf\|^2_{L^2_t([0,\infty))L^2_{x,v}}
	\\
	&\notag 
	+\frac{c_0}{16CC_0}\sum_{r\ge 0,\,j\ge 1}
	2^{2rl+2r\ga}\|\De_jP_0R_rf\|_{L^2_t([0,\infty))L^2_{x,v}}^2
	+\frac{1}{2C_0}\sum_{r\ge 0}
	\|\De_0P_0R_rf\|_{L^{2}_{t}((T,\infty))L^2_{x,v}}^2\\
&\le \notag
	C_{C_0,M_0,\om_0}\|f_0\|_{L^2_{x,v}}^2\\
	&\notag\ 
	+C\Big(\om_0^{-2}+C_0^{-1}\om_0^{2+2s}\varpi_0^{-2}
	+\big(o_{\de_2}(1)+o_{\de_4}(1)C_0^{-1}\om_0^{2+4s}\varpi_0^{-2}\big)A_0^2\Big)\|f\|_{L^2_t([T,\infty))L^2_{x,v}}^2\\
	&\notag\quad
	+C\Big(
	\om_0^{-2}+C_0^{-1}\om_0^{2+2s}\varpi_0^{-2}
	+
	M_0^2\om_0^{-2s}
	+M_0\om_0^{-s-1}\varpi_0^{\ve}
	+C_{\de_1,M_0}\varpi_0^{-2}
	+C_0^{-1}M_0^2\om_0^{2+2s}\varpi_0^{-2+2\ve}
	\\
	&\quad \qquad\notag
	+C_{\de_4,M_0}C_0^{-1}\om_0^{-2s}
	+\big(o_{\de_2}(1)+o_{\de_4}(1)C_0^{-1}\om_0^{2+4s}\varpi_0^{-2}\big)
	\Big)\|f\|_{L^2_t([0,T])L^2_{x,v}}^2
	\\
	&\  \notag
	+C_{\de_2,\de_3,M_0,\om_0}\sum_{r,j,k\ge 0}(\om2^k)^{-2s}2^{2rl}\|\De_jP_kR_rf\|_{L^2_t([0,T])L^2_xL^2_v}^2 \\
&\le \notag
	C_{C_0,M_0,\om_0}\|f_0\|_{L^2_{x,v}}^2
	+C\Big(\om_0^{-2}+C_0^{-1}\om_0^{-2s}
	+o_{\de_2}(1)A_0^{2}+o_{\de_4}(1)C_0^{-1}A_0^{2}\Big)\|f\|_{L^2_t([T,\infty))L^2_{x,v}}^2\\
	&\notag\ 
	+C\Big(
	\om_0^{-2}+C_0^{-1}\om_0^{-2s}
	+
	M_0^2\om_0^{-2s}
	+M_0\om_0^{-s-1+\ve(1+2s)}
	+C_{\de_1,M_0}\om_0^{-2-4s}\\
	&\quad\notag
	+C_0^{-1}M_0^2\om_0^{-2s+2\ve(1+2s)}
	+C_{\de_4,M_0}C_0^{-1}\om_0^{-2s}+\big(o_{\de_2}(1)+o_{\de_4}(1)C_0^{-1}\big)
	\Big)\|f\|_{L^2_t([0,T])L^2_{x,v}}^2
	\\
	&\  
	+C_{\de_2,\de_3,M_0,\om_0,l_0,l_1}\sum_{r,j,k\ge 0}(\om2^k)^{-2s}2^{2rl}\|\De_jP_kR_rf\|_{L^2_t([0,T])L^2_xL^2_v}^2,
 \end{align}
where the constants $C=C(p,\ga,s)>0$ is a generic constant. 
 The left-hand terms have weights of non-negative order if (note that $\ga<0$)
 \begin{align}\label{choiceN0}
	2l+2\ga\ge 0.
 \end{align}
 To control the right-hand $L^2$ norms with small constants, we expand using dyadic decomposition \eqref{fequivDejPk} and Littlewood-Paley theorem \ref{LPThm}:
\begin{align}\label{es769}
	\|f\|_{L^2_{x,v}}^2
	&\le C\sum_{r\ge 0}
	\|R_rf\|_{L^2_{x,v}}^2
	\le C\sum_{r,j,k\ge 0}
	\|\De_jP_kR_rf\|_{L^2_{x,v}}^2, 
\end{align}
Moreover, the case $\{k=j=0\}$ in \eqref{es769} on $[0,T]$ will be controlled by Gr\"onwall's inequality, while the $[T,\infty)$ part can be absorbed into the last term on the left-hand side of \eqref{es21c}, provided that
\begin{align}\label{choiceN2}
	\om_0^{-2}+C_0^{-1}\om_0^{-2s}
	+o_{\de_2}(1)A_0^{2}+o_{\de_4}(1)C_0^{-1}A_0^{2} \ll C_0^{-1}.
\end{align}
Similarly, the cases $\{k=0,\, j\ge 1\}$ and $\{k\ge 1,\,j\ge 0\}$ in \eqref{es769} on $[0,\infty)$ can be absorbed by the second and third left-hand terms of \eqref{es21c}, provided that $2l+2\ga\ge 0$, $C_0\gg 1$ and
\begin{align}\label{choiceN3}
	&\om_0^{-2}+C_0^{-1}\om_0^{-2s}
	+
	M_0^2\om_0^{-2s}
	+M_0\om_0^{-s-1+\ve(1+2s)}
	+C_{\de_1,M_0}\om_0^{-2-4s}\\
	&\qquad\notag
	+C_0^{-1}M_0^2\om_0^{-2s+2\ve(1+2s)}
	+C_{\de_4,M_0}C_0^{-1}\om_0^{-2s}+o_{\de_2}(1)+o_{\de_4}(1)C_0^{-1}\ll C_0^{-1}.
\end{align}

\smallskip 
Therefore, to absorb the right-hand side of \eqref{es21c} into the left-hand side, we choose the constants and coefficients so that, together with the parameter conditions from \eqref{choiceM2}, \eqref{choideM3}, \eqref{choice3}, \eqref{choice6}, \eqref{choice7}, \eqref{choice9}, \eqref{choiceN0}, \eqref{choiceN2}, and \eqref{choiceN3}, the following constraints hold: $\ga+2s<0$, $\om=\om_02^{\al j+rl_1}$, $\al=\frac{1}{1+2s}\in(0,1)$, $l_0,l_1,l\ge 0$, $\om_0,\varpi_0\ge 1$, and 
\begin{align}\label{choice10}
\begin{cases}
	l_0=(1+2s)l_1+\ga,\ \ 
	sl_1=l,\\
	-2l_1+l+2\ga-1\le 0,&\text{(for last term in \eqref{es741}),}\\
	-4sl_1+2l\le 0,&\text{(for \eqref{es748b}),}\\
	2l+2\ga\ge 0,&\text{(for left-hand weight)},\\
	-l_0+\frac{\ga+2s}{2}\le -sl_1,&\text{(for \eqref{es733})},\\
	5\ve l_0+\frac{\ga+2s}{2}\le 0,&\text{(for \eqref{es733})},\\
	2\ve l_0+\ga+2s\le 0, &\text{(for \eqref{es779})},\\
	\ga+2s+2\ve l_0-2sl_1+2l<0,&\text{(for \eqref{choiceM2})}, \\ 
	-\frac{3}{2}s\al+2\ve<0,&\text{(for \eqref{choiceM2})}, 
\end{cases}
\end{align}
with $\ve\in(0,\min\{\fr{s}{2(1+2s)},\fr{1-s}{2(1+2s)}\})$, and 
\begin{align}\label{condition1}
	\begin{cases}
	\frac{c_0}{100C}+\om_0^{-2s}
	+o_{\de_1}(1)+o_{\de_2}(1)
	\le \fr{c_0}{8C},\\
	C_0^{-1}C_{M_0}+C_{\de_3,M_0}C_0^{-1}\om_0^{-1-2s}
	\le\frac{c_0}{8C},
	\\
	\om_0^{-2}+C_0^{-1}\om_0^{-2s}
	+o_{\de_2}(1)A_0^{2}+o_{\de_4}(1)C_0^{-1}A_0^{2} \ll C_0^{-1},\\
	\om_0^{-2}+C_0^{-1}\om_0^{-2s}
	+
	M_0^2\om_0^{-2s}
	+M_0\om_0^{-s-1+\ve(1+2s)}
	+C_{\de_1,M_0}\om_0^{-2-4s}\\
	\qquad
	+\ C_0^{-1}M_0^2\om_0^{-2s+2\ve(1+2s)}
	+C_{\de_4,M_0}C_0^{-1}\om_0^{-2s}+o_{\de_2}(1)+o_{\de_4}(1)C_0^{-1}\ll C_0^{-1}.
	\end{cases}
\end{align}
To close the constraints \eqref{choice10} and \eqref{condition1}. Using the first line of \eqref{choice10} with $s\in(0,1)$, we then choose $l_1=l_1(\ga,s,d)>0$ sufficiently large (and hence $l_0>0$ sufficiently large) so that the second through fifth lines of \eqref{choice10} are satisfied. Lastly, we choose $\ve\in(0,\fr{s}{2(1+2s)})$ sufficiently small so that the sixth through ninth conditions in \eqref{choice10} are satisfied.
On the other hand, using $C_0^{-1}=A_0\om_0^{-2s}$, the constraint \eqref{condition1} can be reduced to 
\begin{align}\label{con5}
	\begin{cases}
	\om_0^{-2s}+o_{\de_1}(1)+o_{\de_2}(1)+o_{\de_3}(1)+o_{\de_4}(1)\ll 1,\\
	C_{M_0}A_0\om_0^{-2s}
	+C_{\de_3,M_0}A_0\om_0^{-1-4s}
	\ll 1,
	\\
	A_0^{-1}\om_0^{-2+2s}
	+o_{\de_2}(1)A_0\om_0^{2s}
	+o_{\de_4}(1)A_0^{2} \ll 1,\\
	A_0^{-1}\om_0^{-2+2s}+\om_0^{-2s}
	+M_0^2A_0^{-1}
	+M_0A_0^{-1}\om_0^{-1+s+\ve(1+2s)}
	+A_0^{-1}C_{\de_1,M_0}\om_0^{-2-2s}\\
	\qquad
	+\ M_0^2\om_0^{-2s+2\ve(1+2s)}
	+C_{\de_4,M_0}\om_0^{-2s}
	+o_{\de_2}(1)A_0^{-1}\om_0^{2s}
	+o_{\de_4}(1)\ll 1.
	\end{cases}
\end{align}
where the implicit constants depend only on $p,\ga,s>0$. 
 The worst term is $M_0^2A_0^{-1}$. 
Therefore, for any $\ve\in(0,\min\{\fr{s}{2(1+2s)},\fr{1-s}{2(1+2s)}\})$ (with the following choices uniform in $\ve$), we first choose $A_0=A_0(M_0)>0$ sufficiently large, and then take $\de_1,\de_3,\de_4=\de_4(A_0,M_0)>0$ sufficiently small. Then the constraint \eqref{con5} redueces to 
\begin{align}\label{con7}
	\begin{cases}
	\om_0^{-2s}+o_{\de_2}(1)\ll 1,\\
	C_{M_0}A_0\om_0^{-2s}
	+C_{\de_3,M_0}A_0\om_0^{-1-4s}
	\ll 1,
	\\
	A_0^{-1}\om_0^{-2+2s}
	+o_{\de_2}(1)A_0\om_0^{2s} \ll 1,\\
	A_0^{-1}M_0\om_0^{-1+s+\ve(1+2s)}
	+A_0^{-1}C_{\de_1,M_0}\om_0^{-2-2s}\\
	\qquad
	+ \ M_0^2\om_0^{-2s+2\ve(1+2s)}
	+C_{\de_4,M_0}\om_0^{-2s}
	+o_{\de_2}(1)A_0^{-1}\om_0^{2s}
	\ll 1.
	\end{cases}
\end{align}
This can be achieved by first choosing $\om_0=\om_0(A_0,M_0,\de_1,\de_3,\de_4)>0$ sufficiently large, and then $\de_2=\de_2(\om_0)>0$ sufficiently small, if $\ve\in(0,\min\{\fr{s}{2(1+2s)},\fr{1-s}{2(1+2s)}\})$. 


Therefore, the right-hand terms in \eqref{es21c} can be absorbed by the left-hand side, and we deduce 
 \begin{align*}
	&\notag\Big\|\sum_{r,j,k\ge 0}(\om2^k)^{-2s}2^{2rl}\|\De_jP_kR_rf\|_{L^2_{x,v}}^2\Big\|_{L^\infty_t([0,T])}
	+\|f\|_{L^2_t([0,T])L^2_{x,v}}^2\\
	&\ \ \le C_{M_0}\|f_0\|_{L^2_{x,v}}^2+C_{M_0}\sum_{r,j,k\ge 0}(\om2^k)^{-2s}2^{2rl}\|\De_jP_kR_rf\|_{L^2_t([0,T])L^2_xL^2_v}^2,
 \end{align*}
for any $T\in (0,T_*]$. Thus, by Gr\"onwall's inequality, we obtain 
 \begin{align}\label{EsContDepend}
	\Big\|\sum_{r,j,k\ge 0}(\om2^k)^{-2s}2^{2rl}\|\De_jP_kR_rf\|_{L^2_{x,v}}^2\Big\|_{L^\infty_t([0,T])}
	+\|f\|_{L^2_t([0,T])L^2_{x,v}}^2
	\le e^{C_{M_0}T}\|f_0\|_{L^2_{x,v}}^2,
 \end{align}
 for any $T\in(0,T_*]$, 
which implies the continuous dependence in \eqref{ThmContiDepend}. Finally if $f_0=0$, then $f=0$. 
This completes the proof of the Uniqueness Theorem \ref{MainThm1}. 


\section{Appendix: Toolbox}\label{SecToolbox}

\subsection{Tools for pseudo-differential calculus}
  We recall some notations and theorems of pseudo-differential calculus. For details, we refer to \cite[Chapter 2]{Lerner2010}, \cite[Prop. 1.1]{Bony1998-1999}, and \cite{Global2019,Deng2020a}. 
  We set $\Gamma=|dv|^2+|d\eta|^2$ as an admissible metric and let $M$ be a $\Gamma$-admissible weight function, i.e., $M:\R^{2d}\to (0,+\infty)$ satisfies: (a) there exists $\delta>0$ such that for any $X,Y\in\R^{2d}$ with $|X-Y|\le\delta$, $M(X)\approx M(Y)$; (b) there exists $C>0$, $N\in\R$, such that $\frac{M(X)}{M(Y)}\le C\<X-Y\>^N$.
	It's direct to check that $\ti{a}$ (defined in \eqref{tiaa}), $\<v\>^n$, $\<\eta\>^n$ $(n\in\R)$ are $\Gamma$-admissible weights.
	A symbol $a=a(v,\eta;\xi)\in S(M,\Gamma)$ (or simply $S(M)$) uniformly in $\xi$, if for any $\alpha,\beta\in \N^d$, $v,\eta\in\R^d$,
  \begin{align*}
    |\partial^\alpha_v\partial^\beta_\eta a(v,\eta,\xi)|\le C_{\alpha,\beta}M,
  \end{align*}
	with some constant $C_{\alpha,\beta}>0$ independent of $\xi$. The space $S(M)$ endowed with the seminorms
    $\|a\|_{k;S(M)} = \max_{0\le|\alpha|+|\beta|\le k}\sup_{(v,\eta)\in\R^{2d}}
    |M(v,\eta)^{-1}\partial^\alpha_v\partial^\beta_\eta a(v,\eta,\xi)|$
	becomes a Fr\'{e}chet space.
  We formally define the pseudo-differential operator by
  \begin{align*}
    (\operatorname{op}_ta)u(x)=\int_\Rd\int_\Rd e^{2\pi i (x-y)\cdot\xi}a((1-t)x+ty,\xi)u(y)\,dyd\xi
  \end{align*}
	for any $t\in\R$ and Schwartz function $f$.
  $a(v,D_v)=\operatorname{op}_0 a$ denotes the standard pseudo-differential operator, while $a^w = a^w(v,D_v) = \operatorname{op}_{1/2} a$ denotes the Weyl quantization of the symbol $a$. We write $A \in \operatorname{Op}(M)$ to indicate that $A$ is a Weyl quantization with symbol in the class $S(M)$. Moreover, by \cite[Theorem 2.3.18 and Proposition 1.1.10]{Lerner2010}, one can pass between different quantizations:
	$a(v,D_v)=(J^{-1/2}a)^w$,
where the mapping $J^t=\exp(2\pi i tD_v\cdot D_\eta)$ is an isomorphism of the Fr\'{e}chet space $S(M)$, with polynomial bounds in the real variable $t$, where $D=\partial/2\pi i$. 


\smallskip Next, we consider the commutator and composition of pseudo-differential operators; one may also refer to \cite[Prop. 1.1]{Bony1998-1999} and \cite[Theorem 2.3.7]{Lerner2010}. 
Let $a_1(v,\eta)\in S(M_1),a_2(v,\eta)\in S(M_2)$, then $a_1^wa_2^w=(a_1\#a_2)^w$, where $a_1\#a_2\in S(M_1M_2)$ satisfies 
\begin{align*}
  a_1\#a_2(v,\eta)&=a_1(v,\eta)a_2(v,\eta)
  +\int^1_0(\partial_{\eta}a_1\#_\theta \partial_{v} a_2-\partial_{v} a_1\#_\theta \partial_{\eta} a_2)\,d\theta,\\
  g\#_\theta h(Y)&:=\frac{1}{2i}\int_\Rd\int_\Rd e^{-2\pi i\sigma(Y-Y_1,Y-Y_2)}g(Y_1)\cdot h(Y_2)\,dY_1dY_2,\ \text{ where }Y=(v,\eta).
\end{align*}
For any $k\in\{1,2,\dots\}$, there exist constants $l,C$ independent of $\theta\in[0,1]$ such that
  $\|g\#_\theta h\|_{k;S(M_1M_2)}\le C\|g\|_{l,S(M_1)}\|h\|_{l,S(M_2)}$. 
Thus 
\begin{align}
	\label{Liebracket}
\text{if $\partial_{\eta}a_1,\partial_{\eta}a_2\in S(M'_1)$ and $\partial_{v}a_1,\partial_{v}a_2\in S(M'_2)$, then $[a_1,a_2]\in S(M'_1M'_2)$, }
\end{align}
where $[\cdot,\cdot]$ is the commutator defined by $[A,B]:=AB-BA$.
As in \cite[Definition 2.6.1]{Lerner2010}, we can define a Hilbert space $H(M):=\{u\in\S':\|u\|_{H(M)}<\infty\}$, where
\begin{align}\label{HmDef}
  \|u\|_{H(M)}:=\int M(Y)^2\|\varphi^w_Yu\|^2_{L^2}
	\,dY<\infty,
\end{align}
and $(\varphi_Y)_{Y\in\R^{2d}}$ is any uniformly confined family of symbols which forms a partition of unity. In this work, we simply use $H(\<v\>^n)=L^2_v(\<v\>^ndv)$ and $H(\<\eta\>^n)=H^n_v$ for any $n\in\R$. Let $a\in S(M)$; then $a^w:H(M_1)\to H(M_1/M)$ is linear and continuous, in the sense of unique bounded extension from Schwartz space $\mathscr{S}$ to $H(M_1)$. Moreover, the existence of a symbol $b\in S(M^{-1})$ satisfying $b\#a = a\#b = 1$ is equivalent to the invertibility of $a^w$ as an operator from $H(MM_1)$ onto $H(M_1)$ for some $\Gamma$-admissible weight function $M_1$.
The following simple upper-bound estimate will be useful.
\begin{Lem}[{\cite[Lemma 2.3 and Corollary 2.5]{Deng2020a}}]
	\label{basicCommuLem}
	Let $m,c$ be $\Ga$-admissible weights. Assume that $a\in S(m)$ and $a^w: H(mc)\rightarrow H(c)$ is an invertible operator. If $b \in S(m)$, then there exists $C>0$, depending only on the seminorms of $a$ and $b$, such that for any $k\in\R$ and suitable $f$,
\begin{align*}
	&\|b(v,D_v)f\|_{H(c)}+\|b^w(v,D_v)f\|_{H(c)}\le C\|a^w(v,D_v)f\|_{H(c)}.\\
	&\text{In particular,}\ \ \ \ 
	\|\<v\>^ka(v,D_v)f\|_{L^2_v}\le C\|\<v\>^ka^wf\|_{L^2_v}.
\end{align*}
\end{Lem}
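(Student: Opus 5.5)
The argument reduces everything to two facts from the Weyl calculus recalled above: a symbol in $S(m)$ maps $H(mc)$ continuously into $H(c)$, and the assumed invertibility of $a^w$ lets us control the $H(mc)$ norm of $f$ by $\|a^w f\|_{H(c)}$.

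\textbf{Step 1: the Weyl quantization $b^w$.} Since $b\in S(m)$, the continuity statement for Weyl quantizations gives that $b^w: H(mc)\to H(c)$ is bounded. Its operator norm is controlled by finitely many seminorms $\|b\|_{l;S(m)}$, with $l$ depending only on $d$ and the structure constants of $m$ and $c$. On the other hand, $a^w: H(mc)\to H(c)$ is invertible by assumption, so
\begin{align*}
\|f\|_{H(mc)}=\|(a^w)^{-1}a^wf\|_{H(mc)}\le \|(a^w)^{-1}\|_{H(c)\to H(mc)}\,\|a^wf\|_{H(c)}.
\end{align*}
Combining the two bounds gives $\|b^wf\|_{H(c)}\le C\|a^wf\|_{H(c)}$.

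\textbf{Step 2: the standard quantization $b(v,D_v)$.} Use the change of quantization $b(v,D_v)=(J^{-1/2}b)^w$. The map $J^{-1/2}$ is an isomorphism of the Fr\'echet space $S(m)$, so $J^{-1/2}b\in S(m)$, and each of its seminorms is bounded by finitely many seminorms of $b$. Step 1 applied to $J^{-1/2}b$ then gives the same bound for $\|b(v,D_v)f\|_{H(c)}$.

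\textbf{Step 3: the weighted special case.} Take $c=\langle v\rangle^k$, which is $\Gamma$-admissible, and $b=a$. By the identification $H(\langle v\rangle^k)=L^2_v(\langle v\rangle^k dv)$ recalled above, $\|u\|_{H(\langle v\rangle^k)}\approx\|\langle v\rangle^k u\|_{L^2_v}$. Step 2 with these choices gives $\|\langle v\rangle^k a(v,D_v)f\|_{L^2_v}\le C\|\langle v\rangle^k a^wf\|_{L^2_v}$. Here the invertibility hypothesis is read with $c=\langle v\rangle^k$, as in the applications, where $a=\tilde a$ is invertible on every $H(M)$ by \cite[Prop.~1.4]{Global2019}.

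\textbf{Main obstacle.} The delicate point is the claim that $C$ depends only on seminorms of $a$ and $b$. The norm $\|(a^w)^{-1}\|_{H(c)\to H(mc)}$ is not a priori a seminorm quantity. To handle it, I would use the equivalence stated above between invertibility of $a^w$ and the existence of $\tilde b\in S(m^{-1})$ with $\tilde b\# a=a\#\tilde b=1$ (Bony--Lerner). Then $(a^w)^{-1}=\tilde b^w$ is bounded $H(c)\to H(mc)$ by finitely many seminorms of $\tilde b$. Equivalently, one can write $b^w=(b\#\tilde b)^w a^w$ with $b\#\tilde b\in S(1)$, which is bounded on $H(c)$, and the composition estimate for $\#$ controls the seminorms of $b\#\tilde b$ by those of $b$ and $\tilde b$. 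The constant therefore depends on seminorms of $a$ through this parametrix $\tilde b$. That is the sense in which ``depending only on the seminorms'' should be understood, matching \cite{Deng2020a}.
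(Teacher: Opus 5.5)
Your argument is correct and is the standard one behind the cited result. The paper gives no proof of its own, only the reference to Deng's earlier work, and your route matches it: boundedness of $S(m)$-quantizations from $H(mc)$ to $H(c)$, the inverse symbol $\tilde b\in S(m^{-1})$ supplied by the Bony--Lerner equivalence, and the change of quantization $b(v,D_v)=(J^{-1/2}b)^w$.

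The one thing to tighten is Step~3. Instead of re-reading the hypothesis with $c=\langle v\rangle^k$, use the parametrix from your last paragraph. Since $\tilde b^w$ is bounded from $H(\langle v\rangle^k)$ to $H(m\langle v\rangle^k)$ for every $k$, the weighted estimate follows from the single invertibility assumption. The constant $C$ may then depend on $k$ and on $\tilde b$, as you already note.
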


\subsubsection{Embedding theorem}\label{SubsecTL_Besov}
Pick a function $\beta_0\in C^\infty(\mathbb{R}^d)$ such that $\beta_0(\xi)=1$ for $|\xi|\le 2-\tfrac{1}{7}$ and $\beta_0(\xi)=0$ for $|\xi|\ge 2$. 
For $k\ge 1$, define $\beta_k(\xi)=\beta_0(2^{-k}\xi)-\beta_0(2^{1-k}\xi)$ 
and $\widehat{\Lambda_k f}=\beta_k\,\widehat{f}$ for $k\ge 0$. 
Then the Besov and Triebel-Lizorkin spaces are defined by
\begin{align*}
	\begin{cases}
		\|f\|_{B^s_q[L^p]}=\Big(\sum_{k\ge 0}2^{sqk}\|\Lam_kf\|_{L^p}^q\Big)^{\frac{1}{q}},\\
		\|f\|_{F^s_q[L^p]}=\Big\|\Big(\sum_{k\ge 0}2^{sqk}|\Lam_kf|^q\Big)^{\frac{1}{q}}\Big\|_{L^p}.
	\end{cases}
\end{align*}
We use the embedding theorem for Triebel-Lizorkin and Besov spaces from \cite[Theorems 1.1 and 1.2]{Seeger2018}. Although \cite{Seeger2018} is formulated for Lorentz spaces, we work in the $L^p$ framework instead, using the fact that $L^{p,p}=L^p$ $(0< p\le \infty)$. 
\begin{Thm}
	\label{BesovembeddThm}
	Let $s_0,s_1\in\R$, $0<p_0,p_1<\infty$, $0<q_0,q_1\le\infty$. The embedding
	$B^{s_0}_{q_0}[L^{p_0}]\hookrightarrow F^{s_1}_{q_1}[L^{p_1}]$
holds if and only if one of the following conditions holds.
\begin{enumerate}[label=(\roman*)]
\item $s_0-s_1>d/p_0-d/p_1> 0$.
\item $s_0>s_1$, $p_0= p_1$.
\item $s_0-s_1=d/p_0-d/p_1> 0$, $q_0\le p_1$.
\item $s_0=s_1$, $p_0= p_1\ne q_1$, $q_0\le \min\{p_1,q_1\}$.
\item $s_0=s_1$, $p_0= p_1= q_1$, $q_0\le p_1$.
\end{enumerate}
Moreover, the embedding
	$F^{s_0}_{q_0}[L^{p_0}]\hookrightarrow B^{s_1}_{q_1}[L^{p_1}]$
holds if and only if one of the following conditions holds.
\begin{enumerate}[label=(\roman*)]
\item $s_0-s_1>d/p_0-d/p_1> 0$.
\item $s_0>s_1$, $p_0= p_1$.
\item $s_0-s_1=d/p_0-d/p_1> 0$, $p_0\le q_1$.
\item $s_0=s_1$, $p_0= p_1\ne q_0$, $q_1\ge \max\{p_0,q_0\}$.
\item $s_0=s_1$, $p_0= p_1= q_0$, $q_1\ge p_0$.
\end{enumerate}
Moreover, by \cite[Theorem 1.3.6]{Grafakos2014a}, we have $F^{s_1}_{2}[L^{p}]=H^{s_1,p}$ whenever $1<p<\infty$. 
\end{Thm}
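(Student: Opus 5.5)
The plan is to reduce the statement to the Lorentz-space version in \cite[Theorems 1.1 and 1.2]{Seeger2018} rather than reprove it. That result characterizes $B^{s_0}_{q_0}[L^{p_0,r_0}]\hookrightarrow F^{s_1}_{q_1}[L^{p_1,r_1}]$ and the reverse embedding. I will specialize the secondary Lorentz indices to $r_i=p_i$ and use $L^{p,p}=L^p$ with equivalent quasi-norms for $0<p\le\infty$. The Littlewood--Paley pieces $\Lambda_k$ defined above fit their framework: $\beta_0$ is smooth, radial-type and compactly supported, and the $\beta_k$ form an inhomogeneous dyadic partition. Changing the partition only alters the norms by equivalent constants, by the standard Peetre maximal-function argument. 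The main bookkeeping is to check that their conditions, with $r_0=p_0$ and $r_1=p_1$, collapse exactly to items (i)--(v) in each direction, where the Lorentz conditions become restrictions between $q_0$ and $p_1$ (resp.\ $p_0$ and $q_1$).

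For a self-contained sufficiency argument, I would handle the cases as follows.
\begin{itemize}
\item Case (ii) ($p_0=p_1$, $s_0>s_1$). I would use the trivial embeddings $B^{s_0}_{q_0}\hookrightarrow B^{s_1}_{\min(p,q_1)}\hookrightarrow F^{s_1}_{q_1}$, with the gap in $s$ absorbing any change of $q$.
\item Cases (iv)--(v). These are Minkowski's inequality between $\ell^q(L^p)$ and $L^p(\ell^q)$, which requires $q_0\le\min(p,q_1)$, respectively $q_1\ge\max(p,q_0)$.
\item Case (i). I would apply Bernstein's inequality $\|\Lambda_k f\|_{L^{p_1}}\lesssim 2^{kd(1/p_0-1/p_1)}\|\Lambda_k f\|_{L^{p_0}}$ together with the strict gap in $s$, which makes the sequence summable in any $\ell^q$.
\item Case (iii), the sharp diagonal. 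This is the Jawerth--Franke embedding $B^{s_0}_{p_1}[L^{p_0}]\hookrightarrow F^{s_1}_{q}[L^{p_1}]$ (and dually $F^{s_0}_{q}[L^{p_0}]\hookrightarrow B^{s_1}_{p_0}[L^{p_1}]$), combined with monotonicity in $q_0$ (resp.\ $q_1$).
\end{itemize}

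For necessity, I would test on lacunary sums $\sum_k a_k 2^{-k(s_0-d/p_0)}\psi(2^k x)$ and on modulated bumps $\sum_k a_k e^{i2^k x_1}\psi(x-x_k)$ with separated $x_k$. These scale differently in $B$ and $F$ and force each of the listed inequalities between $s_i$, $p_i$ and $q_i$. Finally, the identification $F^{s}_2[L^p]=H^{s,p}$ for $1<p<\infty$ follows from the Littlewood--Paley square-function theorem applied to $\langle D\rangle^{s}f$, as in \cite[Theorem 1.3.6]{Grafakos2014a}.

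The main obstacle is the endpoint case (iii), the Jawerth--Franke embedding. It cannot be obtained from Bernstein plus summation and requires an interpolation or atomic-decomposition argument, which I would simply cite from \cite{Seeger2018}. Since the paper only uses the embeddings, not their sharpness, the necessity part can also be taken from the reference.
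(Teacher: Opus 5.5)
Your proposal is correct and matches the paper's treatment. The paper also gives no independent proof: it specializes \cite[Theorems 1.1 and 1.2]{Seeger2018} to the secondary Lorentz index equal to the primary one, using $L^{p,p}=L^p$, and cites \cite[Theorem 1.3.6]{Grafakos2014a} for $F^{s}_{2}[L^{p}]=H^{s,p}$. Your supplementary sufficiency sketch is sound and is only needed in that direction, since the paper uses only sufficiency: trivial embeddings for (ii), Minkowski's inequality for (iv)--(v), Bernstein's inequality for (i), and the cited Jawerth--Franke endpoint for (iii).
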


\subsection{Tools for Boltzmann collision operator}
	\label{ToolboxNoncutoff}
For the non-cutoff Boltzmann collision operator, we used the following basic tools. As in the former analysis, we will split the collision into large and small relative-velocity regions. 
Let $\phi(r)\in[0,1]$ be a smooth radial function with value $1$ when $|r|\le \frac{1}{2}$, and $0$ for $|r|\ge 1$. 
Then we split (for simplicity $\phi(v):=\phi(|v|)$) 
\begin{align}\label{Bbarc}\notag
	B(v-v_*,\si)&=b(\cos\th)\Big(|v-v_*|^\ga\phi(v-v_*)+|v-v_*|^\ga(1-\phi(v-v_*))\Big)\\
	&
	\equiv b(\cos\th)\big(\Th_c(v-v_*)+\Th_{\bar c}(v-v_*)\big)
	\equiv B_c(v-v_*,\si)+B_{\bar c}(v-v_*,\si), 
\end{align}
where $\Th_c$ has a well-behaved Fourier transform, while $\Th_{\bar c}$ has no singular point.
Then we denote the corresponding collision operators $Q$ by 
\begin{align}\label{QcDef}
	Q_c(f,g)+Q_{\bar{c}}(f,g):=\int_{\R^{d}_{v_*}\times\S^{d-1}_\si}\big(B_c+B_{\bar c}\big)(v-v_*,\si)\big(f(v'_*)g(v')-f(v_*)g(v)\big).
\end{align}
Moreover, by the so-called pre-post velocity change of variable $(v,v_*)\mapsto (v',v'_*)$, we have 
\begin{align}\label{CollBoltzDef}
	\big(Q(f,g),h\big)_{L^2_v}
	=\int_{\R^{2d}_{v,v_*}\times\S^{d-1}_\si}B(v-v_*,\si)f(v_*)g(v)\big(h(v')-h(v)\big). 
\end{align}
Under the exponential form of collision \eqref{GaBolt}, the linearized collision terms $\Ga(\mu^{\frac{1}{2}},f)$ and $\Ga(f,\mu^{\frac{1}{2}})$ satisfy the following. 
By \cite[Prop. 4.8, p. 983]{Alexandre2012} and \eqref{esD}, for $\ga>-d$ and $s\in(0,1)$, one has
\begin{align}
	\label{esAf}
	\big(\Gamma(\mu^{\frac{1}{2}},f),f\big)_{L^2_v} & \le -2c_0\|f\|_{L^2_D}^2+C\|\<v\>^{\frac{\ga}{2}}f\|_{L^2_v}^2 
	 \le -c_0\|f\|_{L^2_D}^2+C\|\1_{|v|\le R_0}f\|_{L^2_v}^2,
\end{align}
for some constant $c_0>0$ and large $R_0>0$, where we used interpolation in $v$ and $\1_{|v|\le R_0}$ is the indicator function of $B(0,R_0)$.
For the $\Gamma(f,\mu^{\frac{1}{2}})$ part, it follows from \cite[Lemma 2.15]{Alexandre2012} that 
\begin{align}\label{esKf}
	\big(\Gamma(f,\mu^{\frac{1}{2}}),f\big)_{L^2_v} \le C\|\mu^{1/160}f\|_{L^2_v}\|\mu^{1/160}g\|_{L^2_v}. 
\end{align}
Moreover, by \cite[(6.6), p. 817]{Gressman2011}, for $\ga+2s>-\frac{d}{2}$, one has 
\begin{align}\label{Gammaes}
	|(\Gamma(f,g),h)_{L^2_v}|\le C\|f\|_{L^2_v}\|g\|_{L^2_D}\|h\|_{L^2_D}.
\end{align}
To obtain the desired commutator for $\Ga(f,g)-Q(\mu^{\frac{1}{2}}f,g)$ and for the weight function $\<v\>^{n}$, we borrow the following Lemma from \cite[Lemma 4.5]{Alexandre2011} and \cite[Lemma 6.15]{Morimoto2016}.
\begin{Lem}[{\cite[Lemma 4.5 and Lemma 2.12]{Alexandre2011} and \cite[Lemma 6.15]{Morimoto2016}}]
	Let $s\in(0,1)$ and $\ga>\max\{-d,-2s-\frac{d}{2}\}$. For any $n,n_1,n_2\in\R$, and any suitable functions $f,g,h$, we have 
\begin{align}\label{CommGaAndQ}\notag
	\big|\big(\Ga(f,g)-Q(\mu^{\frac{1}{2}}f,g),h\big)_{L^2_v}\big|
	&\le C\min\{\|\<v\>^{-N}f\|_{L^2_v}\|\<v\>^{n}g\|_{L^2_D}\|\<v\>^{\frac{\ga+2s}{2}-n}h\|_{L^2_v},\,\\
	&\qquad\qquad\notag\|\<v\>^{-N}f\|_{L^2_v}\|\<v\>^{n}g\|_{L^2_v}\|\<v\>^{\frac{\ga+2s}{2}-n}h\|_{L^2_D}\}
	\\&\kern-3em+C\|\<v\>^{-n_1}f\|_{L^2_v}\|\<v\>^{\frac{\ga+2s}{2}+n_1+n_2}g\|_{L^2_v}\|\<v\>^{\frac{\ga+2s}{2}-n_2}h\|_{L^2_v}, 
\end{align}
for some $C=C(m,n,n_1,n_2,\ga,s)>0$ and any $N\ge 0$. 
\end{Lem}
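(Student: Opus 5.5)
We follow the decomposition already used in \eqref{GminusQDef}, but now without assuming $f\ge 0$ and keeping track of weights. By \eqref{CollBoltzDef} and \eqref{GaBolt},
\begin{align*}
\big(\Ga(f,g)-Q(\mu^{\frac12}f,g),h\big)_{L^2_v}=\int_{v,v_*,\si}B\,\big(\mu^{\frac12}(v'_*)-\mu^{\frac12}(v_*)\big)f(v_*)g(v)h(v').
\end{align*}
Writing $\mu'^{1/2}_*-\mu^{1/2}_*=(\mu'^{1/4}_*-\mu^{1/4}_*)^2+2(\mu'^{1/4}_*-\mu^{1/4}_*)\mu^{1/4}_*$ and $h(v')=h(v)+(h(v')-h(v))$, we obtain the three pieces $I_1,I_2,I_3$ exactly as in \eqref{GminusQDef}. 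Throughout, we insert $\<v\>^{n}\<v\>^{-n}$ (or $\<v\>^{n_2}\<v\>^{-n_2}$) between $g$ and $h$. We use $\<v'\>\approx$ up to factors of $\<v_*\>$, via \eqref{distancevpriv}, so that every polynomial loss in $v_*$ is absorbed by Gaussian factors and produces only $\|\<v\>^{-N}f\|_{L^2_v}$ or $\|\<v\>^{-n_1}f\|_{L^2_v}$.

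\textbf{Step 1: $I_1$ and $I_2$.} For $I_1$ we use the Taylor bound
\[
|\mu'^{1/4}_*-\mu^{1/4}_*|\le C\min\{1,\th|v-v_*|\}\,\big(\mu^{1/16}(v_*)+\mu^{1/16}(v'_*)\big),
\]
which yields a $\th^2$ angular factor. When the Gaussian sits at $v'_*$, we first perform the pre-post change of variables. The angular integral \eqref{angularintegral}, together with \eqref{gafg} (which needs $\ga+2s>-d/2$, guaranteed by $\ga>-2s-\frac d2$), then bounds $I_1$ by the last term of \eqref{CommGaAndQ}: Cauchy--Schwarz splits $|v-v_*|^{\ga+2s}$ as $\<v\>^{\frac{\ga+2s}{2}}\cdot\<v\>^{\frac{\ga+2s}{2}}$ between $g$ and $h$.

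For $I_2$ (where $h$ is evaluated at $v$), the first-order Taylor term involves $v'_*-v_*=\sin^2\frac\th2(v-v_*)-\frac12|v-v_*|\sin\th\,\om$. The $\om$-part vanishes by symmetry, so we are left with a $\th^2$ factor again, and $I_2$ obeys the same bound.

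\textbf{Step 2: $I_3$, the main obstacle.} Here the difference $h(v')-h(v)$ carries a singular angular factor only of order $\th$, which gives no integrable gain on its own, so a dissipation norm is necessary. We apply Cauchy--Schwarz with the splitting
\begin{align*}
\Big(\int B(\mu'^{1/4}_*-\mu^{1/4}_*)^2\mu^{1/4}_*|f_*||g|^2\<v\>^{2n}\Big)^{\frac12}
\Big(\int B\,\mu^{1/4}_*|f_*|\,\<v\>^{-2n}(h'-h)^2\Big)^{\frac12}.
\end{align*}
The first factor is handled as in Step 1. The second factor is rewritten through
\[
(h'-h)^2=h'^2-h^2-2h(h'-h),
\]
which combines the cancellation lemma (regular change of variables \eqref{regular}) with $|(Q(\mu^{1/4}|f|,\<v\>^{-n}h),\<v\>^{-n}h)|$; the latter is controlled via \eqref{QfghUpper} or \eqref{Gammaes} by $\|\<v\>^{-N}f\|_{L^2_v}\|\<v\>^{\frac{\ga+2s}{2}-n}h\|^2_{L^2_D}$. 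Commuting the weight $\<v'\>^{-n}$ past the difference costs only a factor $\big(\<v'\>^{-n}-\<v\>^{-n}\big)^2$, which has a $\th^2$ angular gain and contributes lower-order terms. This produces the second entry of the minimum in \eqref{CommGaAndQ}.

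To obtain the first entry instead, where $L^2_D$ is placed on $g$, we first apply the pre-post change $(v,v_*)\to(v',v'_*)$ in $I_3$, so that the difference falls on $g$, and then repeat the same argument with the roles of $g$ and $h$ exchanged. This exchange is the delicate point: the Gaussian then sits at $v'_*$, and we must check via \eqref{distancevpriv} that it still dominates every polynomial weight in $v_*$. We therefore expect the bookkeeping of these weights under the changes of variables to be the main technical obstacle. Finally, collecting Steps 1--2 gives \eqref{CommGaAndQ}, with constants depending only on $n,n_1,n_2,\ga,s,N$.
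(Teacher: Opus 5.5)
Note first that the paper does not prove this lemma; it quotes it from the literature. Your splitting into $I_1,I_2,I_3$ is the one in \eqref{GminusQDef}, and your treatment of $I_3$ does give the second entry of the minimum. That treatment is Cauchy--Schwarz, $(h'-h)^2=h'^2-h^2-2h(h'-h)$, cancellation, and \eqref{QfghUpper}. There are, however, genuine gaps.

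The main gap is the weight bookkeeping in the part of $I_1$ where the Gaussian sits at $v'_*$. There is then no Gaussian at $v_*$, so $\<v_*\>^{n_1}$ cannot be absorbed. It must be moved onto $g$ (or $h$), and the only available comparison is $\<v_*\>\lesssim\<v\>\<v'_*\>$ (or $\<v'\>\<v'_*\>$). This works only for $n_1\ge0$. For $n_1<0$ you would need $\<v\>\lesssim\<v_*\>\<v'_*\>$, which fails for grazing collisions.

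In fact the inequality as printed is false for $n_1<0$. Take $1\ll\rho\ll R$, let $f$ be a bump on $\{\rho\le|v|\le2\rho\}$, and let $g=h$ be smooth at scale $R$ with $g=h=1$ on $\{R/2\le|v|\le4R\}$.
\begin{itemize}
\item The contribution of $\{|v'_*|\le1\}$, where $\th\approx\rho/R$, is $\gtrsim\rho^{-2s}R^{\ga+2s+d}$. After the pre-post change it equals $\int B\mu^{1/2}_*\1_{|v_*|\le1}f'_*g'h$.
\item Every other contribution is either nonnegative or $O(e^{-c\rho^2}\rho^{C}R^{\ga+2s+d})$. The latter are the terms carrying $\mu^{1/2}(v_*)$ and those with angles $\th\ll\rho/R$.
\item With $N>\frac d2+2s$ fixed, the right-hand side is $\lesssim\big(\rho^{-N+\frac d2}+\rho^{|n_1|+\frac d2}R^{-|n_1|}\big)R^{\ga+2s+d}$.
\end{itemize}
Letting $\rho\to\infty$ and then $R\to\infty$ gives a contradiction. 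So your argument can at best give the statement for $n_1\ge0$. That is all the paper uses, since it always takes $\|f\|_{L^2_v}$. Your proof should say explicitly where the sign of $n_1$ enters.

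Second, you do not reach the first entry of the minimum the way you describe. The pre-post change maps $g(v)\big(h(v')-h(v)\big)$ to $g(v')\big(h(v)-h(v')\big)$, so the difference stays on $h$. What you need is the identity $g(h'-h)=-(g'-g)h'+(g'h'-gh)$.
\begin{itemize}
\item The first piece is handled by Cauchy--Schwarz and the regular change $v\mapsto v'$ on the $h'$ factor. You also use $\int B\mu^{1/4}_*|f_*|(g'-g)^2\lesssim\|\<v\>^{-N}f\|_{L^2_v}\|g\|_{L^2_D}^2$ (cf.\ \eqref{ColliUpperTriple}), commuting weights as you do for $h$.
\item The second piece is a cancellation term of the same type as $I_2$. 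Under the regular change of Lemma \ref{regularLem}, $v'_*-v_*$ becomes $-|v-v_*|\tan\frac\th2\,\tilde\om$, which is odd in $\tilde\om$.
\end{itemize}

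A smaller point concerns $I_2$ itself. The surviving term $\sin^2\frac\th2\,(v-v_*)\cdot\na\mu^{1/4}(v_*)$ carries only one power of $|v-v_*|$. Near the diagonal the kernel is therefore $|v-v_*|^{\ga+1}$, which for $s>\frac12$ is more singular than $|v-v_*|^{\ga+2s}$. Applying \eqref{gafg} then needs $\ga+1>-\frac d2$, which $\ga>-2s-\frac d2$ does not give (the paper's standing assumption $\ga+s>-\frac d2$ does). Otherwise you must place the region $|v-v_*|\le1$ of $I_2$ in the minimum, using the $H^s$ gain from the $L^2_D$ norm on $g$ or on $h$.
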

To estimate the commutator of weights, we borrow the estimate \eqref{CommvlTemp} below (whose proof is self-contained) together with \eqref{CommGaAndQ} to deduce that, for any $l,n\in\R$, 
\begin{align}\label{Commvl}
	|(\<v\>^l\Ga(f,g)-\Ga(f,\<v\>^lg),h)_{L^2_v}|
	&\le C
	\|f\|_{L^2_v}\|\<v\>^{l+n+\frac{\ga+2s}{2}}g\|_{L^2_v}\|\<v\>^{-n}h\|_{L^2_D}. 
\end{align}
By \cite[Prop. 6.10]{Morimoto2016}, for any $\ga>\max\{-d,-d/2-2s\}$ and $s_2,s_3\ge 0$ with $s_2+s_3=2s$, we have 
\begin{align}\label{es2sUpper}
	|(\Gamma(f,g),h)_{L^2_v}|
	\le C\|f\|_{L^2_v}\|\<D_v\>^{s_2}g\|_{L^2_v}\|\<v\>^{\ga+2s}\<D_v\>^{s_3}h\|_{L^2_v}.
\end{align}
By splitting $\Gamma(f,g)
=\<v\>^{-l_0}\Gamma(f,\<v\>^{l_0}g)
+\<v\>^{-l_0}\big(\<v\>^{l_0}\Gamma(f,g)
-\Gamma(f,\<v\>^{l_0}g)\big)$ with $l_0=l+\ga+2s$,
and applying estimates \eqref{Commvl} (with $n=-\frac{\ga+2s}{2}$ therein) and \eqref{es2sUpper}, we obtain, for any $\ga>\max\{-d,-d/2-2s\}$, $s_2\ge 0$, and $s_3\ge s$ with $s_2+s_3=2s$,  
\begin{align}
	\label{64eq}
	|(\Gamma(f,g),h)_{L^2_v}|
	\le C\|f\|_{L^2_v}\|\<v\>^{l+\ga+2s}\<D_v\>^{s_2}g\|_{L^2_v}\|\<v\>^{-l}\<D_v\>^{s_3}h\|_{L^2_v}.
\end{align}

\subsubsection{Basic facts in velocity variable and regular change of variable}
We first give some facts about the pre-post velocities.
Let $\mathbf{k}=\frac{v-v_*}{|v-v_*|}$ if $v\ne v_*$ and $\mathbf{k}=(1,0,0)$ if $v=v_*$.
Under the spherical coordinate, we write
\begin{align}\label{sigma}
	\sigma=\cos\th\,\mathbf{k}+\sin\th\,\omega,
\end{align}
with $\th\in[0,\pi/2]$, where $\omega\in\S^{d-2}(\mathbf{k}):=\{\omega\in\S^{d-1}_\si\,:\,\omega\cdot\mathbf{k}=0\}$, 
Then we have
\begin{align}
	\label{vprivth}
	\begin{cases}
		v'  =\cos^2\frac{\th}{2}v+\sin^2\frac{\th}{2}v_*+\frac{1}{2}|v-v_*|\sin\th\omega, \\
		v'_*  =\sin^2\frac{\th}{2}v+\cos^2\frac{\th}{2}v_*-\frac{1}{2}|v-v_*|\sin\th\omega, \\
		|v'-v|=|v'_*-v_*|=|v-v_*|\sin\frac{\th}{2}, \\
		|v'-v_*|=|v'_*-v|=|v-v_*|\cos\frac{\th}{2}.
	\end{cases}
\end{align}
Using \eqref{vprivth} and the fact that $\th\in\big(0,\frac{\pi}{2}\big]$, one can deduce 
\begin{align}\label{vstar3}
	\begin{cases}
	|v_*|^2\le C(|v'_*|^2+|v|^2),\ \ \ 
	|v'|^2\le C(|v|^2+|v'_*|^2),\\
	|v'|^2\le C(|v|^2+|v_*|^2),\ \ \ 
	|v|^2\le C(|v'|^2+|v_*|^2),
	\end{cases}
\end{align}
which, roughly speaking, mean that the distance between $v$ and $v'$ is controlled by $|v_*|$. Moreover, the following estimate is commonly used: for any $l\in\R$, one has 
\begin{align}
	\label{equivlam}
\<v\>^{l}\le2^{\frac{|l|}{2}}\<v_*\>^{l}\<v-v_*\>^{|l|}.
\end{align}
Moreover, we list some propositions for the collision operators. By means of \cite[Lemma 2.12 and its proof]{Alexandre2011}, for any $\ga>\max\{-d,-d/2-2s\}$, 
\begin{align}\label{ColliUpperTriple}
	\int_{\R^{2d}_{v,v_*}\times\S^{d-1}_\si}
	BF(v_*)(g-g')^2\,d\si dvdv_*
	\le C\big(\|\<v\>^{|\ga|}F\|_{L^2_v}+\|F\|_{L^1_{3|\ga|+2}}\big)
	\|g\|_{L^2_D}^2. 
\end{align}

Using the assumption on $b(\cos\th)$ in \eqref{ths} and the relation $|\eta^-|=|\eta|\sin\fr\th2$ as in \eqref{bobylevangle}, we have the following estimate for the angular integral. Note that angular integrals can be defined by their principal values.
\begin{Lem}[{Angular integrals for velocity and frequency}]\label{bcossinLem}
	Assume that $b(\cos\th)$ satisfies \eqref{ths}. 
\begin{itemize}[leftmargin=1.2em]
	\item 
Writing $b(\cos\th)=b\big(\frac{v-v_*}{|v-v_*|}\cdot\si\big)$, for any function $a(v,v_*)$ and any $v,v_*\in\R^d$, 
\begin{align}\label{angularintegral}
	\begin{aligned}
	&\int_{\S^{d-1}_\si}b(\cos\th)\sin^2\frac{\th}{2}\1_{\th\le a(v,v_*)^{-1}}\,d\si\le C\min\{1,a(v,v_*)^{2s-2}\},\\
	&\int_{\S^{d-1}_\si}b(\cos\th)\1_{\th> a(v,v_*)^{-1}}\,d\si\le Ca(v,v_*)^{2s}.
	\end{aligned}
\end{align}
\item 
Writing $b(\cos\th)=b\big(\frac{\eta}{|\eta|}\cdot\si\big)$, 
for any $\eta,\eta_*\in\R^d$, 
\begin{align}\label{angulareta}
	\begin{aligned}
		\int_{\S^{d-1}_\si}\1_{|\eta^-|\le C\<\eta_*\>}b(\cos\th)\sin^2\frac{\th}{2}\,d\si&\le C\min\{1,|\eta|^{-2+2s}\<\eta_*\>^{2-2s}\},\\
		\int_{\S^{d-1}_\si}\1_{|\eta^-|>\frac{1}{C}\<\eta_*\>}b(\cos\th)\,d\si&\le C|\eta|^{2s}\<\eta_*\>^{-2s}\1_{|\eta|>C\<\eta_*\>}.
	\end{aligned}
\end{align}
\end{itemize}
\end{Lem}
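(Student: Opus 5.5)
The plan is to reduce every integral to a one-dimensional integral in the deviation angle $\th$ by means of the spherical coordinates \eqref{sigma}, $\si=\cos\th\,\mathbf{k}+\sin\th\,\omega$. These give $d\si=\sin^{d-2}\th\,d\th\,d\omega$ with $\omega\in\S^{d-2}(\mathbf{k})$, and in the frequency case the same holds with $\mathbf{k}$ replaced by $\eta/|\eta|$. Since the integrands depend only on $\th$, the $\omega$-integral contributes the constant $|\S^{d-2}|$. The non-cutoff assumption \eqref{ths} then turns $b(\cos\th)\,d\si$ into $\approx\th^{-1-2s}\,d\th$ on $(0,\pi/2]$. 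Every estimate therefore reduces to elementary integrals of powers of $\th$ over subintervals of $(0,\pi/2]$, with the singular ones understood as principal values as stated.

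For \eqref{angularintegral}, write $a=a(v,v_*)$. I will use $\sin^2\frac\th2\le\th^2/4$, so the first integral is bounded by
\begin{align*}
C\int_0^{\min\{\pi/2,\,a^{-1}\}}\th^{1-2s}\,d\th\le C\min\{1,a^{2s-2}\},
\end{align*}
using $2-2s>0$. For the second integral, I bound it by $C\int_{a^{-1}}^{\pi/2}\th^{-1-2s}\,d\th\le C a^{2s}$. This integral vanishes when $a^{-1}\ge\pi/2$, so the bound holds trivially in that case.

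For \eqref{angulareta}, the key input is \eqref{bobylevangle}, $|\eta^-|=|\eta|\sin\frac\th2$. Since $\frac{\th}{\pi}\le\sin\frac\th2\le\frac\th2$ on $[0,\pi/2]$, I get:
\begin{itemize}
\item the constraint $|\eta^-|\le C\<\eta_*\>$ implies $\th\le C'\<\eta_*\>/|\eta|$;
\item the constraint $|\eta^-|>\frac1C\<\eta_*\>$ implies $\th>c'\<\eta_*\>/|\eta|$.
\end{itemize}
Applying the two bounds above with $a=|\eta|/(C'\<\eta_*\>)$, respectively $a=|\eta|/(c'\<\eta_*\>)$, yields $C\min\{1,|\eta|^{-2+2s}\<\eta_*\>^{2-2s}\}$ and $C|\eta|^{2s}\<\eta_*\>^{-2s}$. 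For the indicator in the second line, the set $\{|\eta^-|>\frac1C\<\eta_*\>\}$ is empty unless $|\eta|\ge|\eta^-|>\frac1C\<\eta_*\>$. Hence the integral equals zero off $\{|\eta|>C''\<\eta_*\>\}$, and the factor $\1_{|\eta|>C\<\eta_*\>}$ may be inserted after renaming the generic constant.

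I expect no real obstacle here. The only points needing care are the bookkeeping of the generic constants $C$ in the indicators and the edge cases where $a^{-1}$ exceeds $\pi/2$, in which the integration range is truncated or empty. Both are handled by the $\min\{\cdot\}$ in the first bound and by the trivial vanishing in the second.
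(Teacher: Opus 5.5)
Your proof is correct and is essentially the argument the paper relies on: the paper gives no separate proof and only points to the bound \eqref{ths} on $b$ and the identity $|\eta^-|=|\eta|\sin\frac{\th}{2}$ from \eqref{bobylevangle}, which is exactly your reduction to one-dimensional integrals of $\th^{1-2s}$ and $\th^{-1-2s}$ on $(0,\pi/2]$. You read the indicator in the second line of \eqref{angulareta} as $\1_{|\eta|>c\<\eta_*\>}$ with a \emph{small} constant $c$, which is forced by $|\eta|\ge|\eta^-|$; this is also the only reading consistent with the paper's usage, e.g.\ $\1_{|\eta|\ge\frac12\<\eta_*\>}$ in the estimate of $Q_{c3}^4$.
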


We also use the regular change of variables for $v'$ (and similarly for $z=v+\tau(v'-v)$). Note that in the following Lemma, the function $b$ may depend on several independent variables.
\begin{Lem}[{Regular change of variable}]\label{regularLem}
	Let $\mathbf{k}=\frac{v-v_*}{|v-v_*|}$.
	 For any function $f$ such that the integrals below are well-defined, we have the regular change of variables.
\begin{itemize}[leftmargin=1.1em]
	\item (The basic version) Writing $\cos\th=\mathbf{k}\cdot\sigma$, $\wt\omega\in\S^{d-2}(\mathbf{k})$ where $\S^{d-2}(\mathbf{k})$ is defined by \eqref{sigma}:
		\begin{multline}
			\label{regular1}
			\int_{\R^d_v}\int_{\S^{d-1}_\si}f\big(\sigma,\mathbf{k},\cos\th, v',|v-v_*|\big)\,d\sigma dv\\
			=\int_{\R^d_v\times\S^{d-2}_{\wt\omega}(\mathbf{k})}\int_0^{\frac{\pi}{2}}f\Big(\cos\frac{\th}{2}\,\mathbf{k}+\sin\frac{\th}{2}\,\wt\omega,\cos\frac{\th}{2}\,\mathbf{k}-\sin\frac{\th}{2}\,\wt\omega,\cos\th, v,\frac{|v-v_*|}{\cos\frac{\th}{2}}\Big)\frac{\sin^{d-2}\frac{\th}{2}\,d\th d\wt\omega dv}{2^{-(d-2)}\cos^2\frac{\th}{2}}.
		\end{multline}

		\item In particular, we have the simplified version:
		\begin{align}
			\label{regular}
			\int_{\R^d_v}\int_{\S^{d-1}_\si}f(\cos\th, v',|v-v_*|)\,d\sigma dv=\int_{\R^d_v}\int_{\S^{d-1}_\si}f\Big(\cos\th, v,\frac{|v-v_*|}{\cos\frac{\th}{2}}\Big)\,\frac{d\sigma dv}{\cos^{d}\frac{\th}{2}}. 
		\end{align}


		\item As a consequence, we have 
		\begin{align}\label{regularvprime}
			\int_{\R^d_v}\int_{\S^{d-1}_\si}f(\cos\th, v',|v-v_*|^\ga)(v'-v)\,d\sigma dv=0. 
		\end{align}
\end{itemize}
\end{Lem}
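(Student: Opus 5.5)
The natural route is to fix $v_*$, pass to polar coordinates around it, and treat $\sigma\mapsto v'$ as a change of the direction variable. Write $v-v_*=\rho\,\mathbf{k}$ with $\rho=|v-v_*|>0$ and $\mathbf{k}\in\S^{d-1}$, so that $dv=\rho^{d-1}d\rho\,d\mathbf{k}$. By \eqref{vprivth},
\begin{align*}
v'-v_*=\rho\cos\tfrac{\th}{2}\,\mathbf{k}',\qquad \mathbf{k}'=\frac{\mathbf{k}+\si}{|\mathbf{k}+\si|}=\cos\tfrac{\th}{2}\,\mathbf{k}+\sin\tfrac{\th}{2}\,\omega ,
\end{align*}
where $\si=\cos\th\,\mathbf{k}+\sin\th\,\omega$ as in \eqref{sigma}. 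Thus the angle between $\mathbf{k}'$ and either $\mathbf{k}$ or $\si$ is $\th/2$.

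The first step is to change variables in the pair of directions, $(\mathbf{k},\si)\mapsto(\mathbf{k}',\cdot)$, keeping track of the spherical measure. I would use coordinates $\si=(\th,\omega)$ with $d\si=\sin^{d-2}\th\,d\th\,d\omega$. The map $(\mathbf{k},\th,\omega)\mapsto(\mathbf{k}',\th,\wt\omega)$, with $\mathbf{k}=\cos\frac{\th}{2}\mathbf{k}'-\sin\frac{\th}{2}\wt\omega$ and $\wt\omega\perp\mathbf{k}'$, is a rotation by angle $\th/2$ in the plane spanned by $\mathbf{k}$ and $\omega$. It is therefore measure preserving in $(\mathbf{k},\omega)$ for fixed $\th$.

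The second step is to substitute $\rho'=\rho\cos\frac{\th}{2}$ for fixed $\th$. This gives $\rho^{d-1}d\rho=\cos^{-d}\frac{\th}{2}\,\rho'^{d-1}d\rho'$. Combining this with $\sin^{d-2}\th=2^{d-2}\sin^{d-2}\frac{\th}{2}\cos^{d-2}\frac{\th}{2}$ produces the factor $2^{d-2}\sin^{d-2}\frac{\th}{2}\cos^{-2}\frac{\th}{2}$ of \eqref{regular1}. Renaming $v'\to v$, so that the new $\rho'\mathbf{k}'$ becomes $v-v_*$, the old quantities read as follows: $|v-v_*|_{\rm old}=|v-v_*|/\cos\frac{\th}{2}$, $\si=\cos\frac{\th}{2}\mathbf{k}+\sin\frac{\th}{2}\wt\omega$, and the old $\mathbf{k}$ equals $\cos\frac{\th}{2}\mathbf{k}-\sin\frac{\th}{2}\wt\omega$. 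This is exactly \eqref{regular1}.

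For \eqref{regular}, when $f$ depends only on $(\cos\th,v',|v-v_*|)$, the $\wt\omega$-dependence disappears. I would rewrite $\sin^{d-2}\frac{\th}{2}\,2^{d-2}\cos^{-2}\frac{\th}{2}\,d\th\,d\wt\omega$ back as $\sin^{d-2}\th\,d\th\,d\omega\,\cos^{-d}\frac{\th}{2}=\cos^{-d}\frac{\th}{2}\,d\si$, using a fresh $\si$ relative to the new $\mathbf{k}$. An equivalent check is the direct Jacobian of $v\mapsto v'$ at fixed $(v_*,\si)$, namely $\det=\frac{1}{2^{d}}(1+\mathbf{k}\cdot\si)=2^{1-d}\cos^2\frac{\th}{2}$, combined with the angle-halving reparametrization of $\si$ around $\mathbf{k}'$. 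I would use this as a consistency check.

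For \eqref{regularvprime}, I apply \eqref{regular1} with the integrand $(v'-v)$ and express everything in the new variables. The old $v$ becomes $v_*+\frac{|v-v_*|}{\cos(\th/2)}\big(\cos\frac{\th}{2}\mathbf{k}-\sin\frac{\th}{2}\wt\omega\big)$, so $v'-v_{\rm old}=|v-v_*|\tan\frac{\th}{2}\,\wt\omega$. The integrand is otherwise independent of $\wt\omega$, so the $\wt\omega$-integral over $\S^{d-2}(\mathbf{k})$ vanishes by oddness. I expect the main obstacle to be this bookkeeping: correctly identifying which directions become $\wt\omega$ under the rotation, verifying that no residual $\mathbf{k}$-component survives, and justifying the exchange of integrals via principal values when $b$ is singular.
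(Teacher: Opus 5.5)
Your argument is correct, but it is organized differently from the paper's. The paper fixes $(v_*,\si)$ and computes the Jacobian of $v\mapsto v'$ directly with the matrix determinant lemma, $|\pa v'/\pa v|=2^{-d}(1+\mathbf{k}\cdot\si)=2^{1-d}(\mathbf{k}'\cdot\si)^2$. It then inverts the map via $\mathbf{k}=2(\mathbf{k}'\cdot\si)\mathbf{k}'-\si$ and $|v-v_*|=|v'-v_*|/(\mathbf{k}'\cdot\si)$. Only after that does it write $\si$ in spherical coordinates around the new $\mathbf{k}$ (half-angle $\th/2\in[0,\pi/4]$) and double the angle.

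You instead work in polar coordinates $(\rho,\mathbf{k})$ about $v_*$, with $\si=(\th,\om)$ around $\mathbf{k}$. You factor the substitution at fixed $\th$ into two pieces: the frame rotation $(\mathbf{k},\om)\mapsto(\mathbf{k}',\wt\om)$ by angle $\th/2$, and the dilation $\rho'=\rho\cos\frac{\th}{2}$. The weight then visibly splits as $\sin^{d-2}\th$ (from $d\si$) times $\cos^{-d}\frac{\th}{2}$ (from the dilation). The formulas for the old $\si$, the old $\mathbf{k}$, and $v'-v=|v-v_*|\tan\frac{\th}{2}\,\wt\om$ fall directly out of the rotation, which makes \eqref{regularvprime} immediate. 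That last item genuinely needs the $\wt\om$-resolved identity \eqref{regular1}, as you use it, not \eqref{regular}.

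The cost of your route is one input that the paper's purely computational route avoids. The plane of rotation depends on the point $(\mathbf{k},\om)$, so ``it is a rotation'' does not by itself give measure preservation, and your ``therefore'' needs support. One way to supply it is to note that
\begin{align*}
T_\alpha(\mathbf{k},\om)=(\cos\alpha\,\mathbf{k}+\sin\alpha\,\om,\,-\sin\alpha\,\mathbf{k}+\cos\alpha\,\om)
\end{align*}
commutes with the diagonal action of $O(d)$ on orthonormal pairs, and this action is transitive. Hence $T_\alpha$ pushes the invariant measure $d\mathbf{k}\,d\om$ to an $O(d)$-invariant measure of the same total mass, which by uniqueness is $d\mathbf{k}\,d\om$ itself. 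Equivalently, $T_\alpha$ is the time-$\alpha$ geodesic flow on the unit tangent bundle of $\S^{d-1}$, which preserves Liouville measure. With that sentence added, your proof is complete.
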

\begin{proof}
The proof is similar to \cite[Lemma 1]{Alexandre2000}, and a detailed proof is also given in \cite{Deng2023z}. Here, we provide a sketch of the proof for completeness.
To perform a change of variables $v\mapsto v'$ on the set $\{0\le\th\le\frac{\pi}{2}\}$, we calculate the Jacobian determinant, by the matrix determinant lemma, as
	\begin{align*}
		\Big|\frac{\pa v'}{\pa v}\Big|=\Big|\frac{1}{2}I+\frac{1}{2}\mathbf{k}\otimes\sigma\Big|
		=\frac{1}{2^d}(1+\mathbf{k}\cdot\sigma)=\frac{1}{2^{d-1}}(\mathbf{k}'\cdot\sigma)^2,
	\end{align*}
	where $\mathbf{k}'=\frac{v'-v_*}{|v'-v_*|}$ if $v'\ne v_*$ and $\mathbf{k}'=(1,0,0)$ if $v'=v_*$. The last identity can be seen from the geometry of binary collisions and deduced by using $\mathbf{k}\cdot\si=\cos\th$, $v'=\frac{v+v_*}{2}+\frac{|v-v_*|\sigma}{2}$ and $|v'-v_*|=|v-v_*|\cos\frac{\th}{2}$ from \eqref{vprivth}:
	\begin{align*}
		&\mathbf{k}'\cdot\si
		=\frac{(v-v_*)\cdot\si}{2|v'-v_*|}+\frac{|v-v_*|}{2|v'-v_*|}
		=\frac{\mathbf{k}\cdot\si}{2\cos\frac{\th}{2}}+\frac{1}{2\cos\frac{\th}{2}}=\cos\frac{\th}{2}\ge\frac{1}{\sqrt{2}},
		\\
		&\notag\mathbf{k}=\frac{v-v_*}{|v-v_*|}=\frac{2(v'-v_*)-|v-v_*|\sigma}{|v-v_*|}
		=2\cos\frac{\th}{2}\mathbf{k}'-\si=2(\mathbf{k}'\cdot\si)\mathbf{k}'-\si,\\
		&\notag|v-v_*|=\frac{|v'-v_*|}{\cos\frac{\th}{2}}=\frac{|v'-v_*|}{\mathbf{k}'\cdot\sigma}.
	\end{align*}
	Then we apply the change of variables $v \mapsto v'$ to \eqref{regular1} and relabel $v'$ as $v$ to deduce
	\begin{align}\label{341a}\notag
		& \int_{\S^{d-1}_\si}\int_{\R^d_v}f\big(\sigma,\mathbf{k},\mathbf{k}\cdot\sigma, v',|v-v_*|\big)\,dvd\sigma \\
		&
		=2^{d-1}\int_{\S^{d-1}_\si}\int_{\mathbf{k}\cdot\sigma\ge\frac{1}{\sqrt{2}}}f\Big(\sigma,2(\mathbf{k}\cdot\sigma)\mathbf{k}-\sigma,2(\mathbf{k}\cdot\sigma)^2-1, v,\frac{|v-v_*|}{\mathbf{k}\cdot\sigma}\Big)\,\frac{dvd\sigma}{(\mathbf{k}\cdot\sigma)^{2}}. 
	\end{align}
	Applying spherical coordinate with $\mathbf{k}$ as in \eqref{sigma}, i.e., $\sigma=\cos\th\,\mathbf{k}+\sin\th\,\wt\omega$, $\wt\omega\in\S^{d-2}(\mathbf{k})$,	
	where $\cos\th=\mathbf{k}\cdot\sigma$ and $\wt\omega\perp\mathbf{k}$, and then performing the change of variables $\th\mapsto 2\th$, we have
	\begin{align*}
		& 2^{d-1}\int_{\S^{d-1}_\si,\,\mathbf{k}\cdot\sigma\ge\frac{1}{\sqrt{2}}}f\Big(\sigma,2(\mathbf{k}\cdot\sigma)\mathbf{k}-\sigma,2(\mathbf{k}\cdot\sigma)^2-1, v,\frac{|v-v_*|}{\mathbf{k}\cdot\sigma}\Big)\,\frac{d\sigma}{(\mathbf{k}\cdot\sigma)^{2}}\\
		& =2^{d-2}\int_{\S^{d-2}_{\wt\omega}(\mathbf{k})}\int_0^{\frac{\pi}{2}}f\Big(\cos\frac{\th}{2}\,\mathbf{k}+\sin\frac{\th}{2}\,\wt\omega,\cos\frac{\th}{2}\,\mathbf{k}-\sin\frac{\th}{2}\,\wt\omega,\cos\th, v,\frac{|v-v_*|}{\cos\frac{\th}{2}}\Big)\frac{\sin^{d-2}\frac{\th}{2}\,d\th d\wt\omega}{\cos^2\frac{\th}{2}}.
	\end{align*}
	This, together with \eqref{341a}, implies \eqref{regular1}. Consequently, when $b$ depends only on $\cos\theta$, we obtain \eqref{regular}. 
	Finally, we apply the decomposition \eqref{vprime} to obtain
	$v'-v
	=-\frac{|v-v_*|\mathbf{k}}{2}+\frac{|v-v_*|\sigma}{2}$,
with $\mathbf{k}=\frac{v-v_*}{|v-v_*|}$.
Consequently, by regular change of variables \eqref{regular}, we have 
\begin{align*}
	&\int_{\R^d_v\times\S^{d-1}_\si}f(\cos\th, v',|v-v_*|)(v'-v)\,d\sigma dv\\
	&=2^{d-1}\int_{\R^d_v\times\S^{d-2}_{\wt\omega}(\mathbf{k})}\int_0^{\frac{\pi}{2}}\frac{|v-v_*|\sin^{d-1}\frac{\th}{2}}{\cos^{3}\frac{\th}{2}}f\Big(\cos\th, v,\frac{|v-v_*|}{\cos\frac{\th}{2}}\Big)\,\wt\omega\,d\th d\wt\omega dv. 
\end{align*}
Thus, by symmetry of $\wt\omega$, the integration vanishes.
	This concludes \eqref{regularvprime} and Lemma \ref{regularLem}.	
\end{proof}

The following change of variables can be found in \cite[p. 63]{Alexandre2010}. 
\begin{Lem}[Regular change of variables $v\mapsto z=v+\tau(v'-v)$]
	Assume the same conditions as in Lemma \ref{regularLem} and let $z=v+\tau(v'-v)$ with any $\tau\in[0,1]$, then we have 
\begin{align}
	\label{zminvstar}
	\begin{cases}
	\frac{|v-v_*|^2}{2}\le |z-v_*|^2\le \frac{5}{4}|v-v_*|^2,\\
	|z|^2\le C(|v|^2+|v_*|^2),\quad 
	|z|^2	\le C(|v'|^2+|v_*|^2),\\
	|v'|^2 \le C(|z|^2+|v_*|^2),\quad
	|v|^2\le C(|z|^2+|v_*|^2),
	\end{cases}
\end{align}
and the change-of-variables formula 
\begin{align}
			\label{regularz}
			\int_{\R^d_v\times\S^{d-1}_\si}b(\cos\th)f(z,|v-v_*|)\le C\int_{\R^d_v\times\S^{d-1}_\si}b(\cos\th)f\Big(v,C(\tau,\th)|v-v_*|\Big), 
		\end{align}
		for some constants $C>0$ and $C(\tau,\th)>0$ satisfying $\displaystyle 0<\inf_{\substack{\tau\in[0,1]\\
		\th\in[0,\frac{\pi}{2}]}}C(\tau,\th)\le \sup_{\substack{\tau\in[0,1]\\
		\th\in[0,\frac{\pi}{2}]}} C(\tau,\th)\le C$.
\end{Lem}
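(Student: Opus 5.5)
The statement is the last lemma of the toolbox: the change of variables $v\mapsto z=v+\tau(v'-v)$ with $\tau\in[0,1]$, together with the comparison estimates \eqref{zminvstar}. I would prove it by working in the $\sigma$-representation with $v_*$ and $\sigma$ fixed. The map $v\mapsto z$ is then an explicit affine-plus-norm map.

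\textbf{Step 1: the pointwise bounds \eqref{zminvstar}.} By \eqref{vprime},
\begin{align*}
z-v_*=(1-\tfrac{\tau}{2})(v-v_*)+\tfrac{\tau}{2}|v-v_*|\sigma .
\end{align*}
Expanding the square with $\mathbf{k}\cdot\sigma=\cos\theta\in[0,1]$ gives
\begin{align*}
|z-v_*|^2=|v-v_*|^2\big((1-\tfrac{\tau}{2})^2+\tfrac{\tau^2}{4}+\tau(1-\tfrac{\tau}{2})\cos\theta\big).
\end{align*}
I minimize and maximize the bracket over $\tau\in[0,1]$ and $\cos\theta\in[0,1]$. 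At $\cos\theta=0$ the bracket is at least $\tfrac12$ (minimum at $\tau=1$). At $\cos\theta=1$ the bracket equals $1$. This yields a two-sided comparison of $|z-v_*|^2$ with $|v-v_*|^2$, with constants of the stated form.

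The remaining inequalities in \eqref{zminvstar} follow from the triangle inequality applied to $z=(1-\tau)v+\tau v'$, combined with \eqref{vstar3}. For instance, $|z|\le |v|+|v'|$ and $|v'|^2\le C(|v|^2+|v_*|^2)$. Conversely, $|v|\le |z-v_*|\sqrt2+|v_*|$ and $|z-v_*|\lesssim|z|+|v_*|$.

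\textbf{Step 2: the Jacobian.} Write $u=v-v_*$. Then
\begin{align*}
\frac{\partial z}{\partial v}=(1-\tfrac{\tau}{2})I+\tfrac{\tau}{2}\,\sigma\otimes\mathbf{k}.
\end{align*}
The matrix determinant lemma gives
\begin{align*}
\det=(1-\tfrac{\tau}{2})^{d-1}\big(1-\tfrac{\tau}{2}+\tfrac{\tau}{2}\cos\theta\big)\ge 2^{-d}>0,
\end{align*}
uniformly in $\tau\in[0,1]$ and $\theta\in[0,\pi/2]$. The map is injective for fixed $\sigma$, since $|z-v_*|$ determines $|u|$ and then $u$ is recovered linearly. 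So the change of variables is a diffeomorphism onto its image, and its Jacobian is bounded below.

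\textbf{Step 3: the main obstacle.} The hard part is that $\cos\theta=\mathbf{k}\cdot\sigma$ changes under the substitution, so $b(\cos\theta)$ does not simply transport. I would handle this as in the proof of Lemma \ref{regularLem}, by re-expressing $\theta$ through the new deviation angle $\theta_z$, defined by $\cos\theta_z=\frac{z-v_*}{|z-v_*|}\cdot\sigma$. The computation gives
\begin{align*}
\cos\theta_z=\frac{(1-\tfrac{\tau}{2})\cos\theta+\tfrac{\tau}{2}}{\sqrt{\text{bracket}}},
\end{align*}
so $\theta_z\le\theta$ and $\theta_z\approx\theta$ uniformly. Two facts are then needed:
\begin{itemize}
\item $\sin\theta_z\approx\sin\theta$ near $0$, so that $b(\cos\theta)\le C\,b(\cos\theta_z)$ holds via \eqref{ths};
\item the $\sigma$-measure is preserved after the spherical reparametrization around $\mathbf{k}$, up to a bounded factor, as in \eqref{regular1}.
\end{itemize}
With these, the integral in $(v,\sigma)$ becomes an integral in $(z,\sigma)$ with the weight $b(\cos\theta_z)$, a bounded Jacobian factor, and $|v-v_*|=C(\tau,\theta)|z-v_*|$, where $C(\tau,\theta)=\text{bracket}^{-1/2}$ is bounded above and below by Step 1. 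Relabelling $z$ as $v$ yields \eqref{regularz}.
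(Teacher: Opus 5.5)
Your argument is correct and follows the standard proof behind the reference the paper cites (the paper gives no proof of its own, only \cite[p.~63]{Alexandre2010}). You compute the Jacobian of $v\mapsto z$ for fixed $\sigma$ via the matrix determinant lemma, as in the proof of Lemma~\ref{regularLem}. You then use the angle comparison $\theta/2\le\theta_z\le\theta$, which together with \eqref{ths} gives $b(\cos\theta)\le C_b^2\,b(\cos\theta_z)$. Your bracket computation even yields the sharper bound $|z-v_*|\le|v-v_*|$.

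One justification is wrong, however. $|z-v_*|$ does \emph{not} determine $|u|$, because
$|z-v_*|^2=|u|^2\big((1-\tfrac{\tau}{2})^2+\tfrac{\tau^2}{4}+\tau(1-\tfrac{\tau}{2})\cos\theta\big)$
depends on the direction of $u$ as well as on $|u|$. Injectivity is nevertheless true:
\begin{itemize}
\item Put $w=z-v_*$ and $r=|u|$. Squaring $(1-\tfrac{\tau}{2})u=w-\tfrac{\tau}{2}r\sigma$ gives $(1-\tau)r^2+\tau(w\cdot\sigma)r-|w|^2=0$.
\item For $\tau<1$ this quadratic has exactly one positive root.
\item For $\tau=1$ it gives $r=|w|^2/(w\cdot\sigma)$, where $w\cdot\sigma\ge r/2>0$ because $\cos\theta\ge0$.
\item Once $r$ is known, $u$ is recovered linearly.
\end{itemize}

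Your second bullet in Step~3 is unnecessary. You change variables only in $v$ with $\sigma$ frozen, so $d\sigma$ is never touched. What you do use implicitly is $f\ge0$, which the inequality \eqref{regularz} presupposes anyway. It is needed both to replace $b(\cos\theta)$ by $C\,b(\cos\theta_z)$ and to enlarge the image of $\{\cos\theta\ge0\}$ to all of $\R^d_z$.

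Finally, for fixed $\tau$ the old angle is a strictly increasing function of $\theta_z$. This is what makes $C(\tau,\theta)=\big(\text{bracket}\big)^{-1/2}\in[1,\sqrt2]$ a genuine function of the new angle after relabelling.
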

By \eqref{vstar3} and \eqref{zminvstar}, pairwise differences among $v,v'$ and $z=v+\tau(v'-v)$, $\tau\in[0,1]$, satisfy 
\begin{align}\label{distancevpriv}
	C^{-1}\max\{\<v\>\<v_*\>^{-1},\,\<z\>\<v_*\>^{-1}\}\le\<v'\>\le C\min\{\<v\>\<v_*\>,\,\<z\>\<v_*\>\}.
\end{align}

\subsubsection{Basic facts in frequency variable and pseudo-differential estimate}
We begin by listing some facts about $\eta^\pm:=\dfrac{\eta\pm|\eta|\si}{2}$ where $\si\in\S^{d-1}$ and let $\cos\th=\dfrac{\eta}{|\eta|}\cdot\si$ with $\th\in[0,\pi/2]$ in the Subsection. 
We then have the following basic identities for $\eta^+$, $\eta^-$, and $\eta$: for any $\tau\in[0,1]$ and $\th\in[0,\pi/2]$,
\begin{align}\label{bobylevangle}
\begin{cases}
	\eta^+\cdot\eta^-=0,\quad\eta-\eta^+=\eta^-,\\
	|\eta^-|^2
	=|\eta|^2\sin^2\frac{\th}{2},
	\quad
	|\eta^+|^2
	=|\eta|^2\cos^2\frac{\th}{2},\\
	|\eta^+|\le |\eta|\le 2|\eta^+|,
	\quad \frac{|\eta|}{\sqrt{2}}\le |\eta^++\tau(\eta-\eta^+)|\le|\eta|. 
\end{cases}
\end{align}
As in the case of velocity variable, we can decompose 
	\begin{align}\label{decomsitoj}
		\si=\cos\th\,\mathbf{j}+\sin\th\,\omega,
		\quad\text{with }\th\in[0,\pi/2],\ \mathbf{j}=\frac{\eta}{|\eta|},\ \omega\in\S^{d-2}(\mathbf{j}),
	\end{align}
	where $\S^{d-2}(\mathbf{j}):=\{\omega\in\S^{d-1}_\si\,:\,\omega\cdot\mathbf{j}=0\}$. Then 
	\begin{align}\label{symmetaomega}
		\big(\frac{\eta}{|\eta|}\cdot\si\big)\frac{\eta}{|\eta|}-\si
		&=\cos\th\mathbf{j}-\cos\th\,\mathbf{j}-\sin\th\,\omega
		=-\sin\th\,\omega,
	\end{align}
	is the symmetric part. 
Moreover, as in \cite[Prop. 2.16]{Alexandre2011}, it's direct to obtain 
	\begin{align}\label{equivetastar}
		\begin{cases}
			\text{when}~\<\eta_*\>\ge\frac{3}{2}|\eta|,&\<\eta_*\>\approx\<\eta-\eta_*\>\gtrsim\<\eta\>,\\
			\text{when}~\frac{1}{2}|\eta|\le\<\eta_*\>\le2|\eta|,&
			\<\eta\>\approx\<\eta_*\>\gtrsim\<\eta-\eta_*\>,\\
			\text{when}~\<\eta_*\>\le\frac{1}{2}|\eta|,&
			\<\eta\>\approx\<\eta-\eta_*\>\gtrsim\<\eta_*\>.
		\end{cases}
	\end{align}
To find the regular angular $\th$, note that if $f(\eta)$ is a radial function as $f(\eta)=\wt{f}(|\eta|^2)$, we have 
	$\na_\eta f(\eta)=2\eta\wt{f}'(|\eta|^2)$, 
and hence the first-order term in Taylor's expansion of $f(\eta)-f(\eta^+)$ vanishes since $\eta^+\cdot\eta^-=0$.
 by Taylor's expansion, 
Thus, by $|\eta^+|\le|\eta|\le2|\eta^+|$, we have 
\begin{align}\label{expanfetaplue}
	f(\eta)-f(\eta^+)
	&=\int^1_0(1-\tau)\na^2_\eta f(\eta^++\tau(\eta-\eta^+)):\eta^-\otimes\eta^-\,d\tau,\\
	\notag
	|f(\eta)-f(\eta^+)|
	&\notag\le |\eta^-|^2\sup_{\tau\in[0,1]}|\na^2_\eta f(\eta^++\tau(\eta-\eta^+))|
	\le |\eta|^2\sin^2\frac{\th}{2}\sup_{\frac{|\eta|}{2}\le|\wt\eta|\le|\eta|}|\na^2_\eta f(\wt\eta)|.
\end{align}
As in \cite[pp. 2043--2044]{Alexandre2008}, we can obtain the regular change of variables $\eta\mapsto\eta^+$ and $\eta\mapsto z=\eta^++\tau(\eta-\eta^+)$, $\tau\in(0,1)$, in the frequency variable:
\begin{align*}\notag
	\int_{\R^d_\eta\times\S^{d-1}_\si}b\big(\frac{\eta}{|\eta|}\cdot\si\big)f\big(\eta^+,|\eta|,|\eta^-|\big)
	&=2^d\int_{\R^d_\eta\times\S^{d-1}_\si}b(\cos\th)f\big(\eta,|\eta|\cos^{-1}\frac{\th}{2},|\eta|\sin\frac{\th}{2}\cos^{-1}\frac{\th}{2}\big)(1+\cos\th)^{-1},
\end{align*}
where $\cos\th=\frac{\eta}{|\eta|}\cdot\si$, 
and, 
for any $k,l\in\R$, 
\begin{align}\notag
	\label{xichangetoplustau}
	&\int_{\R^d_\eta\times\S^{d-1}_\si}\1_{0\le\th\le\frac{\pi}{2}}|\th|^{2k}|\eta|^{2l}
	f(\eta^++\tau\eta^-,|\eta|)\,d\eta d\si\\
	&\quad\le C\int_{\R^d_\eta\times\S^{d-1}_\si}\1_{0\le \th\le\frac{\pi}{2}}|\th|^{2k}|\eta|^{2l}f\Big(\eta,|\eta|\big(\cos^2\frac{\th}{2}+\tau^2\sin^2\frac{\th}{2}\big)^{-\frac{1}{2}}\Big)
	\,d\eta d\si,
\end{align}
where $\big(\cos^2\frac{\th}{2}+\tau^2\sin^2\frac{\th}{2}\big)^{-\frac{1}{2}}\approx C$.  

\smallskip\noindent{\bf Integration of $e^{2\pi iv\cdot\zeta}$ and the Dirac delta function.} 
To calculate \emph{commutator estimates}, we list some useful tools related to Bobylev formula; cf. \cite{Bobylev1988,Alexandre2000,Alexandre2010}. 
	Following \cite[Proof of Theorem 2.1]{Alexandre2010}, we have \begin{align}
	\label{deltafunction}
	\begin{cases}
	\int_{v}e^{2\pi iv\cdot (\eta-\zeta)}
	=\de(\eta-\zeta),\\
	\int_{v,\si}b\big(\frac{v-v_*}{|v-v_*|}\cdot\si\big)e^{2\pi i(\eta-\zeta)\cdot v}
	=\int_{\si}b\big(\frac{\eta}{|\eta|}\cdot\si\big)\de(\zeta-\eta),\\
	\int_{v,\si}b\big(\frac{v-v_*}{|v-v_*|}\cdot\si\big)e^{2\pi i\zeta\cdot v-2\pi i\eta\cdot v'}
	=\int_{\si}b\big(\frac{\eta}{|\eta|}\cdot\si\big)e^{-2\pi i\eta^-\cdot v_*}\de(\zeta-\eta^+).
	\end{cases}
\end{align}
This can be done by rotation in $\si$ and a limiting procedure for cutoff cross-sections, after combining the gain and loss terms $\Gamma(f,g)$ for sufficiently smooth functions $f$ and $g$ (once all norm estimates have been completed).

\smallskip\noindent{\bf The estimate of $\wh{\Th_c}$.}
Here, $\Th_c(v)=|v|^\ga\phi(v)$, where $\phi(v)\in[0,1]$ is a smooth radial function that is equal to $1$ when $|v|\le c$ and $0$ when $|v|\ge 2c$. 
It follows from \cite[Appendix A, Lemma A.1]{Alexandre2012} (although \cite{Alexandre2012} addresses the case $d=3$) that 
\begin{align}\label{nakThc}
	|D_\eta^k\wh{\Th_c}(\eta)|\le C_{d,\ga}\<\eta\>^{-d-\ga-k}.
\end{align}

\begingroup
\renewcommand{\addcontentsline}[3]{}
\section*{Declarations}
\endgroup
\noindent{\bf Funding.}
D.-Q. Deng was supported by JSPS KAKENHI Grant Number JP25K23329 and JP26K17011. S. Sakamoto was supported by JSPS KAKENHI Grant Number JP24K16952.

\noindent{\bf Data Availability Statements.}
We do not analyse or generate any datasets, because our work proceeds within a theoretical and mathematical approach.

\noindent{\bf Author Contribution Statement.}
D.D. conceived the study and proposed Theorem \ref{ThmNegativeHypo}. All authors developed the theory, proved the main results, wrote the manuscript, and reviewed the manuscript.

\noindent{\bf Acknowledgement.}
The authors would like to thank Prof. Weiran Sun for the valuable discussion. 

\medskip 
\providecommand{\bysame}{\leavevmode\hbox to3em{\hrulefill}\thinspace}
\providecommand{\MR}{\relax\ifhmode\unskip\space\fi MR }
\providecommand{\MRhref}[2]{%
  \href{http://www.ams.org/mathscinet-getitem?mr=#1}{#2}
}
\providecommand{\href}[2]{#2}


\end{document}